\documentclass{amsart}
\usepackage{amsmath,amssymb,amsthm}
\usepackage{enumerate}
\usepackage{cite}
\usepackage[hidelinks]{hyperref}

\DeclareMathOperator{\sym}{sym}

%mathbb

\def \N {\mathbb{N}}

\def \Y {\mathbb{Y}}

\def \dropline {\vspace{\baselineskip}}

%mathcal
\def \calA {\mathcal{A}}
\def \calB {\mathcal{B}}
\def \calC {\mathcal{C}}
\def \calD {\mathcal{D}}
\def \calL {\mathcal{L}}

\def \calN {\mathcal{N}}
\def \calW {\mathcal{W}}

%mathfrak

\def \fC {\mathfrak{C}}

\def \dos {d_{0s}}

\def \ds {d_{s}}

\def \leqs {\leqslant_s}
\def \esi {\emptyset\leqs}

\def \mult {\mathfrak{m}}

\newcommand{\fullenum}[1]{\mathcal{U}_{#1}}

\newcommand{\dosover}[2]{\dos (#1/#2)}
\newcommand{\otn}[2]{#1_{1},...,#1_{#2}}
\newcommand{\set}[1]{\{#1\}}
\newcommand{\setarg}[2]{{\{#1\ |\ #2\}}}

\newcommand{\clq}[1]{(#1,wit(#1))}
\newcommand{\clqwit}[2]{(#1,\set{#2})}

%theorems
\newtheorem{lemma}{Lemma}[section]
\newtheorem{theorem}[lemma]{Theorem}
\newtheorem*{theorem*}{Theorem}
\newtheorem{prop}[lemma]{Proposition}
\newtheorem*{prop*}{Proposition}
\newtheorem{Cor}[lemma]{Corollary}
\newtheorem*{Cor*}{Corollary}
\newtheorem*{question*}{Question}
\theoremstyle{definition}
\newtheorem{obs}[lemma]{Observation}
\newtheorem*{obs*}{Observation}
\newtheorem{definition}[lemma]{Definition}
\newtheorem{remark}[lemma]{Remark}
\newtheorem*{notation}{Notation}
\newtheorem*{remark*}{Remark}
\newtheorem{fact}[lemma]{Fact}

%lists
\newcommand{\itemlabel}[1]{\renewcommand{\labelitemi}{#1}}

\usepackage[all]{xy}

\newcommand{\outsource}[3]{{#1}_{#2}^{#3}}

\newcommand{\witclq}[2]{\set{#1}^{#2}}
\def \CC {\mathcal{C}}
\def \CCS {\mathcal{C}_s}
\def \strong {\leqslant}
\def \isoext {\rightsquigarrow}

\newcommand{\wtclq}[2]{#1^{(#2)}}

\DeclareMathOperator{\acl}{acl}

\title{On reducts of Hrushovski's construction - the non-collapsed case}
\author{Assaf Hasson}
\thanks{The first author was partially supported by an Israel Science
  Foundation grant number 1156/10.}
\address{Dept. of Math.\\ Ben Gurion University\\ POB 653\\ Beer Sheva, 84105, Israel.}
\email{hassonas@math.bgu.ac.il}
\author{Omer Mermelstein}
\address{Dept. of Math.\\ Ben Gurion University\\ POB 653\\ Beer Sheva, 84105, Israel.}
\email{omermerm@math.bgu.ac.il}

\begin{document} 

\begin{abstract}
 We show that the rank $\omega$ structure obtained by the non-collapsed version of Hrushovski's amalgamation construction has a proper reduct. We show that this reduct is the Fra\"iss\'e-Hrushovski limit of its own age with respect to a pre-dimension function generalising Hrushovski's pre-dimension function. It follows that this reduct has a unique regular type of rank $\omega$, and we prove that its geometry is isomorphic to the geometry of the generic type in the original structure. We ask whether our reduct is bi-interpretable with the original structure and whether it, too, has proper reducts with the same geometry. 
\end{abstract}

\maketitle

\section{Introduction}

\subsection{Background}
In the late 70s of the 20$^{\text{th}}$ century Zilber formulated the conjecture that the $\acl$-geometries associated with strongly minimal sets come in three flavours: trivial (that of an infinite set with no structure), linear (that of an infinite projective space over a division ring) or field-flavoured (that of an algebraic curve over an algebraically closed field). Though refuted by Hrushovski in the late 80s, \cite{Hns} and \cite{HrushovskiSecond}, the theme of classifying structures (and, more generally, types) according to their associated geometries remains a central idea and an important tool in model theory -- reaching way  beyond the realm of stability.  

In \cite{Hns} Hrushovski introduces a technique for constructing non-locally modular strongly minimal sets not interpreting a group (let alone a field). However, the geometries of all the new strongly minimal sets constructed in \cite{Hns} shared a common combinatorial property, known as CM-triviality,  the mildest -- in a precise technical sense -- possible relaxation of modularity (\cite{NotePillayAmple}). Thus, while  refuting Zilber's original idea that the classification of strongly minimal geometries could proceed by classifying the algebraic structures they interpret, Hrushovski's work did not destroy altogether the hope that some classification of strongly minimal geometries may still exits. A more serious hurdle on the way to such a classification was Hrushovski's work in \cite{HrushovskiSecond}, where a modification of the original construction from \cite{Hns} allowed, given any collection $\{T_i\}_{i=1}^\infty$ of strongly minimal theories satisfying a minor technical requirement (DMP),  to 
construct a strongly 
minimal theory $T$ having each of the theories $T_i$ as a reduct (allowing, for example the construction of a strongly minimal set with one reduct an algebraically closed field of characteristic $2$, and one reduct an algebraically closed field of characteristic $3$). 

Hrushovski's fusion construction of \cite{HrushovskiSecond} seems to undermine the idea of classifying the geometries of strongly minimal sets along the simple line suggested by Zilber's conjecture, i.e., it proves that there cannot be a fixed collection of ``classical'' (whichever way this term is interpreted) structures such that the geometry of any strongly minimal set is isomorphic to the geometry of one of those structures. Nevertheless, in his paper, Hrushovski suggests, informally, that the geometry of the fusion theory $T$  is flat \emph{over} the data (i.e., over the collection of theories, $T_i$, taking place in the fusion). This leaves some hope that the geometry of the fusion could be structurally understood in terms of its building blocks, namely the geometries of the different theories, $T_i$. 

This, naturally, leads to the question: if an attempt is made to formulate a conjectural structural theory for the geometry of strongly minimal sets, in what terms should it be formulated. Are there (in some sense yet to be formulated) \emph{smallest} (or \emph{prime}) strongly minimal theories which could serve as basic building blocks of any strongly minimal theory? Observing that the relation of interpretability is a quasi-order on first order theories, it seems reasonable to think of a strongly minimal theory, $T$, as \emph{geometrically-prime} if it is minimal (with respect to the quasi-order of interpretability) within the class of strongly minimal theories with the same geometry as $T$, that is, if $T$ is interpretable in any strongly minimal theory $T'$ interpretable in $T$, whose associated geometry is isomorphic to that of $T$. 

In that sense the theory of pure equality is certainly prime. By Hrushovski's structure theory for locally modular regular types, \cite{Hr4}, so are the theories  of infinite vector (or affine) spaces over division rings. Since any non-trivial locally modular strongly minimal theory interprets a vector space, it follows that -- up to bi-interpretability -- these are the only prime locally modular theories. By Hrushovski's field configuration theorem, a strongly minimal theory whose $\acl$-geometry is isomorphic to that of an algebraically closed field, $F$, interprets an algebraically closed field, and since an  algebraically closed field is interpretable in its own combinatorial geometry by \cite{EvHr} this field must be isomorphic to $F$. Therefore algebraically closed fields are also geometrically prime. Zilber's trichotomy conjecture would have implied that any non-locally-modular theory interpretable in ACF is bi-interpretable with ACF, whence it would follow that ACF are the only geometrically prime 
non-locally modular strongly minimal theories. 

In view of the above, we find it natural to ask: are there more geometrically prime strongly minimal sets? Since the conjecture that any non-locally modular theory interpretable in ACF interprets an algebraically closed field is still open (proved by Rabinovich, \cite{Rabinovich}, for reducts of ACF). It seems, therefore, that the only known places to be looking for new geometrically prime strongly minimal theories is among Hrushovski's constructions. The present work is a first step in that investigation. 

We do not know whether Hrushovski's new strongly minimal sets are geometrically prime. For the sake of simplicity, our work focuses on the, so called, \emph{ab initio} construction, with the hope that a good understanding of that construction could help in a similar analysis of the fusion construction. Hrushovski's original construction is implemented in a relational language with a unique ternary relation, and it seems reasonable to start our investigation there. As will be clear from the body of the work, everything we do in this paper could work in, essentially, the same way for the construction with a unique $n$-ary relation (for $n\ge 3$). Indeed, in some respects the construction for $n\ge 5$ is simpler. On the other hand, the results of \cite{DavidMarcoOne} and \cite{DavidMarcoTwo} suggest that the structures obtained by a construction with more than one relation are not geometrically prime. 

Finally, though the present work is only a first step, the methods developed in this paper suggest that a classification of all reducts of Hrushovski's new strongly minimal set could be a feasible project. Since the structure in the $3$-ary relation language is interpretable in any structure obtained via a similar construction in a language containing at least one relation of arity at least $3$, it seems reasonable that this should be the test case for our investigation. 

In the present work we deal solely with the non-collapsed version of Hrushovski's construction. A second part of this work, dealing with the strongly minimal version of the construction will appear separately. 

\subsection{Results}
In order to adapt the above discussion to the context of Hrushovski's non-collapsed structure we note that for an ($\omega$-stable, for the sake of simplicity) structure $\mathcal{M}$ with a unique \emph{generic} regular $p\in S_1(\emptyset)$, the $p$-geometry $\rm{cl}_p$ defines a geometry on $\mathcal{M}$, which we call (\emph{ad hoc} but unambiguously) the geometry of $\mathcal{M}$. The natural generalisation of the notion of geometrically prime seems, therefore, to be: $\mathcal{M}$ is geometrically prime if whenever $\mathcal{M}'$ is a rank-preserving reduct of $\mathcal{M}$ such that the geometry of $\mathcal{M}'$ is isomorphic to the geometry of $\mathcal{M}$, then $\mathcal{M}$ is interpretable in $\mathcal{M}'$. Obviously, if $\mathcal{M}$ admits no rank and geometry preserving reducts then $\mathcal{M}$ is automatically geometrically prime (e.g., the theory of pure equality and the theory of a vector space over a prime field are geometrically prime, but ACF is not). It is thus a natural first step to ask:  

\begin{question*}
Does the non-collapsed version of Hrushovski's construction (from \cite{Hns}) admit a proper reduct with an isomorphic geometry\footnote{This question was also asked by D. Evans in private communication.}?
\end{question*}

% \begin{remark*}From here on, everything is done in the non-collapsed case, but there should not be a significant problem with the collapse. We work in Hrushovski's construction for a ternary relation $R$ such that $(a,b,c)\in R$ implies that the elements $a,b,c$ are distinct.
% \end{remark*}

We answer this question in the positive. In section \ref{symmetric} we show that the symmetric reduct of Hrushovski's structure is a proper reduct and that its geometry is isomorphic to that of the original structure. Furthermore, we show that the symmetric reduct is in fact Hrushovski's construction for a symmetric ternary relation. This is easy, and so we immediately ask

\begin{question*}
Does Hrushovski's \emph{symmetric} construction admit a proper reduct with an isomorphic geometry?
\end{question*}

We answer this in the positive, also. Consider the following formula
\[S(x_1,x_2,x_3):= \exists x_4,x_5 \bigwedge_{i\in\set{1,2,3}} R(x_i,x_4,x_5) \wedge \bigwedge_{i\neq j} x_i \neq x_j\]
Call the $S$-reduct of Hrushovski's symmetric non-collapsed construction $M_s$, we prove the following

\begin{Cor*}\emph{\textbf{\ref{properReduct}}}  $M_s$ is a proper reduct of Hrushovski's symmetric non-collapsed construction. Namely, the relation $R$ cannot be recovered from the relation $S$.
\end{Cor*}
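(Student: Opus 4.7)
The plan is to construct two self-sufficient embeddings $\iota_1,\iota_2$ into $M$ of a single $5$-element set equipped with two different $R$-structures that differ by a single hyperedge yet induce identical $S$-structures, and then to use the homogeneity of $M_s$ to extend the resulting $S$-isomorphism to an automorphism of $M_s$ that fails to preserve $R$.

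Explicitly, let $A_1=(\{a_1,a_2,a_3,d,e\},\{\{a_i,d,e\}:i=1,2,3\})$ and let $A_2$ be the same universe equipped in addition with the $R$-edge $\{a_1,a_2,a_3\}$. Both structures satisfy $\delta\geq 0$ on every subset, so each embeds into the generic $M$ as a $\leq$-closed substructure. A direct inspection of the triples of distinct elements shows that the only $S$-edge on either standalone structure is $\{a_1,a_2,a_3\}$, with witness pair $(d,e)$; the extra $R$-edge in $A_2$ produces no new $S$-edge because $S$ demands a common \emph{pair}, not a common triple. Moreover, once $A_i\leq M$, no element of $M\setminus A_i$ can contribute to a witness pair for an $S$-relation among elements of $A_i$: any such witness needs two auxiliary vertices together with three new $R$-edges of the form $\{x_j,y_1,y_2\}$, and if $k\in\{1,2\}$ of those auxiliary vertices lie outside $A_i$ then the relative predimension $k-3$ is negative, contradicting $A_i\leq M$. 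Hence the $S$-structures induced on $\iota_1(A_1)$ and $\iota_2(A_2)$ inside $M_s$ coincide under the obvious identification of the underlying sets.

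The remaining and principal step is to extend this partial $L_s$-isomorphism to a full automorphism $\sigma$ of $M_s$; this is where I expect the main difficulty to lie. The natural route is to invoke the Fra\"iss\'e-Hrushovski characterization of $M_s$ relative to the generalised pre-dimension $\delta_s$ promised in the abstract, whose homogeneity over $\delta_s$-strong finite sets yields the extension immediately. Failing that, one argues directly by a back-and-forth in $L_s$, using a similar predimension count to show that at each stage an appropriate matching element of $M$ can be chosen. Either way, the resulting $\sigma$ maps the triple $(\iota_1(a_1),\iota_1(a_2),\iota_1(a_3))$, on which $R$ fails, to $(\iota_2(a_1),\iota_2(a_2),\iota_2(a_3))$, on which $R$ holds, so $\sigma$ is an $L_s$-automorphism not preserving $R$, and $R$ cannot be $L_s$-definable.
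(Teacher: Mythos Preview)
Your argument is correct and follows the same strategy as the paper's: exhibit two finite $R$-structures whose induced $S$-structures coincide but which differ on an $R$-edge, embed each as a $\leq$-strong substructure of $M$, and then invoke the homogeneity of $M_s$ over $\leqs$-strong sets (the paper's Corollary~\ref{homogeneity}) to extend the identification to an automorphism of $M_s$ that does not preserve $R$.

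The paper's choice of witnesses is simpler than yours. It takes the $3$-point structures $A=\{a,b,c\}$ with $R_A=\emptyset$ and $B=\{1,2,3\}$ with $R_B=\{\{1,2,3\}\}$; once these are embedded with $A,B\leq M$, both carry the \emph{empty} $S$-structure (a lone $R$-edge on a $3$-set admits no witness pair disjoint from it, and external witnesses are ruled out by the same predimension count you give). This avoids all clique and multiplicity bookkeeping. Your $5$-point example works too, but you must additionally check that the clique $\{a_1,a_2,a_3\}$ has the same multiplicity (namely $1$) in both embedded copies---your external-witness argument handles this, though you do not say so explicitly.

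One step you leave implicit deserves emphasis: to apply homogeneity you need $\iota_i(A_i)\leqs M_s$, not merely $\iota_i(A_i)\leq M$. The paper obtains this from Proposition~\ref{StrongImpliesLeqs} ($A\leq M\Rightarrow A\leqs M_s$), which is itself one of the substantial results of Section~\ref{age} and not a formality.
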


\begin{theorem*}\emph{\textbf{\ref{PGMandMs}}}
The pre-geometries of Hrushovski's non-collapsed structure and $M_s$ are isomorphic
\end{theorem*}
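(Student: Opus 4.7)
The strategy is to exploit the characterisation of $M_s$ established in the previous section: $M_s$ is the Fra\"iss\'e-Hrushovski limit of its own age equipped with a pre-dimension $\delta_S$ generalising Hrushovski's pre-dimension $\delta_R$. In any such construction the pre-geometry of the generic type is given explicitly: the closure of a finite set $A$ is the minimal self-sufficient superset of $A$ with respect to the relevant pre-dimension. Hence both pre-geometries admit an intrinsic combinatorial description on the common underlying set $M = M_s$.

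Since $M_s$ and $M$ share this universe, the plan is to show that the identity on $M$ is a pre-geometry isomorphism. Explicitly, for every finite $A \subseteq M$ I would prove that the $\delta_R$-self-sufficient closure of $A$ in $M$ coincides with the $\delta_S$-self-sufficient closure of $A$ in $M_s$. One direction---that the $\delta_S$-closure is contained in the $\delta_R$-closure---follows from the fact that $M_s$ is a reduct of $M$, so every $S$-definable algebraic witness is \emph{a fortiori} $R$-definable. The content lies in the reverse inclusion.

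For the reverse inclusion, I would analyse minimal self-sufficient one-element $R$-extensions of $A$ inside $M$ and translate each into a self-sufficient $S$-extension inside $M_s$. The definition of $S$ from $R$ ensures that every $R$-configuration responsible for a $\delta_R$-drop produces a matching pattern of $S$-edges, and the pre-dimension $\delta_S$ has been designed in the previous section precisely so that the drop in $\delta_S$ on this pattern matches the drop in $\delta_R$. Iterating the correspondence over successive one-element closure steps yields the desired equality $\operatorname{cl}_{d_R}(A) = \operatorname{cl}_{d_S}(A)$, and the identity on $M$ therefore serves as a pre-geometry isomorphism.

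The principal obstacle is managing the loss of information when passing from $R$-structures to their $S$-reducts: different $R$-configurations may give rise to the same $S$-edges, and the witnesses realising a given $S$-edge may lie either inside or outside the closure under consideration. The requisite bookkeeping---tracking which witnesses are visible in $M_s$ via $S$-edges and which are not---is exactly what the generalised pre-dimension $\delta_S$ has been tailored to encode; verifying that this encoding suffices to recover every $\delta_R$-self-sufficiency constraint in $M$ is the technical heart of the argument.
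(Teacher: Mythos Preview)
Your approach has a genuine gap: the identity on $M$ is \emph{not} a pre-geometry isomorphism, so the reverse inclusion you set out to prove is simply false. Take $\{a,b,c\}\leqslant M$ with exactly the single relation $R(a,b,c)$ (such a configuration exists and is used in Corollary~\ref{properReduct}). Then $d_R(\{a,b,c\})=2=d_R(\{a,b\})$, so $c\in\mathrm{cl}_{d_R}(\{a,b\})$. On the other hand, $\{a,b,c\}$ carries no $S$-structure whatsoever: a pair of external witnesses $x,y$ with $R(a,x,y),R(b,x,y),R(c,x,y)$ would give $d_0(\{a,b,c,x,y\})=5-4=1<2$, contradicting $\{a,b,c\}\leqslant M$. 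Hence $\dos(\{a,b,c\})=3$, and by Proposition~\ref{StrongImpliesLeqs} the set is $\leqs$-strong, so $d_S(\{a,b,c\})=3>2=d_S(\{a,b\})$ and $c\notin\mathrm{cl}_{d_S}(\{a,b\})$. Thus the two closure operators differ on the nose, and no amount of bookkeeping over one-element extensions will rescue the argument: an $R$-drop caused by a single $R$-tuple produces \emph{no} $S$-pattern at all.

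The paper's route is genuinely different. Rather than comparing the two pre-geometries on the same underlying set, it works at the level of the amalgamation classes $\CC$ and $\CCS$ and establishes an \emph{Isomorphism Extension Property} in each direction (Lemmas~\ref{CCextendstoCCS} and~\ref{CCSextendstoCC}): given a pre-geometry isomorphism $f_0:PG(A)\to PG(D)$ with $A$ in one class and $D$ in the other, any strong extension of $A$ can be matched by a strong extension of $D$ so that $f_0$ extends to a pre-geometry isomorphism. The construction of the matching extension is explicit (an ``emulation'' in the harder direction $\CCS\isoext\CC$) and uses Lemma~\ref{dimensionleq} to compare dimensions across the translation. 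A back-and-forth argument of Evans--Ferreira then yields an isomorphism $PG(M)\cong PG(M_s)$, but one built step by step, not the identity.
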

The theorem directly above implies that $M_s$ and the non-symmetric Hrushovski construction have isomorphic geometries. We have already stated that the geometry of the non-symmetric structure is isomorphic to the geometry of the symmetric structure and thus so is the geometry of $M_s$.

Proving these results comprises the bulk of this work. We do not know whether the structure $M_s$ is bi-interpretable with the original structure or whether it admits a proper reduct with an isomorphic geometry, but it seems to be a possible candidate for being prime.

\subsection{Overview of the paper}

%We first give an ad-hoc description of a Fra\"iss\'e-Hrushovski type amalgamation class. There may be such amalgamation classes that our description omits, but this will serve for the purposes of this paper.

Consider a class of finite (or finitely generated) structures $\CC$ with a sub-modular function $d_0:\CC\to \N$ (called pre-dimension). Let $\mathfrak{M}$ be the class of embeddings of a structure from $\CC$ into another structure from $\CC$. Distinguish the, so called, \emph{strong} embeddings $f_0\in \mathfrak M$, $f_0: A\to B$ such that $d_0(A') \geq d_0(A)$ for any embedding $f:A'\to B$ with $f_0\subseteq f$.   If $\CC$ with the class of strong embeddings is an amalgamation class (so, in particular, forms a category), we say -- for the purposes of this paper only -- that $(\CC, d_0)$ is a Fra\"iss\'e-Hrushovski amalgamation class. 
%Further discussion of this may be found at the beginning of section \ref{Mspregeometry}.

%$Hrushovski's construction is the generic construction of such an amalgamation class $\CC$. 
The restrictions imposed by the function $d_0$ on the class of strong embeddings assure that $T$, the theory of the Fra\"iss\'e limit of $(\CC, d_0)$, is superstable and supports, naturally, an independence relation, which -- on abstract grounds -- must be non-forking. Strong minimality is then obtained by imposing further restrictions on the class $\CC$ (known as "the collapse" stage of the construction).

In the original construction introduced by Hrushovski, $\CC$ was the class of finite structures in the language of a ternary relation $R$, with the special property that every substructure has no less points than relations. The function $d_0$ is then given by the difference between the number of points and the number of relations in the structure.

\smallskip
The results described in the previous sub-section all follow from the fact that $M_s$ is, too, the Fra\"iss\'e limit of its own age with resepct to a Hrushovski-type pre-dimension function. We observe, however, that there is no (non-trivial) bound on the number of relations a finite sub-structure of $M_s$ may have. Thus, instead of counting relations, our dimension function counts sizes of \emph{cliques} (with some minor technical modifications). In fact, adapting this function to Hrushovski's construction, it coincides with the original pre-dimension function. The technical heart of the paper is, thus, in the proof that $M_s$ is the Fra\"iss\'e-Hrushovski limit of its own age, with respect to this revised pre-dimension function, $\dos$.

This is obtained in several steps. First, we prove that $\CC_s$, the class of $S$-structures with hereditarily non-negative pre-dimension, is the age of $M_s$ (conclusion of Section \ref{age}). We then proceed to show that $M_s$ is the Fra\"iss\'e limit of $(\CC_s,\dos)$ and show that $M_s$ is a proper reduct (Section \ref{MsFraisse}). In Section \ref{Mstheory} we reap the fruits, axiomatising the Fraisse limit and recognizing the level of quantifier elimination of the theory. In Section \ref{Mspregeometry} we exploit further these results to study the geometry of $M_s$.

\smallskip
In sections \ref{defs}, \ref{SStructs} and \ref{age} we develop the tool box needed for the later sections. Among these, two may be worth mentioning already at this stage.
Unlike in Hrushovski's original structure, amalgamation in $\CC_s$ is not unique. As can be easily verified, a clique of size 3 can split (i.e., have two cliques extending it, whose union does not form a clique), but there is a bound (in fact, 3) on the number of times a clique may split. Thus, given two cliques $C_1,C_2\in \CC_s$ extending a common sub-clique $C\in \CC_s$, there is no unique ``free amalgam'' of $C_1$ with $C_2$ over $C$. 
% should the amalgam be one clique or two? And what if $C_2$ is not one clique but two distinct cliques both extending $C$? To which should we append $C_1$ -- to the first? to the second, to neither? And how do we know how many times should a given clique $C$ be counted? 
The notion of \emph{multiplicity} of a clique, introduced in Section \ref{defs}, provides the technical means for handling this problem. In section \ref{SStructs} the appropriate amalgam of $\CC_s$ is developed.

A technically more involved problem is related to witnesses: from the definition of the relation $S$ it follows, that in the structure $M$, given an $S$-clique $C$, there are witnesses $x_C,y_C$ such that $M\models R(z,x_C,y_C)$ for all $z\in C$. We call $x_C, y_C$ the \emph{witnesses} for  $C$. Using multiplicity, we may assume that each clique has a unique set of witnesses. Thus, the number and size of cliques (in a finite $M_s$-substructure $A$) gives an estimate on the number of $R$-relations in $A$: add all witnesses to $A$, then each pair of witnesses, $x_C,y_C$ contributes $|C|$ relations, and therefore $d_0(A)$ can be estimated by $\sum (|C|-2)$. However, this estimate is readily verified as an upper bound only in case $x_C, y_C$ are not in $A$. In Section \ref{age} we develop the technique of \emph{outsourcing} which allows us to assume that this is indeed the case. 

%As stated before, in Section \ref{symmetric} is the treatment of the symmetric reduct of Hrushovski's original construction. 
In order to keep the text more readable some of the more technical proofs were differed to Section \ref{appendix},  an appendix to this paper. \\

% It still remains to verify that everything we have done transfers properly to the collapsed structure. This indeed seems to be the case, under the assumption that the collapse is lenient enough (large enough $\mu$ values for $\mu$ as defined in \cite{Hns}).
% 
% Finally, it is also worth noting that we may readily generalise $M_s$ to Hrushovski constructions for a relation of arity greater than 3. For a relation of arity $n$, we adapt the relation $S$ such that the cliques will be of size $n$ and above and that the generic dimension of a clique is $(n-1)$. Everything should then work quite similarly.
\noindent\textbf{Acknowledgement}
In writing the paper we consulted, beside Hrushovski's original paper, \cite{Hns}, also M. Ziegler's exposition of Hrushovski's construction, \cite{Ziegler}. We would also like to thank D. Evans, whose questions helped initiating this project, and whose comments and suggestions contributed to its progress. This paper is part of the second author's M.Sc. thesis.

\section{Definitions} \label{defs}
We refer to Hrushovski's non-collapsed countable structure for a symmetric (i.e., invariant under permutation of the variables), ternary relation $R$ as $M$\cite{Hns}. The age of $M$ is denoted $\CC$. We assume, without loss of generality, that $R$ is such that if  $(x,y,z)\in R$ then $x,y,z$ are distinct elements. We let $A,B,C\dots$ denote elements in $\CC$, $N,P\dots$ will denote possibly infinite sub-structures of $M$.  As in Hrushovski's construction $d_0(A)=|A|-r(A)$ where $r(A):=\{\{x,y,z\}\subseteq A: (x,y,z)\in R(A)\}$. We set $d(A,B)=\min\{d_0(C):A\subseteq C\subseteq A\cup B\}$ and $A\leqslant B$ if $A$ is \emph{strong} (or \emph{self-sufficient}) in $B$, i.e., $d_0(A)=d(A,B)$. 
%  The pre-dimension and dimension functions $d,d_0$ as well as the relation $\leqslant$ are as defined in the original paper (adjusted to a symmetric relation). The letters $A,B,C,D$ will usually denote finite structures, the letter $N$ will denote a possibly infinite structure.

\begin{remark*}
As $R$ is a symmetric irreflexive relation, we may refer to relations in $R$ as both ordered triples and as sets of size three. i.e. $(x,y,z)\in R \Leftrightarrow \set{x,y,z}\in R$.
\end{remark*}

\begin{definition} Let \[S(x_1,x_2,x_3):= \exists x_4,x_5 \bigwedge_{i\in\set{1,2,3}} R(x_i,x_4,x_5) \wedge \bigwedge_{i\neq j} x_i \neq x_j\]
Note that the relation $S$ is also symmetric.
\end{definition}

This work will be concerned with a reduct of the structure $M$ to the relation $S$ which we will call $M_s$. We begin by introducing terminology and several facts regarding the relation $S$ in the context of $R$-structures, and $M$ in particular.

\begin{notation}
Given an $R$-structure $A$, we denote the interpretation of the relation $R$ in $A$ by $R_A =\setarg{(a_1,a_2,a_3)}{a_1,a_2,a_3\in A\text{ and } A\models R(a_1,a_2,a_3)}$. When discussing the structure $M$, we simply denote the interpretation as $R$.
\end{notation}

\begin{definition}\label{C(A)forR}
Let $N$ be an $R$-structure and $A\subseteq N$ a finite substructure of $N$. In the context of the superstructure $N$:

We say $\bar{C} = (C,\set{b_1,b_2})$ is an \emph{$S$-clique in $A\subseteq N$} if $C\subseteq A$, $|C|\geq 3$, $b_1,b_2\in N\setminus C$ and $(c,b_1,b_2)\in R_N$ for any $c\in C$. We say $\bar{C}$ is \textbf{maximal} if also $(a,b_1,b_2)\notin R_N$ for any $a\in A\setminus C\cup\set{b_1,b_2}$. We say $b_1,b_2$ are \emph{the witnesses} of $\bar{C}$ and $C$ is \emph{the universe} of $\bar{C}$.

We say $C\subseteq A$ is the \emph{universe of an (maximal) $S$-clique in $A\subseteq N$} if there exist $b_1,b_2\in N\setminus C$ such that $(C,\set{b_1,b_2})$ is an (maximal) $S$-clique in $A\subseteq N$.

Define $C(A,N) = \setarg{\bar{C}}{\bar{C}=(C,\set{x^C,y^C}) \text{ is a maximal }S\text{-clique in }A\subseteq N}$

In cases where it is obvious what the superstructure $N$ is (or $A$ itself is the superstructure), we simply omit it and define: \emph{$S$-clique in $A$}, \emph{universe of an $S$-clique in $A$} and $C(A)$ without reference to $N$. Where applicable, the superstructure will be $M$.
\end{definition}

\begin{remark}
We state several conventions we will use throughout the text:

\begin{itemize}
\renewcommand{\labelitemi}{$-$}
\item
We will often abuse notation and think of a given maximal $S$-clique $\bar{C}\in C(A)$, where $A$ is a given $R$-structure, both as an ordered pair $\bar{C} = (C, \set{x,y})$ and as its universe, $C$. Ofttimes we omit the bar and simply refer to the clique by $C$, its universe.

\item
Unless otherwise stated, when referring to a clique we assume it is maximal.

\item
We call $C(A)$ the \emph{enhanced-$S$-diagram} of $A$, as it contains the information of exactly which triples of elements from $A$ are in the relation $S$ and how the relations are witnessed.

\item
We assume that $d(A)$ is an integer for any finite $R$-structure $A$. This implies that $C(A)$ is finite for any finite $R$-structure $A$.
\end{itemize}
\end{remark}

\begin{definition}
Let $C$ be a set and $x,y$ given elements. Define:
\\$RW(C,\set{x,y}) = \setarg{(c,x,y)}{c\in C}$

\smallskip
Let $A\subseteq N$ be an $R$-structure. Let $C\in C(A)$ with $C = (C,\set{x,y})$. Define:
\\$wit(C) = \set{x,y}$
\\$RW(C) = RW(C,wit(C))$

\smallskip
We call $RW(C)$ the set of \emph{generating relations} of the clique $C$ and say that a relation $r\in RW(C)$ is \emph{used by the clique $C$} or is a \emph{generator of $C$}.
\end{definition}

\begin{definition}
Let $A\subseteq N$ be an $R$-structure. Let $C_1,C_2\in C(A)$. If $RW(C_1)\cap RW(C_2)\neq \emptyset$ we say $C_1$ and $C_2$ \emph{share a relation} or \emph{use the same relation}. If $r\in RW(C_1)\cap RW(C_2)$ we say $r$ is \emph{shared} or \emph{used} by $C_1$ and $C_2$. The cliques $C_1$ and $C_2$ need not necessarily be maximal for the terminology to apply.
\end{definition}

\begin{obs}\label{sharingRelation}
Say $r = \set{a,b,c}$ is shared by $C_1 = (C_1,\set{x_1,y_1})$ and $C_2 = (C_2,\set{x_2,y_2})$, distinct maximal cliques in some $R$ structure $A\subseteq N$.

As $r\in RW(C_i)$, it must be that $wit(C_1)\cup wit(C_2)\subseteq r$. As $|r| = 3$ this means $wit(C_1)\cap wit(C_2) \neq \emptyset$. So it must be that $|wit(C_1)\cap wit(C_2)| \geq 1$, meaning $C_1$ and $C_2$ have a common witness. without loss of generality $x_1 = x_2 \overset{def}{=} x$. Since $C_1$ and $C_2$ are different maximal cliques, it must be that $wit(C_1)\neq wit(C_2)$ and so the elements $x,y_1,y_2$ are distinct.

So $\set{x,y_1,y_2}\subseteq r$ and so $r = \set{x,y_1,y_2}$. As $r$ is determined by the two cliques that share it, there is at most one relation shared by two given cliques and it is the union of their pairs of witnesses. Also, since $r\in RW(C_1)$ it must be that $y_2\in C_1$ and similarly $y_1\in C_2$.
\end{obs}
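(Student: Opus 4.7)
The plan is to unpack the definition of $RW(C_i)$ and run a small cardinality argument on the three-element relation $r$. First I would observe that, by definition, every element of $RW(C_i)$ has the form $\{c, x_i, y_i\}$ with $c \in C_i$, so the hypothesis that $r$ belongs to both $RW(C_1)$ and $RW(C_2)$ immediately gives $wit(C_1) \cup wit(C_2) \subseteq r$. Since $|r| = 3$ and each $|wit(C_i)| = 2$, inclusion--exclusion forces the two witness pairs to intersect, producing a common witness which, after renaming, I denote $x_1 = x_2 = x$.

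Next I would show that the remaining witnesses $y_1, y_2$ are distinct from $x$ and from each other. Distinctness from $x$ is automatic because each $wit(C_i)$ consists of two distinct elements. For $y_1 \neq y_2$ I would invoke the maximality assumption: if the two witness pairs coincided, then $C_1$ and $C_2$, being maximal $S$-cliques sharing the same pair of witnesses inside the same ambient structure, would both coincide with $\{z \in A : (z, x, y_1) \in R_N\}$, contradicting their distinctness. Consequently $\{x, y_1, y_2\}$ consists of three distinct elements sitting inside the three-element set $r$, so $r = \{x, y_1, y_2\}$. This equality simultaneously pins down $r$ from the witness data of the two cliques (giving uniqueness of a shared relation) and, by matching the general form $\{c, x, y_1\}$ of elements of $RW(C_1)$ against $r$, forces $c = y_2$, so $y_2 \in C_1$; the symmetric argument yields $y_1 \in C_2$.

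The argument has no real obstacle: it is essentially a pigeonhole on a $3$-element set, together with a one-line appeal to maximality. The only subtle point is the exclusion of $wit(C_1) = wit(C_2)$, where the assumption that both cliques are \emph{maximal} in a common ambient $R$-structure is genuinely used; without it, nothing would prevent two distinct subcliques with identical witness data.
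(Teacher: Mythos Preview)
Your argument is correct and follows essentially the same route as the paper's own reasoning: both extract $wit(C_1)\cup wit(C_2)\subseteq r$ from the shape of $RW(C_i)$, use $|r|=3$ to force a common witness, invoke maximality to rule out $wit(C_1)=wit(C_2)$, and read off $r=\{x,y_1,y_2\}$ together with $y_2\in C_1$, $y_1\in C_2$. If anything, you are slightly more explicit than the paper in justifying the maximality step, spelling out that equal witness pairs would force both cliques to equal $\{z\in A:(z,x,y_1)\in R_N\}$.
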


\begin{definition}
Let $A\subseteq N$ be $R$-structures. For a clique $(C,\set{x,y})$ (not necessarily maximal) in $A$, we say that $x$ (or $y$) is an external witness if $x\notin A$ (or $y\notin A$). If that is the case, we say $C$ is \emph{externally witnessed} or that its pair of witnesses is \emph{external}.
\end{definition}

\begin{obs}\label{addinExternalWitnesses}
Let $A\subseteq N$ be $R$-structures. Say $C\subseteq A$ is the universe of a clique (not necessarily maximal) in $A$ witnessed externally by some $\set{b_1,b_2}$. Then $d_0(b_1,b_2/A) \leq 2-|C|$. This is true because $RW(C,\set{b_1,b_2})\cap R_A = \emptyset$ and thus $r(b_1,b_2/A) \geq |RW(C,\set{b_1,b_2})| = |C|$, on the other hand $|\set{b_1,b_2}\setminus A| \leq 2$ and so we have $d_0(b_1,b_2/A) \leq 2-|C|$. In case exactly one of $b_1,b_2$ is external to $A$ we have $d_0(b_1,b_2/A) \leq 1-|C|$ by the same argument.

The observation holds in case $|C| < 3$ as well.
\end{obs}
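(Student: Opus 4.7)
The plan is to unwind the definition of the relative pre-dimension $d_0(b_1,b_2/A)$ and then bound its two ingredients separately: the number of new points and the number of new relations. By definition, $d_0(b_1,b_2/A) = |\{b_1,b_2\}\setminus A| - r(b_1,b_2/A)$, where $r(b_1,b_2/A)$ counts those $R$-relations in the combined structure that use at least one of $b_1,b_2$ and are not already in $R_A$. The task therefore reduces to showing that the first term is at most $2$ (or at most $1$ in the one-external-witness case), and that the second term is at least $|C|$.

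First I would bound the number of new points. Since $\{b_1,b_2\}$ has cardinality at most $2$, the trivial bound $|\{b_1,b_2\}\setminus A|\leq 2$ is immediate. If only one of the two witnesses lies outside $A$, this sharpens to $|\{b_1,b_2\}\setminus A|\leq 1$. This is where the split into the two cases in the statement comes from.

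Next I would identify an explicit family of new relations. Since $C$ is the universe of a clique (maximal or not) in $A$ witnessed by $\{b_1,b_2\}$, every triple in $RW(C,\{b_1,b_2\}) = \{(c,b_1,b_2) : c\in C\}$ lies in $R_N$, and $|RW(C,\{b_1,b_2\})|=|C|$ because the $c$ range over distinct elements. The point to verify is that none of these relations already sit in $R_A$. This follows from external witnessing: at least one of $b_1,b_2$ is outside $A$, so no triple of the form $(c,b_1,b_2)$ can be a relation on a triple from $A$ alone. Hence all $|C|$ elements of $RW(C,\{b_1,b_2\})$ contribute to $r(b_1,b_2/A)$, giving $r(b_1,b_2/A)\geq |C|$.

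Combining the two bounds yields $d_0(b_1,b_2/A)\leq 2 - |C|$ in general and $d_0(b_1,b_2/A)\leq 1-|C|$ when exactly one witness is external. I expect no real obstacle here; the only subtlety worth flagging is that the argument is just a counting bound and makes no use of maximality of the clique, which is why the observation goes through even when $|C|<3$ (the statement still makes formal sense and the inequality remains true; one simply gets a weaker bound on the pre-dimension).
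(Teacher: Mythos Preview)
Your argument is correct and is essentially identical to the paper's own justification embedded in the observation: bound $|\{b_1,b_2\}\setminus A|$ by $2$ (or $1$), note that external witnessing forces $RW(C,\{b_1,b_2\})\cap R_A=\emptyset$ so that $r(b_1,b_2/A)\ge |C|$, and subtract. There is nothing to add.
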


We introduce the notion of multiplicity. This will be a crucial notion once we define abstract $S$-structures. We also show that it is natural to discuss multiplicity in the setting of the relation $S$ in preparation for the definition of pure $S$-structures (with no given underlying $R$ structure).

\begin{definition} \label{multiplicity}
For $A$ an $R$-structure and for $C\subseteq A$ the universe of an $S$-clique (not necessarily maximal) in $A$ - define $m_A(C)$, the multiplicity of $C$ in $A$, to be the number of distinct pairs witnessing that $C$ is a clique in $A$. For $C$ not the universe of an $S$-clique (not necessarily maximal) define $m_A(C) = 0$. When the structure in discussion is $M$, we may simply write $m(C)$.
\end{definition}

\begin{lemma}\label{multiplicityMaximum} Say $C$ is the universe of an $S$-clique (not necessarily maximal) in $M$. Then:
\begin{enumerate}
\item If $|C| > 4$ then $m(C) = 1$
\item If $|C| = 4$ then $1\leq m(C) \leq 2$
\item If $|C| = 3$ then $1\leq m(C) \leq 3$
\end{enumerate}
\end{lemma}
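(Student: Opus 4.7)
The plan is to bound the multiplicity $m=m(C)$ by a single application of the constraint $d_0(X) \geq 0$ for finite substructures $X$ of $M$, using the substructure formed by $C$ together with all its witnessing pairs.

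First I would fix $m$ distinct witnessing pairs $P_1 = \set{x_1,y_1}, \dots, P_m=\set{x_m,y_m}$ of $C$ and let $W = \bigcup_{i=1}^m P_i$ and $A = C\cup W$. Two preliminary observations: (a) Since $R$ is irreflexive on triples, every $P_i$ is disjoint from $C$, so $|A| = |C|+|W|$. (b) Trivially $|W|\leq 2m$.

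Next I would count relations in $A$. For each $c\in C$ and each $i\leq m$, the triple $\set{c,x_i,y_i}$ is in $R_A$. I claim these $m|C|$ triples are pairwise distinct as $3$-element sets: if $\set{c,x_i,y_i}=\set{c',x_j,y_j}$ for some $c,c'\in C$, then since the witnesses lie outside $C$ we must have $c=c'$ and $\set{x_i,y_i}=\set{x_j,y_j}$, so $i=j$ by distinctness of the pairs. Hence $r(A) \geq m|C|$.

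Now from $d_0(A)\geq 0$ I get $|C| + |W| \geq m|C|$, and using $|W|\leq 2m$ this gives the master inequality
\[
m(|C|-2) \leq |C|.
\]
Specialising: for $|C|\geq 5$ this forces $m\leq |C|/(|C|-2) < 2$, hence $m=1$; for $|C|=4$ it gives $m\leq 2$; for $|C|=3$ it gives $m\leq 3$. Since $C$ is the universe of a clique there is at least one witnessing pair, so $m\geq 1$ in all cases. The bound $m\geq 1$ together with these upper bounds is exactly the statement of the three clauses.

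I do not expect any real obstacle here; the only thing to be careful about is (c) the distinctness count of the $m|C|$ generating triples, which relies on irreflexivity of $R$ to push all witnesses outside $C$, and (b) the trivial bound $|W|\leq 2m$, which is enough despite being wasteful when pairs share elements (for those sharper configurations the bound $|C|+|W|\geq m|C|$ is in fact even more restrictive, so nothing is lost).
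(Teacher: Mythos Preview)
Your argument is correct and is essentially the same as the paper's: both take $A=C\cup\{\text{all witnessing pairs}\}$, verify that the $m|C|$ generating triples are pairwise distinct (the paper phrases this via Observation~\ref{sharingRelation}, you argue it directly), and then read off $m(|C|-2)\le |C|$ from $d_0(A)\ge 0$. The only cosmetic difference is that the paper cites the definition of an $S$-clique for $P_i\cap C=\emptyset$ while you derive it from irreflexivity of $R$; both are valid.
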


\begin{proof} Since $C$ is the universe of an S-clique, by definition $m(C)\geq 1$. Let $\otn{p}{m(C)}$ be distinct pairs of witnesses for $C$. Note that for any $1\leq i\leq m(C)$ we have $p_i\cap C = \emptyset$ by definition of $S$. It cannot be that $RW(C,p_i)\cap RW(C,p_j) \neq \emptyset$ for $i\neq j$, for then by Observation \ref{sharingRelation}, $p_i\cap C \neq \emptyset$, which we said cannot happen. So by iterating \ref{addinExternalWitnesses} we have:
\\$0\leq d_0(C,\otn{p}{m(C)})\leq d_0(C) + d_0(\otn{p}{m(C)}/C) \leq |C| - m(C)\cdot (|C|-2)$ 
\\So $m(C)\cdot (|C|-2) \leq |C| $  and the lemma is evident.
\end{proof}

The proof of the following fact is very tedious and uninteresting. It can be found in the appendix (Corollary \ref{morethanfiveclique}), but it is highly advised to avoid it. We will use it nonetheless.

\begin{fact}\label{noUnwitnessedCliques}
Let $A$ be an $R$-structure with $\emptyset \leqslant A$. If $C\subseteq A$ with $|C| > 4$ and $S(c_1,c_2,c_3)$ for all $c_1,c_2,c_3\in C$ distinct, then $C$ is the universe of a clique (not necessarily maximal).
\end{fact}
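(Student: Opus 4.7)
The plan is to proceed by contradiction: suppose $n := |C| \geq 5$, every triple of distinct elements of $C$ satisfies $S$ in $A$, but no pair $\{x,y\} \subseteq A$ witnesses all of $C$. I will exhibit $B \subseteq A$ with $d_0(B) < 0$, contradicting $\emptyset \leqslant A$. For each $T \in \binom{C}{3}$ fix a witness pair $p_T \subseteq A$; let $\mathcal{P}$ be the set of distinct such pairs and $C_p := \{c \in C : R(c,p)\}$; then $|C_p| \leq n-1$ under the assumption, while covering gives $\sum_{p \in \mathcal{P}}\binom{|C_p|}{3} \geq \binom{n}{3}$. Partition $\mathcal{P} = \mathcal{P}_0 \sqcup \mathcal{P}_1 \sqcup \mathcal{P}_2$ according to $i = |p \cap C|$ with $k_i := |\mathcal{P}_i|$, and set $B := C \cup \bigcup \mathcal{P}$. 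Then $|B| \leq n + 2k_0 + k_1$, and using Observation~\ref{sharingRelation} (each relation $\{c\}\cup p$ in $B$ admits at most $|p\cap C|+1$ representations as such),
\[
r(B) \geq \sum_{p\in\mathcal{P}_0}|C_p| + \tfrac{1}{2}\sum_{p\in\mathcal{P}_1}|C_p| + \tfrac{1}{3}\sum_{p\in\mathcal{P}_2}|C_p|.
\]

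I would then induct on $n$. For $n \geq 6$, the IH applied to each $C \setminus \{c\}$ (which inherits the hypothesis and has $n-1 \geq 5$ elements) gives a common witness pair $p_c$; either $c \in C_{p_c}$ and we contradict the assumption, or $C_{p_c} = C \setminus \{c\}$ exactly and the $p_c$ are pairwise distinct. Since $C_p \cap p = \emptyset$, the equality $|C_{p_c}| = n-1$ forces $|p_c \cap C| \leq 1$, hence $k_2 = 0$. Then $|B| \geq r(B)$ with $k_0 + k_1 = n$ simplifies to $(n-3)k_0 + \tfrac{n-3}{2}k_1 \leq n$, which has no nonnegative solution for $n \geq 6$ since the smaller coefficient $(n-3)/2$ is already at least $3/2 > 1$.

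The base case $n = 5$ is more delicate, because the uniform estimate admits borderline solutions; the most noteworthy is the configuration of five pairs $p_c = \{c, w_c\}$ with $C_{p_c} = C \setminus \{c\}$. For this one computes directly: with $s$ the number of distinct $w_c$'s and $\mu(w) := |\{c : w_c = w\}|$, $|B| = 5 + s$ and $r(B) = 20 - \sum_w \binom{\mu(w)}{2}$, so $d_0(B) = s + \sum_w \binom{\mu(w)}{2} - 15$; a short check over $s \in \{1, \dots, 5\}$ with $\sum_w \mu(w) = 5$ shows the maximum of $s + \sum_w \binom{\mu(w)}{2}$ to be $11 < 15$, whence $d_0(B) < 0$. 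A similar direct count disposes of the remaining borderline configurations, distinguished by the distribution of $(|C_p|, |p \cap C|)$ across $\mathcal{P}$ and by a covering-with-overlap estimate (two 4-element $C_p$'s in a 5-set necessarily share a triple, so a mere sum-of-binomial count is not tight).

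The main obstacle is exactly the base case: the worst-case label-sharing bound is too loose to preclude a handful of configurations for $n = 5$, so one has to refine the count of distinct relations by hand, exploiting how witness pairs may share external elements. This bookkeeping is the source of the ``tedious'' character of the proof, which the authors therefore defer to the appendix.
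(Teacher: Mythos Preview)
Your plan is sound and follows the same overall architecture as the paper's proof in the appendix: induction on $n=|C|$ with base case $n=5$ handled by exhaustion. The paper likewise reduces everything to a laborious five-point case (Lemma~\ref{fiveclique}) and then closes the induction (Corollary~\ref{morethanfiveclique}).

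Your induction step, however, is genuinely different. You take \emph{all} $n$ pairs $p_c$ coming from the inductive hypothesis and run a double-counting argument on $B=C\cup\bigcup_c p_c$, obtaining the inequality $(n-3)k_0+\tfrac{n-3}{2}k_1\le n$ with $k_0+k_1=n$, which indeed has no solution for $n\ge 6$. The paper instead uses only \emph{two} pairs, witnessing $C\setminus\{a_1\}$ and $C\setminus\{a_2\}$; the resulting set $A'$ has $d_0(A')\le 5-i\le 0$, hence $d_0(A')=0$ and crucially $A'\leqslant N$, and then self-sufficiency of $A'$ is used to force the witnesses of any triple containing both $a_1$ and $a_2$ inside $A'$, giving the contradiction. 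Your route avoids invoking self-sufficiency and is a cleaner pure count; the paper's route is sharper (it already gives a contradiction from two pairs, not $n$) and illustrates a technique used elsewhere in the paper.

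For $n=5$ your diagnosis is accurate: the uniform bound is tight and one must rule out a finite list of configurations by hand. You carry out one of these in detail and wave at the rest; the paper's Lemma~\ref{fiveclique} is precisely this exhaustion, organised slightly differently (first eliminating fully internal witness pairs, then stratifying by the maximal size of $C_p$). There is no missing idea in your sketch, only missing bookkeeping---which, as you note, is exactly why the paper banishes the argument to the appendix.
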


We will now consider and define structures in the setting of the relation $S$ without the relation $R$. We begin by showing that $S$ \emph{knows} the multiplicity of subsets of $M$.

\begin{prop}\label{geqMultiplicityPredicate} Assume $m(C)<\infty$ for any $C\subseteq M$, then the predicate "$C$ is an $S$-clique of size k with $m(C) \geq n$" can be written in $M$ using only the relation $S$.
\end{prop}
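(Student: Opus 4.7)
The plan is to construct, by induction on $k$ (downward) and then on $n$ (upward), a formula in the language of $S$ defining the desired predicate. By Lemma~\ref{multiplicityMaximum}, multiplicity is bounded: $m(C)\leq 3$ for $|C|=3$, $m(C)\leq 2$ for $|C|=4$, and $m(C)\leq 1$ for $|C|\geq 5$. So apart from the trivial case $n=0$ (expressible as distinctness of $k$ points) and the uniformly false cases where $n$ exceeds these bounds, only finitely many situations require genuine work.

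For the base case $k\geq 5$, $n=1$: since $\emptyset\leqslant M$, Fact~\ref{noUnwitnessedCliques} tells us that every $C\subseteq M$ of size $>4$ whose triples all lie in $S$ is already the universe of a clique. Combined with the multiplicity bound, $m(C)\geq 1$ is equivalent to the conjunction of all $S$-triples on $C$, which is plainly a formula in $S$. For $k\in\set{3,4}$ and $n=1$: I claim $m(C)\geq 1$ iff some $y\notin C$ extends $C$ to a $(k+1)$-clique (the relevant formula being already available from the previous step). The backward direction is immediate since any witness pair for $C\cup\set{y}$ also witnesses $C$. The forward direction uses the Fra\"iss\'e property of $M$: given a witness pair $\set{u,v}$ for $C$, the extension of the self-sufficient closure of $C\cup\set{u,v}$ by a fresh point $y$ together with the single relation $R(y,u,v)$ is pre-dimension preserving and hence realised in $M$.

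For $n\geq 2$ (so $k\in\set{3,4}$): I claim $m(C)\geq n$ iff there exist pairwise distinct $y_1,\dots,y_n\notin C$ such that each $C\cup\set{y_i}$ is a $(k+1)$-clique (formula from the previous step), while for every $i\neq j$ the set $C\cup\set{y_i,y_j}$ is \emph{not} a $(k+2)$-clique. For the backward direction, each $C\cup\set{y_i}$ is witnessed by some pair $P_i$ which also witnesses $C$; if two of the $P_i$ coincided, then both corresponding $y_i,y_j$ would lie in their common $R$-neighbourhood, forcing $C\cup\set{y_i,y_j}$ to be a $(k+2)$-clique and contradicting the second condition, so $m(C)\geq n$. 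For the forward direction, given $n$ distinct witness pairs $P_1,\dots,P_n$ for $C$, apply the Fra\"iss\'e property iteratively to realise each $y_i$ as a generic extension whose only new $R$-relation to the existing structure is the one to $P_i$.

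The main obstacle is verifying the failure of $(k+2)$-cliques in this last step: one must show that no pair $\set{u,v}$ can witness a triple $(c,y_i,y_j)$ with $c\in C$. This is an Observation~\ref{sharingRelation}-style check — any such witness would have to be a pair supplying $R$-relations to both $y_i$ and $y_j$, forcing $\set{u,v}=P_i=P_j$ by the genericity of the $y_i$, contradicting $P_i\neq P_j$. Once this is established, the induction terminates after finitely many steps and yields the desired formula.
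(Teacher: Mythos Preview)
Your argument is correct and rests on the same three ingredients as the paper's proof: Fact~\ref{noUnwitnessedCliques} to pass from ``all triples satisfy $S$'' to the existence of a witnessing pair once the set has size at least $5$; Lemma~\ref{multiplicityMaximum} to bound the cases; and the Fra\"iss\'e property of $M$ to realise the auxiliary points generically. The verification that no $(c,y_i,y_j)$ can lie in $S$ is exactly the case analysis you sketch (any putative witnessing pair either violates strongness of the generic extension, or is forced to equal both $P_i$ and $P_j$).

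The one structural difference from the paper is that you adjoin \emph{single} auxiliary points $y_i$ and therefore need the downward induction on $k$ to bootstrap the notion of ``$(k{+}1)$-clique'' through $k=4$ down to $k=3$. The paper sidesteps this induction entirely by adjoining \emph{pairs} $\bar{x}_i$ instead: then already for $k=3$ the extended set $\{v_1,\ldots,v_k\}\cup\bar{x}_i$ has size $k+2\geq 5$, so Fact~\ref{noUnwitnessedCliques} applies directly and a single uniform formula
\[
\varphi_{k,n}(\bar v)\;:=\;\exists \bar{x}_1,\ldots,\bar{x}_n\Big(\psi_k(\bar v)\wedge\bigwedge_i\psi_{k+2}(\bar v,\bar{x}_i)\wedge\bigwedge_{i<j}\neg\psi_{k+4}(\bar v,\bar{x}_i,\bar{x}_j)\Big)
\]
works for all $k\geq 3$ and $n\geq 1$ at once. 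Your route reaches the same destination with a little more bookkeeping; the paper's trick of doubling the auxiliary points buys uniformity and a shorter proof.
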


\begin{proof}
In $M$, a clique has multiplicity greater than $n$ if and only if it has $n+1$ extensions, such that the union of any two of these extensions is not a clique. We will shortly prove this fact, and use it in order to indicate the multiplicity of a clique in $M$ with a first order formula.

\medskip
Denote \[\psi_k(\otn{v}{k}):= \bigwedge_{1\leq i<j<l\leq k} S(v_i,v_j,v_l)\land \bigwedge_{1\leq i<j\leq k} v_i\neq v_j\]
\\$M \models\psi_{|\bar{a}|}(\bar{a})$ means that $\bar{a}$ as a set is a clique of size $|\bar{a}|$ under the relation $S$.

For $k\geq 3$, $n\geq 1$, consider the formula $\varphi_{k,n}(\otn{v}{k}):=$
\[\exists \otn{\bar{x}}{n} \left( \psi_k(\otn{v}{k})
\land\bigwedge_{1\leq i \leq n}\psi_{k+2}(\otn{v}{k},\bar{x_i})
\land\bigwedge_{1\leq i< j\leq n}\neg\psi_{k+4}(\otn{v}{k},\bar{x_i},\bar{x_j}) \right)\]

Where each $\bar{x}_i$ is a pair of distinct variables. We take $\bar{x}$ to be a tuple rather than a single variable in order to use Fact \ref{noUnwitnessedCliques} with $\set{\otn{v}{k},\bar{x}_i}$.

This formula will express the fact that its arguments are a clique of size $k$ with multiplicity at least $n$.

\noindent\textbf{Claim.} Let $\bar{a}\in M^k$. Then $M\models \varphi_{k,n}(\bar{a}) \iff m(\bar{a}) \geq n$
\begin{proof}
\underline{$\Leftarrow$}: Say $m(\bar{a}) = m \geq n$, then there are exactly $\otn{p}{m}$ distinct pairs of witnesses for $\bar{a}$. By saturation of $M$ there are pairs of elements $\otn{\bar{x}}{n} = (x^1_1,x^1_2),...,(x^n_1,x^n_2)$ such that
$M\models R(x^i_t,p_i)\land\bigwedge_{\substack{j\neq i\\1\leq j\leq m}}\neg R(x^i_t,p_j)$ for $1\leq i\leq n$, $t\in\set{1,2}$. Fix $\otn{\bar{x}}{n}$.

So $M\models \psi_k(\bar{a}) \land\bigwedge_{1\leq i \leq n}\psi_{k+2}(\bar{a},\bar{x_i})$. If $M\models \psi_{k+4}(\bar{a},\bar{x}_i,\bar{x}_j)$ for some $1\leq i<j\leq n$ then by Fact \ref{noUnwitnessedCliques}, there is a pair $p\subseteq M$ witnessing $\bar{a},\bar{x}_i,\bar{x}_j$. Then in particular $p$ witnesses $\bar{a}$ and therefore $p=p_r$ for some $1\leq r\leq m$. Without loss of generality $r\neq j$ and so this is a contradiction to our choice of $\bar{x}_j$. Thus also $M\models\bigwedge_{\substack{j\neq i\\1\leq j\leq m}}\neg\psi_{k+4}(\bar{a},\bar{x}_i,\bar{x}_j)$ and $M\models \varphi_{k,n}(\bar{a})$

\underline{$\Rightarrow$}: Say $M\models \varphi_{k,n}(\bar{a})$. Fix some $\otn{\bar{x}}{n}\in M^2$ as guaranteed in $\varphi_{k,n}(\bar{a})$. By Fact \ref{noUnwitnessedCliques}, there are pairs $\otn{p}{n}$ witnessing $(\bar{a},\bar{x}_1),...,(\bar{a},\bar{x}_n)$ respectively. It cannot be that $p_i = p_j$ for $1\leq i<j\leq n$, because then we would have $M\models\psi_{k+4}(\bar{a},\bar{x}_i,\bar{x}_j)$. So the pairs $\otn{p}{n}$ are distinct. Since each pair $p_i$ is a pair witnessing $\bar{a}$, $m(\bar{a}) \geq n$.
\end{proof}

So the claim shows that the proposition is true for $k\geq 3$ and $n\geq 1$. But the rest is trivial, as a set of size $k<3$ always has multiplicity $0$ and any set at all has multiplicity equal or greater to zero.
\end{proof}

\begin{Cor}
The predicate "$C$ is an $S$-clique of size $k$ with $m(C) = n$" can be written in $M$ using only the relation $S$.
\end{Cor}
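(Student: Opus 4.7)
The plan is to reduce this directly to Proposition~\ref{geqMultiplicityPredicate}. That proposition produces, for each $k\geq 3$ and $n\geq 1$, a formula $\varphi_{k,n}(\bar v)$ in the language $\{S\}$ expressing ``$\bar v$ is an $S$-clique of size $k$ with $m(\bar v)\geq n$.'' The obvious move is then to define
\[
\varphi'_{k,n}(\bar v) \; := \; \varphi_{k,n}(\bar v) \,\wedge\, \neg\,\varphi_{k,n+1}(\bar v),
\]
which literally asserts that $\bar v$ is an $S$-clique of size $k$ with $m(\bar v)\geq n$ but not $m(\bar v)\geq n+1$. Together with the fact that $m(\bar v)$ takes values in $\N$, this is equivalent to $m(\bar v)=n$.

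Next I would check that $\varphi'_{k,n}$ does what it should for every relevant pair $(k,n)$. Lemma~\ref{multiplicityMaximum} gives a uniform finite upper bound on the possible values of $m(\bar v)$ in $M$ (namely $1$ for $k>4$, $2$ for $k=4$, and $3$ for $k=3$), so whenever $n$ strictly exceeds this bound the formula $\varphi_{k,n}$ is identically false in $M$, and hence so is $\varphi'_{k,n}$ — which is correct, since there the equality $m(\bar v)=n$ is impossible. For $n$ within the allowed range the equivalence $m(\bar v)=n \iff \varphi'_{k,n}(\bar v)$ is immediate from the definition of $\varphi_{k,n}$.

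There is essentially no obstacle: all the real work is already encapsulated in Proposition~\ref{geqMultiplicityPredicate}. The only subtlety worth spelling out is that the statement of the corollary conditions on $\bar v$ already being an $S$-clique of size $k$, so the convention $m(C)=0$ for non-cliques from Definition~\ref{multiplicity} does not cause any ambiguity: the conjunct $\varphi_{k,n}(\bar v)$ (via its first clause $\psi_k(\bar v)$) already forces $\bar v$ to enumerate a $k$-element set whose triples all satisfy $S$, so within the scope of $\varphi'_{k,n}$ the multiplicity is automatically at least $1$.
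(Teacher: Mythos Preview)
Your proposal is correct and follows essentially the same route as the paper: invoke Lemma~\ref{multiplicityMaximum} to ensure $m(C)<\infty$ so that Proposition~\ref{geqMultiplicityPredicate} applies, and then define the desired predicate as $\varphi_{k,n}\wedge\neg\varphi_{k,n+1}$. The additional remarks you make about the boundary cases are accurate but not needed for the argument.
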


\begin{proof}
Lemma \ref{multiplicityMaximum} assures us the for any $C\subseteq M$ we have $m(C)<\infty$ and so by Proposition \ref{geqMultiplicityPredicate} we can write  $\varphi_{k,n}(C) :=$ "$C$ is an $S$-clique of size k with $m(C) \geq n$" in $M$ using only $S$. Now the predicate we seek is simply $\phi_{k,n}(C) := \varphi_{k,n}(C)\wedge \neg\varphi_{k,n+1}(C)$
\end{proof}

\begin{remark}\label{languageExpansionRemark}
We may now definably expand any $R$ structure $A$ with $\emptyset\leqslant A$ to a two-sorted structure in the language $\calL'$ with relation symbols $\set{R,S}$, n-ary function symbols $\setarg{m_n}{n\in \N}$ and constants $\set{0,1,2,3}$. The new sort will be the set of new constants and the function $m_n: M^n\rightarrow \set{0,1,2,3}$ will be the multiplicity function for sets of size $n$. The reduct $M_s$ which we spoke of is infact the reduct of this definable expansion of $M$, where we forget the relation $R$. As we have just shown, the functions are definable from $S$ and so we have not infact added any new data to a reduct where we only remember $S$.

This technicality cannot be skipped, as we will later (when discussing the theory of $M_s$) need the dimension of finite pure $S$-structures to be a first order property. For multiplicity to be definable using the first order formula presented above, a structure must be sufficiently saturated, which will not always be the case. In order to allow for readability, we will not use this notation, and instead simply acknowledge that we may use multiplicity in a first order setting.
\end{remark}

To better understand the following definition, it is best to think of the set $P$ as $C(A)$.

\begin{definition}
Let $A$ be a set. We say $\mult$ is a \emph{pre-multiplicity} function if $\mult:P\rightarrow \set{1,2,3}$ for some $P\subseteq [A]^{\geq 3}$, where we remind that $[A]^{\geq 3} = \setarg{X\subseteq A}{|X| \geq 3}$.

\medskip
We define $m:P(A)\rightarrow \N\cup\set{\infty}$, the \emph{associated multiplicity function} of $\mult$, as follows:
\\- $m(C) = 0$ for all $C\subseteq A$ with $|C| < 3$
\\- For $C\subseteq A$ with $|C|\geq 3$ let $\set{C_i}_{i\in I}$ distinct be all the sets in $P$ with $C\subseteq C_i$, then $m(C) = \sum_{i=I} \mult(C_i)$ (where if the sum is infinite, $m(C) = \infty$) 

\medskip
We say $\mult$ is a \emph{valid} pre-multiplicity function if:
\\- For $C\subseteq A$ with $|C|\geq 3$, as in Lemma \ref{multiplicityMaximum}, $m(C)\leq 1 + \frac{2}{(|C|-2)}$.
\end{definition}

\begin{remark}\label{UniqueDefineSstructure}
A valid finite pre-multiplicity function $\mult_\mathcal{A}$ can be recovered from its associated multiplicity function $m_\mathcal{A}$, and $C(\mathcal{A})$ may be recovered from $\mult_\mathcal{A}$. So in order to uniquely define a finite $S$-structure we only need its pre-multiplicity or multiplicity function.
\end{remark}

\begin{definition} An $S$-structure $\mathcal{A}$ is the following data: A set of points $A$ with $C(\mathcal{A})\subseteq [A]^{\geq 3}$ the set of declared maximal cliques in $A$ (the cliques here are merely sets, no witnesses specified), and a \emph{valid} pre-multiplicity function $\mult_\mathcal{A}:C(\mathcal{A})\rightarrow \set{1,2,3}$.

\medskip
We define $m_\mathcal{A}:P(A)\rightarrow \set{0,1,2,3}$, the multiplicity function of $\mathcal{A}$, to be the associated multiplicity function of $\mult_\mathcal{A}$ 
\end{definition}

\begin{definition}
Let $\mathcal{A}$ and $\mathcal{B}$ be $S$-structures. We say $\mathcal{A}$ is a substructure of $\mathcal{B}$ and denote $\mathcal{A}\subseteq \mathcal{B}$ if:
\begin{itemize}
\itemlabel{$-$}
\item $A\subseteq B$
\item $C(\mathcal{A}) = \setarg{C\cap A}{C\in C(\mathcal{B})}$
\item (Additivity) For any $C\in C(\mathcal{A})$
\[\mult_\mathcal{A}(C) = \sum_{\substack{C'\in C(\mathcal{B})\\ C'\cap A = C}}\mult_\mathcal{B}(C')\]
\end{itemize}

For $A'\subseteq A$, we define an associated $S$-structure $\mathcal{A'}\subseteq \mathcal{A}$ with universe $A'$ in the natural way.
\end{definition}

\begin{obs*}
Let $\mathcal{A}\subseteq \mathcal{B}$ be finite where $A$ is the universe of $\mathcal{A}$. Then $m_\mathcal{A}(C) = m_\mathcal{B}(C)$ for any $C\subseteq A$. Then by Remark \ref{UniqueDefineSstructure} $\mathcal{A}\subseteq \mathcal{B}$ iff $m_\mathcal{A} = (m_\mathcal{B}\upharpoonright P(A))$.

We say the $S$-structures $\set{\mathcal{A}}_{i\in I}$ agree on the multiplicity, if whenever $C$ is a subset of the universe of $\mathcal{A}_i,\mathcal{A}_j$, then $m_{\mathcal{A}_i}(C) = m_{\mathcal{A}_j}(C)$. In case all structures in discussion agree on the multiplicity function, we may use the notation $m$ with no subscript.
\end{obs*}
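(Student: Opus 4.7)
The plan is to prove the pointwise equality $m_\mathcal{A}(C) = m_\mathcal{B}(C)$ for every $C \subseteq A$ directly from the three clauses defining $\mathcal{A} \subseteq \mathcal{B}$, and then deduce the biconditional from that equality together with Remark \ref{UniqueDefineSstructure}.

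I would split on the size of $C$. When $|C| < 3$ both sides are $0$ by the first clause in the definition of the associated multiplicity function, so there is nothing to do. Assume $|C| \geq 3$ and expand
\[
m_\mathcal{A}(C) \;=\; \sum_{\substack{D \in C(\mathcal{A}) \\ C \subseteq D}} \mult_\mathcal{A}(D).
\]
Apply additivity to each $\mult_\mathcal{A}(D)$, rewriting it as $\sum_{D' \in C(\mathcal{B}),\ D' \cap A = D} \mult_\mathcal{B}(D')$. This presents $m_\mathcal{A}(C)$ as a double sum over pairs $(D, D')$ with $D \in C(\mathcal{A})$, $D' \in C(\mathcal{B})$, $D' \cap A = D$, and $C \subseteq D$.

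The main step is re-indexing this double sum by $D'$ alone. Using the second clause $C(\mathcal{A}) = \{D' \cap A : D' \in C(\mathcal{B})\}$, each $D' \in C(\mathcal{B})$ contributes at most one pair, namely with $D := D' \cap A$. Since $C \subseteq A$, the constraint $C \subseteq D' \cap A$ collapses to $C \subseteq D'$; and the condition $D' \cap A \in C(\mathcal{A})$ (that is, $|D' \cap A| \geq 3$) is automatic from $C \subseteq D' \cap A$ and $|C| \geq 3$. Hence the sum simplifies to $\sum_{D' \in C(\mathcal{B}),\ C \subseteq D'} \mult_\mathcal{B}(D') = m_\mathcal{B}(C)$, finishing the first claim.

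For the biconditional, the forward direction is exactly the equality just proved. For the converse, assume $m_\mathcal{A} = m_\mathcal{B} \upharpoonright P(A)$ and form the natural restriction $\mathcal{A}'$ of $\mathcal{B}$ to $A$ by taking $C(\mathcal{A}') := \{D' \cap A : D' \in C(\mathcal{B}),\ |D' \cap A| \geq 3\}$ together with $\mult_{\mathcal{A}'}$ defined by the additivity formula. By construction $\mathcal{A}' \subseteq \mathcal{B}$, so the first part gives $m_{\mathcal{A}'} = m_\mathcal{B} \upharpoonright P(A) = m_\mathcal{A}$, and Remark \ref{UniqueDefineSstructure} (a finite $S$-structure is determined by its multiplicity function) yields $\mathcal{A} = \mathcal{A}'$, hence $\mathcal{A} \subseteq \mathcal{B}$. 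I expect no real obstacle; the only step that needs attention is the re-indexing of the double sum, but the constraint $|C| \geq 3$ eliminates all degenerate terms automatically.
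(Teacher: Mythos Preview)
Your argument is correct. The paper states this observation without proof, treating it as a routine verification; your write-up supplies exactly the details the authors omit, and the re-indexing step is handled carefully (in particular, your use of $|C|\geq 3$ to guarantee $D'\cap A\in C(\mathcal{A})$ is the one point that actually needs checking, and you check it).
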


\begin{definition} Let $\mathcal{A}\subseteq \mathcal{N}$ be $S$-structures with $\mathcal{A}$ finite. Define:
\\$\dos(\mathcal{A}) = |A| - \sum_{C\in C(\mathcal{A})}\mult_{\mathcal{A}}(C)\cdot (|C|-2)$.
\\$\ds(\mathcal{A},\mathcal{N}) = \min\setarg{\dos(\mathcal{D})}{\mathcal{A}\subseteq \mathcal{D} \subseteq \mathcal{N}\text{, }\mathcal{D}\text{ finite}}$ 
\\$\mathcal{A} \leqs \mathcal{N}$ if $\dos(\mathcal{A}) = \ds(\mathcal{A},\mathcal{N})$. We say $\mathcal{A}$ is \emph{strong} or \emph{self-sufficient} in $\mathcal{N}$.
\medskip
\\We expand the definition to infinite $\mathcal{A}$ and say $\mathcal{A}\leqs\mathcal{N}$ if for any $\mathcal{D}\subseteq\mathcal{A}$ finite, $\mathcal{D}\leqs\mathcal{A}$ implies $\mathcal{D}\leqs\mathcal{N}$
\end{definition}

\begin{remark*}
For any structure $\mathcal{N}$ appearing throughout the text, we assume that $d_s(\emptyset, \mathcal{N})$ exists.
\end{remark*}

\begin{definition}
Let $\mathcal{A}$ and $\mathcal{N}$ be $S$-structures with universes $A$ and $N$, respectively. Let $f:A\rightarrow N$. Denote $A'=f[A]\subseteq N$ and denote $\mathcal{A}'$ the naturally defined $S$-structure on the set $A'$ as a substructure of $\mathcal{N}$. We say $f$ is an embedding of $\mathcal{A}$ into $\mathcal{N}$ if:
\begin{enumerate}
\item $f$ is injective.
\item $C(\mathcal{A}') = \setarg{f[C]}{C\in C(\mathcal{A})}$
\item $m_{\mathcal{A}'}(f[C]) = m_\mathcal{A}(C)$ for any $C\subseteq A$.
\end{enumerate}
\end{definition}

\begin{definition}
Let $\mathcal{A}$ and $\mathcal{B}$ be $S$-structures with universes $A$ and $B$, respectively. Let $f:A\rightarrow B$. We say $f$ is an isomorphism of $S$-structures between $\mathcal{A}$ and $\mathcal{B}$ if $f$ is a bijective embedding of $\mathcal{A}$ into $\mathcal{B}$.
\end{definition}

\begin{definition}
Let $\mathcal{A}$ and $\mathcal{N}$ be two $S$-structures and $f$ an embedding of $\calA$ into $\calN$. Denote $f(\calA)\subseteq \calN$ the image of $\calA$ under $f$. We say $f$ is a \emph{strong embedding} if $f(\mathcal{A})\leqs \mathcal{N}$.
\end{definition}

We now link abstract $S$-structures and $R$-structures that give rise to an $S$-structure and observe several facts that will be useful later.

\begin{definition}
Let $A\subseteq N$ be an $R$-structure. We define its associated $S$-structure $\mathcal{A}$ with respect to $N$ to be the structure with:
\begin{itemize}
\itemlabel{$-$}
\item
The universe $A$
\item
$C(\mathcal{A}) = \setarg{C\subseteq A}{C\text{ is the universe of some clique in }C(A)}$
\item
The multiplicity function $m_\mathcal{A}$ is defined to be the restriction of $m_N$ (as defined in Definition \ref{multiplicity}) to $P(A)$.
\end{itemize}

\medskip
In most cases we will assume implicitly that $N$ is known and so we will speak of the $S$-structure associated with $A$.
\end{definition}

\begin{remark*}
In the above definition, instead of defining a multiplicity function, we could have equally defined a pre-multiplicity function on $C(\mathcal{A})$ in the natural way: $\mult_{\mathcal{A}}(C) = |\setarg{\bar{C}\in C(A)}{C\text{ is the universe of }\bar{C}}|$

The multiplicity function associated with this pre-multiplicity function is the same as the one defined above
\end{remark*}

\begin{remark}
We leave as an exercise to the reader the fact that if $A\subseteq B$ are $R$-structures with associated $S$-structures $\mathcal{A}$ and $\mathcal{B}$ respectively, then $\mathcal{A}\subseteq \mathcal{B}$.
\end{remark}

\begin{definition}\label{bijectionFromRtoS}
Say $A$ is an $R$-structure with associated $S$-structure $\mathcal{A}$. 
Let $(\otn{\bar{C}}{n})$ be an enumeration of $C(A)$. Consider now the vector of universes $\mathcal{U} = (\otn{C}{n})$ where $C_i$ is the universe of $\bar{C}_i$. The universe $C_i$ appears in $\mathcal{U}$ exactly $\mult_\mathcal{A}(C_i)$ times for any $1\leq i\leq n$. We call an enumeration such as $\mathcal{U}$ a \emph{full-enumeration of $C(\mathcal{A})$}.

For a general $S$-structure $\mathcal{A}$, we say a vector of universes $\mathcal{U} = (\otn{C}{n})$ is a full-enumeration of $K\subseteq C(\mathcal{A})$ if it only contains elements from $K$ and each $C\in K$ appears in $\mathcal{U}$ exactly $\mult_\mathcal{A}(C)$ times.
\end{definition}

A full-enumeration of $C(\mathcal{A})$ for an associated $S$-structure $\mathcal{A}$ will aid us with counting arguments, as it induces a correspondence between any $C\in C(\mathcal{A})$ and the $\mult_\mathcal{A}(C)$ cliques in $C(A)$ with universe $C$.

\begin{definition}
Let $A\subseteq B$ be $R$-structures with associated $S$-structures $\mathcal{A}$ and $\mathcal{B}$ respectively. We define:
\\$\dos(A) = \dos(\mathcal{A})$
\\$A\leqs B$ iff $\mathcal{A}\leqs \mathcal{B}$
\end{definition}

\begin{definition}
Let $N$ be an $R$-structure. Let $x,y\in N$ be two distinct elements.

\noindent For any subset $A\subseteq N$ define $\witclq{x,y}{A} = \setarg{a\in A}{(a,x,y)\in R_N}$

\noindent If $\witclq{x,y}{A}$ is the universe of a clique we identify it with the clique $\clqwit{\witclq{x,y}{A}}{x,y}$.

\noindent For $K$ a set of cliques define $K^A = \setarg{\witclq{x,y}{A}}{\clqwit{C}{x,y}\in K \text{ and } |\witclq{x,y}{A}| \geq 3}$
\end{definition}

\begin{definition}
Let $A,B\subseteq D$ be $R$-substructures. For $C$, the universe of a maximal clique in $A$, denote $C^{B(A)} = \setarg{C'\in C(B\cup A)}{C'\cap A = C}$, the set of maximal cliques in $B\cup A$ extending $C$.

\noindent Define:
\\$C(B/A) = \setarg{C\in C(B\cup A)}{|C\cap A| < 3}$
\\$C^B(A) = \bigcup_{C\in C(A)} C^{B(A)}$

$C(B/A)$ is the set of cliques in $B$ which are not extensions of cliques from $A$. The set $C^B(A)$ is the collection of maximizations of cliques in $C(A)$ to the structure $B\cup A$. Take note that $C(B\cup A) = C^B(A)\amalg C(B/A)$.

The definitions are identical for $S$-structures.
\end{definition}

We conclude this section with an alternate way of defining $\dos(A)$ which we will use in the setting of $R$-structures.

\begin{definition}
Let $A\subseteq N$ be an $R$-structure. Let $K\subseteq C(A)$. Define:
\\$Clq_K(a) = |\setarg{C\in K}{a\in C}|$
\\$Clq(a,A) = Clq_{C(A)}(a)$
\\$i(A) = \sum_{a\in A} Clq(a,A) = \sum_{C\in C(A)} |C|$
\end{definition}

\begin{lemma}\label{IdenticalD0s}
$\dos(A) = |A| + 2|C(A)|-i(A)$
\end{lemma}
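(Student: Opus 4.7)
The plan is to simply unpack definitions, using the correspondence between $C(\mathcal{A})$ with multiplicity and $C(A)$ supplied by a full-enumeration. Since $\dos(A)$ is defined as $\dos(\mathcal{A})$ for the associated $S$-structure $\mathcal{A}$, we start from
\[\dos(A) = |A| - \sum_{C\in C(\mathcal{A})}\mult_{\mathcal{A}}(C)\cdot (|C|-2).\]

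The key (and essentially only) observation is the following. By the construction of the associated $S$-structure, $\mult_{\mathcal{A}}(C)$ for $C \in C(\mathcal{A})$ counts exactly the number of maximal cliques in $C(A)$ whose universe is $C$. Equivalently, a full-enumeration $\mathcal{U} = (C_1,\dots,C_n)$ of $C(\mathcal{A})$ in the sense of Definition \ref{bijectionFromRtoS} gives a bijection with $C(A)$, so
\[\sum_{C\in C(\mathcal{A})}\mult_{\mathcal{A}}(C)\cdot (|C|-2) \;=\; \sum_{i=1}^{n}(|C_i|-2) \;=\; \sum_{\bar C \in C(A)}(|C|-2),\]
where $C$ denotes the universe of $\bar C$ on the right.

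Now I would simply split the right-hand sum into two pieces: $\sum_{\bar C \in C(A)} |C| = i(A)$ by the very definition of $i(A)$ given just above the lemma, and $\sum_{\bar C \in C(A)} 2 = 2|C(A)|$. Substituting back gives
\[\dos(A) = |A| - i(A) + 2|C(A)|,\]
which is what we wanted. There is no real obstacle here; the lemma is a bookkeeping identity, and the only thing to be careful about is keeping straight that elements of $C(A)$ are maximal cliques together with their witnesses (so distinct witness-pairs over the same universe are counted separately, precisely as $\mult_{\mathcal{A}}$ does on the $S$-side).
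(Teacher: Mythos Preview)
Your proof is correct and follows essentially the same approach as the paper: both use the correspondence between $C(\mathcal{A})$ counted with multiplicity and $C(A)$, then split $\sum_{\bar C\in C(A)}(|C|-2)$ into $i(A)-2|C(A)|$. The only cosmetic difference is that the paper starts from $|A|+2|C(A)|-i(A)$ and works toward $\dos(\mathcal{A})$, while you go in the reverse direction.
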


\begin{proof}
Denote $\mathcal{A}$ the $S$-structure associated with $A$. As in Observation \ref{bijectionFromRtoS}, there is a clear correspondence between a clique $C\in C(\mathcal{A})$ and the $\mult_\mathcal{A}(C)$ cliques in $C(A)$ with universe $C$. Thus we have:
\begin{align*}
|A| + 2|C(A)| - i(A) &= |A| + 2|C(A)| - \sum_{C\in C(A)}|C|
\\&= |A| -\sum_{C\in C(A)} (|C|-2)
\\&= |A|-\sum_{C\in C(\mathcal{A})} \mult_\mathcal{A}(C)\cdot (|C|-2)
\\&= \dos(\mathcal{A}) = \dos(A)
\end{align*}
\end{proof}

\begin{remark*}
From this point onwards we identify an $S$-structure $\mathcal{A}$ with its universe $A$. In case $\mathcal{A}$ is the $S$-structure associated with the $R$-structure $A$, we think of $A$ dually as an $R$-structure and as an $S$-structure.
\end{remark*}

We will proceed to examine the reduct $M_s$ of $M$ in which we forget the relation $R$ and remember only $S$ (as discussed in Remark \ref{languageExpansionRemark}). The main goal of this work is to first show that $M_s$ is a proper reduct. Namely, that $R$ cannot be retrieved from $S$. Once that is achieved, we would like to know what is the d-geometry of $M_s$.

\section{Properties of $S$-structures}\label{SStructs}

\begin{definition}
For $A,B \subseteq M_s$ denote $\dosover{B}{A} = \dos(B\cup A) - \dos(A)$, the pre-dimension of $B$ over $A$.
\end{definition}

\begin{obs*}
\begin{enumerate}[(i)]
\item $A\leqs B$ if and only if for all $X\subseteq B$ we have $\dosover{X}{A} \geq 0$.
\item If $A\leqs B$ then for any $X\subseteq B$ we have $A\leqs X\cup A$.
\item Let $A\subseteq B$. For a maximal clique $C\in C(B)$ note that if $|C\cap A| \geq 3$ then $C\cap A$ is a maximal clique in $C(A)$. We say $C$ extends $C\cap A$.
\end{enumerate}
\end{obs*}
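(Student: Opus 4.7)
The plan is to handle the three clauses in order, as each is essentially an unpacking of a definition. I do not expect any real obstacle; the only point requiring care is the finite-vs-infinite distinction in the definition of $\leqs$.

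For (i), I would start from the definition for finite $A$: $A \leqs B$ means $\dos(A) = \ds(A,B) = \min\{\dos(D) : A \subseteq D \subseteq B,\ D\text{ finite}\}$, which is the same as saying $\dos(D) \geq \dos(A)$ for every finite $D$ with $A \subseteq D \subseteq B$. Any finite superset $D$ of $A$ inside $B$ can be written as $X \cup A$ for some finite $X \subseteq B$ (namely $X = D \setminus A$), and conversely every such union is a finite superset of $A$ inside $B$; substituting gives $\dos(X \cup A) - \dos(A) \geq 0$, i.e.\ $\dosover{X}{A} \geq 0$ for every $X \subseteq B$, which is the required equivalence. For the case where $A$ itself is infinite, I would reduce to the finite case by the definition $A \leqs N$ iff $D \leqs N$ for every finite $D \subseteq A$ with $D \leqs A$, applying the finite equivalence to each such $D$.

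For (ii), I would simply chain (i) with itself. Assuming $A \leqs B$, part (i) applied in $B$ gives $\dosover{Y}{A} \geq 0$ for every $Y \subseteq B$. Since $X \cup A \subseteq B$, every $Y \subseteq X \cup A$ is also a subset of $B$, so the same inequality holds for every $Y \subseteq X \cup A$. Re-applying (i) in the reverse direction inside the structure $X \cup A$ then yields $A \leqs X \cup A$.

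For (iii), I would quote the definition of $S$-substructure directly: if $A \subseteq B$ then $C(\mathcal{A}) = \{C' \cap A : C' \in C(\mathcal{B})\}$, where only sets of size at least three are retained (since by definition $C(\mathcal{A}) \subseteq [A]^{\geq 3}$). Hence whenever $C \in C(B)$ is a maximal clique with $|C \cap A| \geq 3$, the set $C \cap A$ lies in $C(\mathcal{A})$ and so is a maximal clique of $A$; the terminology ``$C$ extends $C \cap A$'' is then just a naming convention for this situation.
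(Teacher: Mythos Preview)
Your proposal is correct and matches the paper's treatment: the paper records these three items as an unproved Observation, precisely because each one is, as you say, an immediate unpacking of the definitions of $\leqs$, $\dosover{X}{A}$, and $S$-substructure. Your one unnecessary detour is the infinite-$A$ case in (i): since $\dosover{X}{A}=\dos(X\cup A)-\dos(A)$ is only defined when $A$ is finite, the equivalence in (i) is implicitly stated for finite $A$, and no reduction is needed.
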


\begin{definition}
For a set $A$ define $|A|_* = max\set{0,|A|-2}$.

Define also for $A$ an $S$-structure: $s(A) = \sum_{C\in C(A)}\mult_A(C)\cdot |C|_*$
\end{definition}

\begin{obs*}\*\begin{enumerate}
\item
Let $\fullenum{A}$ be a full enumeration of $C(A)$ for some $S$-structure $A$. Then $s(A) = \sum_{C\in \fullenum{A}}|C|_*$
\item
An alternative way to express the predimension of $A$ is $\dos(A) = |A| - s(A)$.
\end{enumerate}
\end{obs*}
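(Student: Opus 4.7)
The proof is a direct unwinding of definitions, and I expect no real obstacle; the only point to check is that $|C|_* = |C|-2$ for every $C$ that contributes to the sum.

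For part (1), the plan is to use the definition of a full enumeration directly. By Definition \ref{bijectionFromRtoS} (extended to abstract $S$-structures), a full enumeration $\fullenum{A}$ of $C(A)$ lists each $C \in C(A)$ exactly $\mult_A(C)$ times. Grouping the sum $\sum_{C \in \fullenum{A}} |C|_*$ by the underlying clique gives
\[\sum_{C \in \fullenum{A}} |C|_* \;=\; \sum_{C \in C(A)} \mult_A(C)\cdot |C|_* \;=\; s(A),\]
which is exactly the claim.

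For part (2), I would note that every $C\in C(A)$ satisfies $|C|\geq 3$ (since $C(\mathcal{A})\subseteq [A]^{\geq 3}$ by the definition of an $S$-structure), so $|C|_* = \max\{0,|C|-2\} = |C|-2$ for each $C$ contributing to $s(A)$. Substituting into the definition of $s(A)$ gives
\[s(A) \;=\; \sum_{C \in C(A)} \mult_A(C)\cdot(|C|-2),\]
and plugging this into the definition
\[\dos(A) \;=\; |A| - \sum_{C \in C(A)} \mult_A(C)\cdot(|C|-2)\]
yields $\dos(A) = |A| - s(A)$, as desired. Both parts are thus routine book-keeping, with the only conceptual content being the observation that the $\max$ in the definition of $|C|_*$ is inactive on elements of $C(A)$.
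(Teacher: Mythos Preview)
Your proposal is correct; the paper states this observation without proof, treating it as immediate from the definitions, and your unwinding of those definitions is exactly what is needed. The only nontrivial check is indeed that $|C|\geq 3$ for $C\in C(A)$, which you handle correctly.
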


\begin{lemma}\label{dosFreeAmalgam} Let $N$ be an $S$-structure. Let $B_1,B_2 \subseteq N$ be substructures of $N$. Denote $A = B_1\cap B_2$ and $D = B_1\cup B_2$. Then $\dosover{B_2}{A} \geq \dosover{B_2}{B_1}$.

Furthermore, if in addition $C(D/A) = C(B_1/A)\cup C(B_2/A)$ then equality holds and $\dosover{B_2}{A} = \dosover{B_2}{B_1}$.
\end{lemma}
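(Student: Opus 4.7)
The plan is to reduce the inequality to submodularity of $\dos$ and then to a termwise convexity statement. Since $A = B_1 \cap B_2 \subseteq B_2$, we have $B_2 \cup A = B_2$ and $B_2 \cup B_1 = D$, so the claim $\dosover{B_2}{A} \geq \dosover{B_2}{B_1}$ is equivalent to the submodularity inequality
\[\dos(B_1) + \dos(B_2) \geq \dos(A) + \dos(D).\]
Using $\dos(X) = |X| - s(X)$ (from the observation preceding the lemma) together with the set-theoretic identity $|B_1| + |B_2| = |A| + |D|$, this further reduces to $s(A) + s(D) \geq s(B_1) + s(B_2)$.

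Next I would unify all four $s$-values against a common full enumeration $\fullenum{D}$ of $C(D)$. Using additivity of multiplicity along each inclusion $X \subseteq D$ (for $X \in \{A, B_1, B_2\}$), together with the fact that $|Y|_* = 0$ whenever $|Y| < 3$, an unwinding of the definitions yields
\[s(X) = \sum_{C \in \fullenum{D}} |C \cap X|_*\]
for every $X \in \{A, B_1, B_2, D\}$. The target inequality therefore reduces to showing, for each $C \in \fullenum{D}$, that $|C|_* + |C \cap A|_* \geq |C \cap B_1|_* + |C \cap B_2|_*$. Writing $n = |C|$, $a = |C \cap A|$ and $b_i = |C \cap B_i|$, inclusion-exclusion for $C = (C \cap B_1) \cup (C \cap B_2)$ gives $a + n = b_1 + b_2$, and trivially $a \leq b_i \leq n$. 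The function $f(x) = \max\{0, x - 2\}$ is convex, and since under the above constraints $b_1$ and $b_2$ are both convex combinations of $a$ and $n$, convexity immediately delivers $f(b_1) + f(b_2) \leq f(a) + f(n)$, which is the desired termwise inequality.

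For the equality case, under the hypothesis $C(D/A) = C(B_1/A) \cup C(B_2/A)$, I would argue termwise equality by splitting each $C \in \fullenum{D}$ into two cases. If $|C \cap A| \geq 3$, then all of $a, b_1, b_2, n$ exceed $2$, the truncations are inactive, and the termwise inequality collapses to the identity $n + a - 4 = b_1 + b_2 - 4$. If $|C \cap A| < 3$, then $C \in C(D/A)$, so by hypothesis $C \subseteq B_1$ or $C \subseteq B_2$; in the first case $C \cap B_1 = C$ and $C \cap B_2 = C \cap A$, giving trivial termwise equality, and symmetrically in the second case. The main technical obstacle is the bookkeeping in the full-enumeration step: one must verify that restrictions to $A, B_1, B_2$ of cliques of $D$ interact correctly with the multiplicity function. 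This, however, is exactly the content of the additivity axiom in the definition of substructure, so the point is one of careful unpacking rather than of genuine difficulty.
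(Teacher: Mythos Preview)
Your proof is correct and follows essentially the same route as the paper: reduce to $s(A)+s(D)\geq s(B_1)+s(B_2)$, rewrite each $s(X)$ as a sum over a full enumeration $\fullenum{D}$ of $C(D)$ via additivity, and verify the termwise inequality $|C|_*+|C\cap A|_*\geq |C\cap B_1|_*+|C\cap B_2|_*$. The paper packages the restriction step using explicit surjections $\sigma_I:\fullenum{D}\to\fullenum{I}\cup\{\emptyset\}$ and states the termwise bound directly as $|C_1\cup C_2|_*\geq |C_1|_*+|C_2|_*-|C_1\cap C_2|_*$, whereas you skip the $\sigma_I$ notation and phrase the same bound via convexity of $x\mapsto\max\{0,x-2\}$ together with $a+n=b_1+b_2$; the equality analysis is identical in both.
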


\begin{proof}
Note that $B_2\setminus A = D \setminus B_1$ and so $|B_2| - |A| = |B_2\setminus A| = |D \setminus B_1| = |D| - |B_1|$. Then:
\begin{align*}
\dosover{B_2}{A} &\geq \dosover{B_2}{B_1} &\iff
\\|B_2| - |A| + s(A) - s(B_2) &\geq |D| - |B_1| + s(B_1) - s(D) &\iff
\\s(A)+s(D)&\geq s(B_1)+s(B_2) &\iff
\\s(D)\geq s(B_1&)+s(B_2)-s(A)
\end{align*}

\medskip
We prove the last inequality.

Fix $\fullenum{D}, \fullenum{B_1}, \fullenum{B_2},\fullenum{A}$ full enumerations of $C(D),C(B_1),C(B_2)$ and $C(A)$ respectively. It will now be enough to prove that:
\[\sum_{C\in\fullenum{D}}|C|_* \geq \sum_{C\in\fullenum{B_1}}|C|_* + \sum_{C\in\fullenum{B_2}}|C|_* - \sum_{C\in\fullenum{A}}|C|_*\]

For $I\in \set{B_1,B_2,A}$, fix $\sigma_I:\fullenum{D}\rightarrow\fullenum{I}\cup\set{\emptyset}$ surjective, such that
\[\sigma_I(C) = \left\{
	\begin{array}{ll}
		C\cap I & \text{if } C\cap I\in C(I)\\
		\emptyset & \text{otherwise}
	\end{array}
\right.\]
Such surjections exist by additivity of the pre-multiplicity function.

We observe several facts:
\begin{enumerate}
\item
By definition, $|C\cap I|_*=|\sigma_I(C)|_*$ for any $I\in \set{B_1,B_2,A}$ and $C\in \fullenum{D}$

\item Recall the equality $|C_1\cup C_2| = |C_1|+|C_2| - |C_1\cap C_2|$. Note that the inequality $|C_1\cup C_2|_* \geq |C_1|_*+|C_2|_* -|C_1\cap C_2|_*$ is always true. Equality holds in case $|C_1\cap C_2| \geq 2$ or $C_i = \emptyset$ for some $i\in\set{1,2}$.

\item
Let $C\subseteq D$, then $C = (C\cap B_1)\cup (C\cap B_2)$. By the previous item $|C|_* \geq |C\cap B_1|_* + |C\cap B_2|_* - |C\cap A|_*$.

\item
Let $C\in \fullenum{D}$, then by combining items $1$ and $3$ we get $|C|_* \leq |\sigma_{B_1}(C)|_* + |\sigma_{B_2}(C)|_* - |\sigma_A(C)|_*$. If $C\in C^D(A)\cup C(B_1/A)\cup C(B_2/A)$ then by item 2, equality holds.
\end{enumerate}

It now only remains to follow with the computation:
\begin{align*}
\sum_{C\in\fullenum{D}}|C|_* &\geq \sum_{C\in\fullenum{D}}(|\sigma_{B_1}(C)|_* + |\sigma_{B_2}(C)|_* - |\sigma_A(C)|_*) =
\\&= \sum_{C\in\fullenum{D}}|\sigma_{B_1}(C)|_* + \sum_{C\in\fullenum{D}}|\sigma_{B_2}(C)|_* - \sum_{C\in\fullenum{D}}|\sigma_A(C)|_* =
\\&=\sum_{C\in\fullenum{B_1}}|C|_* + \sum_{C\in\fullenum{B_2}}|C|_* - \sum_{C\in\fullenum{A}}|C|_*
\end{align*}

This proves the first part of the lemma.

If $C(D/A) = C(B_1/A)\cup C(B_2/A)$, then $C(D) = C^D(A)\cup C(B_1/A)\cup C(B_2/A)$. As observed in item $4$, this means that $|C|_* = |\sigma_{B_1}(C)|_* + |\sigma_{B_2}(C)|_* - |\sigma_A(C)|_*$ for any $C\in C(D)$. Thus, the first inequality in the above computation is in fact an equality. This proves $s(D)= s(B_1)+s(B_2)-s(A)$ and consequently the second part of the lemma.

\end{proof}

\begin{lemma}\label{StrongSubset} Let $A \leqs N$, then $X\cap A\leqs X$ for any $X\subseteq N$.
\end{lemma}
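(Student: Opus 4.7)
My plan is to deduce the lemma from the sub-modular inequality $\dosover{D}{A\cap D}\ge \dosover{D}{A}$ given by Lemma \ref{dosFreeAmalgam}. The crucial algebraic observation is that whenever $X\cap A\subseteq D\subseteq X$ one has the equality $A\cap D = X\cap A$, so sub-modularity directly compares the quantity we care about, $\dosover{D}{X\cap A}$, with the quantity controlled by the hypothesis $A\leqs N$.

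Concretely, assume first that $A$ is finite. Unwinding the definition of $\leqs$, it suffices to show $\dos(D)\ge \dos(X\cap A)$ for every finite $D$ with $X\cap A\subseteq D\subseteq X$. Apply Lemma \ref{dosFreeAmalgam} with $B_1=A$ and $B_2=D$; their intersection is $X\cap A$, so
\[\dosover{D}{X\cap A}\ \ge\ \dosover{D}{A}.\]
Since $A\leqs N$ and $A\subseteq A\cup D\subseteq N$, observation (ii) just before the lemma gives $A\leqs A\cup D$, so $\dosover{D}{A}\ge 0$, and hence $\dosover{D}{X\cap A}\ge 0$, as required.

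If $A$ is infinite, the definition of $\leqs$ forces me to verify instead that any finite $D\subseteq X\cap A$ with $D\leqs X\cap A$ satisfies $D\leqs X$. Fixing such a $D$ and any finite $E$ with $D\subseteq E\subseteq X$, I interpose the finite set $E\cap A$ in two steps. First, pick a finite $A_0$ with $E\cap A\subseteq A_0\subseteq A$ and $A_0\leqs A$; such an $A_0$ exists because $\dos$ is bounded below on finite subsets of $N$, by the standing assumption that $d_s(\emptyset,N)$ exists. Then $A_0\leqs N$ by hypothesis. Applying Lemma \ref{dosFreeAmalgam} with $B_1=A_0$ and $B_2=E$, whose intersection equals $E\cap A$ (because $E\cap A\subseteq A_0\subseteq A$), and using $A_0\leqs N$ exactly as in the finite case, I obtain $\dos(E)\ge \dos(E\cap A)$. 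The hypothesis $D\leqs X\cap A$, combined with $D\subseteq E\cap A\subseteq X\cap A$ and the finiteness of $E\cap A$, then gives $\dos(E\cap A)\ge \dos(D)$, which chains to $\dos(E)\ge \dos(D)$.

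The only mild obstacle is the closure step producing $A_0$ in the infinite case, and this is standard for Hrushovski-style constructions. The genuine content of the lemma is already bottled up in the sub-modularity of Lemma \ref{dosFreeAmalgam}; the rest is bookkeeping around the set-theoretic identity $A\cap D = X\cap A$ and the definition of $\leqs$.
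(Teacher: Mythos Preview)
Your proof is correct and is essentially the paper's own argument: both apply Lemma~\ref{dosFreeAmalgam} with $B_1=A$ and $B_2$ the arbitrary finite superset of $X\cap A$ inside $X$, then use $A\leqs N$ to conclude the relative predimension is nonnegative. Your explicit treatment of the infinite-$A$ case via a finite $A_0\leqs A$ is in fact more careful than the paper, whose proof tacitly assumes $X\cap A$ is finite.
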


\begin{proof}
Assume such an $X$ is given. Choose an arbitrary finite $X\cap A \subseteq Y\subseteq X$. We wish to show $\dosover{Y}{X\cap A} \geq 0$. Note that $Y\cap A = X\cap A$ and so we in fact need to show $\dosover{Y}{Y\cap A} \geq 0$. By taking $B_1$ to be $A$ and $B_2$ to be $Y$, by Lemma \ref{dosFreeAmalgam} we have exactly $\dosover{Y}{Y\cap A} \geq \dosover{Y}{A} = \dosover{Y\cup A}{A}$. Since $A\leqs N$ we have $\dosover{Y\cup A}{A} \geq 0$ and thus $\dosover{Y}{Y\cap A} \geq 0$. The set $Y$ was arbitrary and so $X\cap A \leqs X$.
\end{proof}

\begin{Cor}\label{leqsTransitivity} The relation $\leqs$ is transitive.
\end{Cor}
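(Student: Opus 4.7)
The plan is to reduce to the case where $A$ is finite via the definitional unfolding of $\leqs$, and then use Lemma \ref{StrongSubset} applied to $B \leqs C$ as the key tool; the finite case of $A$ is then a short sandwich estimate.

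First I would dispose of the infinite case. If $A$ is infinite, then by definition $A \leqs C$ amounts to showing: every finite $D \subseteq A$ with $D \leqs A$ satisfies $D \leqs C$. Given such a $D$, the hypothesis $A \leqs B$ directly yields $D \leqs B$. Thus it suffices to prove the statement under the additional assumption that $A$ is finite (with $D$ playing the role of $A$, and $B$, $C$ possibly infinite). This is the only place the infinite version of $\leqs$ is used, and it just lets us pass through to the finite core.

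Now assume $A$ is finite. Take an arbitrary finite $Y$ with $A \subseteq Y \subseteq C$; the goal is $\dos(A) \leq \dos(Y)$. Apply Lemma \ref{StrongSubset} to $B \leqs C$ with $X := Y \subseteq C$: it gives $Y \cap B \leqs Y$. Since $Y$ is finite, so is $Y \cap B$, and taking $Y$ itself as the witness inside itself yields $\dos(Y \cap B) \leq \dos(Y)$. Next, note that $A \subseteq Y \cap B$ because $A \subseteq Y$ and $A \subseteq B$, and $Y \cap B$ is a finite subset of $B$ containing $A$. Since $A$ is finite and $A \leqs B$, the definition of $\ds(A,B)$ forces $\dos(A) \leq \dos(Y \cap B)$. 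Chaining the two inequalities gives $\dos(A) \leq \dos(Y \cap B) \leq \dos(Y)$, as required.

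There is no real obstacle beyond carefully matching finite versus infinite formulations of $\leqs$; the amalgamation content is entirely packaged inside Lemma \ref{StrongSubset} (and through it, Lemma \ref{dosFreeAmalgam}). The only thing one has to be slightly careful with is making sure that when $A$ is infinite one does not try to evaluate $\dos(A)$ directly, and instead works with finite $\leqs$-strong subsets $D \subseteq A$, which is exactly what the definition was set up for.
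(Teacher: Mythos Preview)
Your argument is correct and follows the same route as the paper: both proofs hinge on Lemma~\ref{StrongSubset}, and your sandwich $\dos(A) \leq \dos(Y\cap B) \leq \dos(Y)$ is exactly the paper's decomposition $\dosover{X}{X\cap A} = \dosover{X}{X\cap B} + \dosover{X\cap B}{X\cap A}$ specialised to $X \supseteq A$. The only difference is cosmetic: the paper invokes Lemma~\ref{StrongSubset} twice (once for each summand), while you invoke it once and read off the second inequality directly from the definition of $A\leqs B$.

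One small wrinkle: you reduce only the case where $A$ is infinite, leaving $B$ possibly infinite when you apply Lemma~\ref{StrongSubset}. The paper's proof of that lemma passes through $\dos$ of the strong substructure (via Lemma~\ref{dosFreeAmalgam}) and so, as written, only treats the finite case; accordingly the paper disposes of \emph{both} the $A$-infinite and $B$-infinite cases before proceeding. This is easily patched on your side: if $B$ is infinite and $A$ is finite with $A\leqs B$, then the very definition of $B\leqs C$ for infinite $B$ (every finite $D\leqs B$ satisfies $D\leqs C$) gives $A\leqs C$ immediately, with no need for Lemma~\ref{StrongSubset} at all.
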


\begin{proof}
Let $A\leqs B\leqs N$. In case $A$ or $B$ are infinite, the proof is by definition, so assume both are finite. Let $X\subseteq N$ be finite. By using Lemma \ref{StrongSubset} twice we have $X\cap A\leqs X\cap B$ and $X\cap B\leqs X$ and so $\dosover{X}{X\cap A} = \dosover{X}{X\cap B} + \dosover{X\cap B}{X\cap A} \geq 0$. Because $X$ is arbitrary we have $A\leqs C$.

\end{proof}

\begin{Cor}\label{intersectionOfStrongIsStrong}
Let $A_i\leqs N$ ($A_i$ not necessarily finite) for $i\in I$, then $\bigcap_{i\in I} A_i\leqs N$
\end{Cor}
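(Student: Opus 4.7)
The plan is to reduce to the case of a finite ambient structure, where the two-set case is immediate from Lemma~\ref{StrongSubset} and Corollary~\ref{leqsTransitivity}, and then leverage the finiteness of the ambient to collapse the (possibly infinite) index set $I$.

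First I would dispose of the finite-ambient case: if $B_1, B_2 \leqs X$ with $X$ finite, then applying Lemma~\ref{StrongSubset} to $B_1 \leqs X$ with the ``$X$'' there equal to $B_2$ gives $B_1\cap B_2 \leqs B_2$, and chaining with $B_2 \leqs X$ via Corollary~\ref{leqsTransitivity} yields $B_1\cap B_2 \leqs X$. An obvious induction extends this to any finite intersection inside a finite $X$. Since $X$ finite has only finitely many distinct subsets, for any family $\{B_i\}_{i\in I}$ (even with $I$ infinite) of strong substructures of $X$ the intersection $\bigcap_i B_i$ is in fact the intersection of only finitely many distinct sets, and is therefore $\leqs X$.

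Next I would localise. Set $A := \bigcap_{i\in I} A_i$ and fix an arbitrary finite $Y \subseteq N$. By Lemma~\ref{StrongSubset}, each $Y \cap A_i \leqs Y$. Since $Y$ is finite, the previous step applies to the family $\{Y\cap A_i\}_{i\in I}$ inside the finite ambient $Y$, and gives $Y \cap A = \bigcap_i (Y \cap A_i) \leqs Y$. This is the key reduction.

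Finally I would split along the definition of $\leqs$. If $A$ is finite, take any finite $A \subseteq Y \subseteq N$. Then $Y\cap A = A$, so the localisation step gives $A \leqs Y$, i.e.\ $\dos(A) \leq \dos(Y)$; since $Y$ was arbitrary, $A \leqs N$. If $A$ is infinite, fix any finite $D \subseteq A$ with $D \leqs A$ and any finite $D \subseteq Y \subseteq N$; then $D \subseteq Y \cap A \subseteq A$, so $D \leqs A$ gives $\dos(D) \leq \dos(Y\cap A)$, while the localisation step gives $\dos(Y\cap A) \leq \dos(Y)$. Hence $\dos(D) \leq \dos(Y)$, so $D \leqs N$, which by definition yields $A \leqs N$. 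I don't anticipate a real obstacle; the only point requiring any care is recognising that the finiteness of $Y$ is what permits collapsing an arbitrary index set $I$ to finitely many distinct sets $Y\cap A_i$, so that the finite-intersection case suffices.
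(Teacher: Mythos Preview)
Your argument is correct, with one small caveat worth making explicit. In the localisation step you invoke Lemma~\ref{StrongSubset} for each $A_i\leqs N$ with $A_i$ possibly infinite, but the paper's proof of that lemma tacitly assumes the strong set is finite (it forms $\dosover{Y\cup A}{A}$). The extension you need---infinite $A_i$, finite $Y$---is immediate: choose a finite $D$ with $Y\cap A_i\subseteq D\subseteq A_i$ and $D\leqs A_i$; then $D\leqs N$ by the definition of $\leqs$ for infinite sets, and the finite version of Lemma~\ref{StrongSubset} gives $Y\cap D\leqs Y$, while $Y\cap D=Y\cap A_i$. You should insert this one line.

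Your route genuinely differs from the paper's. You localise by intersecting everything with an arbitrary finite test set $Y\subseteq N$ and then work entirely inside the finite ambient $Y$, where the collapse of $I$ to finitely many distinct $Y\cap A_i$ is automatic. The paper instead first proves the case where every $A_i$ is finite (same two-set argument as yours, then an induction), and for the general case fixes a finite $B\leqs\bigcap_i A_i$, picks for each $i$ a finite $B_i$ with $B\subseteq B_i\leqs A_i$ (hence $B_i\leqs N$), applies the finite case to the family $\{B_i\}$, and observes $B\leqs\bigcap_i B_i\leqs N$. Your version is more uniform---one mechanism handles both the finite and infinite cases---while the paper's keeps Lemma~\ref{StrongSubset} strictly in its finite form at the cost of a separate closure-finding step inside each $A_i$.
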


\begin{proof}
We first prove the case where $A_i$ is finite for any $i\in I$:

Because any $A_i$ is finite, it can intersect with sets in only finitely many ways and so we may assume $I$ is finite. It is then enough to show the claim for $I = \set{1,2}$ and continue by induction.

Let $A_1,A_2\leqs N$. By Lemma \ref{StrongSubset} and $A_1\leqs N$ we have $A_1\cap A_2\leqs N\cap A_2 = A_2$. By $A_2\leqs N$ and transitivity we have $A_1\cap A_2\leqs N$ as desired.

\bigskip
We proceed to prove that $\bigcap_{i\in I} A_i\leqs N$ when $\set{A_i}_{i\in I}$ are possibly infinite:

Denote $\mathbb{A} =\bigcap_{i\in I} A_i$. Let $B\leqs\mathbb{A}$, we show that $B\leqs N$. Choose $B\subseteq B_i\subseteq A_i$ finite such that $\dos(B_i) = d_s(B,A_i)$. So $B_i\leqs A_i$ and thus $B_i\leqs N$. Denote $\mathbb{B} = \bigcap_{i\in I} B_i$. By the previous case, $\mathbb{B} \leqs N$. Because $B\subseteq \mathbb{B} \subseteq \mathbb{A}$ and $B\leqs \mathbb{A}$, we have $B \leqs \mathbb{B}$. By transitivity, $B\leqs N$.
\end{proof}

\begin{definition}
For $A\subseteq N$ $S$-structures we say $\bar{A}$ is an $S$-closure of $A$ if it is a smallest set (under inclusion) with $\bar{A}\leqs N$. By the above corollary there is exactly one such $S$-closure. We denote it $cl(A) = \bigcap_{A\subseteq B\leqs N} B$.
\end{definition}

\begin{definition}\label{defSimpleAmalgam}
Let $A,B_1,B_2$ be $S$-structures with $B_1\cap B_2 = A$. For each $C\in C(A)$, denote $\otn{C^1}{\mult_A(C)}\in C^{B_1(A)}$ its maximal extensions in the structure $B_1$ and $\otn{C^2}{\mult_A(C)}\in C^{B_2(A)}$ its maximal extensions in $B_2$ guaranteed by additivity (Where a maximal extension with pre-multiplicity $k$ is counted $k$ times). The order we choose for the extensions is arbitrary and fixed.

For a set $C'\subseteq B_1\cup B_2$ define:
\\$X_{C'} = \setarg{(C^1_i,C^2_i)}{C\in C(A), 1\leq i\leq \mult_A(C), C'\cap B_1= C^1_i, C'\cap B_2= C^2_i}$

Then let D be the structure with universe $B_1\cup B_2$ and $C(D)$ and $\mult_D$ defined as follows:

$C(D) = \setarg{C^1_i\cup C^2_i}{C\in C(A), 1\leq i \leq \mult_A(C)}\amalg C(B_1/A)\amalg C(B_2/A)$

$\mult_D(C) =
	\left\{
		\begin{array}{ll}
			\mult_{B_1}(C) & \text{if } C\in C(B_1/A)\\
			\mult_{B_2}(C) & \text{if } C\in C(B_2/A)\\
			|X_{C}| & \text{otherwise}
		\end{array}
	\right.$

We call the structure D a \emph{simple $S$-amalgam} of $B_1$ and $B_2$ over $A$. Ascertaining that this is an $S$-structure in accordance with our definition is left to the reader.
\end{definition}

\begin{obs}\label{simpleAmalgamObs}
\begin{enumerate}[(i)]
\item
If $D$ is a simple $S$-amalgam of $B_1$ and $B_2$ over $A$, then it is worth noting that $A,B_1,B_2$ are substructures of $D$.
\item
Let $D$ be a simple $S$-amalgam of $B_1$ and $B_2$ over $A$. Then for $A\subseteq X_1 \subseteq B_1$ and $A\subseteq X_2 \subseteq B_1$, $X_1\cup X_2$ as a substructure of $D$ is a simple $S$-amalgam of $X_1$ and $X_2$ over $A$.
\end{enumerate}
\end{obs}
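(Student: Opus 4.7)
For part (i), the plan is to verify the three substructure conditions for each of $A$, $B_1$, $B_2$ inside $D$. Universe inclusion is immediate from the construction. The clique-set equality $C(\mathcal{A}) = \{C'\cap A : C'\in C(D)\}$ (and analogously for $B_1$, $B_2$) will be established by going through the three types of cliques in $C(D)$—the unions $C^1_i\cup C^2_i$ for $C\in C(A)$, and the cliques in $C(B_1/A)$ and $C(B_2/A)$—and computing their intersections with $A$, $B_1$, $B_2$. The crucial observations are $(C^1_i\cup C^2_i)\cap A = C$, and $(C^1_i\cup C^2_i)\cap B_1 = C^1_i$ (since $C^2_i\cap B_1 = C^2_i\cap A = C\subseteq C^1_i$), while cliques in $C(B_1/A)\cup C(B_2/A)$ meet $A$ in fewer than three elements. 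For additivity, on cliques in $C(B_1/A)$ or $C(B_2/A)$ the equality is immediate from the definition of $\mult_D$; for $C\in C(A)$, one has $\sum_{C'\cap A = C}\mult_D(C') = \sum |X_{C'}|$, and the sets $X_{C'}$ partition the enumeration $\{(C^1_i, C^2_i)\}_{i=1}^{\mult_A(C)}$ by value of the union, so the total is $\mult_A(C)$; for $C^1\in C^{B_1(A)}$, the analogous sum counts indices $i$ with $C^1_i = C^1$, yielding $\mult_{B_1}(C^1)$.

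For part (ii), write $Y := X_1\cup X_2$. The plan is to produce a candidate simple amalgam structure on $Y$ via a naturally induced enumeration, then verify it agrees with $Y$ as a substructure of $D$. Set $\tilde{C}^1_i := C^1_i\cap X_1$ and $\tilde{C}^2_i := C^2_i\cap X_2$. Since by (i) $B_1\subseteq D$ is a substructure and $X_1\subseteq B_1$ is one (in the natural way), additivity implies that the list $\{\tilde{C}^1_i\}_{i=1}^{\mult_A(C)}$ enumerates the maximal extensions of $C\in C(A)$ in $X_1$ counted with $\mult_{X_1}$-multiplicity, and similarly for $X_2$, so this is a legitimate enumeration for the simple-amalgam definition. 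I would then compare the two structures: the substructure $Y\subseteq D$ has its cliques arising as restrictions—$C^1_i\cup C^2_i$ restricts to $\tilde{C}^1_i\cup\tilde{C}^2_i$; a clique $C'\in C(B_1/A)$ restricts to $C'\cap X_1$ (using $A\subseteq X_1$ so that $C'\cap X_2 = C'\cap A\subseteq X_1$), which reproduces $C(X_1/A)$ after discarding restrictions of size less than three; symmetrically for $C(B_2/A)$. This matches the candidate clique-set exactly, and multiplicities match by additivity of $\mult_D$ under restriction to $Y$.

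The main obstacle will be the bookkeeping in (ii): verifying that the sets $X_{\tilde{C}^1_i\cup\tilde{C}^2_i}$ prescribed by the definition of the candidate simple amalgam on $Y$ coincide with the total $\mult_D$-multiplicity of cliques of $D$ whose restriction to $Y$ is $\tilde{C}^1_i\cup\tilde{C}^2_i$. This amounts to tracing how the pairing $(C^1_i,C^2_i)$ survives intersection with $Y$ and how repeated pairs collapse consistently under restriction; no new ideas beyond the definitions are required, but the indexing needs careful handling to ensure that two pairs $(C^1_i,C^2_i)$ and $(C^1_j,C^2_j)$ that yield the same clique in $D$ are recognised as yielding the same paired clique in the amalgam on $Y$.
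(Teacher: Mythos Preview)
The paper states this as an observation without proof; it is left to the reader (indeed, even the verification that the simple amalgam is a well-defined $S$-structure is explicitly deferred in Definition~\ref{defSimpleAmalgam}). Your proposal correctly supplies the omitted argument, and the key computations---that $(C^1_i\cup C^2_i)\cap B_1 = C^1_i$, that the sets $X_{C'}$ partition the index set $\{1,\dots,\mult_A(C)\}$, and that restricting the paired enumeration to $(C^1_i\cap X_1, C^2_i\cap X_2)$ yields a legitimate enumeration for the amalgam over $X_1,X_2$---are exactly what is needed.
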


\begin{remark*}
For $A,B_1,B_2$ given, there may be several simple $S$-amalgams of $B_1$ and $B_2$ over $A$. When we state that we take $D$ a simple amalgam, we will choose one arbitrarily.
\end{remark*}

\begin{lemma}\label{dosSimpleAmalgam}
Let $A,B_1,B_2$ be $S$-structures with $B_1\cap B_2 = A$. Let $D$ be a simple amalgam of $B_1$ and $B_2$ over $A$. Then $\dosover{B_2}{A} = \dosover{B_2}{B_1}$.
\end{lemma}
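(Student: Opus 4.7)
The plan is to apply the equality case of Lemma \ref{dosFreeAmalgam} with the ambient structure taken to be the simple amalgam $D$ itself. By Observation \ref{simpleAmalgamObs}(i), both $B_1$ and $B_2$ are substructures of $D$ with $B_1\cap B_2 = A$, so the hypotheses of Lemma \ref{dosFreeAmalgam} are satisfied (with $N := D$). Therefore it suffices to verify the partition condition
\[C(D/A) \;=\; C(B_1/A)\cup C(B_2/A),\]
after which the inequality in Lemma \ref{dosFreeAmalgam} becomes equality and yields $\dosover{B_2}{A} = \dosover{B_2}{B_1}$ at once.

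To check the partition condition, I would unpack Definition \ref{defSimpleAmalgam} directly. The set $C(D)$ is declared to be the disjoint union
\[\setarg{C^1_i\cup C^2_i}{C\in C(A),\ 1\le i\le \mult_A(C)} \;\amalg\; C(B_1/A) \;\amalg\; C(B_2/A).\]
A clique of the first type has intersection with $A$ equal to $(C^1_i\cup C^2_i)\cap A = C$, which has size $\ge 3$, so it lies in $C^D(A)$ and not in $C(D/A)$. Conversely, any clique in $C(B_1/A)$ or $C(B_2/A)$ has intersection with $A$ of size $<3$ by definition, so lies in $C(D/A)$. Thus the decomposition in the definition of the amalgam is exactly the split $C(D) = C^D(A)\amalg C(D/A)$, and the ``not extending $A$'' part is precisely $C(B_1/A)\cup C(B_2/A)$, as required.

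There is essentially no obstacle here; the lemma is a bookkeeping consequence of how the simple amalgam was constructed to have no cross-cliques between $B_1\setminus A$ and $B_2\setminus A$ beyond those forced by amalgamating extensions of cliques already present in $A$. The only point that requires any care is confirming that the pre-multiplicity function of $D$ matches the additivity requirement for $A\subseteq D$ (so that the cliques $C^1_i\cup C^2_i$ are genuinely the full list of extensions of $C\in C(A)$ in $D$), but this is built into the construction via the sets $X_{C'}$ and the formula $\mult_D(C^1_i\cup C^2_i) = |X_{C^1_i\cup C^2_i}|$.
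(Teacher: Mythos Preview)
Your proof is correct and follows exactly the same route as the paper: verify that $C(D/A) = C(B_1/A)\cup C(B_2/A)$ from the definition of the simple amalgam, then invoke the equality case of Lemma \ref{dosFreeAmalgam}. The paper's proof is a one-line version that simply asserts this partition condition ``by definition of the simple amalgam,'' whereas you spell out why it holds.
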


\begin{proof}
By definition of the simple amalgam we have $C(D/A) = C(B_1/A)\cup C(B_2/A)$ and so by Lemma \ref{dosFreeAmalgam} $\dosover{B_2}{A} = \dosover{B_2}{B_1}$.
\end{proof}

\begin{Cor}\label{leqsInSimpleAmalgam} In the notation of the above lemma: If $A\leqs B_2$ then $B_1 \leqs D$. If also $A\leqs B_1$ then $A\leqs D$.
\end{Cor}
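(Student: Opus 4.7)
The corollary follows from a direct application of Lemma \ref{dosSimpleAmalgam} together with transitivity of $\leqs$ (Corollary \ref{leqsTransitivity}).

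For the first assertion, assume $A \leqs B_2$, and consider first the case where $B_1$ is finite. To verify $B_1 \leqs D$ it suffices to prove $\dos(Y) \geq \dos(B_1)$ for every finite $Y$ with $B_1 \subseteq Y \subseteq D$. Setting $Y_2 := Y \cap B_2$, the inclusions $B_1 \subseteq Y \subseteq B_1 \cup B_2$ force $Y = B_1 \cup Y_2$ and $B_1 \cap Y_2 = B_1 \cap B_2 = A \subseteq Y_2$, so by Observation~\ref{simpleAmalgamObs}(ii), $Y$ is itself (as a substructure of $D$) a simple $S$-amalgam of $B_1$ and $Y_2$ over $A$. Lemma~\ref{dosSimpleAmalgam} then yields
\[
\dos(Y) - \dos(B_1) \;=\; \dosover{Y_2}{B_1} \;=\; \dosover{Y_2}{A} \;=\; \dos(Y_2) - \dos(A),
\]
which is non-negative since $A \leqs B_2$ and $A \subseteq Y_2 \subseteq B_2$.

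When $B_1$ is infinite, the definition of $\leqs$ demands that $Z \leqs D$ for every finite $Z \leqs B_1$. Fixing such a $Z$ and a finite $Y$ with $Z \subseteq Y \subseteq D$, Lemma~\ref{StrongSubset} applied to $Z \leqs B_1$ gives $Z \leqs Y \cap B_1$ and hence $\dos(Y \cap B_1) \geq \dos(Z)$; it remains to verify $\dos(Y) \geq \dos(Y \cap B_1)$. This is obtained by enlarging $Y$ within $D$ by finitely many elements of $A$ so that every clique of the enlargement $\hat Y$ straddling the $B_1$ and $B_2$ parts meets $A$ in at least three points; this makes the ``furthermore'' hypothesis of Lemma~\ref{dosFreeAmalgam} apply to the decomposition $\hat Y = (\hat Y \cap B_1) \cup (\hat Y \cap B_2)$, the same simple-amalgam computation then yields $\dos(\hat Y) \geq \dos(\hat Y \cap B_1)$, and the inequality for $Y$ follows by transferring back via Lemma~\ref{StrongSubset} and Corollary~\ref{intersectionOfStrongIsStrong}.

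The second assertion is then immediate: assuming also $A \leqs B_1$, the first part yields $B_1 \leqs D$, and applying transitivity of $\leqs$ (Corollary~\ref{leqsTransitivity}) to $A \leqs B_1 \leqs D$ gives $A \leqs D$. The substantive step is the finite case, where recognising $Y$ as itself a simple amalgam over $A$ and invoking Lemma~\ref{dosSimpleAmalgam} does essentially all the work; the main obstacle is the infinite case, since Observation~\ref{simpleAmalgamObs}(ii) requires the base of the amalgam to be exactly $A$ (not $Y \cap A$), and this is handled by the enlargement described above.
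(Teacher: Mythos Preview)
Your finite-case argument is correct and essentially identical to the paper's: reduce to $Y_2 = Y \cap B_2$, recognise $B_1 \cup Y_2$ as a simple amalgam via Observation~\ref{simpleAmalgamObs}(ii), and apply Lemma~\ref{dosSimpleAmalgam} to get $\dosover{Y_2}{B_1} = \dosover{Y_2}{A} \geq 0$. The second assertion via transitivity also matches the paper exactly.

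The infinite-$B_1$ paragraph, however, is both unnecessary and incomplete. It is unnecessary because the corollary inherits ``the notation of the above lemma'' (Lemma~\ref{dosSimpleAmalgam}), whose statement uses $\dos$ and is therefore about finite structures; every application of this corollary in the paper is to finite $A,B_1,B_2$. It is incomplete because your ``transferring back'' step does not work as stated: from $\dos(\hat Y) \geq \dos(\hat Y \cap B_1)$ you cannot directly conclude $\dos(Y) \geq \dos(Y \cap B_1)$, since enlarging $Y$ to $\hat Y$ by elements of $A$ can change both sides of that inequality, and neither Lemma~\ref{StrongSubset} nor Corollary~\ref{intersectionOfStrongIsStrong} bridges that gap without further argument. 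In addition, the enlargement procedure itself (adding elements of $A$ so that every straddling clique of $\hat Y$ meets $A$ in at least three points) may create new straddling cliques in $\hat Y$, and you have not argued that this process terminates. You should simply drop this paragraph; the finite case \emph{is} the statement.
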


\begin{proof}
Say $A\leqs B_2$. Let $A\subseteq X\subseteq D$ then $\dosover{X}{B_1} = \dosover{X\cap B_2}{B_1}$ then it is safe to assume $X\subseteq B_2$. By Observation \ref{simpleAmalgamObs}.i and Lemma \ref{dosSimpleAmalgam} $\dosover{X}{B_1} = \dosover{X}{A} \geq 0$. So $B_1\leqs D$.

Now say $A\leqs B_1$ also, then by transitivity of $\leqs$, $A\leqs D$.
\end{proof}

\section{The age of $M_s$}\label{age}
$\calC = \setarg{A}{A\text{ is a finite }R\text{-structure with }\emptyset \leqslant A}$ is the age of $M$. We are looking for an analogous description of the age of $M_s$.

In order to do so, we attempt to extract information regarding a structure's $R$-predimension from its $S$-predimension. We first present and discuss the notion of \emph{outsourcing} $S$-cliques and how to apply it.

\subsection{Outsourcing A Clique}\label{ageSubsecOne}
\begin{notation} In the context of $R$-structures, let $K$ be a set of cliques. Let $A$ be some $R$-structure. Denote as follows:
\begin{itemize}
\item
$W_K = \bigcup_{C\in K}wit(C)$, the set of elements witnessing cliques from $K$.
\item
$W_A = W_{C(A)}$
\item
$EW_K(A) = W_K\setminus (A\cup\bigcup K)$, the set of witnesses of cliques in $K$ that are not members of $A$ or of cliques in $K$.
\item
$EW^1_K(A) = \setarg{x\in EW_K(A)}{x\text{ witnesses exactly one clique from }K}$
\end{itemize}
\end{notation}

\begin{definition}\label{outsourceDef}
Let $A\subseteq B$ be $R$-structures. Let $K \subseteq C(B)$ such that $W_K\subseteq B$. Let a specific clique $(C,\set{x,y})\in K$ be given. Define:
\begin{enumerate}[(1)]
\item
Let $z\in \set{x,y}$. Define $z^C = z$ if $z\in EW^1_K(A)$, and otherwise let $z^C$ be some completely new element.
\item
\[\outsource{B}{A}{C} = B\cup\set{x^C,y^C}\]
\[R(\outsource{B}{A}{C}) = RW(C, \set{x^C,y^C}) \cup (R_A\setminus RW(C,\set{x,y}))\cup \bigcup_{D\in K\setminus\set{C}} RW(D)\]
\end{enumerate}
The set $\outsource{B}{A}{C}$ with the relation $R(\outsource{B}{A}{C})$ is \emph{the structure $B$ with core $A$ after the outsourcing of the clique $\clqwit{C}{x,y}$ with respect to $K$}.
\end{definition}

Outsourcing is a way for us to simplify a structure while leaving certain aspects of the structure unchanged, namely, the $R$-diagram of $A$ and the cliques of $K$.

Intuitively, we add a pair of witnesses external to the structure, remove the relations that generate the clique $C$, connect the new witnesses to the clique $C$ such that it will reform as a clique, and finally, we add back any relations that we removed that are used in other cliques in $K$. To be more precise, an added witness is not necessarily new, but this only happens when a witness is virtually external.

We prove that our description of $\outsource{B}{A}{C}$ is accurate.

\begin{lemma}\label{outsourcingRespectsS} Let $A\subseteq B$ be $R$-structures with $K\subseteq C(B)$ and $\clqwit{C}{x,y}\in K$. Denote $\outsource{B}{A}{C}$ the structure $B$ with core $A$ after the outsourcing of the clique $\clqwit{C}{x,y}$ with respect to $K$, where the new witnesses are denoted $x^C,y^C$. Then:
\begin{enumerate}[(i)]
\item
Let $\clqwit{D}{a,b}\in C(B)\setminus\set{\clqwit{C}{x,y}}$. Then $\witclq{a,b}{\outsource{B}{A}{C}} \subseteq \witclq{a,b}{B} = D$. (i.e. the clique $\clqwit{D}{a,b}$ gained no new elements during the outsourcing)
\item
$K\setminus\set{\clqwit{C}{x,y}}\subseteq C(\outsource{B}{A}{C})$
\item
Let $\set{a,b}$ witness a clique in $\outsource{B}{A}{C}$. Then $\set{a,b}$ already witness a clique in $B$ unless $\set{a,b} = \set{x^C,y^C}$. This means that no completely new cliques were unintentionally formed during the outsourcing.
\item
$RW(C,\set{x^C,y^C})\cap RW(D) = \emptyset$ for all $D\in C(\outsource{B}{A}{C})\setminus\set{(C,\set{x^C,y^C})}$.
\end{enumerate}
\end{lemma}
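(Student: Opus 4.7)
The proof is a careful accounting exercise based on the definition of $R(\outsource{B}{A}{C})$. The crucial preliminary remark is that every relation in the outsourced structure lies in exactly one of three sets: (a) the new generating relations $RW(C,\set{x^C,y^C})$, each of which has witness pair $\set{x^C,y^C}$; (b) relations of $R_A\setminus RW(C,\set{x,y})$; (c) relations of $\bigcup_{D'\in K\setminus\set{C}}RW(D')$. Classes (b) and (c) are subsets of $R_B$, so altogether $R(\outsource{B}{A}{C})\subseteq R_B\cup RW(C,\set{x^C,y^C})$, and every relation of the outsourced structure which is \emph{not} in class (a) already lives in $R_B$.

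For (i), I fix $(D,\set{a,b})\in C(B)\setminus\set{(C,\set{x,y})}$ and an arbitrary $e\in\witclq{a,b}{\outsource{B}{A}{C}}$, and split on the class of the relation $(e,a,b)$. If it lies in (b) or (c), it is already a relation of $R_B$, and maximality of $D$ in $B$ yields $e\in\witclq{a,b}{B}=D$. If it lies in (a), then $\set{a,b}=\set{x^C,y^C}$; I rule this out by a sub-case split on whether $x^C,y^C$ are freshly introduced or coincide with $x,y$: if either is fresh then it is not an element of $B$, contradicting $a,b\in B$; if both equal the originals, the pair $\set{x,y}$ witnesses the unique maximal clique $C$ in $B$, forcing $D=C$, also a contradiction. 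Parts (ii) and (iii) follow quickly: for (ii), by definition $RW(D')\subseteq R(\outsource{B}{A}{C})$ for every $D'\in K\setminus\set{C}$, so writing $\set{a,b}=wit(D')$ gives $D'\subseteq\witclq{a,b}{\outsource{B}{A}{C}}$, and (i) supplies the reverse inclusion; for (iii), any pair $\set{a,b}\neq\set{x^C,y^C}$ witnessing a clique in the outsourced structure sees only relations from (b) and (c), which lie in $R_B$, so the same pair witnesses a clique already in $B$.

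Part (iv) is the most delicate, and is where I expect the main obstacle. I plan to invoke Observation~\ref{sharingRelation}: if a maximal clique $D\neq(C,\set{x^C,y^C})$ of $\outsource{B}{A}{C}$ shares a relation with $(C,\set{x^C,y^C})$, then $D$ must share one of the witnesses $x^C$ or $y^C$. The core of the proof is to show that in $\outsource{B}{A}{C}$ every relation involving $x^C$ (and symmetrically $y^C$) lies in $RW(C,\set{x^C,y^C})$. When $x^C$ is fresh this is immediate: no relation of $R_B$ can involve it. When $x^C=x$, I would use the definition of $EW^1_K(A)$: the fact $x\notin A$ kills the contribution from (b), and the facts that $x$ lies in no clique of $K$ and witnesses no clique of $K\setminus\set{C}$ kill the contribution from (c). Repeating the analysis for $y^C$ forces $wit(D)=\set{x^C,y^C}$; combined with a brief auxiliary argument of the same flavour (showing that $(C,\set{x^C,y^C})$ is itself maximal in the outsourced structure, so that $\witclq{x^C,y^C}{\outsource{B}{A}{C}}=C$), this gives $D=(C,\set{x^C,y^C})$, contradicting the choice of $D$.
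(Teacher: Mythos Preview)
Your plan for (i)--(iii) matches the paper's, with one small slip in (i) (and the parallel step in your (iii)): from $(e,a,b)\in RW(C,\{x^C,y^C\})$ you only obtain $\{x^C,y^C\}\subseteq\{e,a,b\}$, hence that \emph{at least one} of $x^C,y^C$ lies in $\{a,b\}$ --- not that $\{a,b\}=\{x^C,y^C\}$. (For instance one could have $a=x^C=x$, $b\in C$, $e=y^C$.) The paper records the weaker conclusion $\{x^C,y^C\}\cap\{a,b\}\neq\emptyset$ and then appeals to the fact that the element of $\{x^C,y^C\}$ falling into $\{a,b\}$ cannot serve as witness for any other clique; the cleanest way to justify that is precisely the analysis you spell out in your treatment of (iv), namely that in $\outsource{B}{A}{C}$ every relation containing $x^C$ (respectively $y^C$) already lies in $RW(C,\{x^C,y^C\})$. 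Promote that observation to the top and (i)--(iii) go through as you describe.

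For (iv) your route genuinely differs from the paper's. The paper first combines (i) and (iii) to conclude $RW(D)\subseteq R_B$ for any maximal $D\neq(C,\{x^C,y^C\})$ of the outsourced structure, and then splits on whether $\{x^C,y^C\}\subseteq B$: if some $z^C$ is fresh then $RW(C,\{x^C,y^C\})\cap R_B=\emptyset$ immediately; if both equal the originals then $x,y\in EW^1_K(A)$, and one argues that $\{a,b\}\cap\{x,y\}=\emptyset$ so that no relation of $RW(D)$ can contain $x$ or $y$. Your argument via Observation~\ref{sharingRelation} followed by the ``all relations through $x^C$ lie in $RW(C,\{x^C,y^C\})$'' analysis is more self-contained: it does not feed (i) back into (iv), and it makes explicit the single structural fact that drives the whole lemma (the paper uses the same fact, but only inside its proof of (iii)). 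Either approach works.
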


\begin{proof}
\begin{enumerate}[(i)]
\item
Say $(c,a,b)\in R(\outsource{B}{A}{C})$. If $(c,a,b)\notin R_B$ then because all relations added to the structure contain both $x^C$ and $y^C$, we must have $x^C,y^C\in \set{c,a,b}$ and therefore $\set{x^C,y^C}\cap \set{a,b} \neq \emptyset$. By assumption, $x^C,y^C$ do not witness a clique that is not $\clqwit{C}{x,y}$ and so this is impossible. Thus $(c,a,b)\in R_B$ and so $c\in D$.

\item
Say $\clqwit{D}{a,b}\in K\setminus\set{\clqwit{C}{x,y}}$. None of the generating relations of $D$ was removed from the structure because $\clqwit{D}{a,b}$ is in $K\setminus\set{\clqwit{C}{x,y}}$. Therefore $\clqwit{D}{a,b}$ did not lose or gain (by $(i)$) any elements during the outsourcing and $\clqwit{D}{a,b} \in C(\outsource{B}{A}{C})$.

\item
Let $\set{a,b}$ witness a clique in $\outsource{B}{A}{C}$. Let $D\subseteq \outsource{B}{A}{C}$ be the universe of a clique in $\outsource{B}{A}{C}$ witnessed by $\set{a,b}$. If $RW(D,\set{a,b})\subseteq R_B$ then we are done because $\set{a,b}$ witness a clique in $B$, so assume this is not the case. Let $(c,a,b)\in RW(D,\set{a,b})\setminus R_B$, so $\set{x^C,y^C}\subseteq \set{c,a,b}$ and so without loss of generality we have $b = y^C$. By definition of $y^C$ and $R(\outsource{B}{A}{C})$, the only relations in $R(\outsource{B}{A}{C})$ containing $y^C$ are in $RW{\clqwit{C}{x^C,y^C}}$. Because $b = y^C$ appears in all generating relations of $D$, it must be that both $x^C$ and $y^C$ appear in all relations in $RW(D,\set{a,b})$. So we must also have $a=x^C$. Thus either $\set{a,b}$ witness a clique in $A$ or $\set{a,b} = \set{x^C,y^C}$ and we are done.

\item
Let $\clqwit{D}{a,b}\in C(\outsource{B}{A}{C})\setminus\set{\clqwit{C}{x^C,y^C}}$. We have shown that no completely new cliques were formed and no cliques gained new elements so $RW{\clqwit{D}{a,b}}\subseteq R_B$. If $\set{x^C,y^C}\nsubseteq B$ then $RW{\clqwit{C}{x^C,y^C}}\cap R_B = \emptyset$ and we are done. So assume $\set{x^C,y^C}\subseteq B$. By definition of $x^C$ and $y^C$ this must mean $\set{x,y} = \set{x^C,y^C}$ and $x,y\in EW^1_K(A)$. So the only generating relations $x$ or $y$ appear in, are relations in $RW{\clqwit{C}{x,y}}$. Since in this case $\set{a,b}\cap \set{x,y} = \emptyset$ we also have $RW{\clqwit{D}{a,b}}\cap RW{\clqwit{C}{x,y}} = \emptyset$.
\end{enumerate}
\end{proof}

\begin{obs}\label{outsourceObs}\ 
Notation is as in the above lemma.
Denote:
\[\alpha = \set{x,y}\cap EW^1_K(A)\]
\[\beta = RW{\clqwit{C}{x,y}}\cap \bigcup_{D\in K\setminus \set{C}} RW(D)\]

Observe that:
\[\outsource{B}{A}{C} = B \amalg (\set{x^C,y^C}\setminus\alpha)\]
\[R(\outsource{B}{A}{C}) \subseteq (R_B\setminus RW{\clqwit{C}{x,y}})\amalg \beta \cup RW{\clqwit{C}{x^C,y^C}}\]

Then:
\begin{align*}
d_0(\outsource{B}{A}{C}) &= |\outsource{B}{A}{C}| - |R(\outsource{B}{A}{C})|
\\&\geq (|B| + (2-|\alpha|)) - ((|R_B| - |RW{\clqwit{C}{a,b}}|) + |RW{\clqwit{C}{x^C,y^C}}| + |\beta|)
\\&= (|B| + 2 - |\alpha|) - (|R_B| - |C| + |C| + |\beta|)
\\&= |B| - |R_B| + 2 - |\alpha| - |\beta|
\\&= d_0(B) + 2 -(|\alpha| + |\beta|)
\end{align*}
\end{obs}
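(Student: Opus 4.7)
The plan is to verify the three consecutive assertions packaged inside the observation: the universe decomposition, the inclusion describing $R(\outsource{B}{A}{C})$, and the resulting lower bound on $d_0(\outsource{B}{A}{C})$. Each is an unpacking of Definition \ref{outsourceDef}; the only non-bookkeeping step is correctly accounting for the relations in generators of cliques in $K \setminus \set{C}$ that happen to coincide with generators of $\clqwit{C}{x,y}$, which is exactly what $\beta$ measures.

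First, by Definition \ref{outsourceDef} the universe of $\outsource{B}{A}{C}$ is $B \cup \set{x^C, y^C}$. For each $z \in \set{x,y}$, the element $z^C$ equals $z$ precisely when $z \in EW^1_K(A)$ and is a fresh element otherwise. Since $W_K \subseteq B$ (an assumption of the ambient lemma), we have $\alpha \subseteq B$, while the fresh elements do not lie in $B$ by construction. Hence $\set{x^C,y^C} \cap B = \alpha$, yielding $\outsource{B}{A}{C} = B \amalg (\set{x^C,y^C} \setminus \alpha)$.

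Next, for the relation containment, I would split the defining formula for $R(\outsource{B}{A}{C})$ into its three summands and bound each. The summand $RW(\clqwit{C}{x^C,y^C})$ appears verbatim on the right. The summand $R_A \setminus RW(\clqwit{C}{x,y})$ is contained in $R_B \setminus RW(\clqwit{C}{x,y})$ because $A \subseteq B$. For the third summand $\bigcup_{D \in K\setminus \set{C}} RW(D)$, I would partition each generator by whether it lies in $RW(\clqwit{C}{x,y})$: those that do contribute to $\beta$ by definition, while those that do not lie in $R_B \setminus RW(\clqwit{C}{x,y})$ since $\bigcup_D RW(D) \subseteq R_B$. The disjointness $(R_B \setminus RW(\clqwit{C}{x,y})) \amalg \beta$ is automatic because $\beta \subseteq RW(\clqwit{C}{x,y})$.

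Finally, the $d_0$ bound is a direct computation from the two preceding items. The first gives $|\outsource{B}{A}{C}| = |B| + 2 - |\alpha|$ exactly. The second, together with the disjointness, bounds $|R(\outsource{B}{A}{C})| \leq (|R_B| - |C|) + |\beta| + |C| = |R_B| + |\beta|$, using $|RW(C,p)| = |C|$ for any witness pair $p$. Subtracting gives $d_0(\outsource{B}{A}{C}) \geq |B| - |R_B| + 2 - (|\alpha| + |\beta|) = d_0(B) + 2 - (|\alpha| + |\beta|)$. The only real pitfall is the middle step: one must notice that the definition of $R(\outsource{B}{A}{C})$ does not remove those relations in $\bigcup_{D \in K \setminus \set{C}} RW(D)$ that happen to be generators of $\clqwit{C}{x,y}$, and that this leftover overlap is exactly the set $\beta$; once this is identified, the rest of the argument is routine cardinality arithmetic.
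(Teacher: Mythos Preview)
Your proposal is correct and matches the paper's approach exactly: the observation is an immediate unpacking of Definition~\ref{outsourceDef}, and the paper presents the computation inline without further commentary, so your detailed verification of the universe decomposition, the relation containment, and the cardinality arithmetic is precisely what is needed. Your identification of the one non-trivial point---that the generators in $\bigcup_{D\in K\setminus\{C\}} RW(D)$ which coincide with generators of $\clqwit{C}{x,y}$ are exactly $\beta$---is the only thing worth naming, and you name it.
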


Note that if $\alpha\neq \emptyset$ then $\beta = \emptyset$: Let $z\in \alpha$. Let $(a,b,z) RW{\clqwit{C}{x,y}}$. By definition, $z$ is not a member of any cliques in $K$ and $z$ witnesses exactly one clique in $K$ (i.e. the clique $\clqwit{C}{x,y}$). Therefore, $(a,b,z)$ cannot be a generating relation of another clique in $K$ and thus $\beta = \emptyset$.

So $(|\alpha|+|\beta|) = max\set{|\alpha|,|\beta|}$, and therefore:

\[d_0(\outsource{B}{A}{C}) = d_0(B) + (2 - max\set{\alpha,\beta})\]

Since $|\alpha|\leq 2$ always, it follows that $d_0(\outsource{B}{A}{C})\geq d_0(A)$ unless $|\beta| > 2$.

The next lemma shows there is always a clique $C\in K$ that we can outsource such that $d_0(\outsource{B}{A}{C})\geq d_0(A)$.

\begin{definition} Let $\emptyset\leqslant A$ be an $\set{R}$ structure and $K = \set{\otn{C}{n}}\subseteq C(A)$ maximal $S$-cliques. We define the graph of $A$ with respect to $K$, $G^{A}_{K} = (V,E)$ with:
\\$V = \setarg{wit(C_i)}{1\leq i\leq n}$
\\$E = \setarg{(wit(C_i),wit(C_j))}{RW(C_i)\cap RW(C_j)\neq\emptyset}$
\end{definition}

Less formally: The vertices of $G^A_{K}$ are the cliques of $K$, each represented by its pair of witnesses, and there is an edge between two cliques that use the same $R$-relation.

\begin{lemma}\label{goodclique}
Let $\emptyset\leqslant A$ be an $R$-structure and $K = \set{\otn{C}{n}}\subseteq C(A)$ with $W_K\subseteq A$. Then there exists a clique $C_k$ such that $|\beta_k| = |RW(C_k)\cap \bigcup_{\substack{i<n\\i\neq k}} RW(C_i)| \leq 2$.
\end{lemma}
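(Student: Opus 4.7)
The plan is a proof by contradiction: assume $\beta_k\ge 3$ for every $k\in\{1,\dots,n\}$ and exhibit a subset of $A$ with negative predimension, contradicting $\emptyset\leqslant A$.

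I begin by invoking Observation~\ref{sharingRelation}: any relation shared by two of the cliques $C_i,C_j$ equals $wit(C_i)\cup wit(C_j)$, so it is a $3$-element subset of $W_K$, and it can be a generator for at most three cliques (one for each of its three $2$-element subsets viewed as a witness pair). Classify the shared relations by whether they generate exactly two cliques (counting $p$ of these) or exactly three (counting $t$); then $\sum_k\beta_k=2p+3t\ge 3n$ under the contradiction hypothesis.

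I then extract two inequalities from $\emptyset\leqslant A$. First, applied to $W_K$ itself: every shared relation lies in $W_K$, so $|R_{W_K}|\ge p+t$, and $d_0(W_K)\ge 0$ gives $|W_K|\ge p+t$. Second, for each $x\in W_K$, let $d(x)$ be the number of cliques through $x$ and let $B_x=\{x\}\cup\{y:wit(C_i)=\{x,y\}\text{ for some }i\}$, a set of size $d(x)+1$. Every pair of cliques through $x$ sharing a relation via $x$ contributes a distinct triple to $R_{B_x}$, so $d_0(B_x)\ge 0$ bounds the number of such pairs by $d(x)+1$. Summing over $x\in W_K$, and noting that a pair-shared relation contributes once (through its unique common witness) while a triple-shared one contributes three times (once for each pair of its cliques), gives $p+3t\le 2n+|W_K|$.

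These inequalities, combined with $n\le\binom{|W_K|}{2}$ (since distinct maximal cliques have distinct witness pairs), force $|W_K|\ge n$ and $t\ge 3n-2|W_K|$. To convert this into an actual contradiction I appeal to the graph $H$ on $W_K$ whose edges are $\{wit(C_k):k\le n\}$: this graph has exactly $n$ edges, every vertex has positive degree, and each triple-shared relation corresponds to a triangle of $H$. A Kruskal--Katona-type bound on the number of triangles in such an $H$ falls far short of $3n-2|W_K|$, delivering the contradiction.

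The main obstacle is this final step: the linear inequalities by themselves reach only $|W_K|\ge n$, so the crux lies in bounding the number of triangles of $H$ in a way that contradicts the required lower bound $t\ge 3n-2|W_K|$. A clean treatment may require a short case analysis, separating small values of $|W_K|-n$ (where one argues by listing the possible graphs with few edges) from the asymptotic regime (where Kruskal--Katona yields the bound uniformly).
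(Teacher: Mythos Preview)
Your overall plan---assume every $\beta_k\ge 3$ and exhibit a subset of $A$ with negative $d_0$---is exactly the paper's. The identities $\sum_k\beta_k=2p+3t$ and the inequality $p+t\le|W_K|$ (from $d_0(W_K)\ge0$) are correct, and your local bound $p+3t\le 2n+|W_K|$ is also valid, though as you will see it adds nothing.

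The gap is in the final step. Write $w=|W_K|$. Your three linear inequalities, together with $p,t\ge0$, reduce to $3n-2w\le t\le 2w-n$, which is equivalent to $w\ge n$ and nothing more; for instance $(p,t)=(\lceil 3n/2\rceil,0)$ satisfies all of them whenever $w\ge 3n/2$. So the linear system never yields a contradiction. Your invocation of Kruskal--Katona does not close this: that bound controls triangles of $H$ only in terms of the edge count $n$, giving roughly $t\le\Theta(n^{3/2})$, which for large $n$ \emph{exceeds} $3n-2w\le n$ rather than falling short of it; and in the regime $w\ge 3n/2$ the lower bound $3n-2w\le0$ is vacuous, so no triangle bound can help. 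Your proposed split into ``small $w-n$'' versus ``asymptotic regime'' founders because nothing in your argument so far forces $w-n$ to be small.

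What is missing is precisely an upper bound $w\le n+1$, and this is the crux of the paper's proof. The paper first reduces to a single connected component of the \emph{sharing graph} $G$ (vertices the pairs $wit(C_i)$, edges between cliques that share a relation)---legitimate since $\beta_k$ only sees cliques in $C_k$'s component. But two cliques sharing a relation have a common witness, so the corresponding edges of your graph $H$ are adjacent; hence connectivity of $G$ forces $H$ to be connected, and a connected $H$ with $n$ edges has at most $n+1$ vertices. With $w\le n+1$ in hand your own inequality $t\ge 3n-2w\ge n-2$ already finishes things (a connected graph on $n+1$ vertices and $n$ edges is a tree with no triangles; on $n$ vertices it is unicyclic with at most one), and this is essentially what the paper does via its case split $s=0$, $t=0$, $s,t>0$.
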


\begin{proof}
Consider the undirected graph $G = G^A_K = (V,E)$.

As in Observation \ref{sharingRelation} a relation used by two distinct cliques $C_i,C_j$ must contain both pairs of witnesses, and so the relation is $wit(C_i)\cup wit(C_j)$. So each edge of the graph corresponds to a single, uniquely determined relation.

In case one relation $\set{a,b,c}$ is used in three different cliques (that must be represented by witnesses $\set{a,b},\set{b,c},\set{a,c}$) we call it a \emph{strong triangle} as it creates a triangle in the graph. We call an edge that is not a part of a strong triangle a \emph{simple edge}. There may be triangles in the graph that are not strong triangles, we treat these as three simple edges.

Note that for a single $R$-relation shared among different cliques in $K$, either it corresponds to one simple edge or to three edges (a strong triangle). Also note that this implies that distinct strong triangles cannot share an egde. Finally, observe that if a vertex is a part of a strong triangle, there are two edges corresponding to the same relation originating from that vertex. Thus, the number of edges originating in a vertex is not necessarily the number of relations the clique associated with the vertex shares.

Assume without loss of generality that $G$ is connected (otherwise take a connected component and refer to $n$ as the number of vertices in the component). We may assume this because a clique shares relations only with members of its connected component (better yet, only with its neighbours in the graph), so if we find a clique that shares at most two relations with cliques in its component, then it also shares at most two relations in general.

\dropline
Observe that $|\beta_k|\leq (n-1)$ for any $k\in\set{1,...,n}$, because any two cliques share at most one relation and there are at most $(n-1)$ other cliques to share relations with. As the lemma is trivially satisfied whenever $n<4$, we assume $n\geq 4$.

Assume that a clique as described in the lemma does not exist. So each clique uses at least 3 different relations used in other cliques. That is, as there are $n$ cliques, there must be at least $3n$ instances of a relation being used multiple times (where a simple edge is counted twice and a strong triangle is counted thrice). We henceforth call such an instance a \emph{recycling} of a relation, i.e. there must be at least $3n$ recyclings in our structure.

A simple edge implies two recyclings. A strong triangle implies three recyclings. Say in $G$ there are $t$ strong triangles and $s$ simple edges. One of the following three cases must occur and we show that for any of them $\emptyset \nleqslant A$:
\dropline
\\\underline{\textbf{Case $s=0$}}:
As $G$ has no simple edges, it is composed of strong triangles.
\dropline

\noindent\textbf{Claim.} There are at most (n-1) different points from $A$ appearing as witnesses in vertices of $V$. Namely, $|\bigcup V|\leq n-1$.
\begin{proof} Since $n>3$ and $G$ is connected, $G$ has at least two strong triangles and so $n\geq 5$. We prove inductively that for every $5\leq i \leq n$ there exists a set of $i$ vertices with at most $i-1$ witnesses appearing in them.

\underline{$i = 5$}: Since $G$ is connected, composed only of strong triangles and has at least two strong triangles, there are two strong triangles with a common vertex, as in the illustration below. Let the two strong triangles be $\set{\set{a,b},\set{b,c},\set{a,c}}$ and $\set{\set{a,b},\set{b,d},\set{a,d}}$ with $\set{a,b}$ the common vertex. There must be exactly four distinct points from $A$ among the witnesses of the five vertices forming the strong triangles. Namely, $5$ vertices, $(5-1)$ witnesses.

\bigskip
\centerline{\xymatrix{
\set{a,c} \ar@{-}[rd] \ar@{-}[dd]	&	& \set{a,d}	\ar@{-}[ld] \ar@{-}[dd]\\
	& \set{a,b}	\ar@{-}[rd] \ar@{-}[ld]	\\
\set{b,c}	&	& \set{b,d}
}}

\bigskip
\underline{$i+1\leq n$}: Assume there is a set $I$ of $i$ vertices with $i-1$ witnesses appearing in them. Since $G$ is connected, choose $v\in V\setminus I$ connected by an edge to some $u\in I$. As $(v,u)\in E$, $u$ and $v$ share a witness that is already amongst the $i-1$ witnesses appearing in $I$'s vertices. Therefore, $I\cup \set{u}$ is a set of vertices of size $i+1$ with at most $(i-1)+1 = i$ witnesses as required.
\\For $i=n$ the set of vertices must be $V$ and there are at most $n-1$ witnesses appearing in it.
\end{proof}
As there are at least $3n$ recyclings and each strong triangle contributes $3$ recyclings, there must be at least $n$ strong triangles in the graph. Each strong triangle corresponds to a specific relation among witnesses and so: $d_0(\bigcup V) = |\bigcup V| - r(\bigcup V) \leq (n-1) - n < 0$. Since $\bigcup V \subseteq A$, we get $\emptyset \nleqslant A$.
\dropline
\\\underline{\textbf{Case $t=0$}}:
As $G$ has no strong triangles, it is composed of simple edges.

We claim there are at most $(n+1)$ distinct points from $A$ appearing in $V$:

We build the set of vertices $V$ inductively, such that $V_i\subseteq V$ is a set of $i$ vertices of $V$ with at most $i+1$ points of $A$ appearing in vertices of $V_i$.

Choose an arbitrary vertex $v\in V$, denote $V_1 = \set{v}$. There are $2$ points from $A$ appearing in $v$. Given $V_i$ a set of $i<n$ vertices with at most $(i+1)$ witnesses appearing in them, choose $u\in V\setminus V_i$ connected by an edge to some vertex $v\in V_i$. Such a $u$ exists because $G$ is connected. Define $V_{i+1} = V_i\cup \set{u}$. Since $u$ was connected by an edge to an element of $V_i$, there is at most one witness appearing in $u$ not already appearing in $V_i$. Therefore, $V_{i+1}$ is a set of $(i+1)$ vertices with at most $(i+2)$ points from $A$ appearing in it. After $n$ steps we get $V_n = V$ with at most $(n+1)$ points from $A$ appearing in $V$.

As each simple edge contributes $2$ recyclings and there are at least $3n$ recyclings, there must be at least $\frac{3n}{2}$ simple edges in $G$. Each simple edge represents a relation between witnesses. Recall $n>3$ and so we have: $d_0(\bigcup V) = |\bigcup V| - r(\bigcup V) \leq (n+1) - \frac{3n}{2} < 0$. Since $\bigcup V \subseteq A$, we get $\emptyset \nleqslant A$.
\dropline
\\\underline{\textbf{Case $t>0,s>0$}}:
We claim there are at most $n$ distinct points from $A$ appearing in $V$. By assumption $t>0$ and so there is $\set{u,v,w}$ a strong triangle in $G$. Denote $V_3 = \set{u,v,w}$ which is a set of $3$ vertices with $3$ points from $A$ appearing in them. As in the previous case, we proceed inductively. Let $V_i$ be a given set of $i$ vertices with $i$ with points from $A$ appearing in them. Let $z\in V\setminus V_i$ be connected to one of the vertices of $V_i$ and define $V_{i+1} = V_i\cup \set{z}$. As before, $V_{i+1}$ has $i+1$ vertices with at most $i+1$ points from $A$ appearing in them. With each step the size of the set increases by one and the number of points from $A$ appearing in the set increases by at most one. We then have $V_n = V$ has at most $n$ points from $A$ appearing in it.

As each simple edge contributes $2$ recyclings, each strong triangle contributes $3$ recyclings and there are at least $3n$ recyclings it must be that $3n\leq 2s + 3t$. Because $s>0$ we have: $3n \leq 2s+3t<3s+3t=3(s+t)$ and so $n<s+t$. Note now that as each simple edge and each strong triangle represent a single distinct relation among witnesses, $r(\bigcup V) \geq s+t > n$. And finally:
$d_0(\bigcup V) = |\bigcup V| - r(\bigcup V) < n - n = 0$. Since $\bigcup V \subseteq A$, we get $\emptyset \nleqslant A$.
\dropline

So if $\emptyset \leqslant A$, a clique as described in the lemma must exist and so the proof is complete.
\end{proof}

\subsection{Outsourcing A Set of Cliques}\label{ageSubsecTwo}

\begin{notation}
In the context of $R$-structures, let $K$ and $L$ be sets of cliques. Let $A$ be some $R$-structure. Denote as follows:
\begin{itemize}
\item
$EW_{K,L}(A) =  EW_{K\cup L}(A)\setminus W_L$, witnesses for cliques in $K$ that are not elements of $A$, members of cliques in either $K$ or $L$ or witnesses of cliques in $L$.

\item
$EW_{K,L}^1(A) = \setarg{x\in EW_{K,L}(A)}{x\text{ witnesses exactly one clique from }K}$
\end{itemize}
\end{notation}

\begin{definition} Let $A$ be a finite $R$-structure with $\emptyset \leqslant A$. We say $L\subseteq C(A)$ and $K\subseteq C(A)$ are \emph{mutually exclusive} if there are no cliques $C_1\in L$ and $C_2\in K$ such that $RW(C_1)\cap RW(C_2) \neq \emptyset$.
\end{definition}

\begin{definition}
Let $A\subseteq B$ be $R$-structures and let $K,L\subseteq C(B)$. Let $\calW_K = \setarg{x^C,y^C}{C\in K}$ be a set of $2|K|$ distinct new elements we call such elements \emph{designated witnesses}. Define:

\[\outsource{B}{A}{K,L} = (B\setminus EW_{K,L}(A))\amalg \calW_K\]
\[R(\outsource{B}{A}{K,L}) = \bigcup_{C\in L}RW{\clq{C}}\cup(R_A\setminus \bigcup_{C\in K}RW(C,wit(C))\amalg \bigcup_{C\in K} RW(C,\set{x^C,y^C})\]

The set $\outsource{B}{A}{K,L}$ with the relation $R(\outsource{B}{A}{K,L})$ is \emph{the structure $B$ with core $A$ after outsourcing $K$ with respect to $L$}.
\end{definition}

We now define an iterative process which alternatively defines the outsourcing of a set of cliques. The process allows us to keep track of the resulting structure's pre-dimension.

\begin{definition}
We say a tuple $(B,A,I,T)$ is \emph{valid} if $A\subseteq B$ are $R$-structures and $I\subseteq T\subseteq C(B)$ with $W_T\subseteq B$. We say the tuple is \emph{good} if in addition, $I$ and $T\setminus I$ are mutually exclusive.
\end{definition}

\begin{definition}\label{A^Kdefinition}
Let $A\subseteq B$ be finite $R$-structures with $\emptyset \leqslant B$. Let $K,L \subseteq C(B)$ mutually exclusive with $W_{K\cup L}\subseteq B$. We define a new structure inductively.

Define $(B_0,A_0,I_0,T_0) = (B,A,K,K\cup L)$.
Let $(B_i,A_i,I_i,T_i)$ be given and valid with $I_i \neq \emptyset$. Let $\clq{C}\in I_i$ be a clique as guaranteed by Lemma \ref{goodclique} for the $R$-structure $B_i$ and the set of cliques $I_i$. Define $B_{i+1} =\outsource{B_i}{A_i}{C}$, the structure $B_i$ with core $A_i$ after outsourcing $\clq{C}$ with respect to $T_i$ using external witnesses $\set{x^C,y^C}$. Define $A_{i+1} = A_i$. Let $I_{i+1} = I_i\setminus\set{\clq{C}}$ and let $T_{i+1} = (T_i\setminus\set{\clq{C}})\cup\set{\clqwit{C}{x^C,y^C}}$. Obviously $A_{i+1}\subseteq B_{i+1}$, $I_{i+1}\subseteq T_{i+1}$ and $T_{i+1} \subseteq C(A_{i+1})$, by Lemma \ref{outsourcingRespectsS}.ii, so $(B_{i+1},A_{i+1},I_{i+1},K_{i+1})$ is valid.

The process ends after $|K|$ steps, because $|I_{i+1}| = |I_i| - 1$ and $|I_0| = |K|$. We say $B_{|K|}$ is a \emph{gradual outsourcing of $K$ with core $(A,L)$}.
\end{definition}

\begin{obs}
It may seem that the above process defines a family of structures, because the resulting structure depends on the order in which cliques are outsourced. In fact, all choices lead to isomorphic structures, and it is not hard to believe that, when $K$ is not empty, any such gradual outsourcing is isomorphic to $\outsource{B}{A}{K,L}$. The proof is technical and somewhat tedious and therefore may be found in the appendix (Lemma \ref{gradualIsOutsourcingLemma}).
\end{obs}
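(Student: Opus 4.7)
The plan is to prove this by induction on $|K|$, exploiting the fact that the batch operation $\outsource{B}{A}{K,L}$ is manifestly order-independent (it is defined by a closed-form formula, not by iteration), so once we reduce the gradual process to a single application of the batch operation on a smaller parameter, order-independence comes for free.

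For the base case $|K|=0$ the gradual process terminates at $B_0=B$, and one checks $\outsource{B}{A}{\emptyset,L}=B$ directly since $EW_{\emptyset,L}(A)=\emptyset$ and $\calW_\emptyset=\emptyset$. For the inductive step, let $|K|=n\geq 1$, let $\clq{C_1}\in K$ be the clique chosen at the first step (guaranteed by Lemma \ref{goodclique}), and let $B_1=\outsource{B}{A}{C_1}$ with (possibly new) witnesses $\set{x^{C_1},y^{C_1}}$. Set $K_1=K\setminus\set{\clq{C_1}}$ and $L_1=L\cup\set{\clqwit{C_1}{x^{C_1},y^{C_1}}}$. One first checks that $(B_1,A,K_1,K_1\cup L_1)$ is good: $K_1$ and $L_1$ are mutually exclusive because the only new concern involves $\clqwit{C_1}{x^{C_1},y^{C_1}}$, and here $\set{x^{C_1},y^{C_1}}$ is either fresh or equal to the original $\set{x_{C_1},y_{C_1}}\in EW^1_{K\cup L}(A)$, which by definition of $EW^1$ witness no other clique in $K\cup L$. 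Hence the tail of the gradual process from step 1 onward is itself a gradual outsourcing of $K_1$ over $(A,L_1)$ starting from $B_1$, so by the induction hypothesis $B_n\cong \outsource{B_1}{A}{K_1,L_1}$.

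The remaining content is to establish
\[
\outsource{B_1}{A}{K_1,L_1}\ \cong\ \outsource{B}{A}{K,L}.
\]
I would prove this by writing down an explicit bijection $\pi$ between the two universes: identity on $A$ and on common elements of $B\setminus EW_{K,L}(A)$, sending the pair $\set{x^{C_1},y^{C_1}}$ (sitting inside $B_1$) to the fresh pair $\set{w_{C_1}^1,w_{C_1}^2}\subseteq\calW_K$ attached to $C_1$ in the batch version, and matching $\calW_{K_1}$ with $\calW_K\setminus\set{w_{C_1}^1,w_{C_1}^2}$ pair-by-pair. Well-definedness reduces to the calculation $EW_{K_1,L_1}(A)=EW_{K,L}(A)\cap W_{K_1}$ together with $\set{x^{C_1},y^{C_1}}\cap W_{K_1}=\emptyset$ (again using the $EW^1$ condition). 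Preservation of $R$ is then a direct comparison: unpacking the definition, both structures have relation set equal to
\[
\bigl(R_A\setminus\bigcup_{C\in K}RW(C,wit(C))\bigr)\ \cup\ \bigcup_{D\in L}RW(D)\ \cup\ \bigcup_{C\in K}RW(C,\text{new witnesses of }C),
\]
once the appropriate relabeling is applied, because in the recursive description the relations originally generating $C_1$ are already replaced at step 1 and the subsequent batch operation leaves them untouched (as $\clqwit{C_1}{x^{C_1},y^{C_1}}$ was added to $L_1$).

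The main obstacle is the bookkeeping in the well-definedness step. The delicate case is when the single-clique outsourcing at step 1 chooses to reuse the original witnesses (the case $x^{C_1}=x_{C_1}$, $y^{C_1}=y_{C_1}$): these elements lie in $B\setminus A$ and in $EW_{K,L}(A)$, so they are \emph{removed} from the universe on the batch side, yet they persist in $B_1\setminus EW_{K_1,L_1}(A)$ on the recursive side (they are no longer "external" because they now witness a clique of $L_1$). The bijection $\pi$ handles this by routing them through the fresh $C_1$-slot of $\calW_K$, and verifying that the $R$-relations involving them match up on both sides is where the careful use of the $EW^1$ hypothesis — i.e. that such reused witnesses participate in no other tracked clique — is essential. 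Order-independence of $B_{|K|}$ is then automatic: every gradual choice produces a structure isomorphic to the single, canonically defined $\outsource{B}{A}{K,L}$, hence to every other gradual choice.
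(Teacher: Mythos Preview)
Your approach is correct and genuinely different from the paper's. The paper (Lemma~\ref{gradualIsOutsourcingLemma}) does not induct on $|K|$ at all: it fixes an arbitrary gradual outsourcing $B_{|K|}$ and verifies \emph{directly} that, after the obvious relabelling, its universe and its relation set coincide with those of $\outsource{B}{A}{K,L}$. Concretely, it argues in four blocks --- $B_{|K|}\subseteq\outsource{B}{A}{K,L}$, the reverse inclusion, $R(B_{|K|})\subseteq R(\outsource{B}{A}{K,L})$, and the reverse --- by tracking, element by element and relation by relation, what is added, removed, or relabelled across all stages of the process. Your route instead factors the problem through a one-step compatibility lemma $\outsource{B_1}{A}{K_1,L_1}\cong\outsource{B}{A}{K,L}$ and lets induction absorb the iteration. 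This is cleaner conceptually (it isolates the only nontrivial content in a single step and makes order-independence a formal consequence of the closed-form definition), but the bookkeeping is comparable: your verification that $EW_{K_1,L_1}(A)=EW_{K,L}(A)\cap W_{K_1}$ and that $R_A$ as an induced substructure of $B_1$ differs from the original $R_A$ only in ways already absorbed by $\bigcup_{C\in K_1}RW(C)$ (using mutual exclusivity of $K$ and $L$) is essentially the same case analysis the paper spreads across its four blocks. Either argument would serve; yours would be easier to adapt if one later wanted a relative version over a varying core.
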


\begin{lemma} \label{outsourcePredimension}
Let $A\subseteq B$ be finite $R$-structures with $\emptyset \leqslant B$. Let $K,L \subseteq C(B)$ be mutually exclusive with $W_{K\cup L}\subseteq B$. Then $d_0(B)\leq d_0(\outsource{B}{A}{K,L})$.
\end{lemma}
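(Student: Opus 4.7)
The plan is to reduce the lemma to the gradual outsourcing construction of Definition \ref{A^Kdefinition} and then induct on the number of cliques to be outsourced. By the appendix fact noted in the observation immediately preceding this lemma, $\outsource{B}{A}{K,L}$ is isomorphic to any gradual outsourcing $B_{|K|}$ of $K$ with core $(A,L)$, so it suffices to show $d_0(B_0) \leq d_0(B_{|K|})$ along any particular run of the iterative process. I would prove by induction on $i$ that $d_0(B_i) \leq d_0(B_{i+1})$ for all $0 \leq i < |K|$.

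For the inductive step at stage $i$, the construction picks some $\clq{C} \in I_i$ as guaranteed by Lemma \ref{goodclique} applied to the structure $B_i$ and the set $I_i$: this yields a clique whose $\beta$-quantity (the relations it shares with the other cliques of $I_i$) satisfies $|\beta| \leq 2$. One must also observe that the relevant $\beta$ here really is taken only over $I_i$ and not over all of $T_i$: this is where the mutual exclusivity of $K$ and $L$ is used, an invariant that the iteration preserves since newly swapped-in witnesses of outsourced cliques cannot share relations with anything else. Now Observation \ref{outsourceObs} applied to the single-clique outsourcing $B_{i+1} = \outsource{B_i}{A_i}{C}$ gives
\[
d_0(B_{i+1}) \;=\; d_0(B_i) + \bigl(2 - \max\{|\alpha|,|\beta|\}\bigr) \;\geq\; d_0(B_i),
\]
because $|\alpha| \leq 2$ trivially and $|\beta| \leq 2$ by the choice of $C$. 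Chaining these inequalities yields $d_0(B) = d_0(B_0) \leq d_0(B_{|K|}) = d_0(\outsource{B}{A}{K,L})$, as required.

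The main obstacle is that Lemma \ref{goodclique} has the hypothesis $\emptyset \leqslant B_i$, and while we know $\emptyset \leqslant B_0 = B$, it is not immediate that this property is preserved by a single outsourcing step. I would therefore strengthen the induction hypothesis to: \emph{for every finite $D \subseteq B_i$ there is a substructure $D' \subseteq B$ with $d_0(D) \geq d_0(D')$.} The point is that the outsourcing at stage $i$ only touches the witness pair of one clique and the relations generating that clique; any substructure $D \subseteq B_{i+1}$ can be translated back by replacing each added external witness $x^C$ (respectively $y^C$) by the original $x$ (respectively $y$), and the contribution to $|D| - r(D)$ can only decrease or stay the same because the relations of $C$ are merely relocated rather than created. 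Combined with $\emptyset \leqslant B$, this gives $\emptyset \leqslant B_i$ at every stage and legitimises the repeated application of Lemma \ref{goodclique}. The bookkeeping in this translation is the only somewhat delicate part; the rest of the argument is the clean induction described above.
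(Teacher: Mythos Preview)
Your approach is exactly the paper's: reduce to a gradual outsourcing, maintain the \emph{goodness} invariant so that the $\beta$-count of the chosen clique relative to $T_i$ collapses to its $\beta$-count relative to $I_i$, and then combine Lemma~\ref{goodclique} with Observation~\ref{outsourceObs} to obtain $d_0(B_i) \leq d_0(B_{i+1})$ at each step.

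The gap you flag concerning the hypothesis $\emptyset \leqslant B_i$ of Lemma~\ref{goodclique} is real, and the paper simply does not address it: Definition~\ref{A^Kdefinition} invokes Lemma~\ref{goodclique} at each stage without checking applicability, and the present proof says only ``by assumption, $C$ satisfies the conclusion of Lemma~\ref{goodclique}''. Your translate-back fix is sound. A shorter alternative, if one opens the proof of Lemma~\ref{goodclique}, is to note that it only ever uses $d_0(\bigcup V) \geq 0$ for the set of witness-points $\bigcup V \subseteq W_{I_i} \subseteq W_K \subseteq B$; an easy induction shows that no outsourcing step introduces a relation among elements of $B$ that was not already in $R_B$, whence $d_0^{B_i}(\bigcup V) \geq d_0^{B}(\bigcup V) \geq 0$ follows directly from $\emptyset \leqslant B$.
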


\begin{proof} If $K = \emptyset$ then this is trivial. Assume $K$ is not empty.

Let $B_{|K|}$ be a gradual outsourcing of $K$ with core $(A,L)$. As in the above observation $d_0(B_{|K|}) = d_0(\outsource{B}{A}{K,L})$. We analyse the process by which $B_{|K|}$ was defined.

The tuple $(B_0,A_0,I_0,T_0)$ is good because by assumption the cliques $I_0 = K$ and $T_0\setminus I_0 \subseteq L$ are mutually exclusive.

We prove that if $(B_{i},A_{i},I_{i},T_{i})$ is good then $(B_{i+1},A_{i+1},I_{i+1},T_{i+1})$ is good. Say in this stage the outsourced clique was $C$. Recall that, as discussed in Lemma \ref{outsourcingRespectsS}, the outsourcing of $C$ does not change any of the other cliques in $T_i$. Let $D_1\in T_{i+1}\setminus I_{i+1}$ be a maximal clique in $B_{i+1}$ and let $D_2$ be any maximal clique in $I_{i+1}$.

If both $D_1\neq \clqwit{C}{x^C,y^C}$ then both $D_1$ and $D_2$ are unchanged from the previous stage, meaning $D_1\in T_i\setminus I_i$ and $D_2\in T_i$, therefore $RW(D_1) \cap RW(D_2) = \emptyset$ by goodness of $(B_i,A_i,I_i,T_i)$.

If $D_1 = C$ then by Lemma \ref{outsourcingRespectsS}.iv $D_1$ shares no relation with any other clique in $B_{i+1}$, in particular $D_2$. So again $RW(D_1) \cap RW(D_2) = \emptyset$.

In any case, $RW(D_1) \cap RW(D_2) = \emptyset$. Thus, $(B_{i+1},A_{i+1},I_{i+1},T_{i+1})$ is good.

We show monotonicity of predimension. For a given stage $i$, let $C\in I_i$ be the clique to be outsourced. By goodness of the tuple $(B_i,A_i,I_i,K_i)$, we have $RW(C) \cap RW(D) = \emptyset$ for any $D\in T_i\setminus I_i$. And so 
\[RW(C)\cap \bigcup_{D\in T_i\setminus\set{C}} RW(D) = RW(C)\cap \bigcup_{D\in I_i\setminus\set{C}} RW(D)\]
By assumption, $C$ satisfies the conclusion of Lemma \ref{goodclique}, and therefore
\[|RW(C)\cap \bigcup_{D\in I_i\setminus\set{C}} RW(D)| \leq 2\]

By Observation \ref{outsourceObs} we have $d_0(B_{i}) \leq d_0(B_{i+1})$. By transitivity, for $i\leq j$ we get $d_0(B_i) \leq d_0(B_j)$ and specifically $d_0(A) = d_0(B_0) \leq d_0(B_{|K|}) = d_0(\outsource{B}{A}{K,L})$ as desired.
\end{proof}

\subsection{$A\leqslant M \implies A \leqs M_s$}\label{ageSubsecThree}

\begin{definition}
Let $A\subseteq B\subseteq E$ be $R$-structures.

Recall that $C(B/A)$ is the set of cliques $C\in C(B)$ which do not extend a clique from $C(A)$ (i.e. $|C\cap A| \leq 2$).

We define $C^E(B/A)$ to be the set of cliques in $E$, extending a clique from $C(B/A)$. Namely $C^E(B/A) = \setarg{C\in C(E)}{C\cap B\in C(B/A)}$.

An equivalent definition is $C^E(B/A) = C(E/A)\setminus C(E/B)$.
\end{definition}

\begin{lemma}\label{outsourceDoAndDos}
Let $A\subseteq B\subseteq N$ be $R$-structures. Denote $E = B\cup W_B$, $K = C^E(B/A)$, $L = C^{E}(A)$. Assume that $W_L\subseteq A$ and $RW(C)\cap R_A = \emptyset$ for any $C\in K$. Then $d_0(\outsource{E}{A}{K,L}/A)\leq \dosover{B}{A}$. If in addition $E\setminus EW_{K,L}(A) = B$, equality holds.
\end{lemma}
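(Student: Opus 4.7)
The plan is to expand both sides explicitly --- the left via the definition of outsourcing, the right via Lemma \ref{IdenticalD0s} --- and reduce the inequality to a cardinality statement about $E\setminus B$.

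Write $B' := \outsource{E}{A}{K,L}$. The hypothesis $RW(C)\cap R_A=\emptyset$ for $C\in K$ ensures $R_A\subseteq R_{B'}$, and the new relations split as
\[R_{B'}\setminus R_A \;=\; \Bigl(\bigcup_{C\in L}RW(C)\setminus R_A\Bigr) \;\amalg\; \bigcup_{C\in K}RW(C,\{x^C,y^C\}).\]
The $K$-contribution accounts for $\sum_{C\in K}|C|$ pairwise distinct fresh relations (distinct $K$-cliques have disjoint designated witness pairs). For the $L$-contribution, Observation \ref{sharingRelation} implies that any relation shared between two distinct $L$-cliques contains both pairs of witnesses; since $W_L\subseteq A$, such a shared relation lies already in $R_A$. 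Thus $\bigl|\bigcup_{C\in L}RW(C)\setminus R_A\bigr|=\sum_{C\in L}|C\setminus A|$ with no double counting. Combining with $|B'\setminus A|=|E\setminus A|-|EW_{K,L}(A)|+2|K|$ gives
\[d_0(B'/A) \;=\; |E\setminus A| - |EW_{K,L}(A)| - \sum_{C\in L}|C\setminus A| - \sum_{C\in K}(|C|-2).\]

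For $\dosover{B}{A}$, Lemma \ref{IdenticalD0s} provides $\dos(X)=|X|-\sum_{C\in C(X)}(|C|-2)$ (full-enumeration convention). Splitting $C(B)=C^B(A)\amalg C(B/A)$ and using the multiplicity-respecting bijection $C^B(A)\to$ full enumeration of $C(A)$ sending $C'$ to $C'\cap A$ (by additivity), the $A$-terms telescope and yield
\[\dosover{B}{A} \;=\; |B\setminus A| - \sum_{C'\in C^B(A)}|C'\setminus A| - \sum_{C\in C(B/A)}(|C|-2).\]
The same additivity, now applied to $E=B\cup W_B$, produces multiplicity-respecting bijections $L\leftrightarrow C^B(A)$ and $K\leftrightarrow C(B/A)$ via intersection with $B$: each maximal clique of $B$ maximizes uniquely in $E$ by absorbing members of $W_B\setminus B$ while retaining its witness pair. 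For matched $C\leftrightarrow C'$, $|C\setminus A|=|C'\setminus A|+|C\setminus B|$ when $C\in L$, and $|C|-2=(|C'|-2)+|C\setminus B|$ when $C\in K$. Substituting and subtracting, the matched clique-sums cancel, leaving
\[d_0(B'/A)-\dosover{B}{A} \;=\; |E\setminus B| - |EW_{K,L}(A)| - \sum_{C\in K\cup L}|C\setminus B|.\]

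To finish: $W_L\subseteq A\subseteq B$ forces $E\setminus B = W_B\setminus B = W_K\setminus B$. Each $x\in E\setminus B$ is a witness of some $K$-clique with $x\notin A$, so by the definition $EW_{K,L}(A)=W_K\setminus(A\cup \bigcup K\cup \bigcup L)$ either $x\in EW_{K,L}(A)$ or $x\in \bigcup(K\cup L)\setminus B$; these cases are disjoint and in the second case $x$ contributes to the multi-count $\sum_{C\in K\cup L}|C\setminus B|$. Hence the right-hand side of the displayed difference majorises $|E\setminus B|$, proving $d_0(B'/A)\leq \dosover{B}{A}$. The \emph{furthermore} clause follows because $E\setminus EW_{K,L}(A)=B$ forces both $EW_{K,L}(A)=E\setminus B$ exactly and $\bigcup(K\cup L)\subseteq B$: the former equates $|EW_{K,L}(A)|$ with $|E\setminus B|$, the latter kills the multi-count, and the difference becomes $0$.

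The main obstacle is formalising the multiplicity-respecting bijections $L\leftrightarrow C^B(A)$ and $K\leftrightarrow C(B/A)$ obtained by maximising $B$-cliques in $E$ (checking uniqueness, witness preservation, and additivity of multiplicity), so that substituting into the two expanded formulas leaves exactly the clean $E\setminus B$-cardinality expression above.
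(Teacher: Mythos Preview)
Your argument is correct. The combinatorial content matches the paper's proof: the bijections $L\leftrightarrow C^B(A)$ and $K\leftrightarrow C(B/A)$ via intersection with $B$, the computation of $|R_{B'}\setminus R_A|$ as $\sum_{C\in L}|C\setminus A|+\sum_{C\in K}|C|$ (using Observation~\ref{sharingRelation} and $W_L\subseteq A$ to exclude double-counting), and the telescoping of $\dosover{B}{A}$ via Lemma~\ref{IdenticalD0s} are all exactly what the paper does. The organizational difference is that the paper introduces an intermediate $R$-structure $D$ on the set $(B\setminus EW_K(A))\amalg\mathcal{W}_K$ and proves the result in two stages, Claim~1 ($d_0(\outsource{E}{A}{K,L}/A)\leq d_0(D/A)$, accounting for the extra points of $E\setminus B$) and Claim~2 ($d_0(D/A)\leq\dosover{B}{A}$, via an explicit bijection onto $R_D\setminus R_A$). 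You instead expand both sides fully and subtract, arriving directly at the clean identity $d_0(B'/A)-\dosover{B}{A}=|E\setminus B|-|EW_{K,L}(A)|-\sum_{C\in K\cup L}|C\setminus B|$, which makes the inequality and the equality case transparent in one stroke. Your route is more economical; the paper's two-step version is more modular in that it separates the ``$E$ versus $B$'' discrepancy from the ``$R$-dimension versus $S$-dimension'' comparison.
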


\begin{proof}
Since $W_L\subseteq A$ we have $EW_{K,L}(A) = EW_K(A)$. Also, by assumption, $R_A\setminus \bigcup_{C\in K}RW(C,wit(C)) = R_A$. So in this case

\[\outsource{E}{A}{K,L} = (E\setminus EW_{K}(A))\amalg \calW_K\]
\[R(\outsource{E}{A}{K,L}) = R_A\cup \bigcup_{C\in L}RW{\clq{C}}\amalg \bigcup_{C\in K} RW(C,\set{x^C,y^C})\]

Let $D$ be the induced $R$-structure on the set $(B\setminus EW_K(A))\amalg \calW_K \subseteq\outsource{E}{A}{K,L}$.

\medskip
\noindent\textbf{Claim 1.} $d_0(\outsource{E}{A}{K,L}/A)\leq d_0(D/A)$. If in addition $E\setminus EW_K(A) = B$, equality holds.

\begin{proof} Let $w\in (E\setminus B)\setminus EW_K(A)$. Then $w\in W(B)$ and $w\in C$ for some $C\in K$, meaning $(w,x^C,y^C)\in R(\outsource{E}{A}{K,L})$. This means that there exists an injection between elements of $(E\setminus B)\setminus EW_K(A)$ and relations in $R(\outsource{E}{A}{K,L})$ and therefore $d_0(\outsource{E}{A}{K,L})\leq d_0(D)$. Since $A$ is a substructure of both we have $d_0(\outsource{E}{A}{K,L}/A)\leq d_0(D/A)$.

If in addition $E\setminus EW_K(A) = B$ then $D = \outsource{E}{A}{K,L}$ and equality holds trivially.
\end{proof}

We wish to write $D$ explicitly. We observe several facts.

\begin{itemize}
\item
The map $\witclq{x,y}{E}\mapsto\witclq{x,y}{B}$ induces bijections between $K \rightarrow C(B/A)$ and $L\rightarrow C^B(A)$. Both the bijections fix pairs of witnesses and also $\witclq{x,y}{E}\supseteq\witclq{x,y}{B}$. Thus $W_{C^B(A)} = W_L$ and $W_{C(B/A)} = W_K$. By definition, the second equality yields $EW_K(A)\subseteq EW_{C(B/A)}(A)$.

In addition, let $w\in EW_{C(B/A)}(A)\setminus EW_K(A)$, then $w\in \witclq{x,y}{E}$ for some $\set{x,y}$ witnessing a clique in $C(B/A)$, but it must be that $w\notin \witclq{x,y}{B}$ and so $w\notin B$.

Combining the two facts gives us $B\setminus EW_K(A) = B\setminus EW_{C(B/A)}(A)$.
\item
By the bijective map above we may redefine $\calW_K = \setarg{x^C,y^C}{C\in C(B/A)}$.

\item
Let $\clqwit{C}{x,y}\in L$ and let $(a,x,y)\in RW{\clqwit{C}{x,y}}$. Since $W_L\subseteq A\subseteq D$ we have $(a,x,y)\in R_D \iff a\in D \iff a\in B\setminus EW_K(A)\iff a\in B$. Thus $RW{\clqwit{C}{x,y}}\cap R_D = RW{\clqwit{C\cap B}{x,y}}$. The same reasoning also gives $RW{\clqwit{C}{x^C,y^C}}\cap R_D = RW{\clqwit{C\cap B}{x^C,y^C}}$ for any $C\in K$.

Recall as in the bijective map above that $C\cap B\in C^B(A)$ for $C\in L$ and $C\cap B\in C(B/A)$ for $C\in K$.

\item
Since $A\in D$ we have $R_D\subseteq R_D$
\end{itemize}

We can now write $D$ explicitly

\[D = (B\setminus EW_{C(B/A)}(A))\amalg \setarg{x^C,y^C}{C\in C(B/A)}\]
\[R_D = R_A\cup \bigcup_{C\in C^{B}(A)} RW(C, wit(C))\cup \bigcup_{C\in C(B/A)}RW(C,\set{x^C,y^C})\]

\medskip
\noindent\textbf{Claim 2.} $d_0(D/A)\leq \dosover{B}{A}$. If in addition $E\setminus EW_K(A) = B$, equality holds.

\begin{proof} Recall that for $A$ an $R$-structure, $i(A) = \sum_{C\in C(A)} |C|$. We explicitly calculate the terms:
\begin{align*}
d_0(D/A) &= |(B\setminus EW_{C(B/A)}(B))| + |\setarg{x^C,y^C}{C\in C(B/A)}| - |A| - (|R_D| - |R_A|)
\\&\leq |B|-|A| + 2|C(B/A)| - (|R_D\setminus R_A|)
\\\dosover{B}{A} &= |B| - |A| + 2|C(B/A)| - (i(B)-i(A))
\end{align*}

Now, in order to prove the claim, it is enough to show that $i(B) - i(A) = |R_D\setminus R_A|$.

\begin{remark*}
Note that the first inequality is an equality in case $E\setminus EW_K(A) = B$. In that case, proving $i(B) - i(A) = |R_D\setminus R_A|$ gives us $d_0(D/A) = \dosover{B}{A}$.
\end{remark*}

Denote \[X = \bigcup_{C\in C(B/A)}C\times\set{C}\amalg \bigcup_{C\in C^B(A)}(C\setminus A)\times \set{C}\]
Then:
\begin{align*}
i(B) - i(A) &= \sum_{C\in C(B)} |C| - \sum_{C\in C(A)} |C|
\\&= \sum_{C\in C(B/A)} |C| + \sum_{C\in C^B(A)} |C\setminus A| = |X|
\end{align*}
\medskip
\\Consider the following map:
\\$f:X \longrightarrow R_D$
\\$f(c,C) =
\left\{
	\begin{array}{ll}
		\set{c}\cup wit(C)  & \mbox{if } C\in C^B(A) \\
		\set{c,x^C,y^C} & \mbox{if } C\in C(B/A)
	\end{array}
\right.$

Note that $f(c,C)$ is a generating relation of $C$ in the structure $D$.We show that $f$ is a bijection from $X$ onto $R_D\setminus R_A$

\begin{itemize}
\item
We show that $f$ is into $R_D\setminus R_A$.

Let $(c,C)\in X$.
\\If $C\in C^B(A)$ then $c\notin A$ and so $f(c,C)\notin R_A$.
\\If $C\in C(B/A)$ and $\set{x^C,y^C}\nsubseteq A$ then $f(c,C)\notin R_A$.
\\If $C\in C(B/A)$ and $\set{x^C,y^C}\subseteq A$ then $\set{x^C,y^C} = wit(C)$ and $\witclq{x^C,y^C}{B}\in C(B/A)$. We know that $\witclq{x^C,y^C}{E}\in K$ and $\witclq{x^C,y^C}{B}\subseteq \witclq{x^C,y^C}{E}$. So by assumption $RW{\clqwit{C}{x^C,y^C}}\cap R_A = \emptyset$. Since $f(c,C)\in RW{\clqwit{C}{x^C,y^C}}$ it must be that $f(c,C)\notin R_A$.

\item
We show that $f$ is injective.

Assume $f(c_1,C_1) = f(c_2,C_2)$. If $C_1 = C_2$ then obviously $c_1=c_2$, so we assume $C_1\neq C_2$. So $C_1$ and $C_2$ share the generating relation $c_1\cup wit(C_1) = c_2\cup wit(C_2)$. A clique witnessed by designated witnesses shares no relation with any other clique and thus it must be that $C_1,C_2\in C^B(A)$. So $c_1\in wit(C_2)$ and by assumption $wit(C_2)\subseteq W_L \subseteq A$. But we know $c_1\in C_1\setminus A$, a contradiction. So it must be that $(c_1,C_1) = (c_2,C_2)$ and thus $f$ is an injection.

\item
We show that $f$ is surjective.

Let $r\in R_D\setminus R_A$.
\\If $r\in RW{\clq{C}{x^C,y^C}}$ for some $C\in C(B/A)$ then it is of the form $(b,x^C,y^C)$, thus $(b,C)\in X$ and $f(b,C) = r$.
\\If $r\in RW{\clqwit{C}{x,y}}$ for some $C\in C^B(A)$, say $r = (b,x,y)$, then $\set{x,y}\subseteq W_{C^B(A)} = W_L$ and so by assumption $\set{x,y}\subseteq A$. Since $r\notin R_A$ it must be that $b\notin A$ and thus $b\in C\setminus A$. So $(b,C)\in X$ and $f(b,C) = r$.
\end{itemize}
\medskip
$f$ is a bijection from $X$ onto $R_D\setminus R_A$ and so $|X|= |R_D\setminus R_A|$. Then $i(B) - i(A) = |X| = |R_D\setminus R_A|$ and consequently $d_0(D/A) \leq \dosover{B}{A}$.

By the above remark, if in addition $E\setminus EW_K(A) = B$, we have $d_0(D/A)= \dosover{B}{A}$.
\end{proof}

By combining  Claim $1$ and Claim $2$ we have $d_0(\outsource{E}{A}{K,L}/A)\leq d_0(D/A) \leq \dosover{B}{A}$. In addition, if $E\setminus EW_K(A) = B$, both inequalities are equalities and equality holds.
\end{proof}

\begin{Cor}\label{genericBoverA}
Let $A\subseteq B\subseteq M$. Denote $E = B\cup W_B$, $K = C^E(B/A)$, $L = C^{E}(A)$. Assume that $K$ and $L$ are mutually exclusive, $W_L\subseteq A$ and $RW(C)\cap R_A = \emptyset$ for any $C\in K$. Then $d_0(E/A)\leq \dosover{B}{A}$.
\end{Cor}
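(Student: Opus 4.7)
The plan is to derive this corollary as an immediate combination of the two preceding lemmas, chaining through the intermediate quantity $d_0(\outsource{E}{A}{K,L}/A)$. The hypotheses of the corollary split cleanly: mutual exclusivity of $K$ and $L$ is precisely what Lemma \ref{outsourcePredimension} needs, while $W_L\subseteq A$ and $RW(C)\cap R_A=\emptyset$ for $C\in K$ are the hypotheses of Lemma \ref{outsourceDoAndDos}. So nothing new needs to be proved; we just need to line up the inequalities correctly.

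First I would apply Lemma \ref{outsourcePredimension} to the structure $E$ in place of the lemma's ``$B$'', with core $A$. The required setup is available: $E\subseteq M$ ensures $\emptyset\leqslant E$; by construction $K,L\subseteq C(E)$, and since $E=B\cup W_B$ already contains the witnesses of the cliques of $B$, the witnesses of the cliques in $K$ and $L$ (taken inside $E$) lie in $E$, giving $W_{K\cup L}\subseteq E$; and $K,L$ are mutually exclusive by assumption. Thus Lemma \ref{outsourcePredimension} yields $d_0(E)\leq d_0(\outsource{E}{A}{K,L})$. Since $A$ is a common substructure of $E$ and $\outsource{E}{A}{K,L}$, subtracting $d_0(A)$ from both sides gives
\[d_0(E/A)\;\leq\;d_0(\outsource{E}{A}{K,L}/A).\]

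Next I would invoke Lemma \ref{outsourceDoAndDos} with the same data $A\subseteq B\subseteq N=M$, using its two remaining hypotheses (already assumed in the corollary): $W_L\subseteq A$ and $RW(C)\cap R_A=\emptyset$ for every $C\in K$. The lemma directly delivers $d_0(\outsource{E}{A}{K,L}/A)\leq \dosover{B}{A}$. Concatenating the two inequalities produces $d_0(E/A)\leq \dosover{B}{A}$, which is the conclusion.

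The only point requiring a moment's care, and the nearest thing to an ``obstacle'', is confirming that $W_{K\cup L}\subseteq E$ when feeding $E$ into Lemma \ref{outsourcePredimension}; this is exactly the purpose for which $E$ was defined as $B\cup W_B$, and the maximal extensions making up $K$ and $L$ are taken inside $E$, so their witnesses are automatically in $E$. Everything else is formal bookkeeping, and no new combinatorial argument is needed beyond the two lemmas already in hand.
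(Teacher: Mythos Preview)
Your proposal is correct and follows essentially the same approach as the paper: both apply Lemma~\ref{outsourcePredimension} to pass from $d_0(E/A)$ to $d_0(\outsource{E}{A}{K,L}/A)$, and then Lemma~\ref{outsourceDoAndDos} to bound the latter by $\dosover{B}{A}$. Your additional verification that $W_{K\cup L}\subseteq E$ is a welcome detail the paper leaves implicit.
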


\begin{proof}
All requirements of Lemma \ref{outsourcePredimension} are fulfilled and so $d_0(E)\leq d_0(\outsource{E}{A}{K,L})$. Since $A$ is a substructure of both structures, we also have $d_0(E/A)\leq d_0(\outsource{E}{A}{K,L}/A)$. By Lemma \ref{outsourceDoAndDos} we have $d_0(\outsource{E}{A}{K,L}/A) \leq \dosover{B}{A}$ and so $d_0(E/A)\leq \dosover{B}{A}$.
\end{proof}

\begin{lemma}\label{twoAlreadyInStruct}
Let $A\subseteq N$ be $R$-structures. Let $C$ be the universe of a clique witnessed by $wit(C) = \set{x,y}$ with $|(C\cup wit(C))\cap A|\geq 2$. Then for any $D\subseteq C$, $d_0(wit(C)\cup D/A) \leq 0$. If also $A\leqslant N$, then $(A\cup wit(C)\cup D) \leqslant N$.
\end{lemma}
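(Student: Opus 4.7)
The plan is to compute $d_0(wit(C)\cup D/A)$ directly by bounding new points from above and new relations from below. Write $W = wit(C)\cup D$, so that $d_0(W/A) = |W\setminus A| - |R_{A\cup W}\setminus R_A|$. The key observation is that since $\clq{C}$ is a clique, every triple $(c,x,y)$ with $c\in C$ lies in $R_N$; the ones lying in $R_{A\cup W}$ are precisely those with $c\in D\cup(C\cap A)$ (using $wit(C)\cap C = \emptyset$ and $D\cap wit(C) = \emptyset$), and a relation $(c,x,y)$ belongs to $R_A$ only when all three of $c,x,y$ lie in $A$.

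I would then split into three cases according to $|\set{x,y}\cap A|$. If $\set{x,y}\subseteq A$, then $|W\setminus A| = |D\setminus A|$, and for each $d\in D\setminus A$ the triple $(d,x,y)$ is a new relation, so $d_0(W/A)\leq 0$. If exactly one of $x,y$ lies in $A$, then $|W\setminus A| = |D\setminus A| + 1$, and the hypothesis $|(C\cup wit(C))\cap A|\geq 2$ forces $|C\cap A|\geq 1$; every $(c,x,y)$ with $c\in D\cup(C\cap A)$ is new (because one witness lies outside $A$), supplying at least $|D\setminus A| + |C\cap A|\geq |D\setminus A| + 1$ new relations. If $\set{x,y}\cap A = \emptyset$, then $|W\setminus A| = |D\setminus A| + 2$ and now $|C\cap A|\geq 2$, and the same count yields at least $|D\setminus A| + 2$ new relations. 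In every case $d_0(W/A)\leq 0$.

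For the second assertion, rewrite $d_0(W/A)\leq 0$ as $d_0(A\cup W)\leq d_0(A)$. If $A\leqslant N$, then for every finite $A\cup W\subseteq Y\subseteq N$ we have $d_0(Y)\geq d_0(A)\geq d_0(A\cup W)$, where the first inequality uses $A\leqslant N$ and the second is the first part of the lemma. This is exactly the statement $(A\cup wit(C)\cup D)\leqslant N$.

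The only real obstacle is the bookkeeping in the case split; no deeper idea is required beyond the observation that each generator $(c,x,y)$ of the clique is automatically a fresh relation whenever one of $c,x,y$ lies outside $A$, and the hypothesis $|(C\cup wit(C))\cap A|\geq 2$ is precisely what keeps the number of new clique elements in $W\setminus A$ below the number of such forced new relations.
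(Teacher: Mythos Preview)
Your proof is correct and follows essentially the same approach as the paper: a three-case split on $|\{x,y\}\cap A|$, using the hypothesis $|(C\cup wit(C))\cap A|\geq 2$ to supply enough elements of $C\cap A$ so that the forced generator relations $(c,x,y)$ outnumber the new points. The only organizational difference is that the paper first proves $d_0(wit(C)/A)\leq 0$ and then appends $D$ via the trivial estimate $d_0(D/A\cup wit(C))\leq 0$, whereas you handle $wit(C)\cup D$ in one shot; the underlying count is identical.
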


\begin{proof}
We claim that $d_0(wit(C)/A) \leq 0$. We examine the three possible (trivial) cases and keep in mind that $|(C\cup wit(C))\cap A|\geq 2$:
\\\textbf{Case I $-\mathbf{|wit(C)\cap A| = 2}$:} Then $wit(C)\subseteq A$ and thus $d_0(wit(C)/A) = d_0(A/A) = 0$.
\\\textbf{Case II $-\mathbf{|wit(C)\cap A| = 1}$:} Say $wit(C)\cap A = \set{y}$. We must have $|A\cap C|\geq 1$ and so there exists $a\in C\cap A$. Then $d_0(wit(C)/A) \leq |\set{x}| - |\set{(a,x,y)}| = 0$
\\\textbf{Case III $-\mathbf{|wit(C)\cap A| = 0}$:} We must have $|A\cap C|\geq 2$ and so there exist $a_1,a_2\in C\cap A$ distinct. Then $d_0(wit(C)/A) \leq |\set{x,y}| - |\set{(a_1,x,y), (a_2,x,y)}| = 0$

So $d_0(wit(C)/A) \leq 0$. Obviously, for any $D\subseteq C$, $d_0(D/wit(C)) \leq 0$ and so $d_0(D/A\cup wit(C)) \leq 0$. Combining the two inequalities we have $d_0(A\cup wit(C)\cup D) = d_0(D/A\cup wit(C)) + d_0(wit(C)/A) + d_0(A) \leq d_0(A)$

\smallskip
If in addition $A\leqslant N$, we know $d_0(wit(C)\cup D/A) \geq 0$ and so we have $d_0(A\cup wit(C)\cup D) = d_0(A) = d(A,N)$. So $(A\cup wit(C)\cup C') \leqslant N$ .
\end{proof}

\begin{remark*}
For $A\subseteq N$ and $b\in N\setminus A$, $\dosover{b}{A} = 1 - Clq(b,A\cup\set{b})$.
\end{remark*}

\begin{prop}\label{StrongImpliesLeqs}
$A\leqslant M \implies A\leqs M_s$
\end{prop}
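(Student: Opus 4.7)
The strategy is to reduce the statement to Corollary \ref{genericBoverA}, which, under three hypotheses on how cliques across the boundary of $A$ behave, yields the inequality $d_0(E/A) \le \dosover{B}{A}$, where $E = B \cup W_B \subseteq M$. Since $A \leqslant M$ and $E \subseteq M$ give $d_0(E/A) \ge 0$, verifying those hypotheses is enough to conclude $\dosover{B}{A} \ge 0$ for any finite $B$ with $A \subseteq B \subseteq M$, which is exactly $A \leqs M_s$. The infinite case follows from the finite one by the definition of $\leqs$ for infinite sets, so we may assume $A$ is finite.

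The hypotheses of Corollary \ref{genericBoverA} (mutual exclusivity of $K = C^E(B/A)$ and $L = C^E(A)$, $W_L \subseteq A$, and $RW(C) \cap R_A = \emptyset$ for $C \in K$) need not hold as given. We arrange them by enlarging $A$ to a witness-closed finite $\hat A$ via Lemma \ref{twoAlreadyInStruct}. For each $C \in C(A)$ we have $|C \cap A| \ge 3$, hence $|(C \cup wit(C)) \cap A| \ge 2$, and the lemma permits adjoining the external witnesses of $C$ to $A$ while preserving $\leqslant M$. The same lemma also permits adjoining any element $c'' \in M \setminus A$ that completes a partial configuration $\set{c,c'} \subseteq A$ with shared witnesses $\set{x,y} \subseteq A$ to a full clique in $M$; this is what kills potential violations of $RW(C) \cap R_A = \emptyset$. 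Iterating produces a finite $\hat A \leqslant M$ closed under both operations, and we apply the corollary to $\hat A \subseteq \hat B := B \cup \hat A$ with $\hat E := \hat B \cup W_{\hat B}$. Then $W_L \subseteq \hat A$ is immediate by construction; a failure of (a) or (c) would, via Observation \ref{sharingRelation}, produce a configuration (a strong triangle or a fully $\hat A$-internal generating relation for a $K$-clique) already absorbed into $\hat A$ during the closure, a contradiction.

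Applying Corollary \ref{genericBoverA} to $\hat A \subseteq \hat B$ yields $d_0(\hat E/\hat A) \le \dosover{\hat B}{\hat A}$, whose left side is $\ge 0$ because $\hat A \leqslant M$; hence $\dos(\hat B) \ge \dos(\hat A)$. Setting $A^\circ = B \cap \hat A$, Lemma \ref{dosFreeAmalgam} applied to $B$ and $\hat A$ inside $\hat B$ gives $\dosover{B}{A^\circ} \ge \dosover{B}{\hat A} = \dosover{\hat B}{\hat A} \ge 0$, so $\dos(B) \ge \dos(A^\circ)$. The remaining inequality $\dos(A^\circ) \ge \dos(A)$ follows because each element of $A^\circ \setminus A$ was added in the closure process as a witness or clique-completion over $A$; by Lemma \ref{twoAlreadyInStruct} the corresponding $d_0$-difference over $A$ is $\le 0$, while $A \leqslant M$ forces the same quantity to be $\ge 0$, so these additions preserve $d_0$, and via the identity of Lemma \ref{IdenticalD0s} one extracts $\dos(A^\circ) \ge \dos(A)$.

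The main obstacle is the honest verification of hypotheses (a) and (c) of the corollary after the enlargement---showing that the iterative closure really does absorb every problematic configuration (strong triangles crossing the boundary of $\hat A$, or $K$-cliques whose generating relations lie entirely in $\hat A$) in finitely many steps---and tracking the precise behaviour of $\dos$ along the closure chain so as to secure $\dos(A^\circ) \ge \dos(A)$. Once that bookkeeping is in place, the chain of inequalities $\dos(B) \ge \dos(A^\circ) \ge \dos(A)$ delivers $A \leqs M_s$.
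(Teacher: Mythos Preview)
Your strategy diverges from the paper's, and the point where it diverges is also where it breaks. The paper argues by minimal counterexample: it takes a pair $A\leqslant M$, $A\subseteq B$ with $\dosover{B}{A}<0$ and $|B\setminus A|$ minimal, and then uses minimality (together with the already-established case $n=1$) to prove directly that the three hypotheses of Corollary~\ref{genericBoverA} hold for the \emph{original} $A$ and $B$. No enlargement of $A$ is needed, and hence no comparison between $\dos(A)$ and $\dos$ of some intermediate set ever arises.

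Your enlargement-to-$\hat A$ approach introduces exactly that comparison, and the step $\dos(A^\circ)\ge\dos(A)$ is where the argument becomes circular. What you know about the elements of $A^\circ\setminus A$ is a $d_0$-statement (each chunk added via Lemma~\ref{twoAlreadyInStruct} has $d_0$-contribution $0$ over the previous stage), but what you need is a $\dos$-statement. Lemma~\ref{IdenticalD0s} is merely the formula $\dos(A)=|A|+2|C(A)|-i(A)$; it does not convert $d_0(A^\circ)=d_0(A)$ into $\dos(A^\circ)\ge\dos(A)$. In fact, the assertion $\dos(A^\circ)\ge\dos(A)$ for an arbitrary finite $A\subseteq A^\circ\subseteq M$ with $A\leqslant M$ is precisely the content of the proposition you are trying to prove. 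You could try to salvage this by induction on $|B\setminus A|$ (since $A^\circ\subseteq B$), but then you must separately handle the case $A^\circ=B$, i.e.\ $B\subseteq\hat A$, and nothing in your construction prevents that. A secondary problem is that your closure is not obviously finite: for a pair $\{x,y\}\subseteq A$, the set $\{x,y\}^M$ is typically infinite in $M$, so ``completing to a full clique in $M$'' does not terminate; you would need a more restrained version of operation (ii) and an argument that the whole iteration stabilises.
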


\begin{proof}
Assume the statement is false. Let $A\leqslant M$ and $A\subseteq B\subseteq M$ be such that $\dosover{B}{A} < 0$ with $n = |B\setminus A|$ minimal.

\noindent\textbf{\underline{Case $\mathbf{n=1}$:}} So $B = A\cup\set{b}$ for some $b\in M\setminus A$. Say $C\in C(B)$ such that $b\in C$, so it must be that $|C\cap A| \geq 2$ and so $d_0(wit(C)/A) \leq 0$.

Denote $CLQ(b,B) = \setarg{C\in C(B)}{b\in C}$. Let 
\[\bar{A} = A\cup \bigcup_{C\in CLQ(b,B)}wit(C)\]
Then $d_0(\bar{A}) \leq d_0(A)$ and since $A\leqslant M$ we have equality and $\bar{A}\leqslant M$. Since $d_0(b/\bar{A}) \leq (1-Clq(b,B))$ and $\bar{A}\leqslant M$ it must be that $Clq(b,B) \leq 1$. Thus we have $\dosover{B}{A} = 1 - Clq(b,B) \geq 0$ in contradiction to the assumption on $B$.

So for any $A\leqslant M$ and $b\in M\setminus A$ it must be that $\dosover{b}{A} \geq 0$.

\smallskip
\noindent\textbf{\underline{Case $\mathbf{n>1}$:}} Consider $E= B\cup W_B$.  Denote $K = C^E(B/A)$ and $L = C^{E}(A)$. Note that because $A\leqslant M$, we have $W_L\subseteq A$.

\smallskip
\noindent\textbf{Claim 1.} Let $C\in L$. If $b\in C\setminus A$ then $b\in W_B\setminus B$.
\begin{proof}
Assume $b\in C\setminus A$.

As explained, $wit(C)\subseteq A$, so $A\cup\set{b}\leqslant M$ and by case $n=1$, $\dosover{b}{A} \geq 0$. So if $b\in B$, the pair $A\cup\set{b} \subseteq B$ contradicts the minimality of $n$. Thus $b\in W_B\setminus B$.
\end{proof}

\smallskip
\noindent\textbf{Claim 2.} Let $C\in K$, then $RW(C)\cap R_A = \emptyset$
\begin{proof}
Assume not. Then $wit(C)\subseteq A$. Since $C\in K$ There must be an element $b\in (B\setminus A)\cap C$. So $A\cup\set{b} \leqslant M$ and by case $n=1$ we know $\dosover{b}{A} \geq 0$. The pair $A\cup\set{b} \subseteq B$ contradicts the minimality of $n$ and so our assumption is impossible.
\end{proof}

\noindent\textbf{Claim 3.} $K$ and $L$ are mutually exclusive.
\begin{proof}
Let $(C_0,\set{x_0,y_0})\in L$ and $(C_1,\set{x_1,y_1})\in K$. Again, note that $\set{x_0,y_0}\subseteq A$. Assume $RW(C_0)\cap RW(C_1) \neq \emptyset$, then $|\set{x_0,y_0}\cap \set{x_1,y_1}| = 1$. Without loss of generality the common witness is $x = x_0 = x_1$, and so $y_1\in C_{0}$. So $RW(C_0)\cap RW(C_1) = \set{(y_0,x,y_1)}$. By $\text{Claim } 2$ $(y_0,x,y_1)\notin R_A$, but since $y_0,x\in A$ it must be that $y_1\notin A$. So $y_1\in C_{0}\setminus A$ and by Claim 1, $y_1\in W_B\setminus B$.

Since $wit(C_0)\subseteq A$ and $y_1\in C_0$ we get $d_0(y_1/A) = 0$, implying $A\cup\set{y_1} \leqslant M$. By the case $n=1$ and the fact $A\leqslant M$, $\dosover{y_1}{A} \geq 0$. Since $C_1\in K$, there must be an element $b\in (B\setminus A)\cap C_1$. Again by the case $n=1$, $\dosover{b}{A,y_1} \geq 0$. We know $d_0(b/A,y_1) = 0$ and so $A\cup\set{b,y_1}\leqslant M$.

As $y_1\notin B$, by Lemma \ref{dosFreeAmalgam} we have $0 > \dosover{B}{A} \geq \dosover{B}{A,y_1} = \dosover{B}{A,b,y_1} + \dosover{b}{A,y_1} \geq \dosover{B}{A,b,y_1}$ so $\dosover{B}{A,b,y_1} <0$. On the other hand we already have $A\cup\set{b,y_1}\leqslant M$. But $0 < |B\setminus (A\cup\set{b,y_1})| = |B\setminus A\cup \set{b}| < |B\setminus A|$ in contradiction to the minimality of $n$. So there cannot exist such $C_0$, $C_1$.
\end{proof}

Claim 2 and Claim 3 assert that the conditions of Lemma \ref{genericBoverA} are met and so $d_0(E/A) \leq \dosover{B}{A} < 0$ in contradiction to $A\leqslant M$.

In conclusion, the initial assumption leads to a contradiction in any case and thus $A\leqs M_s$ as expected.
\end{proof}

\begin{Cor}\label{esiMs} $\emptyset \leqs M_s$
\end{Cor}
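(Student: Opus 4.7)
The plan is to obtain this immediately from Proposition \ref{StrongImpliesLeqs} applied to $A = \emptyset$. For this, one need only observe that $\emptyset \leqslant M$ in the $R$-structure sense. This is built into the setup of the paper: $\CC$, the age of $M$, is defined to consist precisely of the finite $R$-structures $A$ with $\emptyset \leqslant A$, and every finite subset of the Fra\"iss\'e-Hrushovski limit sits inside such a self-sufficient finite substructure. Equivalently, since $d_0(\emptyset) = 0$ and $d_0(X) \geq 0$ for every finite $X \subseteq M$, the empty set realises the minimum predimension among finite substructures of $M$, which is exactly the definition of $\emptyset \leqslant M$.

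Granted this, Proposition \ref{StrongImpliesLeqs} delivers $\emptyset \leqs M_s$ with no further computation. There is no real obstacle: the substantive work---reducing from a general witness $B$ down to the one-point case, verifying that the relevant sets of cliques $K$ and $L$ are mutually exclusive, and invoking the outsourcing machinery through Lemma \ref{outsourcePredimension} and Corollary \ref{genericBoverA}---was already carried out in the proof of the preceding proposition. The only bookkeeping point to flag is that $\leqs$ for an infinite ambient structure is defined via its finite strong subsets, so $\emptyset \leqs M_s$ unfolds as the assertion that $\dosover{X}{\emptyset} \geq 0$ for every finite $X \subseteq M_s$; but this is exactly what Proposition \ref{StrongImpliesLeqs} with $A = \emptyset$ provides, together with the standing assumption (the remark preceding the definition of $\leqs$) that $\ds(\emptyset, M_s)$ exists.
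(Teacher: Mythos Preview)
Your proposal is correct and matches the paper's approach exactly: the corollary is stated without proof immediately after Proposition~\ref{StrongImpliesLeqs}, being the special case $A=\emptyset$ together with the standing fact $\emptyset\leqslant M$. Your additional unpacking of why $\emptyset\leqslant M$ holds and how $\leqs$ is read for infinite structures is accurate but more than the paper itself records.
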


\subsection{Definiton of $\mathit{\calC_s}$}\label{ageSubsecFour}

The previous sub-section's conclusion assures us that $\emptyset \leqs A$ for any $A$ finite substructure of $M_s$. We wish to show that this property defines the age of $M_s$.

\begin{definition}\label{genericRepresentation} Let $\mathcal{A}$ be an $S$-structure with universe $A$.
\\Define $\mathcal{W_A} = \setarg{x^{C_i},y^{C_i}}{C\in C(A), 1\leq i\leq \mult_\mathcal{A}(C)}$
\\Define the \emph{generic $R$-representation} of $\mathcal{A}$ to be the $R$-structure $A^*$ with
\[A^* = A\amalg \mathcal{W_A}\]
\[R_{A^*} = \bigcup_{C\in C(A)}(\bigcup_{i=1}^{\mult_{\mathcal{A}}(C)} RW(C,\set{x^{C_i},y^{C_i}}))\]
Note that the $S$-structure induced on $A\subseteq A^*$ is exactly $\mathcal{A}$.
\end{definition}

\begin{lemma}\label{GenericIsEmbeddable} Let $A$ be an $S$-structure with $\esi A$. Then $\emptyset \leqslant A^*$ and $A\leqs A^*$.
\end{lemma}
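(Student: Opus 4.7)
The plan is to reduce both claims to a clean comparison between the $R$-predimension on $A^{*}$ and the $S$-predimension on its natural $S$-reduct. I would begin by identifying the $S$-structure induced on $A^{*}$: since every relation in $R_{A^{*}}$ has the form $(c, x^{C_{i}}, y^{C_{i}})$ with $c \in C \in C(A)$, the only pairs of elements of $A^{*}$ that witness an $S$-clique in $A^{*}$ are the designated pairs $\{x^{C_{i}}, y^{C_{i}}\}$, and each such pair witnesses exactly the set $C$. Hence $C(A^{*}) = C(A)$ with identical pre-multiplicities, while the elements of $\mathcal{W_A}$ appear as ``isolated'' points that contribute only to the cardinality.

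For $A \leqs A^{*}$ this identification makes the argument almost immediate. Given a finite $D \subseteq A$ with $D \leqs A$ (take $D = A$ if $A$ itself is finite) and any finite intermediate $D \subseteq D' \subseteq A^{*}$, write $D' = D'' \cup W$ with $D'' = D' \cap A$ and $W = D' \cap \mathcal{W_A}$. Since the clique structure and pre-multiplicities on $D'$ match those on $D''$,
\[
\dos(D') \;=\; |D''| + |W| - \sum_{C\in C(D'')}\mult_{D''}(C)(|C|-2) \;=\; \dos(D'') + |W| \;\geq\; \dos(D),
\]
where the last inequality follows from $D \leqs A$ applied to $D \subseteq D'' \subseteq A$.

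For $\emptyset \leqslant A^{*}$ I would take a finite $R$-substructure $B \subseteq A^{*}$, set $A_{0} = B \cap A$ and $W_{B} = B \cap \mathcal{W_A}$, and partition the index pairs $(C,i)$ into $J_{2}, J_{1}, J_{0}$ according to whether both, one, or neither of $x^{C_{i}}, y^{C_{i}}$ lies in $W_{B}$. Only index pairs in $J_{2}$ contribute $R$-triples to $B$, each contributing exactly $|C\cap A_{0}|$ distinct triples, so
\[
d_{0}(B) \;=\; |A_{0}| + 2|J_{2}| + |J_{1}| - \sum_{(C,i)\in J_{2}} |C\cap A_{0}|.
\]
Discarding the non-negative contributions from $|J_{1}|$ and from indices in $J_{2}$ with $|C\cap A_{0}|\leq 2$, and using additivity of $\mult$ on the substructure $A_{0} \subseteq A$ to bound the remaining sum by $\sum_{C'\in C(A_{0})}\mult_{A_{0}}(C')(|C'|-2)$, one obtains $d_{0}(B) \geq \dos(A_{0}) \geq 0$, the final inequality by the hypothesis $\esi A$. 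The main bookkeeping step is this accounting, where one must verify that passing from the count of index pairs $(C,i)$ with $C \cap A_{0} = C'$ to $\mult_{A_{0}}(C')$ really gives an upper bound; once this is in hand, both conclusions are immediate.
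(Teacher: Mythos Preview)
Your proposal is correct and follows essentially the same approach as the paper: both arguments reduce $d_0$ of an $R$-substructure of $A^*$ to $\dos$ of its intersection with $A$, then invoke $\esi A$. The only cosmetic difference is that the paper first makes a without-loss-of-generality reduction (adding the witness pair when $|C\cap B|\ge 3$ and removing it otherwise, since neither move increases $d_0$) to obtain the exact equality $d_0(B^*)=\dos(B^*\cap A)$, whereas you keep the arbitrary substructure and bound $d_0(B)\ge \dos(A_0)$ directly via your $J_2,J_1,J_0$ bookkeeping; the additivity step you flag is exactly what makes the bound go through.
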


\begin{proof}
Let $B^*\subseteq A^*$. Denote $B = B^*\cap A$, and denote by $\mathcal{B}$ the $S$-structure associated with $B$. Note that cliques in $B^*$ in fact contain only elements from $B$ and so $C(B^*) = C(B)$. We wish to show $d_0(B^*)\geq 0$.

Without loss of generality assume $\set{x^C,y^C}\cap B\neq\emptyset$ iff $\set{x^{C},y^{C}} \subseteq B^*$ iff $(C\cap B^*)\in C(B^*) = C(B)$. This is possible to assume since $d_0(x^C,y^C/B^*)\leq 0$ for cliques $C$ with $|C\cap B| \geq 3$, and $d_0(x^D,y^D/B^*\setminus\set{x^D,y^D})\geq 0$ for cliques $D$ with $|D\cap B| < 3$.

Now note that all R-relations within $B^*$ arise from designated witnesses' relation to their respective cliques, and no relation is used by two different cliques. Therefore
\[r(B^*) = \sum_{C\in C(B^*)} |C| = \sum_{C\in C(B)} |C|\]
\[|B^*| = |B| + 2|C(B^*)| = |B| + 2|C(B)|\]
\\And so
\[d_0(B^*) = |B^*| - r(B^*) = |B| + 2|C(B)| - \sum_{C\in C(B)}|C| = \dos(B)\]
By the assumption on $A$ we have $\dos(B) \geq 0$ and thus $d_0(B^*) \geq 0$. Since $B^*$ was general, this gives $\emptyset \leqslant A^*$.

The fact $A\leqs A^*$ is evident because the elements of $A^*\setminus A$ are not members in cliques.
\end{proof}

We denote the age of $M_s$, the set of finite substructures of $M_s$, by $\calC_s$. We conclude this section with an explicit characterization of the elements of $\calC_s$.

\begin{theorem}\label{Criterion}
$\calC_s = \setarg{A}{A \text{ is a finite }S\text{-structure with }\emptyset \leqslant_s A}$. In addition, any $S$-structure $A$ with $\emptyset \leqs A$ is strongly embeddable into $M_s$.
\end{theorem}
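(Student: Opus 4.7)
The plan is to handle the two containments separately, with the bulk of the work being the second (the embeddability claim). The containment $\calC_s \supseteq \{A : \emptyset \leqs A\}$ will follow immediately from the strong embeddability statement, so I will really prove two things: (i) every finite $S$-substructure of $M_s$ satisfies $\emptyset \leqs A$, and (ii) every finite $S$-structure $A$ with $\emptyset \leqs A$ admits a strong $S$-embedding into $M_s$.

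For (i), I would simply invoke Corollary \ref{esiMs}: since $\emptyset \leqs M_s$, we have $\dos(D) \geq 0$ for every finite $S$-substructure $D \subseteq M_s$; specialising to subsets of a given $A \in \calC_s$ gives $\emptyset \leqs A$ hereditarily. For (ii), the idea is to transfer $A$ to $M_s$ through the generic $R$-representation $A^*$ from Definition \ref{genericRepresentation}. By Lemma \ref{GenericIsEmbeddable}, $\emptyset \leqslant A^*$ and $A \leqs A^*$, so $A^* \in \calC$; the Fraïssé–Hrushovski property of $M$ yields a strong $R$-embedding $f \colon A^* \to M$ with $f(A^*) \leqslant M$. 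The remaining task is to show that $f\!\restriction_A$ is a strong $S$-embedding into $M_s$.

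This last step splits into two checks. The delicate one is verifying that the $S$-structure induced on $f(A) \subseteq M_s$ by the superstructure $M$ matches, under $f$, the original $A$; the danger is that $M$ could furnish witness pairs outside $f(A^*)$, thereby creating new cliques on $f(A)$ or inflating existing multiplicities. I would rule this out using Observation \ref{addinExternalWitnesses}: a witness pair $\{x,y\}$ in $M$ for some $C \subseteq f(A)$ with $|C| \geq 3$ and $\{x,y\} \not\subseteq f(A^*)$ would force $d_0(\{x,y\}\setminus f(A^*)/f(A^*)) \leq 2 - |C| < 0$ (with the analogous bound $1 - |C|$ in the single-external-witness case), contradicting $f(A^*) \leqslant M$. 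Thus every witness pair for every clique on $f(A)$ of size $\geq 3$ lies in $f(A^*)$, so both the set of clique universes on $f(A)$ and their multiplicities, as computed in $M$, coincide with those computed inside $f(A^*)$, which by the construction of $A^*$ restrict to the original data of $A$. The second check, that $f(A) \leqs M_s$, is then a clean application of Proposition \ref{StrongImpliesLeqs} (giving $f(A^*) \leqs M_s$), followed by transferring $A \leqs A^*$ across the $S$-isomorphism $f$ to obtain $f(A) \leqs f(A^*)$, and concluding via transitivity (Corollary \ref{leqsTransitivity}). The main obstacle I anticipate is precisely the first check, because it requires the strength of $f(A^*)$ in the richer $R$-structure $M$ and must simultaneously rule out new universes, new multiplicities, and external witnesses of any ``mixed'' type.
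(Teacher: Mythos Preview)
Your proposal is correct and follows essentially the same route as the paper: the easy containment via $\emptyset\leqs M_s$ (Corollary~\ref{esiMs}), and the embeddability via the generic $R$-representation $A^*$, Lemma~\ref{GenericIsEmbeddable}, the Fra\"iss\'e property of $M$, Proposition~\ref{StrongImpliesLeqs}, and transitivity. The only difference is that the paper dispatches the ``delicate check'' in a single clause (``because the embedding is strong, $C(A,A^*)=C(A,M)$''), whereas you spell out the underlying reason via Observation~\ref{addinExternalWitnesses}; your more explicit treatment is a reasonable elaboration of exactly what the paper is invoking.
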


\begin{proof}
\underline{$\subseteq$:} Let $A$ be a finite $S$-structure with $\emptyset \leqslant_s A$. Then by Lemma \ref{GenericIsEmbeddable}, $A^*$ is strongly embeddable into $M$, we identify $A^*$ with its image in $M$. Because the embedding is strong, $C(A,A^*) = C(A,M)$. So $A$ embeds into $M$ with the same $S$-structure.

Moreover, by Proposition \ref{StrongImpliesLeqs} we have $A^*\leqs M$, and so by transitivity and the second assertion of Lemma \ref{GenericIsEmbeddable} we have $A\leqs M$. In conclusion, $A\in \calC_s$ and $A$ is strongly embeddable into $M_s$.

\underline{$\supseteq$:} Let $A\in \calC_s$ meaning $A\subset M_s$. Since $\esi M_s$, by definition $\esi A$.
\end{proof}

\section{$M_s$ as a fra\"{i}ss\'e Limit}\label{MsFraisse}

Let $\fC_s$ be the category with class of objects $\calC_s$ and morphisms being strong embeddings. We wish to show that $\fC_s$ has a countable fra\"{i}ss\'e limit and that this limit is in fact $M_s$.

\begin{prop}\label{FraisseProperties} $\fC_s$ has the following properties:
\begin{enumerate}
\item Hereditary Property - If $A\subseteq B\in \fC_s$ then $A\in \fC_s$.
\item Joint Embedding Property - for any $A,B\in \fC_s$ there exists $C\in \fC_s$ such that there are strong embeddings $i:A \rightarrow C$ and $j: B\rightarrow C$.
\item Amalgamation Property - for $A,B_1,B_2\in \fC_s$ with $A\leqslant_s B_1$ and $A\leqslant_s B_2$ there exists $D\in \fC_s$ and strong embeddings $i:B_1 \rightarrow D$ and $j: B_2\rightarrow D$ such that $i(A) = j(A)$ and $i(A)\leqs D$.
\end{enumerate} 
\end{prop}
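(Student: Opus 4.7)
The plan is to verify each of the three properties in turn, reducing each to machinery already established in Section~\ref{SStructs}, and in particular to the simple $S$-amalgam construction of Definition~\ref{defSimpleAmalgam}.

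For the Hereditary Property, suppose $A\subseteq B$ with $B\in\fC_s$, so $\esi B$. Apply Lemma~\ref{StrongSubset} with the strong inclusion $\emptyset\leqs B$ and the set $X=A$: this yields $\emptyset = X\cap\emptyset\leqs A$, so $A\in\fC_s$. This is essentially a one-line application.

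For the Joint Embedding Property, given $A,B\in\fC_s$, arrange (by renaming) that $A\cap B=\emptyset$ and form a simple $S$-amalgam $D$ of $A$ and $B$ over $\emptyset$ via Definition~\ref{defSimpleAmalgam}. Since $\esi A$ and $\esi B$, Corollary~\ref{leqsInSimpleAmalgam} (applied with its second assertion) yields $\esi D$, so $D\in\fC_s$, and the same corollary gives that the natural inclusions $A\embed D$ and $B\embed D$ are strong. For the Amalgamation Property, given $A\leqs B_1$ and $A\leqs B_2$ in $\fC_s$ (with $B_1\cap B_2=A$ after renaming), form a simple $S$-amalgam $D$ of $B_1$ and $B_2$ over $A$. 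By Corollary~\ref{leqsInSimpleAmalgam}, the hypothesis $A\leqs B_2$ gives $B_1\leqs D$, and by the symmetric application (swapping the roles of $B_1$ and $B_2$ in the corollary) the hypothesis $A\leqs B_1$ gives $B_2\leqs D$; together they also yield $A\leqs D$. Finally, since $\esi A$ (because $A\in\fC_s$) and $A\leqs D$, transitivity of $\leqs$ (Corollary~\ref{leqsTransitivity}) gives $\esi D$, hence $D\in\fC_s$. The inclusions $B_1\embed D$ and $B_2\embed D$ agree on $A$ by Observation~\ref{simpleAmalgamObs}(i) and are the required strong embeddings.

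I do not expect any genuine obstacle here: each of the three properties follows directly from a single previously established lemma or corollary, and the whole proof is essentially bookkeeping. The only non-trivial ingredient is the well-definedness of the simple amalgam as an $S$-structure (that is, that the pre-multiplicity function prescribed in Definition~\ref{defSimpleAmalgam} is valid), which the paper explicitly leaves to the reader and which sits behind both JEP and AP. One minor point worth flagging in the write-up is that the simple amalgam is not unique, but this does not affect the argument: any choice of simple amalgam serves as witness to JEP and AP.
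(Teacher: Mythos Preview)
Your proof is correct and matches the paper's own argument essentially line for line: HP from $\esi B$ (you give the underlying Lemma~\ref{StrongSubset}, the paper just asserts it), JEP as the special case of AP over $\emptyset$, and AP via a simple $S$-amalgam together with Corollary~\ref{leqsInSimpleAmalgam} and transitivity. Your flags about validity of the simple amalgam's pre-multiplicity and its non-uniqueness are apt and agree with the paper's treatment.
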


\begin{proof} We use the fact $A\in \fC_s \iff \esi A$
\begin{enumerate}
\item Let $A\subseteq B$ and $\esi B$, So $\esi A$.
\item Let $A,B\in \fC_s$. This follows from the Amalgamation Property and amalgamating $A$ and $B$ over the empty set.
\item Let $A,B_1,B_2$ be as in the statement of the property, consider a simple amalgam of $B_1$ and $B_2$ over $A$, call it $D$. Then by Corollary \ref{leqsInSimpleAmalgam} the identities are strong embeddings of $B_1$ and $B_2$ into $D$, agreeing on $A$. Also, $A\leqs D$ and due to transitivity and $\esi A$ we have $\esi D$. So $D\in \fC_s$ is the structure sought after.
\end{enumerate}
\end{proof}

So as we desired, by Proposition \ref{FraisseProperties} there exists (by a well known back-and-forth argument) a single (up to isomorphism) countable structure with age $\fC_s$ which is homogeneous with respect to strongly embedded substructures (i.e. any finite partial isomorphism $f:A\rightarrow B$ with $A,B$ strong in the structure extends to an isomorphism of the entire structure). We call this structure the fra\"{i}ss\'e limit of $\fC_s$.

We would like to show that $M_s$ is the fra\"{i}ss\'e limit of $\fC_s$. We already know by Theorem \ref{Criterion} that $\calC_s$ is the age of $M_s$ and that there is a strong embedding of any object in $\fC_s$ into $M_s$. It is only left to show that $M_s$ is homogeneous with respect to strongly embedded substructures.

\begin{definition}\label{mixedGenericAmalgam}
Let $A\subseteq \bar{A}\subseteq N$ be $R$-structures with $W_A\subseteq \bar{A}$. Denote by $\calA$ the $S$-structure induced on $A$. Let $\calB \in \calC_s$ be some $S$-structure with universe $B$ with $\calA\subseteq\calB$. We define a family of $R$-structures.

\smallskip
Let $\fullenum{1}$ be a full-enumeration of $C^{\calB}(\calA)$. By additivity there exists a bijection $g: \fullenum{1} \rightarrow C(A)$ with $g(C) = C\cap A$, we fix such a $g$. We can now think of the cliques in $\fullenum{1}$ as witnessed and define $wit(C) = wit(g(C))$.

Let $\fullenum{2}$ be a full-enumeration of $C(\calB/\calA)$. Denote $\calW = \setarg{x^C,y^C}{C\in \fullenum{2}}$.

\smallskip
We define an $R$-structure $D$ with

\[D = \bar{A}\amalg (B\setminus A)\amalg \calW\]
\[R_D = R_{\bar{A}} \cup \bigcup_{C\in \fullenum{1}} RW{\clq{C}}\amalg \bigcup_{C\in \fullenum{2}}RW{\clqwit{C}{x^C,y^C}}\]

We call $D$ a \emph{mixed generic amalgam of $\bar{A}$ and $\calB$ over $A$}.
\end{definition}

\begin{obs}\label{mixedAmalgamProps}
Let $D$ be as in the definition above. Then:
\begin{enumerate}[i.]
\item
The $S$-diagram of $D$ is that of a simple amalgam of $\calB$ and $\bar{A}$ over $\calA$ as $S$-structures.
\item
The structure $D$ is the free amalgam of $B\cup \calW$ and $\bar{A}$ over $A\cup W_A$ as $R$-structures.
\item
The elements of $\calW$ are not members of any cliques. Therefore $C(D) = C(B\cup \bar{A})$, $C^D(B/\bar{A}) = C(B/\bar{A})$ and $C^D(\bar{A}) = C^B(\bar{A})$.
\end{enumerate}
\end{obs}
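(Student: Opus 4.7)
The three parts are intertwined, and I would prove (iii) first, since once we know that $\calW$ elements participate in no cliques of $D$, the remaining equalities of clique sets in (iii) are essentially tautologies, and the $S$-diagram identification of (i) reduces to a bookkeeping comparison with Definition \ref{defSimpleAmalgam}. Part (ii) is orthogonal and is handled by inspecting the explicit description of $R_D$.

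For (iii), the key step is showing that no element of $\calW$ lies in any clique of $D$. Fix $x^C\in\calW$ for some $C\in\fullenum{2}$. Reading off the definition of $R_D$, every relation in $R_D$ containing $x^C$ has the form $(c,x^C,y^C)$ with $c\in C$; the symmetric statement holds for $y^C$. If $x^C$ belonged to a clique of $D$ with witnesses $\{u,v\}$, then $(x^C,u,v)\in R_D$ forces $\{u,v\}=\{c,y^C\}$ for some $c\in C$. But then the elements of $D$ witnessed by the pair $\{c,y^C\}$ are exactly those $z$ with $(z,c,y^C)\in R_D$; the constraints on $y^C$ force $\{z,c\}=\{c',x^C\}$ for some $c'\in C$, and since $x^C\notin C$ the only solution is $z=x^C$. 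Thus $\{c,y^C\}$ witnesses only the singleton $\{x^C\}$, contradicting size $\geq 3$. With $\calW$ elements excluded from all cliques, every clique of $D$ has its universe inside $B\cup\bar A$; the $\fullenum{2}$-cliques retain their $\calW$-witnesses, which are still available in $D$ viewed as the ambient superstructure of $B\cup\bar A$. Hence $C(D)=C(B\cup\bar A)$, and the equalities $C^D(\bar A)=C^B(\bar A)$ and $C^D(B/\bar A)=C(B/\bar A)$ follow immediately from the definitions.

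For (ii), the universe of $D$ decomposes as $\bar A\cup(B\cup\calW)$. By the disjoint-union in the definition, $(B\cup\calW)\cap\bar A=A$ as a point set, and since $W_A\subseteq\bar A$ the substructure on $A\cup W_A$ is common to both sides (serving as the amalgamation base). Writing out $R_D$ shows that every relation is either in $R_{\bar A}$ or else a generating relation for some clique in $\fullenum{1}$ or $\fullenum{2}$; in the latter two cases every relation's three points lie in $B\cup\calW\cup W_A$. Thus no relation of $R_D$ connects a point of $\bar A\setminus(A\cup W_A)$ with a point of $(B\cup\calW)\setminus(A\cup W_A)$, which is exactly the freeness condition.

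Finally, for (i), once (iii) is in hand the $S$-diagram of $D$ is the $S$-structure on $B\cup\bar A$. I would enumerate its cliques: cliques in $\fullenum{1}$ correspond, via the bijection $g:\fullenum{1}\to C(A)$, to the pairs $(C^1_i,C^2_i)$ of Definition \ref{defSimpleAmalgam} where $C^1_i$ is the $\calB$-extension of $g(C)$ and $C^2_i$ is the $\bar A$-extension sharing the witnesses $wit(g(C))$; cliques in $\fullenum{2}$ account for $C(\calB/\calA)$; cliques contained in $\bar A$ that do not extend an $\calA$-clique supply $C(\bar A/\calA)$. Multiplicities match by construction of $\fullenum{1}$ and $\fullenum{2}$ as full enumerations, and this is precisely the clique structure of a simple $S$-amalgam of $\calB$ and $\bar A$ over $\calA$. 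The main obstacle is the $\calW$-is-clique-free argument in (iii), which is the finicky bit of case-analysis from which everything else follows.
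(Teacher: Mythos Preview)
The paper records this as an \emph{observation} with no proof; your write-up supplies exactly the details the authors leave implicit, and your strategy of establishing (iii) first is the natural one. The case analysis showing that no $x^C\in\calW$ can lie in a clique is correct: the only relations of $R_D$ meeting $x^C$ (respectively $y^C$) are of the form $\{c,x^C,y^C\}$ with $c\in C$, and chasing the witness pair $\{c,y^C\}$ yields only the singleton $\{x^C\}$. From there the clique-set equalities in (iii) and the bookkeeping for (i) go through as you describe; the bijection $g:\fullenum{1}\to C(A)$ gives precisely the pairing of $\calB$-extensions with $\bar A$-extensions (via shared witnesses) that realises one particular simple amalgam in the sense of Definition~\ref{defSimpleAmalgam}.

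One small point on (ii): as written in the paper, the phrase ``free amalgam of $B\cup\calW$ and $\bar A$ over $A\cup W_A$'' is slightly loose, since $W_A$ need not literally sit inside $B\cup\calW$. You handle this correctly by observing that every relation of $R_D$ not already in $R_{\bar A}$ has all three vertices in $B\cup\calW\cup W_A$; thus the two ``sides'' are really $\bar A$ and $B\cup\calW\cup W_A$, with intersection $A\cup W_A$. This is the intended content, and your verification of the freeness condition (no relation crosses between $\bar A\setminus(A\cup W_A)$ and the other side) is exactly right.
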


\begin{lemma}\label{mixedGenericOutsourcing}
In the notation of Definition \ref{mixedGenericAmalgam}, let $D$ be a mixed generic amalgam of $\bar{A}$ and $\calB$ over $A$. Denote $K = C^D(B/\bar{A})$ and $L = C^D(\bar{A})$. Then:
\begin{enumerate}[i.]
\item
$EW_{K,L}(\bar{A}) = \calW = \calW_K$
\item
$\outsource{D}{\bar{A}}{K,L} = D$
\end{enumerate}
\end{lemma}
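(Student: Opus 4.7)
My plan is to pin down exactly what the maximal $R$-cliques of $D$ look like, and then read off both assertions by direct inspection. The key observation is that in the explicit formula $R_D = R_{\bar{A}}\cup\bigcup_{C\in\fullenum{1}}RW(C)\cup\bigcup_{C\in\fullenum{2}}RW(C,\{x^C,y^C\})$, every relation has its witness pair lying entirely in $\bar{A}$ (the first two summands, using $W_A\subseteq\bar{A}$) or entirely in $\calW$ (the last). Consequently, each maximal clique of $D$ is either witnessed by a pair in $\bar{A}$, in which case its intersection with $\bar{A}$ is a maximal $\bar{A}$-clique and so it belongs to $L=C^D(\bar{A})$; or it is witnessed by some $\{x^C,y^C\}\subseteq\calW$, in which case its universe is exactly the corresponding $C\in\fullenum{2}$ (for which $|C\cap\bar{A}|=|C\cap A|<3$), placing it in $K=C^D(B/\bar{A})$.

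For part (i), the above classification yields $W_K=\calW$ and, by definition, $\calW_K=\calW$ (each $C\in K$ comes with its own pair $x^C,y^C$ acting as the designated witness). Moreover $W_L\subseteq\bar{A}$ and $\bigcup(K\cup L)\subseteq\bar{A}\cup B$, both disjoint from $\calW$. Hence
\[EW_{K,L}(\bar{A})=W_{K\cup L}\setminus\bigl(\bar{A}\cup\bigcup(K\cup L)\cup W_L\bigr)=\calW\setminus(\bar{A}\cup B)=\calW,\]
giving the claimed triple equality.

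For part (ii), equality of universes follows at once from (i): $(D\setminus\calW)\amalg\calW=D$. For the $R$-relations, $K$-witnesses lie in $\calW$ and are therefore disjoint from $\bar{A}$, so the subtraction $R_{\bar{A}}\setminus\bigcup_{C\in K}RW(C,wit(C))$ collapses to $R_{\bar{A}}$; under the identification $K\leftrightarrow\fullenum{2}$ the $K$-contribution coincides with $\bigcup_{C\in\fullenum{2}}RW(C,\{x^C,y^C\})$. It then remains to verify $R_{\bar{A}}\cup\bigcup_{C\in L}RW(C)=R_{\bar{A}}\cup\bigcup_{C\in\fullenum{1}}RW(C)$. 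For $\supseteq$, each $C^1_i\in\fullenum{1}$ with witnesses $\{x,y\}$ sits inside the maximal $L$-clique $\witclq{x,y}{D}=\witclq{x,y}{\bar{A}}\cup C^1_i$, so $RW(C^1_i,\{x,y\})$ is among the generators of an $L$-clique. For $\subseteq$, each $L$-clique with witnesses $\{x,y\}\subseteq\bar{A}$ decomposes into its $\bar{A}$-part (contributing only to $R_{\bar{A}}$) and, if $\{x,y\}$ happens to be the witness pair of a (necessarily unique) $C^1_i\in\fullenum{1}$, its $B\setminus A$-part, which contributes only generators of $C^1_i$.

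The main obstacle is this last identity, which rests on tracing, for each pair $\{x,y\}\subseteq\bar{A}$, precisely which elements of $B\setminus A$ join $\witclq{x,y}{\bar{A}}$ to form the maximal $L$-clique in $D$. This is made tractable by the hypothesis $W_A\subseteq\bar{A}$: it forces each $\fullenum{1}$-clique's witnesses to already live in $\bar{A}$, so the $\fullenum{1}$-relations can only enlarge preexisting $\bar{A}$-witnessed cliques and never introduce new cross-witness pairs, which in turn makes the decomposition of $L$-cliques into $\bar{A}$-parts and $\fullenum{1}$-parts unambiguous.
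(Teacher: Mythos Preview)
Your proof is correct and follows essentially the same route as the paper's: both identify $W_K=\calW$, use disjointness of $\calW$ from $\bar{A}$, from the universes of cliques, and from $W_L$ to obtain part (i), and then read off part (ii) by unwinding the definition of $\outsource{D}{\bar{A}}{K,L}$. You are actually more careful than the paper in one place: after rewriting $R(\outsource{D}{\bar{A}}{K,L})$ as $R_{\bar{A}}\cup\bigcup_{C\in L}RW(C)\amalg\bigcup_{C\in K}RW(C,\{x^C,y^C\})$, the paper simply asserts ``which is exactly the structure $D$,'' whereas you explicitly verify the identity $R_{\bar{A}}\cup\bigcup_{C\in L}RW(C)=R_{\bar{A}}\cup\bigcup_{C\in\fullenum{1}}RW(C)$ by decomposing each $L$-clique into its $\bar{A}$-part and (at most one) $\fullenum{1}$-part.
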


\begin{proof}
\begin{enumerate}[i.]
\item
By \ref{mixedAmalgamProps}.iii we have $W_K = W_{C(B/\bar{A})}$. By the fact $D$ has the structure of a simple amalgam over $A$ we have $W_{C(B/\bar{A})} = W_{C(B/A)} = \calW$ and so $W_K = \calW$. The elements of $\calW$ are not members of cliques and $\calW$ is disjoint from $\bar{A}$, so $\calW \subseteq EW_{K\cup L}(\bar{A})$. The set $\calW$ is disjoint from $W_L$ and so $\calW \subseteq EW_{K,L}(\bar{A})$. Obviously $EW_{K,L}(\bar{A})\subseteq W_K = \calW$ and so $\calW = EW_{K,L}(\bar{A})$ as desired.

The fact $\calW = \calW_K$ is evident.
\item
Recall that
\[\outsource{D}{\bar{A}}{K,L} = (D\setminus EW_{K,L}(\bar{A}))\amalg \calW_K\]
\[R(\outsource{D}{\bar{A}}{K,L}) = \bigcup_{C\in L}RW{\clq{C}}\cup(R_{\bar{A}}\setminus \bigcup_{C\in K}RW(C,wit(C))\amalg \bigcup_{C\in K} RW(C,\set{x^C,y^C})\]

By what we showed above, $(D\setminus EW_{K,L}(\bar{A}))\amalg \calW_K = D\setminus\calW \amalg \calW = D$. Since $W_K\cap \bar{A} = \emptyset$ we have $RW{\clq{C}} \cap R_{\bar{A}} = \emptyset$ and so $R_{\bar{A}}\setminus \bigcup_{C\in K}RW(C,wit(C)) = R_{\bar{A}}$. So, rewriting, we have

\[\outsource{D}{\bar{A}}{K,L} = D\]
\[R(\outsource{D}{\bar{A}}{K,L}) = R_{\bar{A}}\cup\bigcup_{C\in L}RW{\clq{C}}\amalg \bigcup_{C\in K} RW(C,\set{x^C,y^C})\]

Which is exactly the structure $D$.
\end{enumerate}
\end{proof}

\begin{lemma}\label{mixedGenericStrong}
In the notation of Definition \ref{mixedGenericAmalgam}, let $D$ be a mixed generic amalgam of $\bar{A}$ and $\calB$ over $A$. Then:
\begin{enumerate}[i.]
\item
If $A\leqs\bar{A}$ then $B\leqs D$.
\item
If $A\leqs B$ then $\bar{A} \leqslant D$.
\end{enumerate}
\end{lemma}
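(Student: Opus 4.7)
The two parts use different notions of strongness—$\leqs$ for $S$-structures in (i) and the $R$-structure $\leqslant$ in (ii)—and admit rather different arguments.

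For (i), the plan is to reduce immediately to the $S$-amalgamation machinery. By Observation \ref{mixedAmalgamProps}.i, the $S$-diagram of $D$ coincides with that of a simple $S$-amalgam of $\calB$ and $\bar{A}$ over $\calA$. The hypothesis $A\leqs \bar{A}$ is exactly $\calA\leqs \bar{A}$, so Corollary \ref{leqsInSimpleAmalgam}, applied with $B_1\mapsto \calB$ and $B_2\mapsto \bar{A}$, yields $\calB\leqs D$, i.e., $B\leqs D$.

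For (ii), I would exploit Observation \ref{mixedAmalgamProps}.ii. Set $N_2:=B\cup \calW\cup W_A$ and $M:=A\cup W_A$. Then $D=\bar{A}\cup N_2$, $\bar{A}\cap N_2=M$, and inspecting the three buckets of relations defining $R_D$ shows that no generator crosses between $\bar{A}\setminus M$ and $N_2\setminus M$. A direct count of elements and triples then gives, for any $\bar{A}\subseteq E'\subseteq D$,
\[
d_0(E')=d_0(\bar{A})+d_0(E'\cap N_2)-d_0(M).
\]
Hence (ii) reduces to $M\leqslant N_2$ as $R$-structures: granted this, $d_0(E'\cap N_2)\geq d_0(M)$ gives $d_0(E')\geq d_0(\bar{A})$.

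To establish $M\leqslant N_2$, I would mimic the counting in Lemma \ref{GenericIsEmbeddable}, but relative to $A$ rather than $\emptyset$. Let $M\subseteq D'\subseteq N_2$ and set $\bar{B}=D'\cap B$. By Observation \ref{addinExternalWitnesses}, adjoining a pair $\set{x^C,y^C}$ for $C\in \fullenum{2}$ changes $d_0$ by $2-|C\cap \bar{B}|$, and an analogous one-element adjustment shows no minimum is missed by insisting $\set{x^C,y^C}\subseteq D'$ precisely when $|C\cap \bar{B}|\geq 3$; so it suffices to check $d_0(D'/M)\geq 0$ under this assumption. Let $I=\set{C\in \fullenum{2}\,:\,|C\cap \bar{B}|\geq 3}$. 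Counting: $|D'\setminus M|=|\bar{B}\setminus A|+2|I|$, and the new $R$-relations are the $\fullenum{1}$-generators with clique-member in $\bar{B}\setminus A$ (the witnesses of these already lying in $W_A\subseteq M$) together with the $\fullenum{2}$-generators of surviving cliques, so
\[
d_0(D'/M)=|\bar{B}\setminus A|-\sum_{C\in \fullenum{1}}|C\cap(\bar{B}\setminus A)|-\sum_{C\in I}\bigl(|C\cap \bar{B}|-2\bigr).
\]
Using the bijection $g:\fullenum{1}\to C(A)$ together with additivity of multiplicity, the right-hand side is exactly $\dosover{\bar{B}}{A}$, which is $\geq 0$ by the hypothesis $A\leqs B$. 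The main obstacle of the proof is precisely this bookkeeping: matching the three contributions to $d_0(D'/M)$ (from $\bar{B}\setminus A$, from the $W_A$-witnessed relations, and from the $\calW$-witnessed relations of surviving cliques) with the two sums in $\dosover{\bar{B}}{A}$ coming from the two parts of a full enumeration of $C(\bar{B})$.
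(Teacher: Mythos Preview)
For (i) your argument is the paper's, modulo one imprecision: Observation \ref{mixedAmalgamProps}.i says the $S$-diagram of $D$ is that of a simple amalgam of $\calB$ and $\bar{A}$ over $\calA$, but the universe of $D$ is $B\cup\bar{A}\cup\calW$, strictly larger than $B\cup\bar{A}$, so Corollary \ref{leqsInSimpleAmalgam} does not literally apply with $B_1=\calB$. The paper patches this by observing $B\leqs B\cup\calW$ (trivially, since elements of $\calW$ lie in no cliques), treating $D$ as a simple amalgam of $B\cup\calW$ and $\bar{A}$ over $A$, and then using transitivity.

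For (ii) your argument is correct and takes a genuinely different route. The paper does not exploit the free $R$-amalgam decomposition for a direct count. Instead, after the same witness-normalisation WLOG you use, it observes that the resulting $E$ is itself a mixed generic amalgam of $\bar A$ and $E\cap B$ over $A$, reduces to $E=D$, and then invokes Lemma \ref{mixedGenericOutsourcing} to identify $D$ with $\outsource{D}{\bar{A}}{K,L}$; the equality clause of Lemma \ref{outsourceDoAndDos} then yields $d_0(D/\bar{A})=\dosover{B}{\bar{A}}$, and $\bar{A}\leqs D$ (from Corollary \ref{leqsInSimpleAmalgam}, using $A\leqs B$) gives nonnegativity. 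Your approach reduces instead to showing $A\cup W_A\leqslant B\cup\calW\cup W_A$ and carries out by hand the bookkeeping that the paper had already packaged into Lemma \ref{outsourceDoAndDos}. Your argument is more self-contained and avoids the outsourcing apparatus of Section \ref{age}; the paper's buys reuse of that machinery at the cost of a longer chain of back-references.
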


\begin{proof}
\begin{enumerate}[i.]
\item
Obviously, since elements of $\calW$ are not members of cliques, we have $B\leqs B\cup \calW$. By \ref{mixedAmalgamProps}.i $D$ is a simple amalgam of $B\cup \calW$ and $\bar{A}$ over $A$. Since $A\leqs\bar{A}$, by \ref{leqsInSimpleAmalgam} we have $B\cup\calW\leqs D$. By transitivity this gives us $B\leqs D$.

\item
Let $\bar{A}\subseteq E\subseteq D$. We wish to show $d_0(E/\bar{A})\geq 0$. Without loss of generality assume $\set{x^C,y^C}\cap E \neq \emptyset \iff \set{x^C,y^C}\subseteq E \iff |C\cap E| > 2$. We may assume this since $d_0(\set{x^C,y^C}/E\setminus\set{x^C,y^C})<0$ iff $|C\cap E| > 2$. So $\calW\cap E = \setarg{x^C,y^C}{C\in C^E(B/A)}$.

Observe that under our assumption, $E$ is a mixed generic amalgam of $\bar{A}$ and $E\cap B$ (as an induced $S$-structure) over $A$. Denote $K = C^E(B/\bar{A})$ and $L = C^E(\bar{A})$. Since $A\leqs B\cap E$, these are exactly the conditions of the statement of the lemma and so we may assume without loss of generality that $E=D$.

We note the following: $\bar{A} \subseteq B\cup\bar{A}$, $E = (B\cup\bar{A})\cup W_{B\cup\bar{A}}$, $W_L\subseteq \bar{A}$, $RW(C)\cap R_{\bar{A}} = \emptyset$ for any $C\in K$ and by \ref{mixedGenericOutsourcing} also $E\setminus EW_{K,L}(\bar{A}) = B\cup\bar{A}$. These are exactly the conditions of Lemma \ref{outsourceDoAndDos} and so $d_0(\outsource{E}{\bar{A}}{K,L}/\bar{A}) = \dosover{B}{\bar{A}}$. By $A\leqs B$ and Lemma \ref{leqsInSimpleAmalgam} we have $\bar{A}\leqs D$ and so $\dosover{B}{\bar{A}}\geq 0$. By Lemma \ref{mixedAmalgamProps}.ii we have $E = \outsource{D}{\bar{A}}{K,L}$. Combining the equalities, we have $d_0(E/\bar{A}) \geq 0$. And so as explained, $\bar{A}\leqslant D$.
\end{enumerate}
\end{proof}

\begin{prop}\label{weakHomogeneity} Let $A\leqs M_s$ and $A\leqs B$ for some $B\in\calC_s$, then there is a strong embedding $f_0:B\rightarrow M_s$ with $f\upharpoonright A = Id_A$.
\end{prop}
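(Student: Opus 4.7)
The plan is to lift the problem from the $S$-reduct $M_s$ to the $R$-structure $M$, where the original Hrushovski amalgamation property is available, and then descend using the mixed generic amalgam (Definition \ref{mixedGenericAmalgam}) together with its strong-embedding properties (Lemma \ref{mixedGenericStrong}).

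First, I enlarge $A$ to a finite $R$-substructure $\bar A \subseteq M$ with $A \cup W_A \subseteq \bar A$ and $\bar A \leqslant M$ -- for instance, the $R$-self-sufficient closure in $M$ of $A \cup W_A$, which is finite because $C(A)$ (and hence $W_A$) is finite. From $A \leqs M_s$ and $A \subseteq \bar A \subseteq M_s$, Lemma \ref{StrongSubset} gives $A \leqs \bar A$ as $S$-structures. Because $W_A \subseteq \bar A$, the $S$-structure $\bar A$ induces on $A$ coincides with the one $A$ carries inside $B$, so it is legitimate to form a mixed generic amalgam $D$ of the $R$-structure $\bar A$ and the $S$-structure $B$ over $A$. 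Lemma \ref{mixedGenericStrong}(ii), applied to the hypothesis $A \leqs B$, yields $\bar A \leqslant D$ as $R$-structures; transitivity of $\leqslant$ combined with $\emptyset \leqslant \bar A$ shows $\emptyset \leqslant D$, so $D \in \calC$ is finite.

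Now I invoke the amalgamation property of $\calC$ in the $R$-language to obtain a strong $R$-embedding $h \colon D \to M$ fixing $\bar A$ pointwise, and set $f_0 := h \upharpoonright B$. Since $h$ fixes $\bar A \supseteq A$, certainly $f_0 \upharpoonright A = \mathrm{Id}_A$. For strength: Lemma \ref{mixedGenericStrong}(i), combined with $A \leqs \bar A$, gives $B \leqs D$ as $S$-structures; because $h$ is an $R$-embedding it preserves the $S$-structure, so $h(B) \leqs h(D)$; Proposition \ref{StrongImpliesLeqs} converts $h(D) \leqslant M$ into $h(D) \leqs M_s$; and Corollary \ref{leqsTransitivity} then yields $f_0(B) \leqs M_s$, as required.

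The main technical obstacle, and the reason for the detour through $\bar A$, is that $A$ being $\leqs$-strong in $M_s$ by itself says nothing about where the witnesses of the maximal $S$-cliques of $A$ live. Including $W_A$ in $\bar A$ anchors those witnesses, forcing the two copies of the $S$-structure on $A$ (one inherited from $\bar A$, one from $B$) to agree, so that the $R$-predimension bookkeeping driving Lemma \ref{mixedGenericStrong} genuinely applies and the resulting amalgam $D$ is honestly an element of $\calC$ into which $M$ absorbs $D$ over $\bar A$.
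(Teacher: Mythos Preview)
Your proof is correct and follows essentially the same route as the paper: pass to the $R$-self-sufficient closure $\bar A$ of $A$ inside $M$ (which automatically contains $W_A$, so your explicit enlargement is harmless), form the mixed generic amalgam $D$, apply Lemma~\ref{mixedGenericStrong} in both directions, strongly embed $D$ into $M$ over $\bar A$, and descend via Proposition~\ref{StrongImpliesLeqs} and transitivity. One small point of exposition: the reason the $S$-structure on $h(D)$ inherited from $M$ agrees with the intrinsic one on $D$ is not merely that $h$ is an $R$-embedding, but that $h(D)\leqslant M$, which rules out externally witnessed cliques; you have this fact available, so the argument goes through.
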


\begin{proof}
Since $A$ is a specific subset of $M_s$, we may treat it as an $R$-substructure of $M$. Denote the $R$-self-sufficient-closure of $A$ by $\bar{A}$. Take $D$ to be a mixed generic amalgam of $\bar{A}$ and $B$ over $A$. Since $A\leqs M$, we have $A\leqs \bar{A}$ and by assumption we have $A\leqs B$. By Lemma \ref{mixedGenericStrong} this gives us $B\leqs D$ and $\bar{A}\leqslant D$.

Because $M$ is universal with respect to $\leqslant$, there is a $\leqslant$-strong embedding $F_0:D\rightarrow M$ with $F_0\upharpoonright \bar{A} = Id_{\bar{A}}$. Identify the structure $D$ with its image under $F_0$. Because $D\leqslant M$ there are no externally witnessed $S$-cliques in $D$ and so the $S$-diagram of $D$ remains the same within $M$, in particular the $S$-diagram of $F_0[B]$ as a substructure of $M$ is identical to that of $B$. By Proposition \ref{StrongImpliesLeqs} and $D\leqslant M$ we have $D\leqs M_s$. By transitivity and $B\leqs D$ we then have $B\leqs M_s$. In conclusion, we have found the embedding we were looking for, $F_0\upharpoonright B$.
\end{proof}

\begin{Cor}\label{homogeneity} $M_s$ is homogeneous with respect to strong embeddings.
\end{Cor}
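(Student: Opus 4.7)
The plan is to deduce the full homogeneity from Proposition \ref{weakHomogeneity} by a standard back-and-forth. Fix a finite partial isomorphism $f_0 \colon A_0 \to A_0'$ with $A_0, A_0' \leqs M_s$. Enumerate $M_s = \{m_n : n < \omega\}$, and build an increasing chain $f_0 \subseteq f_1 \subseteq f_2 \subseteq \cdots$ of finite partial isomorphisms of $M_s$ whose domains $A_n$ and ranges $A_n'$ are all $\leqs M_s$, in such a way that $m_k \in \mathrm{dom}(f_{2k+1})$ and $m_k \in \mathrm{ran}(f_{2k+2})$. The union $\bigcup_n f_n$ will then be an automorphism of $M_s$ extending $f_0$, proving the corollary.

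The inductive "forth" step is the only thing to explain; the "back" step is dual. Given $f_n \colon A_n \to A_n'$ with $A_n, A_n' \leqs M_s$, and a target element $m \in M_s$, I first need to produce a finite $B$ with $A_n \cup \{m\} \subseteq B \leqs M_s$. Starting from $B_0 = A_n \cup \{m\}$ and iteratively passing to a finite $B_{i+1} \supseteq B_i$ with $\dos(B_{i+1}) < \dos(B_i)$ whenever $B_i \not\leqs M_s$, the process terminates because Corollary \ref{esiMs} gives $\dos \geq 0$ throughout $M_s$, so after at most $\dos(B_0)$ steps we reach the desired finite $B \leqs M_s$. By Lemma \ref{StrongSubset} (or directly since $A_n \leqs M_s$ and $A_n \subseteq B \leqs M_s$) we also have $A_n \leqs B$.

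Now transport $B$ across $f_n$: since $f_n$ is an isomorphism of $S$-structures between $A_n$ and $A_n'$, one obtains an abstract $S$-structure $B'$ with universe $A_n' \sqcup (B \setminus A_n)$ and an isomorphism $\tilde f_n \colon B \to B'$ extending $f_n$ that is the identity on $B \setminus A_n$. Then $A_n' \leqs B'$ in $\fC_s$ (transported from $A_n \leqs B$) and $A_n' \leqs M_s$ by hypothesis, so Proposition \ref{weakHomogeneity} supplies a strong embedding $g \colon B' \to M_s$ with $g \upharpoonright A_n' = \mathrm{Id}_{A_n'}$. Setting $f_{n+1} := g \circ \tilde f_n$ yields a partial isomorphism extending $f_n$ whose domain $B$ and range $g[B']$ are both $\leqs M_s$, and the domain contains $m$. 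The back step is identical after replacing $f_n$ by $f_n^{-1}$.

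The only genuine obstacle is producing the finite strong superset $B$ of $A_n \cup \{m\}$, and this is handled by the finiteness-of-closures argument above, which uses only $\emptyset \leqs M_s$ (Corollary \ref{esiMs}) and integrality of $\dos$. Everything else is formal bookkeeping on top of Proposition \ref{weakHomogeneity}.
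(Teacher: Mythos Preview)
Your proof is correct and follows essentially the same back-and-forth argument as the paper: enlarge the domain to a finite strong superset containing the target point, transport the resulting $S$-structure across the partial isomorphism, and invoke Proposition \ref{weakHomogeneity} to realise it strongly on the other side. You are somewhat more explicit than the paper about why a finite strong superset of $A_n\cup\{m\}$ exists (the paper simply asserts ``choose $\bar A\leqs M_s$''), but the substance is identical.
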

\begin{proof}
Say $f:A\rightarrow B$ is a finite partial $S$-isomorphism with $A,B\leqs M_s$. Let $m_1,m_2,...$ be an enumeration of $M_s$.
\\Define $f_0 = f$ and denote $A_n = Dom(f_n),B_n = Rng(f_n)$.

Assume $f_{2n}$ is a given partial isomorphism with $A_{2n},B_{2n}\leqs M_s$. Choose $A_{2n}\cup\set{m_n}\subseteq \bar{A}\subseteq M_s$ such that $\bar{A}\leqs M_s$. Because $A\leqs M_s$ it is true that in particular $A\leqs \bar{A}$. Consider the diagram which is $\bar{A}$ after renaming every element $a\in A$ to $f(a)$ and call it $\bar{B}$. Note that $B\leqs \bar{B}$ and so by Proposition \ref{weakHomogeneity} there is a strong embedding $g:\bar{B}\rightarrow M_s$ with $g\upharpoonright B = Id_B$. Define $f_{2n+1} = f_{2n}\cup g\upharpoonright (\bar{A}\setminus A)$, it is a partial isomorphism with both domain and range strong in $M_s$.

Go about defining $f_{2n+2}$ from $f_{2n+1}$ the same way, only now adding $m_n$ to the range rather than the domain.

Take $\bar{f} = \bigcup_{i=0}^{\infty}f_i$, it is an automorphism of $M_s$ extending $f$ as required.
\end{proof}

\begin{Cor}\label{properReduct} $M_s$ is a proper reduct of $M$. Namely, the relation $R$ cannot be recovered from the relation $S$.
\end{Cor}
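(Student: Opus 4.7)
My plan is to exhibit an $S$-automorphism of $M_s$ that fails to be an $R$-automorphism of $M$; this is enough, since if $R$ were $\emptyset$-definable from $S$, every $S$-automorphism would preserve $R$. To produce such a map I would invoke Corollary \ref{homogeneity}, which reduces the task to exhibiting two strong finite $S$-substructures of $M_s$ that carry the same $S$-diagram but differ in their $R$-diagram.

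The natural candidates are a pair of three-element sets $A=\{a_1,a_2,a_3\}$ and $A'=\{a_1',a_2',a_3'\}$ in $M$, where $M\models R(a_1,a_2,a_3)$ and $A'$ carries no $R$-relation at all. Both are objects of $\calC$ (with $d_0(A)=2$ and $d_0(A')=3$), so by universality of $M$ both embed strongly into $M$. The key verification is that in $M$ neither triple supports an $S$-clique. For $A'$ this is immediate. For $A$, suppose a pair $\{x,y\}\subseteq M$ witnessed $\{a_1,a_2,a_3\}$ as an $S$-clique; then $A\cup\{x,y\}$ would carry the four relations $R(a_1,a_2,a_3)$ together with $R(a_i,x,y)$ for $i=1,2,3$, giving $d_0(A\cup\{x,y\})=5-4=1<2=d_0(A)$, contradicting $A\leqslant M$. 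Hence both $A$ and $A'$ carry the trivial (empty) $S$-diagram on three points, and by Proposition \ref{StrongImpliesLeqs} both are $\leqs$-strong in $M_s$.

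The bijection $a_i\mapsto a_i'$ is therefore an $S$-isomorphism between two strong finite substructures of $M_s$, and Corollary \ref{homogeneity} extends it to an $S$-automorphism $\sigma$ of $M_s$. Since $M\models R(a_1,a_2,a_3)$ while $M\not\models R(\sigma(a_1),\sigma(a_2),\sigma(a_3))$, the map $\sigma$ does not preserve $R$. Consequently $R$ is not $\emptyset$-definable in $M_s$, which is exactly the statement that $M_s$ is a proper reduct.

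The only step with any substance is the verification that $A\leqslant M$ together with $R(a_1,a_2,a_3)$ really forbids external $S$-witnesses for $A$ in $M$; this is precisely where the strength of $A$ in $M$ is used, and it is a short $d_0$-computation rather than a genuine obstacle. Everything else is a direct application of the homogeneity and the equivalence between $\leqslant$ and $\leqs$ proved earlier.
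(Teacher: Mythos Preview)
Your proof is correct and follows essentially the same approach as the paper: exhibit two three-element $R$-structures, one with a single $R$-triple and one with none, strongly embed both into $M$, observe they carry identical $S$-structures, and invoke Corollary~\ref{homogeneity} to produce an $S$-automorphism that moves one to the other. The only difference is cosmetic: you spell out the $d_0$-computation showing that a strongly embedded $R$-triple cannot acquire external $S$-witnesses, whereas the paper simply asserts that the map is a partial $S$-isomorphism.
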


\begin{proof}
We show that there is an automorphism of $M_s$ which is not an automorphism of $M$. Consider the $R$-structure $A = \set{a,b,c}$ with $R_A = \emptyset$ and the $R$-structure $B = \set{1,2,3}$ with $R_B = \set{(1,2,3)}$. As $\emptyset \leqslant A,B$, both $A$ and $B$ are strongly embeddable into $M$. Identify the structures $A$ and $B$ with their images in $M$. So $A,B\leqslant M$ and therefore $A,B\leqs M$. The map $(a,b,c)\mapsto (1,2,3)$ is a partial $S$-isomorphism and so by homogeneity of $M_s$, extends to $\sigma\in Aut(M_s)$. As the map $(a,b,c)\mapsto (1,2,3)$ is \textbf{not} a partial $R$-isomorphism, $\sigma\notin Aut(M)$ and so the relation $R$ can not be defined using $S$.
\end{proof}

Now we have everything we need in order to characterize $M_s$ as the fra\"{i}ss\'e limit $\fC_s$.

\begin{theorem} $M_s$ is the fra\"{i}ss\'e limit of $\fC_s$
\end{theorem}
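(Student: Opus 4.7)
The proof plan is essentially to observe that all the work has already been done and we only need to assemble the pieces. The Fraïssé limit of $\fC_s$ is characterized (up to isomorphism) as the unique countable $S$-structure $\calN$ whose age is $\calC_s$ and which is homogeneous with respect to strong embeddings between finite strong substructures. So my plan is to verify these three conditions for $M_s$.

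First, $M_s$ is countable, since it is a reduct of $M$, which is countable by construction. Second, the age of $M_s$ equals $\calC_s$: this is precisely the content of Theorem \ref{Criterion}, which shows both inclusions, namely that every finite substructure of $M_s$ satisfies $\esi A$, and conversely that every $S$-structure $A$ with $\esi A$ embeds strongly into $M_s$. Third, $M_s$ is homogeneous with respect to strong embeddings, which is exactly Corollary \ref{homogeneity}.

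I would then invoke the standard back-and-forth uniqueness argument for Fraïssé-Hrushovski limits (whose existence was already noted after Proposition \ref{FraisseProperties}): given two countable structures with age $\calC_s$ that are homogeneous with respect to strong embeddings and such that every member of $\calC_s$ strongly embeds into each, one builds an isomorphism between them by alternately extending a partial strong isomorphism so as to exhaust both universes, using strong amalgamation inside each structure at every step. Applying this to $M_s$ and the abstract Fraïssé limit of $\fC_s$ yields the desired identification.

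There is no real obstacle here; the only subtle point is to make sure the back-and-forth step is legitimate, i.e., given a finite partial strong isomorphism $f\colon A\to B$ with $A\leqs M_s$ and $B\leqs \calN$ (the abstract limit), and a new element $m\in M_s$, one can enlarge $A$ to a strong finite superstructure $\bar A\leqs M_s$ containing $m$, then transport $\bar A$ across $f$ and find its strong copy inside $\calN$ by the universality of $\calN$ for $\fC_s$. This is precisely the type of step that Proposition \ref{weakHomogeneity} justifies on the $M_s$ side, and the analogous statement for $\calN$ is built into its definition as the Fraïssé limit. Once this is spelled out, the theorem follows immediately.
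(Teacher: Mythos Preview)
Your proposal is correct and follows essentially the same approach as the paper: invoke Theorem~\ref{Criterion} for the age, Corollary~\ref{homogeneity} for homogeneity with respect to strong embeddings, and then the uniqueness of the Fra\"iss\'e limit noted after Proposition~\ref{FraisseProperties}. The paper's own proof is just a two-line version of exactly this argument.
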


\begin{proof}
By Theorem \ref{Criterion}, $\fC_s$ is the age of $M_s$. By Proposition \ref{homogeneity}, $M_s$ is homogeneous with respect to strong embeddings. As discussed before, there is a single countable structure up to isomorphism with these qualities and so $M_s$ must be it.
\end{proof}

\section{The Theory of $M_s$}\label{Mstheory}
\begin{definition}
Define $F_n$ to be the $S$-structure with $n$ points and $C(F_n) = \emptyset$. Obviously $F_n\in \calC_s$
\end{definition}

\begin{definition}
We say an infinite $S$-structure $N$ is \emph{rich} if it has the following properties:
\begin{enumerate}
\item $\calC_s$ is the class of finite substructures of $N$
\item Let $A\subseteq N$ and $A\leqs B$ for some $B\in \calC_s$, then there is an embedding $f:B\rightarrow N$ such that $f\upharpoonright A = Id_A$.
\item For all $n\in \N$ there is a strong embedding of $F_n$ into $N$
\end{enumerate}

Note that 1 and 2 are first-order properties in the language $\set{S}$
\end{definition}

\begin{obs}\label{MsRich}
$M_s$ is rich.
\begin{enumerate}
\item By Theorem \ref{Criterion}
\item Denote $A' = cl(A)$. Let $D$ be the simple amalgam of $A'$ and $B$ over $A$. By definition, $A'\leqs D$ and so $D$ is strongly embeddable into $M_s$ over $A$. The embedding restricted to $B$ is the one sought after.
\item $F_n\in \calC_s$ and so by the property of $M_s$ as the fra\"{i}ss\'e limit of $\fC_s$, $F_n$ is strongly embeddable into $M_s$.
\end{enumerate}
\end{obs}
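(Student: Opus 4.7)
The plan is to verify the three defining properties of richness in turn, with the main difficulty concentrated in property (2). Property (1) is essentially a restatement of Theorem \ref{Criterion}: every $A\in\calC_s$ is strongly embeddable into $M_s$ (the ``$\subseteq$'' half), while every finite substructure of $M_s$ satisfies $\esi$ because $\esi M_s$ holds (Corollary \ref{esiMs}) and this is inherited by subsets. Property (3) is even more direct: $F_n$ has no cliques, so $\dos(X)=|X|\ge 0$ for every $X\subseteq F_n$, hence $\esi F_n$ and $F_n\in\calC_s$; strong embeddability into $M_s$ is then another application of Theorem \ref{Criterion}.

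The real work is property (2). The obstacle is that the only extension principle we have so far, Proposition \ref{weakHomogeneity}, insists on its source being $\leqs$-strong in $M_s$, whereas we are only handed an arbitrary subset $A\subseteq M_s$ with $A\leqs B$. My plan is to reduce to the strong case by passing to the $S$-closure $A':=cl(A)$, which is by definition strong in $M_s$, and then joining $A'$ with $B$ through a simple amalgam (Definition \ref{defSimpleAmalgam}) to obtain an object $D$ to which Proposition \ref{weakHomogeneity} can be applied.

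Concretely, let $D$ be a simple amalgam of $A'$ and $B$ over $A$. The key technical point is that we do not need to know $A\leqs A'$: the hypothesis $A\leqs B$ alone activates the first clause of Corollary \ref{leqsInSimpleAmalgam}, delivering $A'\leqs D$. Since $\esi M_s$ restricts to $\esi A'$, and $A'\leqs D$, transitivity (Corollary \ref{leqsTransitivity}) gives $\esi D$, so $D\in\calC_s$. Proposition \ref{weakHomogeneity} then produces a strong embedding $f\colon D\to M_s$ fixing $A'$ pointwise, and in particular fixing $A$; the restriction $f\upharpoonright B$ is the embedding required. The subtle step, and the one I expect to be the main obstacle on a first attempt, is precisely the recognition that the asymmetric form of Corollary \ref{leqsInSimpleAmalgam} suffices here: the $\leqs$ assumption on the side we do control ($A\leqs B$) transports to the side we care about ($A'\leqs D$) even though nothing is assumed about the relation between $A$ and its closure $A'$.
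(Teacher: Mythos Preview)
Your proof is correct and follows essentially the same route as the paper's own argument: pass to $A'=cl(A)$, form a simple amalgam $D$ of $A'$ and $B$ over $A$, use Corollary \ref{leqsInSimpleAmalgam} (from $A\leqs B$) to get $A'\leqs D$, and then invoke Proposition \ref{weakHomogeneity}. Your write-up is in fact more careful than the paper's terse ``by definition, $A'\leqs D$'', since you correctly isolate the asymmetric use of Corollary \ref{leqsInSimpleAmalgam} and verify $D\in\calC_s$ before applying Proposition \ref{weakHomogeneity}.
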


\begin{lemma}\label{independence}
Let $A\leqs N$. Denote $d = d_s(A) = \dos(A)$.
\\$(i)$ For $b_1,b_2\in N$, if $d_s(A,b_1) = d = d_s(A,b_2)$ then $d_s(A,b_1,b_2) = d$.
\\$(ii)$ If $N$ is rich then there is a point $b\in N$ such that $d < d_s(A,b) = d + 1$ and $A\cup\set{b}\leqs N$.
\end{lemma}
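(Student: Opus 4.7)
I plan to treat the two parts separately. Part (i) is purely a submodularity computation using Lemma~\ref{dosFreeAmalgam}. By the definition of $d_s$, pick finite sets $B_1, B_2$ with $A \cup \{b_i\} \subseteq B_i \subseteq N$ and $\dos(B_i) = d$. Let $A' = B_1 \cap B_2$; then $A \subseteq A'$, so $\dos(A') \geq d$ by $A \leqs N$. Lemma~\ref{dosFreeAmalgam} gives
\[
\dosover{B_2}{B_1} \;\leq\; \dosover{B_2}{A'} \;=\; \dos(B_2) - \dos(A') \;\leq\; d - d \;=\; 0,
\]
so $\dos(B_1 \cup B_2) \leq \dos(B_1) = d$. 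The finite set $B_1 \cup B_2$ contains $A \cup \{b_1, b_2\}$, whence $\ds(A \cup \{b_1,b_2\}, N) \leq d$; the reverse inequality follows from $A \leqs N$.

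For part (ii), I argue by contradiction. Two preliminary observations. First, additivity of the pre-multiplicity makes $s(\cdot)$ monotone under substructures, so $\dos(A \cup \{b\}) \leq |A| + 1 - s(A) = d + 1$ for every $b \in N \setminus A$; hence $d_s(A, b) \in \{d, d+1\}$. Second, if some $b$ achieves $d_s(A, b) = d + 1$, then the sandwich $d + 1 = d_s(A, b) \leq \dos(A \cup \{b\}) \leq d + 1$ forces equality, which is exactly $A \cup \{b\} \leqs N$. So it suffices to rule out that $d_s(A, b) = d$ for \emph{every} $b \in N \setminus A$.

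Under that hypothesis, the submodularity computation of part~(i) adapts verbatim, with single points replaced by finite sets, to yield $\ds(A \cup F, N) = d$ for every finite $F \subseteq N$ (apply part~(i) to $F_1 = \{b_1\}$ and $F_2 = \{b_2, \ldots, b_k\}$ and induct). Now invoke richness condition~3: take a strong embedding $F_{d+1} \hookrightarrow N$ and let $F \subseteq N$ be its image. Then $F \leqs N$ with $\dos(F) = d + 1$, so $\ds(F, N) = d + 1$. Since $F \subseteq A \cup F$ and $\ds(\cdot, N)$ is monotone under supersets, $\ds(A \cup F, N) \geq \ds(F, N) = d + 1$, contradicting $\ds(A \cup F, N) = d$.

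The step I expect to require the most care is identifying \emph{which} part of the richness hypothesis to invoke: condition~2 produces embeddings but gives no lower bound on the dimension of their images, which is insufficient, whereas condition~3 supplies exactly the strongly embedded independent sets $F_{d+1}$ needed to defeat the "algebraic" case. Once part~(i)'s submodularity inequality is extended from pairs of points to pairs of finite sets (a trivial reprise of the same calculation), the remainder of part~(ii) is just bookkeeping with the monotonicity properties of $s$ and $d_s$.
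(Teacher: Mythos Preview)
Your proposal is correct and follows essentially the same route as the paper. For part~(i) the paper also chooses $B_i$ realising $d_s(A,b_i)=d$ and applies Lemma~\ref{dosFreeAmalgam}; the only cosmetic difference is that the paper first invokes Corollary~\ref{intersectionOfStrongIsStrong} to get $B_1\cap B_2\leqs N$ and hence $\dosover{B_1}{B_1\cap B_2}=0$, whereas you only need the inequality $\dosover{B_2}{A'}\le 0$, which follows directly from $A\leqs N$ without that corollary. For part~(ii) the paper likewise takes a strongly embedded copy of $F_{d+1}$, assumes $d_s(A,b)=d$ for every $b\in F_{d+1}$, and inducts on part~(i) to reach the same contradiction with $d_s(F_{d+1})=d+1$; your added preliminary observations (that $d_s(A,b)\in\{d,d+1\}$ and that $d_s(A,b)=d+1$ forces $A\cup\{b\}\leqs N$) are implicit in the paper's final line.
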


\begin{proof}
$(i)$ As $d_s(A,b_1,b_2) \geq d_s(A) = d$, it is enough to show $d_s(A,b_1,b_2) \leq d$. Let $B_i\subseteq N$ be such that $A\cup\set{b_i}\subseteq B_i$ and $\dos(B_i) = d$. As $A\cup\set{b_1,b_2}\subseteq B_1\cup B_2$ it is enough to prove $\dos(B_1\cup B_2) \leq d$.

$B_1,B_2\leqs N$ and so by Corollary \ref{intersectionOfStrongIsStrong} $B_1\cap B_2\leqs N$. By Lemma \ref{dosFreeAmalgam}, $0 = \dosover{B_1}{B_1\cap B_2} \geq \dosover{B_1}{B_2}$ so $\dos(B_1\cup B_2) \leq \dos(B_2) = d$.

\medskip
$(ii)$ Identify $F_{d+1}$ with a strong embedding of it into $N$. If $d_s(A,b) = d$ for all $b\in F_{d+1}$ then by a simple induction using $(i)$ we have $d_s(A,F_{d+1}) = d$. But that is a contradiction to the fact $d_s(F_{d+1}) = d + 1$, and so there must be some $b\in F_{d+1}$ such that $d < d_s(A,b) = d + 1$.

Obviously $\dos(A,b) = d+1$ and so $A\cup\set{b}\leqs N$.
\end{proof}

\begin{lemma}\label{StrongBackAndForth}
Let $N_1$ and $N_2$ be rich countable $S$-structures. Let $A\leqs N_1$, $B\leqs N_2$ be finite and let $f_0:A\rightarrow B$ a partial-isomorphism. Then for any point $a\in N_1$ there is a partial-isomorphism $f_0\subseteq f$ with $a\in Domf$, $Domf\leqs N_1$ and $Imf\leqs N_2$.
\end{lemma}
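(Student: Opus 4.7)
The plan is to extend $f_0$ by realizing the $N_1$-closure of $A\cup\{a\}$ on the domain side, transporting it into $N_2$ via richness, and iterating if the image fails to be strong.

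First, set $A' := cl(A\cup\{a\})$, computed inside $N_1$. In a rich structure, closures of finite sets are finite: the quantity $d_s(A\cup\{a\},N_1)$ is a minimum over non-negative integers attained by some finite $C$ with $A\cup\{a\}\subseteq C\subseteq N_1$, and any such $C$ automatically satisfies $C\leqs N_1$ and therefore contains $A'$. Thus $A'$ is finite, $a\in A'$, $A'\leqs N_1$, and Lemma~\ref{StrongSubset} applied to $A,A'\leqs N_1$ yields $A\leqs A'$.

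Second, transport $A'$ through $f_0$ to an abstract $\tilde{A'}\in\calC_s$; then $B\leqs\tilde{A'}$. Richness property (2) applied to $N_2$, with $B\subseteq N_2$, supplies an embedding $g:\tilde{A'}\to N_2$ fixing $B$. Writing $B^*:=g(\tilde{A'})$ and identifying $\tilde{A'}$ with $A'$, the induced isomorphism $\hat f_0:A'\to B^*$ extends $f_0$ and has $a$ in its domain $A'\leqs N_1$. If it happens that $B^*\leqs N_2$, then $f:=\hat f_0$ has the required properties and we are done.

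Otherwise, one absorbs the defect via a zigzag. Set $B^{**}:=cl(B^*)$ inside $N_2$; this is finite and strong in $N_2$ with $\dos(B^{**})<\dos(B^*)$. Pull $B^{**}$ back abstractly so as to extend $\tilde{A'}$, form a simple $S$-amalgam (Definition~\ref{defSimpleAmalgam}) of this extension with $A'\subseteq N_1$ over the appropriate common substructure, and use Corollary~\ref{leqsInSimpleAmalgam} together with richness (2) applied to $N_1$ to enlarge the domain inside $N_1$; then iterate between $N_1$ and $N_2$. The main obstacle is the bookkeeping: the amalgamation at each round must be chosen so that the extended iso remains well-defined---avoiding a duplicate copy of $A'$ that would break injectivity---and so that $a$ remains in the domain throughout. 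Since the $\dos$-defect is a non-negative integer that strictly decreases at each non-terminating round, the zigzag terminates in finitely many steps, yielding the required extension $f$.
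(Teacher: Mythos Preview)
Your approach coincides with the paper's when $\dos(A')=\dos(A)=:d$: here richness embeds $A'$ over $B$, and since $B\leqs N_2$ forces $d_s(B^*,N_2)\geq d$ while $\dos(B^*)=d$, the image $B^*$ is automatically strong and no zigzag is needed.

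The gap is in the remaining case, where $a$ is generic over $A$, i.e.\ $A'=A\cup\{a\}$ with $\dos(A')=d+1$. Here the zigzag cannot be carried out with $a$ in the domain. Suppose $B^*\nleqs N_2$; then $B^{**}:=cl(B^*)$ has $\dos(B^{**})=d$ (it is squeezed between $d_s(B,N_2)=d$ and $\dos(B^*)=d+1$, strictly below the latter). Extending $\hat f_0$ to an isomorphism with image $B^{**}$ requires a finite $A^{**}\subseteq N_1$ with $A'\subseteq A^{**}$ and $\dos(A^{**})=d$. But $A'\leqs N_1$ means every finite superset of $A'$ in $N_1$ has $\dos$ at least $d+1$; no such $A^{**}$ exists. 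The obstruction is dimensional, not a matter of bookkeeping. If instead you amalgamate and embed over $A$ rather than over $A'$, the copy of $a$ living inside the abstract pullback of $B^{**}$ need not land on $a\in N_1$, so $a$ drops out of the domain---precisely the failure you flag but do not resolve.

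The paper handles the generic case separately via Lemma~\ref{independence}(ii): richness property~(3) supplies strong copies of $F_n$, from which one extracts a point $b\in N_2$ with $B\cup\{b\}\leqs N_2$ and $\dos(B\cup\{b\})=d+1$; then $f:=f_0\cup\{(a,b)\}$ works directly.
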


\begin{proof}
Denote $d= \dos(A) = \dos(B)$. Consider $A' = cl(A\cup\set{a})$. Note that $A\leqs A'$ because $A\leqs N_1$ and so $d\leq \dos(A')$.

If $d < \dos(A')$ then $A' = A\cup\set{a}$ and $a$ is not a member of any cliques in $A'$. Choose some $b\in N_2$ as guaranteed by Lemma \ref{independence} and define $f = f_0\cup\set{(a,b)}$. The element $b$ is not a member of any cliques in $B\cup\set{b}$ and so $f$ is a partial isomorphism as in the statement of the lemma.

If $d = \dos(A')$ then by richness of $N_2$ there is an embedding of $f:A'\rightarrow N_2$ with $f\upharpoonright A = B$. Denote $B' = Imf$, $f$ is a partial isomorphism between $A'$ and $B'$. Because $d = d_s(B) \leq d_s(B') \leq \dos(B') = d$ we have $\dos(B') = d_s(B')$ and $B'\leqs N_2$. So $f$ is as described in the statement of the lemma.
\end{proof}

\begin{lemma}\label{RichIsomorphic}
Any two rich countable $S$-structures $N_1$, $N_2$ are isomorphic. In particular, any countable rich structure is isomorphic to $M_s$
\end{lemma}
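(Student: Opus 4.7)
The plan is a standard back-and-forth, with Lemma \ref{StrongBackAndForth} supplying the one-step extension and Observation \ref{MsRich} immediately giving the ``in particular'' clause.

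First I would fix enumerations $N_1 = \{a_1, a_2, \ldots\}$ and $N_2 = \{b_1, b_2, \ldots\}$ and set $f_0 = \emptyset$, noting that $\emptyset \leqs N_1$ and $\emptyset \leqs N_2$ hold by the standing assumption that $d_s(\emptyset, \mathcal{N})$ exists for every structure under discussion. Given a finite partial isomorphism $f_n$ with $\mathrm{Dom}\, f_n \leqs N_1$ and $\mathrm{Im}\, f_n \leqs N_2$, I would first apply Lemma \ref{StrongBackAndForth} (with $A = \mathrm{Dom}\, f_n$, $B = \mathrm{Im}\, f_n$, $a = a_{n+1}$) to obtain an extension $f_n \subseteq g$ whose domain contains $a_{n+1}$ and whose domain and image remain strong in $N_1$, $N_2$ respectively. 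Then I would apply the symmetric form of the lemma to $g^{-1}$ (switching the roles of $N_1$ and $N_2$, both of which are rich) to force $b_{n+1}$ into the image, producing $f_{n+1}$ still with strong domain and image. This is exactly the ``forth-and-back'' step; both halves work because richness was assumed for both structures.

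Setting $f = \bigcup_{n \in \N} f_n$, the enumeration ensures $\mathrm{Dom}\, f = N_1$ and $\mathrm{Im}\, f = N_2$, so $f$ is a bijection. To see that $f$ is an isomorphism of $S$-structures it suffices to check on each finite subset $X \subseteq N_1$: pick $n$ large enough that $X \subseteq \mathrm{Dom}\, f_n$, and use that $f_n$ is a partial isomorphism whose domain and image are $\leqs$-strong in the two ambient structures. By the remark following the definition of substructure (cliques and multiplicities are inherited from a strong superstructure), the $S$-structure on $X$ and on $f(X)$ agree via $f_n$, hence $f$ preserves $C(\cdot)$ and the pre-multiplicity function; by Remark \ref{UniqueDefineSstructure} this is enough. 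Thus $N_1 \cong N_2$.

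The only subtlety is bookkeeping in the back-and-forth: one must ensure that after the forth-step the image is still strong (so that the back-step of Lemma \ref{StrongBackAndForth} applies), which is precisely what that lemma guarantees. For the final sentence, Observation \ref{MsRich} tells us $M_s$ is a countable rich $S$-structure, so any other countable rich $S$-structure is isomorphic to $M_s$ by the first part.
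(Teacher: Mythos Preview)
Your proof is correct and follows exactly the paper's approach: start from the empty partial isomorphism and run a standard back-and-forth via Lemma \ref{StrongBackAndForth}, then invoke Observation \ref{MsRich} for the last sentence. One small note: the fact that $\emptyset \leqs N_i$ follows from richness property (1) (every finite substructure lies in $\calC_s$, hence has non-negative predimension), not merely from the standing assumption that $d_s(\emptyset,N_i)$ exists.
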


\begin{proof}
$\esi N_1,N_2$ and $f_0 = \emptyset$ is a partial isomorphism. Continue by standard back and forth between strong substructures using Lemma \ref{StrongBackAndForth} to obtain an isomorphism.
\end{proof}

\begin{Cor}
$M_s$ is a saturated model of $Th(M_s)$.
\end{Cor}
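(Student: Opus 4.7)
Since $M_s$ is countable, ``saturated'' means $\aleph_0$-saturated. The plan is to pass to a countable $\aleph_0$-saturated elementary extension $N\succeq M_s$, show that $N$ is rich, and invoke Lemma \ref{RichIsomorphic} to conclude $N\cong M_s$; this transfers $\aleph_0$-saturation back to $M_s$. Existence of such an $N$ rests on $\mathrm{Th}(M_s)$ being small, i.e., having countably many types over each finite set. This follows from homogeneity of $M_s$ (Corollary \ref{homogeneity}) together with the fact that the $S$-isomorphism type of a tuple over a finite set is determined by the $S$-isomorphism type of its $d$-closure, which belongs to the countable class $\calC_s$ up to isomorphism.

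The key step is showing $N$ is rich. Properties (1) and (2) of the definition transfer by pure elementarity: since multiplicity is $S$-definable (Proposition \ref{geqMultiplicityPredicate}), the isomorphism type of any finite $S$-structure $A$ is captured by a single formula $\phi_A(\bar{x})$. Property (1) is then axiomatized by the sentences $\exists\bar{x}\,\phi_A(\bar{x})$ for each $A\in\calC_s$ together with $\neg\exists\bar{x}\,\phi_A(\bar{x})$ for each finite $S$-structure $A\notin\calC_s$, and property (2) by $\forall\bar{x}\bigl(\phi_A(\bar{x})\to\exists\bar{y}\,\phi_B(\bar{x},\bar{y})\bigr)$ for each pair $A\leqs B$ in $\calC_s$. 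All these sentences hold in $M_s$ by Observation \ref{MsRich} and hence in $N$.

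Property (3) is not expressible by a single first-order sentence, because strongness of a finite substructure refers to all of its finite extensions. I would instead encode it as a partial type $p_n(x_1,\ldots,x_n)$ asserting that the $x_i$ form an $F_n$-diagram together with, for each $B\in\calC_s$ satisfying $F_n\subseteq B$ and $F_n\nleqs B$, the formula $\neg\exists\bar{y}\,\phi_B(x_1,\ldots,x_n,\bar{y})$. The type is consistent since it is realized in $M_s$ (which contains a strong copy of $F_n$ by Observation \ref{MsRich}), so the $\aleph_0$-saturation of $N$ guarantees a realization, yielding a strong embedding of $F_n$ into $N$. Hence $N$ is rich, Lemma \ref{RichIsomorphic} gives $N\cong M_s$, and $M_s$ inherits $\aleph_0$-saturation from $N$. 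This last step --- the partial-type encoding of property (3) --- is the principal obstacle, as it is the only point where mere elementary equivalence is insufficient and genuine $\aleph_0$-saturation is used; everything else is a routine first-order transfer or a direct application of Lemma \ref{RichIsomorphic}.
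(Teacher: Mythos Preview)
Your proof is correct and follows the same overall strategy as the paper: show the theory is small, take a countable saturated model, verify it is rich, and apply Lemma~\ref{RichIsomorphic}. The paper derives smallness by first arguing that $M_s$ is isomorphic to any countable elementary extension (hence realizes every type over $\emptyset$), while you obtain it directly from homogeneity; both routes work. One simplification worth noting: you do not actually need $\aleph_0$-saturation for property~(3). Since $N\succeq M_s$ is an elementary \emph{extension}, the strong copy of $F_n$ already sitting in $M_s$ remains strong in $N$, because strongness of a fixed tuple $\bar a$ is the conjunction of formulas $\neg\exists\bar y\,\phi_B(\bar a,\bar y)$, each of which transfers upward by elementarity. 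So what you identify as the ``principal obstacle'' in fact dissolves without appeal to saturation, and this is presumably what the paper's terse ``of course rich'' is pointing at.
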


\begin{proof}
By the above lemma, $M_s$ is isomorphic to any elementary extension of itself  and therefore realizes all types in $S_n(Th(M_s))$ for any $n\in\N$. Since $M_s$ is countable, this means that $S_n(T)$ is countable for any $n\in \N$ and so there exists a saturated countable model of $Th(M_s)$. This model is of course rich and therefore, by the above lemma, is isomorphic to $M_s$. So $M_s$ is saturated.
\end{proof}

We now suggest an axiomatisation of $Th(M_s)$.
\begin{definition}
Define $T_s$ to be the first-order theory in the language $\set{S}$ that states the following:
\begin{enumerate}
\item $\calC_s$ is the class of finite substructures of $N$
\item Let $A\subseteq N$ and $A\leqs B$ for some $B\in \calC_s$, then there is an embedding $f:B\rightarrow N$ such that $f\upharpoonright A = Id_A$.
\item For all $n,m\in \N$ there is an embedding of $F_n$ into $N$ such that it is strong in any subset of $N$ of size $m$.
\end{enumerate}
\end{definition}

\begin{remark*}
In the above axiomatisation, the axiom scheme $3$ is implied by $1$ and $2$ and is therefore redundant. The axiom scheme is included nonetheless, since it makes the following argument simpler and clearer.
\end{remark*}

\begin{theorem}
$T_s$ is a complete theory and is an axiomatisation of $Th(M_s)$.
\end{theorem}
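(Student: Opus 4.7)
The plan is to show that $T_s$ is $\aleph_0$-categorical and has no finite models; since the language is countable, Vaught's test will then yield completeness, and combined with $M_s\models T_s$ we conclude $T_s=Th(M_s)$.

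First I would verify $M_s\models T_s$: axioms $1$ and $2$ hold by Observation \ref{MsRich}, and for axiom scheme $3$, Observation \ref{MsRich}(3) supplies, for each $n$, a strong embedding $F_n\leqs M_s$, which is automatically strong in every finite superset inside $M_s$, hence in every subset of size $m$ containing its image. Note also that every model of $T_s$ is infinite, since for each $n$ axiom scheme $3$ witnesses at least $n$ distinct elements.

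The main step is to show that every countable $N\models T_s$ is rich in the sense preceding Lemma \ref{RichIsomorphic}. Properties $(1)$ and $(2)$ are simply axioms $1$ and $2$ of $T_s$; the delicate point is property $(3)$, since the first-order scheme only asserts strength inside bounded-size subsets, whereas richness demands a strong embedding into $N$ itself. Fix $n$. For each $m$ pick an embedding $f_m\colon F_n\to N$ strong in every $m$-sized subset of $N$ containing $f_m(F_n)$, as given by axiom $3$. Since $N$ is countable there are only countably many possible images $f_m(F_n)\subseteq N$, so by pigeonhole there is a fixed $I_0\subseteq N$ with $|I_0|=n$ and $f_m(F_n)=I_0$ for infinitely many $m$. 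Given any finite $X\subseteq N$ with $I_0\subseteq X$, choose such an $m\geq|X|$ and then any $Y$ with $X\subseteq Y\subseteq N$ and $|Y|=m$; the axiom gives $I_0\leqs Y$, and Lemma \ref{StrongSubset} applied with $A=I_0$, $N=Y$ yields $I_0=X\cap I_0\leqs X$. Hence $I_0\leqs N$, and the corresponding embedding $F_n\to I_0\hookrightarrow N$ witnesses property $(3)$.

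Finally, Lemma \ref{RichIsomorphic} gives $N\cong M_s$ for every countable $N\models T_s$, so $T_s$ is $\aleph_0$-categorical. Having no finite models, $T_s$ is then complete by Vaught's test, and since $M_s\models T_s$ we obtain $T_s=Th(M_s)$. The single non-routine ingredient is the pigeonhole extraction of a globally strong embedding of $F_n$ from the bounded-strength axiom scheme; everything else is bookkeeping against the machinery already developed.
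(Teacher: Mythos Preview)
The pigeonhole step is wrong: you map the countable index set $\N$ into the countable set $[N]^n$ of $n$-element subsets, and a countable-to-countable map need have no infinite fibre. Nothing prevents the images $f_m(F_n)$ from being pairwise distinct, so you cannot extract a single $I_0$ that is strong in arbitrarily large finite subsets.

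Worse, the target statement---that every countable $N\models T_s$ is rich, hence that $T_s$ is $\aleph_0$-categorical---is false. There are infinitely many $1$-types over $\emptyset$: for an element $a$ with $d_s(a)=0$ the type is determined by the isomorphism class of $(cl(a),a)$, and one can build such closures of unbounded finite size (for instance, chains of overlapping $3$-cliques with a first link of pre-multiplicity $3$ and subsequent links of pre-multiplicity $2$ all have $\dos=0$, are in $\CCS$, and are the $S$-closure of their last point). By Ryll--Nardzewski $T_s$ is therefore not $\aleph_0$-categorical, so some countable model of $T_s$ must fail to be rich; your argument cannot be repaired along these lines.

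The paper avoids this entirely. Given $N\models T_s$ (taken countable by L\"owenheim--Skolem), axiom scheme~$3$ is used only to show that for each $n$ the partial type $p_n$ asserting ``$\bar x$ is a strongly embedded copy of $F_n$'' is \emph{finitely} satisfiable in $N$. One then passes to a countable elementary \emph{extension} $N_0\succ N$ realising every $p_n$; this $N_0$ is rich, hence $N_0\cong M_s$ by Lemma~\ref{RichIsomorphic}, and so $Th(N)=Th(N_0)=Th(M_s)$. The essential move is compactness to realise the $p_n$ in an extension, not an attempt to locate a globally strong copy of $F_n$ inside $N$ itself.
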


\begin{proof}
Let $n\in \N^{\geq 1}$. Consider $\bar{f}$ an enumration of $F_n$. Say an $S$-structure $A\in \calC_s$ is \emph{n-bad} if it is a superstructure of $F_n$ with $\dos(A) < n$. Denote $\varphi_{A}(\bar{x})$ the atomic diagram of a bad structure $A$ over the $n$-tuple $\bar{f}$. Define $p_n = \setarg{\neg \varphi_A(\bar{x})}{A\in \calC_s \text{ is n-bad}}$.

Let $N\models T_s$, without loss of generality $N$ is countable (otherwise take an elementary substructure). By article 3 of $T_s$, for any $n$ there is a realization of any finite $\Delta\subseteq p_n$ in $N$ and so $p_n$ is finitely satisfiable in $N$. Then there exists $N_0$, a countable elementary extension of $N$ realizing $p_n$ for all $n\in \N$. As $N_0$ is countable and rich, by Lemma \ref{RichIsomorphic} it is isomorphic to $M_s$, and therefore $Th(N) = Th(M_s)$. So $T_s\models Th(M_s)$ and is therefore complete.
\end{proof}

\begin{obs}
$T_s$ has quantifier elimination to the level of boolean combinations of formulas of the form $\exists \bar{x}(\phi(\bar{x},\bar{y}))$ where $\phi$ is a finite $S$-diagram.

Negations of formulas of this form are enough to imply that a set is strongly embedded. We have already seen that the closure of a set implies its complete type. Therefore using formulas of this form one can identify the closure of a tuple and imply its complete type.
\end{obs}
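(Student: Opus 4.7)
The plan is to apply the standard quantifier elimination criterion: $T_s$ will eliminate quantifiers down to boolean combinations of $\Delta$-formulas, where $\Delta = \setarg{\exists \bar{x}\, \phi(\bar{x},\bar{y})}{\phi\text{ is a finite }S\text{-diagram}}$, provided any two tuples in a saturated model of $T_s$ sharing the same $\Delta$-type share the same complete type. Since $M_s$ was just shown to be countable and saturated, it suffices to pick tuples $\bar{a}_1, \bar{a}_2 \in M_s$ agreeing on all $\Delta$-formulas and produce an automorphism of $M_s$ sending $\bar{a}_1$ to $\bar{a}_2$. By Corollary \ref{homogeneity}, this further reduces to exhibiting an $S$-isomorphism $cl(\bar{a}_1) \to cl(\bar{a}_2)$ fixing the correspondence $\bar{a}_1 \leftrightarrow \bar{a}_2$.

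The key step will be to show that the $\Delta$-type of $\bar{a}$ determines the isomorphism type of $cl(\bar{a})$ as an $S$-structure enumerated by $\bar{a}$. Since an existential $S$-diagram formula merely asserts the realization of a specific finite diagram over $\bar{a}$, one reads off from the $\Delta$-type the value
\[d := \min\set{\dos(D) : \exists \bar{z}\, \phi_D(\bar{y},\bar{z})\text{ holds of }\bar{a}} = d_s(\bar{a}, M_s),\]
so in particular $d_s(\bar{a}_1) = d_s(\bar{a}_2)$. I will then pick, for each $i$, a realized diagram $D_i$ over $\bar{a}_i$ with $\dos(D_i) = d$ and of minimal cardinality among such. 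Any witness $B_i \subseteq M_s$ satisfies $\dos(B_i) = d = d_s(B_i, M_s)$, whence $B_i \leqs M_s$; since $\dos(cl(\bar{a}_i)) = d$ as well, the uniqueness of the $S$-closure (Corollary \ref{intersectionOfStrongIsStrong}) gives $cl(\bar{a}_i) \subseteq B_i$, and minimality of $|B_i|$ forces equality $B_i = cl(\bar{a}_i)$. Since $\bar{a}_1$ and $\bar{a}_2$ realize exactly the same collection of $\Delta$-formulas, the abstract $S$-diagrams $D_1$ and $D_2$ may be taken identical on the variable tuple $\bar{y}\bar{z}$, yielding the sought $S$-isomorphism.

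Applying Corollary \ref{homogeneity} will then extend this partial isomorphism between strongly embedded finite substructures to a full automorphism of $M_s$, giving $tp(\bar{a}_1) = tp(\bar{a}_2)$ and finishing the main claim. The concluding remarks of the observation follow with no further work: for any finite $S$-structure $A$ over $\bar{a}$, the conjunction of $\exists \bar{z}\, \phi_A(\bar{y},\bar{z})$ with $\neg \exists \bar{z}\bar{w}\, \phi_{A'}(\bar{y},\bar{z},\bar{w})$ taken over every proper extension $A' \supsetneq A$ with $\dos(A') \leq \dos(A)$ forces the realizing set of $A$ to be strong, hence to coincide with $cl(\bar{a})$, which by the main argument pins down $tp(\bar{a})$. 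The one technical point requiring care is verifying that a minimum-$\dos$, minimum-cardinality realized diagram must equal the (subset-unique) $S$-closure; this is precisely where the definition of $\leqs$ meets Corollary \ref{intersectionOfStrongIsStrong}, and I expect this to be the only place where any real argument (beyond bookkeeping) is needed.
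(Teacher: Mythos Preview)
Your proposal is correct and follows essentially the same reasoning the paper sketches inside the statement of the observation itself; the paper does not supply a separate proof beyond those three explanatory sentences. Your write-up simply makes explicit the standard QE criterion and the minimality argument identifying the realized minimal-$\dos$, minimal-cardinality diagram with $cl(\bar a)$, which is precisely the content the paper leaves implicit when it says ``negations of formulas of this form are enough to imply that a set is strongly embedded'' and ``the closure of a set implies its complete type.''
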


\section{The Pre-Geometry of $M_s$}\label{Mspregeometry}

We fix a first order language for the class of pre-geometries. The language is $LPI = \set{I_n}_{n\in \N}$ where $I_n$ is an $n$-ary relation symbol. We interpret a pre-geometry $(X,cl)$ as a structure in the language by taking $I_n^{X}$ to be the set of independent $n$-tuples of $X$. An infinite set is independent if and only if all its finite subsets are, and so this information suffices to define the pre-geometry.

Recall that a pre-geometry has an associated dimension function $d$. The dimension $d$ for a general set $A$ is defined to be the maximal cardinality of an independent subset of $A$. Thus, a finite set $A$ is independent iff $|A| = d(A)$.

Observe then, that a pre-geometry can be recovered from its finite independent sets or from its associate dimension function and vice versa.

\begin{remark}
Recall that in our context, an amalgamation class is a class of finite structures, $\calA$, with some class of distinguished embeddings, say $\mathfrak{M}_\calA$. In this section we depict an amalgamation class by $(\calA,\strong)$. By $\strong$ we mean the class of identity embeddings in $\mathfrak{M}_\calA$. The class $\mathfrak{M}_\calA$ may be recovered by taking the closure under composition of $\strong$ and the class of isomorphisms between structures in $\calA$.

In this section we also assume that all amalgamation classes have HP in relation to inclusion. That is, if $A\subseteq B\in \calA$ then $A\in \calA$.
\end{remark}

\begin{definition}
Let $(\calA,\strong)$ be an amalgamation class with a generic structure $M_\calA$. Let $d_{0\calA}:\calA\rightarrow \N^{\geq 0}$, we say $d_{0\calA}$ is a \emph{pre-dimension} function for $\calA$ if the following properties hold:
\begin{enumerate}
\item
$d_{0\calA}(\emptyset) = 0$
\item
$d_{0\calA}(A) \leq 1$ for all $A\in\calA$ with $|A| = 1$
\item
$d_{0\calA}(A\cup B) \leq d_{0\calA}(A) + d_{0\calA}(B) - d_{0\calA}(A\cap B)$ \hfill (Submodularity)
\item
$A\strong B \iff min\setarg{d_{0\calA}(D)}{A\subseteq D\subseteq B} \geq d_{0\calA}(A)$
\end{enumerate}

For $A\subset M_\calA$ finite, define $d_\calA(A) = min\setarg{d_{0\calA}(B)}{A\subseteq B\subseteq M \text{ finite}}$. We say $d_\calA$ is the \emph{dimension function} associated with $d_{0\calA}$.
\end{definition}

\begin{fact}
For an amalgamation class $(\calA,\strong)$ and a pre-dimension function $d_{0\calA}$ as above, the dimension function $d_\calA$ is the dimension function of a pre-geometry on $M_\calA$.
\end{fact}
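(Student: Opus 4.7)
The plan is to verify that $d_\calA$ satisfies the axioms of a pre-geometric dimension function, namely: $d_\calA(\emptyset)=0$, $d_\calA(\{a\})\leq 1$, monotonicity, and submodularity $d_\calA(X\cup Y)+d_\calA(X\cap Y)\leq d_\calA(X)+d_\calA(Y)$. From these the closure operator $\mathrm{cl}(X):=\{b\in M_\calA : d_\calA(Xb)=d_\calA(X)\}$ can be shown to satisfy the Steinitz exchange property (via submodularity) and hence define a pre-geometry whose associated dimension is $d_\calA$. The first two axioms are immediate from properties (1) and (2) of $d_{0\calA}$, and monotonicity follows from the definition of $d_\calA$ as a minimum over finite supersets.

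Before tackling submodularity, I would establish the basic fact, analogous to Corollary~\ref{intersectionOfStrongIsStrong} in the body of the paper, that if $B_1,B_2\strong N$ (finite), then $B_1\cap B_2\strong N$. This is a purely formal consequence of submodularity of $d_{0\calA}$ together with the minimum-characterisation (4). As a direct corollary, for every finite $A\subseteq M_\calA$ there exists a unique minimal finite strong superset $\mathrm{scl}(A)\subseteq M_\calA$, with $\mathrm{scl}(A)\strong M_\calA$ and $d_{0\calA}(\mathrm{scl}(A))=d_\calA(A)$.

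With a strong closure in hand, submodularity of $d_\calA$ follows cleanly. Given finite $X,Y\subset M_\calA$, set $B_X=\mathrm{scl}(X)$ and $B_Y=\mathrm{scl}(Y)$. Submodularity of $d_{0\calA}$ (property (3)) yields
\[d_{0\calA}(B_X\cup B_Y)+d_{0\calA}(B_X\cap B_Y)\;\leq\; d_{0\calA}(B_X)+d_{0\calA}(B_Y)\;=\;d_\calA(X)+d_\calA(Y),\]
and since $X\cup Y\subseteq B_X\cup B_Y$ and $X\cap Y\subseteq B_X\cap B_Y$, the left side bounds $d_\calA(X\cup Y)+d_\calA(X\cap Y)$ from above, giving the desired inequality.

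Finally, I would deduce exchange in the standard way. If $b\in\mathrm{cl}(Xa)\setminus\mathrm{cl}(X)$, then property (2) together with submodularity gives $d_\calA(Xb)=d_\calA(X)+1=d_\calA(Xa)$, and since $d_\calA(Xab)=d_\calA(Xa)=d_\calA(Xb)$, we obtain $a\in\mathrm{cl}(Xb)$. Reflexivity and finite character are immediate from the definition, and idempotence $\mathrm{cl}(\mathrm{cl}(X))=\mathrm{cl}(X)$ follows by a short submodularity argument on adjoining one element at a time. The main obstacle is really just the intersection-of-strong-is-strong lemma, which is needed both to justify $\mathrm{scl}(A)$ and to push submodularity from $d_{0\calA}$ to $d_\calA$; once this is in place, everything else is formal and parallel to the $(\calC,d_0)$ case in the original Hrushovski construction.
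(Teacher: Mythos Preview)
Your proof is correct and follows the standard verification that any integer-valued, submodular, normalised function passes to a pregeometric dimension via the minimum-over-supersets construction. Note, however, that the paper does not actually prove this statement: it is recorded as a \emph{Fact}, i.e., a known result quoted from the general theory of Hrushovski amalgamation (see, e.g., Hrushovski's original paper or Ziegler's exposition cited in the acknowledgements), and no proof appears anywhere in the text or the appendix.

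One small remark on your write-up: for the submodularity step you do not in fact need the intersection-of-strong-is-strong lemma, nor uniqueness of the strong closure. All you use is that for each finite $X$ there is \emph{some} finite $B_X\supseteq X$ with $d_{0\calA}(B_X)=d_\calA(X)$, and this follows immediately from the codomain of $d_{0\calA}$ being $\N^{\geq 0}$ (the infimum is attained). The intersection lemma is needed elsewhere in the theory (e.g., to define $\mathrm{scl}$ as a genuine closure operator, or to prove $\omega$-stability), but not for the bare pregeometry axioms. This does not affect correctness; it just means your ``main obstacle'' is lighter than you suggest.
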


\begin{definition} Let $(\calA,\strong)$ be an amalgamation class with a generic structure $M_\calA$ and a pre-dimension function $d_{0\calA}:\calA\rightarrow \N^{\geq 0}$.

\begin{itemize}
\renewcommand{\labelitemi}{$-$}

\item
Let $X\in \calA$ be a finite dimensional subset of $N$. We say a point $a\in M_\calA$ is \emph{dependent} on $X$ if $d_{\calA}(X,a) = d_{\calA}(X)$.

\item
We say a finite set $X\subseteq M_\calA$ is \emph{dependent} if there is some $x\in X$ such that $x$ is dependent on $X\setminus\set{x}$.

\item
Let $A\subseteq M_\calA$. We say that a set $X\subseteq A$ is \emph{$d$-closed} in $A$ if for any $a\in A$, if $a$ depends on $X$ then $a\in X$. The $d$-closure of a set $X$ in $A$ is the smallest $d$-closed set in $A$ containing $X$.
\end{itemize}
\end{definition}

\begin{definition}\label{isoext} For two amalgamation classes $(\calC_1,\strong)$ and $(\calC_2,\strong)$ we say the classes have the \emph{Isomorphism Extension Property} and denote $\calC_1 \isoext \calC_2$ if the following statement holds.
\begin{itemize}
\item[(*)]
Suppose $A_1\in \calC_1$, $A_2\in \calC_1$ and $f_0: PG(A_1)\rightarrow PG(A_2)$ is an isomorphism of pre-geometries, and $A_1\strong B_1\in \calC_1$ is finite. Then there is some $B_2\in \calC_2$ and an isomorphism $f:PG(D_1)\rightarrow PG(D_2)$ extending $f_0$.
\end{itemize}
\end{definition}

In their paper \cite{DavidMarcoTwo}, Evans and Ferreira show using a back-and-forth argument that if $(\calC_1,\strong)$ and $(\calC_2,\strong)$ are two amalgamation classes with $\calC_1\isoext \calC_2$ and $\calC_2\isoext \calC_1$, then the pre-geometries\footnote{In fact, Evans and Ferreira do this for geometries rather than pre-geometries, but the proof is identical.} of their respective generic structures $M_1$ and $M_2$ are isomorphic. The proof can be found in \cite{DavidMarcoTwo}, Lemma 2.3.

\begin{lemma} \label{dimensionleq}
Let $(\calC,\strong)$ and $(\calD,\strong)$ be amalgamation classes whose dimension functions originate in pre-dimension functions. Let $C\strong \bar{C}\in \calC$ and $D\strong \bar{D}\in \calD$ with a function $f:PG(\bar{C})\rightarrow PG(\bar{D})$ such that $f_0 = f\upharpoonright C$ is an isomorphism of pre-geometries between $PG(C)$ and $PG(D)$. Let $X\subseteq \bar{C}$ and let $\bar{X}$ be the $d$-closure of $X$ in $\bar{C}$. Then if $d_0(\bar{X}/\bar{X}\cap C) \geq d_0(f[\bar{X}]/f[\bar{X}]\cap D)$, then $d(X)\geq d(f[X])$.
\end{lemma}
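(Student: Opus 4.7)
The strategy is to identify $d(X)$ with $d_0(\bar X)$ and then chain inequalities bounding $d(f[X])$ from above, with the hypothesis entering at the final step.

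First I would verify that $\bar X\strong M_\calC$, so that $d(X)=d(\bar X)=d_0(\bar X)$. The equality $d(X)=d(\bar X)$ is the standard matroid fact that a set and its $d$-closure have the same dimension. To see the strongness, let $Y\supseteq\bar X$ be a finite set realising $d(\bar X)$, i.e.\ $d_0(Y)=d(\bar X)$. Since $\bar C\strong M_\calC$ gives $Y\cap\bar C\strong Y$, we may replace $Y$ by $Y\cap\bar C$ without increasing $d_0$, so WLOG $Y\subseteq\bar C$. For any $a\in Y\setminus\bar X$ the chain
\[d(\bar X)\leq d(\bar X\cup\{a\})\leq d(Y)\leq d_0(Y)=d(\bar X)\]
forces $a$ to be $d$-dependent on $\bar X$; $d$-closedness of $\bar X$ in $\bar C$ then puts $a\in\bar X$, a contradiction. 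Hence $Y=\bar X$ and $d(X)=d_0(\bar X)$.

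Next I would bound $d(f[X])$ from above. Setting $A=f[\bar X]\cap D$ and $B=f[\bar X]$, monotonicity gives $d(f[X])\leq d(B)$. The key estimate is
\[d(B)\leq d(A)+d_0(B/A),\]
obtained by choosing (using amalgamation in $\calD$) a copy $A^s$ of a strong closure of $A$ in $M_\calD$ which is free from $B$ over $A$ (so $A^s\cap B=A$), and applying submodularity of $d_0$ to $A^s$ and $B$: this yields $d_0(A^s\cup B)\leq d_0(A^s)+d_0(B)-d_0(A)=d(A)+d_0(B/A)$, and $B\subseteq A^s\cup B$ gives the bound on $d(B)$. The pre-geometry isomorphism $f_0$ preserves $d$ on subsets of $C$, and under the natural assumption that $f$ is injective and maps $\bar C\setminus C$ into $\bar D\setminus D$, we have $A=f_0[\bar X\cap C]$, so $d(A)=d(\bar X\cap C)\leq d_0(\bar X\cap C)$.

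Combining the two bounds with the hypothesis $d_0(\bar X/\bar X\cap C)\geq d_0(f[\bar X]/f[\bar X]\cap D)$,
\[d(f[X])\leq d(A)+d_0(B/A)\leq d_0(\bar X\cap C)+d_0(f[\bar X]/f[\bar X]\cap D)\leq d_0(\bar X\cap C)+d_0(\bar X/\bar X\cap C)=d_0(\bar X)=d(X).\]
The main obstacle is the submodularity inequality $d(B)\leq d(A)+d_0(B/A)$: while routine in principle, it relies on invoking the amalgamation property of $\calD$ to produce a free realisation of the strong closure of $A$. The strongness claim $\bar X\strong M_\calC$, though quickly argued, is the conceptual heart of the proof.
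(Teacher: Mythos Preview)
Your overall strategy is sound and the chain of inequalities at the end is the right one, but the justification of the key step $d(B)\leq d(A)+d_0(B/A)$ via amalgamation has a genuine gap. The amalgamation property of $\calD$ applies only to \emph{strong} embeddings, and $A$ is not strong in its own strong closure (that is precisely why one passes to the closure), so you cannot invoke AP to manufacture a free copy $A^s$ of the closure over $A$. In fact the inequality $d(B)\leq d(A)+d_0(B/A)$ is false for arbitrary $A\subseteq B$: take $A$ with $d_0(A)-d(A)$ large and let $B$ be $A'$ together with a generic point. So something specific to the situation must be used.

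What you are missing is much simpler than amalgamation: since $A=f[\bar X]\cap D\subseteq D$ and $D\strong\bar D$, the strong closure $A'$ of $A$ in $\bar D$ is already contained in $D$, whence $A'\cap B\subseteq D\cap B=A$ automatically. Submodularity then gives
\[
d(B,\bar D)\;\leq\; d_0(A'\cup B)\;\leq\; d_0(A')+d_0(B)-d_0(A)\;=\;d(A)+d_0(B/A),
\]
with no need to move anything. With this fix your argument goes through.

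The paper's proof takes a slightly different route. Rather than bounding $d(B)$ by $d(A)+d_0(B/A)$, it first observes that $\bar X\cap C$ is $d$-closed in $C$, transports this via the isomorphism $f_0$ to conclude that $A=f[\bar X]\cap D$ is $d$-closed in $D$, and then (by exactly the argument you gave for $\bar X$) deduces $A\strong\bar D$, hence $d_0(A)=d(A)$. This yields the exact equality $d_0(\bar X\cap C)=d_0(A)$, from which $d_0(f[\bar X])\leq d_0(\bar X)$ follows directly from the hypothesis, and then $d(f[X])\leq d_0(f[\bar X])\leq d_0(\bar X)=d(X)$. Your approach trades this $d$-closedness transport for a submodularity computation; both are short once the gap above is filled, though the paper's version extracts a bit more (the equality $d_0(\bar X\cap C)=d_0(A)$ rather than just an inequality).
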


\begin{proof}
We prove several facts:
\begin{enumerate}
\item $\bar{X}\cap C$ is $d$-closed in $C$.

\item $f[\bar{X}]\cap D$ is $d$-closed in $D$.

\item If a set $A$ is $d$-closed in $B$ then $d(A,B) = d_0(A)$

\item $d_0(\bar{X}\cap C) = d_0(f[\bar{X}]\cap D)$.
\end{enumerate}

\begin{proof}
\begin{enumerate}
\item Let $c\in C$ depend on $\bar{X}\cap C$. Then $c$ depends on $\bar{X}$ and since $\bar{X}$ is $d$-closed we have $c\in \bar{X}$. Therefore $c\in \bar{X}\cap C$ and $\bar{X}\cap C$ is $d$-closed in $C$.

\item Since $f_0$ is an isomorphism of pre-geometries, a point $d\in D$ depends on $f[\bar{X}]\cap D$ iff $f^{-1}(d)$ depends on $f^{-1}[f[\bar{X}]\cap D] = \bar{X}\cap C$. By the previous item, if $d\in D$ depends on $f[\bar{X}]\cap D$ then $f^{-1}(d)\in \bar{X}\cap C$ and so $d = f(f^{-1}(d)) \in f[\bar{X}]\cap D$. Thus, $f[\bar{X}]\cap D$ is $d$-closed in $D$.

\item Note that for $Y\subseteq B$, if $d_0(Y/A) \leq 0$ then $Y$ depends on $A$. Since $A$ is $d$-closed, $Y\subseteq A$ and so $d_0(Y/\bar{X}) = 0$. So $A \strong B$ and $d(A,B) = d_0(A)$.

\item For some $Y\in C$, $d(Y, C)$ is the cardinality of the smallest independent subset of $Y$. Therefore, $d(Y)$ is uniquely determined by the pre-geometry of $C$. Since $f_0$ is an isomorphism of pre-geometries between $C$ and $D$ we then have $d(\bar{X}\cap C, C) = d(f[\bar{X}]\cap D, D)$. By the previous items and the last equality, $d_0(\bar{X}\cap C) = d(\bar{X}\cap C, C) = d(f[\bar{X}]\cap D, D) = d_0(f[\bar{X}]\cap D)$
\end{enumerate}
\end{proof}

Since $\bar{X}$ is the $d$-closure of $X$, we have $d(\bar{X}) = d(X)$. By item $3$ and the fact $\bar{X}$ is $d$-closed in $\bar{C}$ we have $d(\bar{X}) = d_0(\bar{X})$.

Now assume $d_0(\bar{X}/\bar{X}\cap C) \geq d_0(f[\bar{X}]/f[\bar{X}]\cap D)$. By item $4$, it must be that $d_0(f[\bar{X}])\leq d_0(\bar{X})$. Since $f[X]\subseteq f[\bar{X}]$ this means $d(f[X]) \leq d_0(f[\bar{X}]) \leq d_0(\bar{X})$. Combining with $d_0(\bar{X}) = d(\bar{X}) = d(X)$ we have $d(X)\geq d(f[X])$ which concludes the proof.
\end{proof}

\begin{remark*}
In this section the relation $R$ is not necessarily symmetric and each $R$-tuple is composed of three distinct elements. Also, in this section $M$ denotes Hrushovski's construction for such a relation $R$ and $\calC$ denotes the age of $M$.
\end{remark*}

We wish to show that the pre-geometry of $M_s$ is isomorphic to that of $M$. We will do so as described in the discussion following Definition \ref{isoext}.

\begin{lemma} \label{CCextendstoCCS}
$\CC \isoext \CCS$
\end{lemma}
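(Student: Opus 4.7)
The plan is to transport the strong $R$-extension $A_1\strong B_1$ across $f_0$ to produce a strong $S$-extension $A_2\leqs B_2$ in $\CCS$, then verify via two applications of Lemma \ref{dimensionleq} that the extended set-map is a pre-geometry isomorphism.

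First I would pass to the $S$-structure $B_1^S$ associated with the $R$-structure $B_1$. The predimension $d_0$ on $B_1$ coincides with $\dos$ on $B_1^S$ (Lemma \ref{IdenticalD0s}), so the hypothesis $A_1\strong B_1$ translates into $A_1^S\leqs B_1^S\in\CCS$. I would then define $B_2$ on the set $A_2\sqcup(B_1\setminus A_1)$ with $f\colon B_1\to B_2$ extending $f_0$ by the identity on $B_1\setminus A_1$, specifying the $S$-structure on $B_2$ so that $C(A_2)$ is retained unchanged, the transported cliques $f[C]$ for $C\in C(B_1^S/A_1^S)$ are adjoined with their multiplicities, and the cliques of $B_1^S$ that extend an $A_1^S$-clique are incorporated into existing cliques of $A_2$ (essentially by the simple $S$-amalgam recipe, adapted to this mixed setting) so that $A_2\subseteq B_2$ is a genuine $S$-substructure. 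A clique-by-clique count using the $f$-correspondence together with $A_1^S\leqs B_1^S$ then yields $\dosover{Y}{A_2}\geq 0$ for every finite $A_2\subseteq Y\subseteq B_2$, so $A_2\leqs B_2$; combined with $\emptyset\leqs A_2$ and transitivity (Corollary \ref{leqsTransitivity}), this gives $\emptyset\leqs B_2$, hence $B_2\in\CCS$.

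To conclude that $f\colon PG(B_1)\to PG(B_2)$ is a pre-geometry isomorphism extending $f_0$, I would invoke Lemma \ref{dimensionleq} in both directions. For any $X\subseteq B_1$ with $d$-closure $\bar X$ in $B_1$, the clique matching forces $d_0(\bar X/\bar X\cap A_1)=\dos(f[\bar X]/f[\bar X]\cap A_2)$, whence Lemma \ref{dimensionleq} gives $d(X)\geq d(f[X])$; the reverse inequality follows by the symmetric application to $f^{-1}$ using the $d$-closure of $f[X]$ in $B_2$ and the fact $A_2\leqs B_2$. The main obstacle will be the handling of cliques of $B_1^S$ whose $A_1$-part is itself a clique of size at least three: since $f_0$ preserves only the pre-geometry and not the $S$-structure on $A_1$, such ``long'' extensions cannot be naively transported to $B_2$ without conflicting with the cliques already present in $A_2$, and the strongness assumption $A_1\strong B_1$ must be used to ensure that the predimension contributions on both sides of $f$ still align.
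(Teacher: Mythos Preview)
Your proposal has a genuine error at the first step. You claim that ``the predimension $d_0$ on $B_1$ coincides with $\dos$ on $B_1^S$ (Lemma \ref{IdenticalD0s})'', but Lemma \ref{IdenticalD0s} says nothing of the sort: it merely rewrites $\dos(A)$ as $|A|+2|C(A)|-i(A)$, with no reference to $d_0$. In fact $d_0$ and $\dos$ disagree badly. Take $B_1=\{a_1,\dots,a_5\}$ with $R_{B_1}=\{(a_i,a_4,a_5):i=1,2,3\}$; then $d_0(B_1)=5-3=2$, while the associated $S$-structure has a single clique $\{a_1,a_2,a_3\}$ of multiplicity $1$, giving $\dos(B_1^S)=5-1=4$. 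So the translation ``$A_1\strong B_1$ becomes $A_1^S\leqs B_1^S$'' is unsupported, and with it the clique-by-clique count you propose collapses. (The entire outsourcing machinery of Section \ref{age} exists precisely because the passage from $d_0$ to $\dos$ is delicate.)

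The obstacle you flag at the end --- that $f_0$ is only a pre-geometry isomorphism, so cliques of $B_1^S$ extending cliques of $A_1^S$ have no natural image in $A_2$ --- is therefore not a side issue but the heart of why this route is the wrong one.

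The paper avoids all of this by a much more naive construction: it never looks at the $S$-structure induced by $B_1$. Instead, for each $R$-relation $(b_1,b_2,b_3)\in R(B_1)\setminus R(A_1)$ it simply adjoins to $A_2$ a new $S$-clique of size three on $\{f(b_1),f(b_2),f(b_3)\}$, with multiplicity equal to the number of $R$-tuples on that unordered triple. Since every new clique has $|C|=3$, it contributes exactly $|C|_*=1$ to $s(\cdot)$, matching the contribution of one $R$-relation to $r(\cdot)$; and since every new clique meets $A_2$ in at most two points, none of them extends a clique of $A_2$, so the existing $S$-structure on $A_2$ is untouched. This gives the exact equality $d_0(X/X\cap A_1)=\dos(f[X]/f[X]\cap A_2)$ for every $X\subseteq B_1$, from which $A_2\leqs B_2\in\CCS$ and the two applications of Lemma \ref{dimensionleq} follow immediately. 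The detour through $B_1^S$ is both incorrect as stated and unnecessary.
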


\begin{proof}
Let $A\in \CC$ and $D\in \CCS$ be finite with $f_0:PG(A)\rightarrow PG(D)$ an isomorphism. Let $A\strong B$ be finite. Define an $S$-structure $E$ with:

\[E = D\cup \setarg{e_b}{b\in B\setminus A}\]
\[C(E) = C(D)\cup \setarg{\set{e_{b_1},e_{b_2},e_{b_3}}}{(b_1,b_2,b_3)\in R(B)\setminus R(A)}\]

where the pre-multiplicity function is defined
\[\mult_E:C(E)\rightarrow \set{1,2,3}\]
\[\mult_E(C) =
	\left\{
		\begin{array}{ll}
			\mult_{D}(C) & \text{if } C\in C(D)\\
			|\setarg{(b_1,b_2,b_3)\in R_B\setminus R_A}{b_1,b_2,b_3\in C}| & \text{otherwise}
		\end{array}	
	\right.\]

Define $f = f_0\cup \setarg{(b,e_b)}{b\in B\setminus A}$. We claim that $D\leqs E\in \CCS$ and that $f$ is an isomorphism between $PG(B)$ and $PG(E)$

\medskip
\noindent\textbf{Claim 1.} Let $X\subseteq B$ and denote $Y = f[X]$, then $d_0(X/X\cap A) = \dos(Y/Y\cap D)$.

\begin{proof}

\begin{align*}
&\dosover{Y}{Y\cap D} =
\\&|Y|-|Y\cap D| - \sum_{C\in C(Y/Y\cap D)} \mult_E(C)\cdot |C|_* - \sum_{C\in C^Y(Y\cap D)} \mult_E(C)\cdot |C\setminus D| =
\\&|Y|-|Y\cap D| - \sum_{C\in C(Y/Y\cap D)} \mult_E(C) =
\\&|X|-|X\cap A| - r(X/X\cap A) = d_0(X/X\cap A)
\end{align*}
\end{proof}

An immediate conclusion of Claim 1 is that $D\leqs E$. By transitivity also $\emptyset \leqs E$ and so $E\in \CCS$.

\medskip
\noindent\textbf{Claim 2.} Let $X\subseteq B$ and denote $Y = f[X]$, then $d(X) = d_s(Y)$.
\begin{proof}
Let $\bar{X}$ be the $d$-closure of $X$ in $B$. By Claim 1, $d_0(\bar{X}/X\cap A) = \dosover{f[\bar{X}]}{f[\bar{X}]\cap D}$ and so by Lemma \ref{dimensionleq} we have $d(X) \geq d_s(Y)$.

Let $\bar{Y}$ be the $d$-closure of $Y$ in $E$. By Claim 1 we have $d_0(\bar{Y}/\bar{Y}\cap D) = \dosover{f^{-1}[\bar{Y}]}{f^{-1}[\bar{Y}]\cap A}$ and so by Lemma \ref{dimensionleq} we have $d_s(Y) \geq d(X)$.

%Denote $X_0 = X\cap A$ and $Y_0 = Y\cap D$. Note that by isomorphism of pre-geometries, a point $a\in A$ is dependent on $X_0$ iff $f_0(a)$ is dependent on $Y_0$. Thus, we may assume without loss of generality that $X_0$ is $d$-closed in $A$, meaning any point in $A$ dependent on $X_0$ is already in $X_0$. This assumption does not restrict generality because it does not change the dimension of either $X_0$ or $Y_0$.
%
%Since $X_0,Y_0$ are $d$-closed we have in particular $d_0(X_0) = d(X_0)$ and $\dos(Y_0) = d_s(Y_0)$. By isomorphism of pre-geometries we have $d(X_0) = d_s(Y_0)$ and so $d_0(X_0) = \dos(Y_0)$. Combining this equality with Claim 1, we have that for any $X\subseteq Z\subseteq B$ we have $d_0(Z) = \dos(f[Z])$. Symmetrically, for any $Y\subseteq Z\subseteq E$ we have $d_0(f^{-1}[Z]) = \dos(Z)$. By definition of dimension as a minimum, it is the case that $d(X) = d_s(Y)$.
\end{proof}

A dimension function uniquely determines the pre-geometry and so by Claim 2, $f$ is an isomorphism of pre-geometries.

\end{proof}

\begin{lemma}\label{distinctPairs}
Let $E\in \CCS$. There exists an injective function $f:C(E)\rightarrow [E]^2$ such that $f(C)\subseteq C$ for any $C\in C(E)$.
\end{lemma}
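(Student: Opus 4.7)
The plan is to obtain $f$ as a system of distinct representatives for the family $\bigl\{[C]^2 : C \in C(E)\bigr\}$ by applying Hall's marriage theorem to the bipartite graph with parts $C(E)$ and $[E]^2$ and an edge from $C$ to $\{x,y\}$ whenever $\{x,y\} \subseteq C$. Since $E \in \CCS$ is finite, $C(E)$ is finite and Hall applies. It then suffices to verify Hall's condition: for every $K \subseteq C(E)$, the \emph{shadow} $\partial K := \bigcup_{C \in K}[C]^2$ satisfies $|\partial K| \geq |K|$.

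Fix such a $K$ and put $U := \bigcup K$, together with $v := |U|$, $k := |K|$, $n := |\partial K|$. I plan to show $n \geq k$ via two independent inequalities $v \geq k$ and $n \geq v$. For the first, Lemma \ref{StrongSubset} applied to $\esi E$ yields $\esi E|_U$, hence $\dos(E|_U) \geq 0$. Each $C \in K$ is a subset of $U$ and maximal in $E$; any clique of $E|_U$ strictly containing $C$ would restrict from a clique of $E$ strictly containing $C$, contradicting maximality, so $C$ remains a maximal clique in $E|_U$, i.e.\ $K \subseteq C(E|_U)$. Using $\mult_{E|_U} \geq 1$ and $|C|-2 \geq 1$ for each $C \in K$, non-negativity of $\dos(E|_U)$ gives
\[
v \;\geq\; \sum_{C' \in C(E|_U)} \mult_{E|_U}(C')\,(|C'|-2) \;\geq\; \sum_{C \in K}(|C|-2) \;\geq\; k.
\]

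For the second inequality, I would examine the graph $G_K$ on vertex set $U$ whose edge set is $\partial K$. Every $u \in U$ belongs to some $C \in K$ with $|C| \geq 3$, so each of the at least two elements of $C \setminus \{u\}$ is a $G_K$-neighbour of $u$; hence $\deg_{G_K}(u) \geq 2$. The handshake lemma then gives
\[
2n \;=\; \sum_{u \in U} \deg_{G_K}(u) \;\geq\; 2v,
\]
so $n \geq v$. Chaining yields $n \geq v \geq k$, which is Hall's condition, and Hall's theorem produces the desired injection. I do not anticipate a serious obstacle: the only subtle point is the preservation of maximality under the restriction $E \to E|_U$, which follows from the incomparability of distinct maximal cliques in $C(E)$; everything else is a short counting argument combining non-negativity of $\dos$ with the handshake lemma.
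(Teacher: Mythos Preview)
Your argument is correct. The reduction to Hall's marriage theorem is clean, and both inequalities are sound: for $v\ge k$ you only need that each $C\in K$ satisfies $C\subseteq U$ and hence $C=C\cap U$ lies in $C(E|_U)$ (so $K\hookrightarrow C(E|_U)$ as sets), after which $\dos(E|_U)\ge 0$ gives the bound; the handshake bound $n\ge v$ is immediate. One small remark: you do not actually need the antichain/incomparability property of $C(E)$ that you invoke --- the inclusion $K\subseteq C(E|_U)$ follows directly from the definition of the induced substructure, and the paper never explicitly records incomparability for abstract $S$-structures.

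The paper's proof takes a different route: it fixes a choice $f$ minimising the number of collisions and, assuming a collision persists, runs an augmenting/saturation argument to produce a set $X\subseteq E$ on which every pair lies in $\mathrm{Im}(f)$ yet more than $|X|$ cliques have their chosen pair inside $X$, forcing $\dos(X)<0$. In effect the paper is re-deriving (a defect version of) Hall's theorem by hand inside this particular setting. Your approach separates the combinatorics (Hall) from the structure theory (the single use of $\dos\ge 0$), which is shorter and more transparent; the paper's approach is self-contained and avoids citing Hall, but the saturation bookkeeping is noticeably heavier.
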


\begin{proof}

Take a choice of pairs $f$ such that the number of instances of $f(C) = f(D)$ with $C\neq D$ is minimal, we call such an instance a \emph{collision}. For a clique $C\in C(E)$ define $C^* = \setarg{D\in C(E)}{f(D)\subseteq C}$.

Assume that $f(C) = f(D)$ for two distinct cliques in $C(E)$. Define $X_0 = C$ and $C_0 = \set{C}$. If there is some pair $\set{c_1,c_2}\in [C]^2\setminus Im(f)$ then we can simply alter $f$ so that $f(C) = \set{c_1,c_2}$ and prevent a collision, so by minimality of $f$ we have $[C]^2\subseteq Im(f)$.

Define $C_i = \setarg{C^*}{C\in C_{i-1}}$ and $X_i = \bigcup C_i$. Let $D\in C_i\setminus C_{i-1}$ and let $D_0\in C_{i-1}$ be a clique such that $D\in D_0^*$. If there is some pair $\set{d_1,d_2}\in [D]^2\setminus Im(f)$, then we can alter $f$ so that $f(D) = \set{d_1,d_2}$. By minimality this cannot prevent a collision, and so now $[D_0]^2\setminus Im(f)\neq \emptyset$. Continuing this way we can find a choice of pairs $f$ with the same number of collisions such that $[C]^2\setminus Im(f) \neq \emptyset$, which as explained is a contradiction to minimality. So for any $n\in\N$ and for any $D\in C_n$ we have $[D]^2\subseteq Im(f)$.

Since $C(E)$ is finite, the process stabilizes and there is some $n\in\N$ such that $C_n = C_k$ for all $n < k$. Consider $X = X_n$. For any $x\in X\setminus X_0$, there is some $i$ such that $x\in X_{i+1}\setminus X_i$ and then there is some pair $\set{x,y}\in ([X_{i+1}]^2\setminus [X_i]^2)\cap Im(f)$. Along with the fact that $[X_0]^2\cap Im(f)$ has at least $|X_0|$ elements, we get that $[X_n]^2\cap Im(f)$ is of size at least $|X|$.

Recall that there is also a collision in $X$, which means $|C(X_n)| > |X|$ and consequently $\dos(X) < 0$, a contradiction. So in conclusion, there are no collisions in $f$ and the lemma is proven.
\end{proof}

\begin{definition}\label{emulation}
Let $D\in \CCS$ and $A\in \CC$ with $f_0:PG(D)\rightarrow PG(A)$ an isomorphism. Assume without loss of generality that the universe of $A$ is $\setarg{b_e}{e\in D}$ and that $f_0(e) = b_e$ for any $e\in D$.

Let $D\leqs E$ be a superstructure of $D$. Let $\fullenum{E}$ be a full-enumeration of $C(E)$. For any $C\in\fullenum{E}$, fix a pair of distinct elements $(x_C,y_C)\in C^2$ where if $C\in C^E(D)$ then also $x_C,y_C\in (C\cap D)^2$. Choose the pairs to be distinct (as ordered pairs).

We now define an $R$-structure $B$. The universe of $B$ is
\[B = A\cup \setarg{b_e}{e\in E\setminus D}\]

Let $f = f_0\cup \setarg{(e,b_e)}{e\in E\setminus D}$ and define
\[R(B) = R(A)\cup \bigcup_{C\in C^E(D)} f[C\setminus D]\times \set{b_{x_C}} \times \set{b_{y_C}}\cup \bigcup_{C\in C(E/D)} f[C]\times \set{b_{x_C}}\times\set{b_{y_C}}\]

We say $B$ is an \emph{emulation} of $E$ over $A$.
\end{definition}

\begin{remark} In the definition above it is possible to choose the pairs $(x_C,y_C)$ to be distinct. Simply use Lemma \ref{distinctPairs} on the structure $E$ where the cliques in $C^E(D)$ are restricted to $D$.
\end{remark}

In the definition of an emulation, the images of the elements $x_C,y_C$ in the structure $B$, in a way, witness the fact that the image of the clique $C$ is a dependent set of dimension $2$. The elements are taken from within the universe of the clique in order to assure the dimension does not increase.

\begin{lemma} \label{CCSextendstoCC}
$\CCS \isoext \CC$
\end{lemma}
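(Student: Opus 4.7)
The plan is to take $B$ to be an emulation of $E$ over $A$ as in Definition~\ref{emulation} and to show that the naturally induced map $f$ (extending $f_0$) is a pre-geometry isomorphism between $PG(E)$ and $PG(B)$ with $B\in\CC$. The proof parallels Lemma~\ref{CCextendstoCCS}, but now each $S$-clique of $E$ is realised in $B$ by a bundle of $R$-relations linking its elements to two distinguished witnesses, rather than by a single $R$-triple.

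The technical heart of the argument is the following counting claim: for every $Y\subseteq E$, setting $X=f[Y]$, $Y_D=Y\cap D$, and $X_A=X\cap A=f[Y_D]$, one has
\[
d_0(X/X_A)\;\geq\;\dos(Y/Y_D),
\]
with equality whenever $Y$ contains the chosen pair $\{x_C,y_C\}$ for every $C\in\fullenum{E}$ with $|C\cap Y|\geq 3$. The equality $|X\setminus X_A|=|Y\setminus Y_D|$ is immediate. For the relation count, the ``new'' relations in $R(B)$ lying inside $X$ are pairwise distinct (since the ordered witness pairs are chosen distinct) and disjoint from $R_A$ (because $|C\cap D|<3$ for $C\in C(E/D)$ forces at least one of $b_e,b_{x_C},b_{y_C}$ outside $A$). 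A case analysis on each $C\in\fullenum{E}$ then shows that its contribution to $r(X)-r(X_A)$ equals its contribution to $s(Y)-s(Y_D)$ except when $|C\cap Y|\geq 3$ but $\{x_C,y_C\}\not\subseteq Y$; in that case the clique contributes to $s$ but not to $r$, producing the claimed one-sided inequality.

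Two consequences follow. First, applied with $A\subseteq X\subseteq B$ and $Y=f^{-1}[X]\supseteq D$, the claim gives $d_0(X/A)\geq\dos(Y/D)\geq 0$ using $D\leqs E$; hence $A\leqslant B$, and transitivity with $\emptyset\leqslant A$ yields $\emptyset\leqslant B$, so $B\in\CC$. Second, the claim feeds both directions of Lemma~\ref{dimensionleq}: the plain inequality provides, after passing to the $d$-closure $\bar{Y}$ of $Y\subseteq B$, the hypothesis $d_0(\bar{Y}/\bar{Y}\cap A)\geq\dos(f^{-1}[\bar{Y}]/f^{-1}[\bar{Y}]\cap D)$ that yields $d(Y)\geq d_s(f^{-1}[Y])$; and the equality case provides, for the $d$-closure $\bar{X}$ of $X\subseteq E$, the reverse $\dos(\bar{X}/\bar{X}\cap D)\geq d_0(f[\bar{X}]/f[\bar{X}]\cap A)$ that yields $d_s(X)\geq d(f[X])$. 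The reason the equality case applies is that every $C\in C(E)$ has pre-geometric dimension $2$, so once $\bar{X}$ meets $C$ in three points, every element of $C$ depends on $C\cap\bar{X}$; $d$-closure then forces $C\subseteq\bar{X}$, and in particular $\{x_C,y_C\}\subseteq\bar{X}$. Combining both applications of Lemma~\ref{dimensionleq} yields $d_s(X)=d(f[X])$ for every finite $X\subseteq E$, so $f$ is a pre-geometry isomorphism extending $f_0$, as required.

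The main obstacle is bookkeeping in the counting claim: cliques in $C(E/D)$ may have their witnesses split between $D$ and $E\setminus D$, which forces a careful case split on whether $b_{x_C}$ and $b_{y_C}$ land in $A$ or in $B\setminus A$, and one must track simultaneously whether a sub-clique $C\cap Y$ of size $\geq 3$ contributes to $s(Y)-s(Y_D)$ (it always does) and to $r(X)-r(X_A)$ (only when $\{x_C,y_C\}\subseteq Y$). Ensuring that the ordered witness pairs across the full enumeration $\fullenum{E}$ can indeed be chosen pairwise distinct subject to the constraint $(x_C,y_C)\in(C\cap D)^2$ for $C\in C^E(D)$ is a combinatorial step handled by Lemma~\ref{distinctPairs} together with the multiplicity bound of Lemma~\ref{multiplicityMaximum}.
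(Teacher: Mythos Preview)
Your proposal is correct and follows essentially the same route as the paper: both take $B$ to be an emulation of $E$ over $A$ (Definition~\ref{emulation}), compare $r$-counts in $B$ to $s$-counts in $E$, and feed the resulting inequalities into Lemma~\ref{dimensionleq} in both directions. The key geometric fact you isolate --- that a $d$-closed subset of $E$ which meets a maximal clique $C$ in three points must contain all of $C$, hence in particular $\{x_C,y_C\}$ --- is precisely the observation the paper uses (without elaboration) in its Claim~1 when it writes ``by the fact $\bar{X}$ is $d$-closed we have $g(C)\subseteq\bar{X}$''.

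The only organisational difference is that you package the paper's two separate computations (its Claim~1 and Claim~2) into a single inequality $d_0(f[Y]/f[Y]\cap A)\geq\dos(Y/Y\cap D)$ together with an equality criterion, and you derive $A\leqslant B$ directly from this inequality rather than, as the paper does, inferring it at the end from the equivalence of dimension functions. Both are fine. One cosmetic point: midway through your second consequence you silently swap the roles of $X$ and $Y$ (initially $Y\subseteq E$, $X=f[Y]\subseteq B$; later ``the $d$-closure $\bar{Y}$ of $Y\subseteq B$'' and ``$\bar{X}$ of $X\subseteq E$''). Fix the naming before writing this up in full.
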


\begin{proof}
Let $D\in \CCS$ and $A\in \CC$ with $f_0:PG(D)\rightarrow PG(A)$ an isomorphism. Let $D\leqs E$ be a superstructure of $D$. Let $B$ be an emulation of $E$ over $A$ as in the above definition. Let $\fullenum{E}$ and the pairs $(x_C,y_C)$ be as in the definition of an emulation.

Define $f = f_0\cup \setarg{(e,b_e)}{e\in E\setminus D}$. We claim that $A\strong B\in \calC$ and that $f$ is an isomorphism between $PG(E)$ and $PG(B)$. As in the proof of the previous lemma, it is enough to show that for any $X\subseteq E$ we have $d_s(X) = d(f[X])$.

Let $X\subseteq E$ be arbitrary, denote $Y = f[X]$.

\medskip
\noindent\textbf{Claim 1.} $d_s(X) \geq d(Y)$

\begin{proof}
Let $\bar{X}$ be the $d$-closure of $X$ in $E$. Denote $\bar{Y} = f[\bar{X}]$, $X_0 = \bar{X}\cap D$ and $Y_0 = \bar{Y}\cap B$.

Let $\fullenum{\bar{X}}$ be a full-enumerations of $C(\bar{X})$ and fix an injection $g:\fullenum{\bar{X}} \rightarrow \fullenum{E}$ with $g(C)\cap \bar{X} = C$. Note that if $C\in \fullenum{\bar{X}}$, then by the fact $\bar{X}$ is $d$-closed we have $g(C)\subseteq \bar{X}$. So in fact, $\fullenum{\bar{X}}$ is partial to $\fullenum{E}$ and so for any $C\in \fullenum{\bar{X}}$ the notation $x_C,y_C$ is meaningful. Also, for any $C\in\fullenum{\bar{X}}$ we have $x_C,y_C\in \bar{X}$ and therefore $b_{x_C},b_{y_C}\in \bar{Y}$.

Let $\fullenum{1}$ and $\fullenum{2}$ be full enumerations of $C^{\bar{X}}(X_0)$ and $C(\bar{X}/X_0)$ respectively. Assume some implicit injections from $\fullenum{i}$ to $\fullenum{\bar{X}}$ so that for $C\in\fullenum{i}$ the notation $x_C,y_C$ is defined. Now:

\begin{align*}
&s(\bar{X}) - s(X_0) =
\\&\sum_{C\in \fullenum{1}} |C\setminus X_0| + \sum_{C\in \fullenum{2}} |C|_* =
\\&\sum_{C\in \fullenum{1}} |f[C\setminus X_0]\times\set{b_{x_C}}\times\set{b_{y_C}}| + \sum_{C\in \fullenum{2}} |f[C]\times\set{b_{x_C}}\times\set{b_{y_C}}|_* =
\\&\sum_{C\in \fullenum{1}} |f[C\setminus X_0]\times\set{b_{x_C}}\times\set{b_{y_C}}| + \sum_{C\in \fullenum{2}} |(f[C]\setminus\set{b_{x_C},b_{y_C}})\times\set{b_{x_C}}\times\set{b_{y_C}}|\leq
\\ &r(\bar{Y}/Y_0)
\end{align*}

So $s(\bar{X}) - s(X_0) \leq r(\bar{Y}/Y_0)$ and since $|\bar{X}\setminus X_0| = |\bar{Y}\setminus Y_0|$ we consequently have $\dosover{\bar{X}}{X_0} \geq d_0(\bar{Y}/Y_0)$. By Lemma \ref{dimensionleq} this gives us $d_s(X) \geq d(Y)$.
\end{proof}

\noindent\textbf{Claim 2.} $d(Y)\geq d_s(X)$

\begin{proof}
Let $\bar{Y}$ be the $d$-closure of $Y$ in $B$. Denote $\bar{X} = f^{-1}[Y]$, $X_0 = \bar{X}\cap D$ and $Y_0 = \bar{Y} \cap A$. Note that because $f_0$ is an isomorphism and $Y_0$ is $d$-closed in $A$, $X_0$ is $d$-closed in $D$.

Let $\fullenum{\bar{X}(X_0)}$ and $\fullenum{\bar{X}/X_0}$ be full-enumerations of $C^{\bar{X}}(X_0)$ and $C(\bar{X}/X_0)$ respectively, and let $\fullenum{\bar{X}} = (\fullenum{\bar{X}(X_0)},\fullenum{\bar{X}/X_0})$ be a full-enumeration of $C(\bar{X})$. Fix an injection $g:\fullenum{\bar{X}}\rightarrow \fullenum{E}$ as in the previous claim.

Denote $\wtclq{\Y}{x,y} = \setarg{e\in \bar{Y}}{(e,x,y)\in R(\bar{Y})\setminus R(Y_0)}$. Note that if $\wtclq{\Y}{x,y} \neq \emptyset$ then $(x,y) = (b_{x_C},b_{y_C})$ for some unique $C\in \fullenum{E}$. Moreover, because $f^{-1}[\wtclq{\Y}{x,y}]\cup\set{x_C,y_C}\subseteq \bar{X}\cap C$, there is a unique $\wtclq{C}{x,y}\in \fullenum{\bar{X}}$ with $g(\wtclq{C}{x,y}) = C$. By construction, if $(x_1,y_1)\neq (x_2,y_2)$ then $\wtclq{C}{x_1,y_1} \neq \wtclq{C}{x_2,y_2}$.

Note that if $\set{x_C,y_C}\subseteq D$, then $C\cap D$ is dependent on $\set{x_C,y_C}$ in $D$. Therefore, for such $\wtclq{C}{x,y}$ (for some $x,y$ such that $\wtclq{\Y}{x,y} \neq \emptyset$), if $\set{x_C,y_C}\subseteq X_0$ it must be that $\wtclq{C}{x,y}\cap X_0 = g(\wtclq{C}{x,y})\cap D$. This is due to the fact $X_0$ is $d$-closed in $D$. Thus, $g(\wtclq{C}{x,y})\in C^E(D)$ implies that $\wtclq{C}{x,y}\in \fullenum{\bar{X}(X_0)}$.

If $\wtclq{C}{x,y}\in \fullenum{\bar{X}(X_0)}$ then by definition $g(\wtclq{C}{x,y})\in C^E(D)$. Denote $C= g(\wtclq{C}{x,y})$. By construction of the emulation we have $f^{-1}[\wtclq{\Y}{x,y}]\subseteq (C\setminus D)\cap \bar{X} = (\wtclq{C}{x,y}\setminus X_0)$. Since $f$ is a bijection, $|\wtclq{\Y}{x,y}| \leq |\wtclq{C}{x,y}\setminus X_0|$

If $\wtclq{C}{x,y}\in \fullenum{\bar{X}/X_0}$ then it must be that $g(\wtclq{C}{x,y})\in C(E/D)$. Denote $C= g(\wtclq{C}{x,y})$. By construction, $f^{-1}[\wtclq{\Y}{x,y}]\cup \set{x_C,y_C}\subseteq C\cap \bar{X} = \wtclq{C}{x,y}$. Since $f^{-1}[\wtclq{\Y}{x,y}]\cap \set{x_C,y_C} =    \emptyset$, $f$ is a bijection and $f^{-1}[\wtclq{\Y}{x,y}]$ is not empty, it must be that $|\wtclq{\Y}{x,y}| \leq |\wtclq{C}{x,y}|_*$.

We now compute:

\begin{align*}
r(\bar{Y}/Y_0) &= \sum_{(x,y)\in \bar{Y}^2} |\wtclq{\Y}{x,y} \times \set{x}\times\set{y}|
\\&= \sum_{(x,y)\in \bar{Y}^2} |\wtclq{\Y}{x,y}|
\\&= \sum_{\substack{(x,y)\in \bar{Y}^2\\\wtclq{C}{x,y}\in \fullenum{\bar{X}(X_0)}}} |\wtclq{\Y}{x,y}| + \sum_{\substack{(x,y)\in \bar{Y}^2\\\wtclq{C}{x,y}\in \fullenum{\bar{X}/X_0}}} |\wtclq{\Y}{x,y}|
\\&\leq \sum_{\substack{(x,y)\in \bar{Y}^2\\\wtclq{C}{x,y}\in \fullenum{\bar{X}(X_0)}}} |\wtclq{C}{x,y}\setminus X_0| + \sum_{\substack{(x,y)\in \bar{Y}^2\\\wtclq{C}{x,y}\in \fullenum{\bar{X}/X_0}}} |\wtclq{C}{x,y}|_*
\\&\leq \sum_{C\in \fullenum{\bar{X}(X_0)}} |C\setminus X_0| + \sum_{C\in \fullenum{\bar{X}/X_0}} |C|_* 
\\&= s(\bar{X}) - s(X_0)
\end{align*}

So $r(\bar{Y}/Y_0)\leq s(\bar{X}) - s(X_0)$ and since $|\bar{Y}\setminus Y_0| = |\bar{X}\setminus X_0|$ we consequently have $d_0(\bar{Y}/Y_0)\geq \dosover{\bar{X}}{X_0}$. By Lemma \ref{dimensionleq} this gives us $d(Y) \geq d_s(X)$.
\end{proof}

Combining the two claims, we have $d_s(X) = d(Y) = d(f[X])$. A dimension function uniquely determines the pre-geometry and so $f$ is an isomorphism of pre-geometries.

It is not hard to verify that in general $N_1\strong N_2$ iff $d(X,N_1) = d(X,N_2)$ for any $X\subseteq N_1$. Thus, the relation $\strong$ (or $\leqs$) is determined by the dimension function. By the fact $D\leqs E$ and the equivalence of the dimension functions, we have $A\strong B$. The fact $B\in \CC$ follows from $A\in \CC$ and transitivity of $\strong$.
\end{proof}

\begin{theorem}\label{PGMandMs}
The pre-geometries $PG(M)$ and $PG(M_s)$ are isomorphic
\end{theorem}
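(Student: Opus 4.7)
The plan is to invoke the back--and--forth argument of Evans and Ferreira (\cite{DavidMarcoTwo}, Lemma~2.3) as a black box. That result asserts that whenever $(\calC_1,\strong)$ and $(\calC_2,\strong)$ are amalgamation classes with generic structures $M_1, M_2$ and one has both $\calC_1 \isoext \calC_2$ and $\calC_2 \isoext \calC_1$, the pre-geometries $PG(M_1)$ and $PG(M_2)$ are isomorphic. Since Lemma~\ref{CCextendstoCCS} supplies $\CC \isoext \CCS$ and Lemma~\ref{CCSextendstoCC} supplies $\CCS \isoext \CC$, and since $M$ and $M_s$ are the generic structures of $(\CC,\strong)$ and $(\CCS,\leqs)$ respectively, the hypotheses are met and the conclusion follows immediately.

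For the sake of self-containedness I would also sketch the underlying back--and--forth. Fix enumerations $\{m_n\}_{n\in\N}$ of $M$ and $\{m'_n\}_{n\in\N}$ of $M_s$, and build a chain of finite strong substructures $A_n \strong M$ and $D_n \leqs M_s$ together with pre-geometry isomorphisms $f_n : PG(A_n) \to PG(D_n)$, starting from $A_0 = D_0 = \emptyset$. At even stages one enlarges $A_n$ inside $M$ to a finite strong $A_{n+1}$ containing the least unused $m_k$; then $\CC \isoext \CCS$ produces a finite $\widehat{D} \in \CCS$ extending $D_n$ and an isomorphism $PG(A_{n+1}) \to PG(\widehat{D})$ extending $f_n$. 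Using richness of $M_s$ (Observation~\ref{MsRich}) one realises $\widehat{D}$ as a strong substructure $D_{n+1} \leqs M_s$ over $D_n$, obtaining $f_{n+1}$. Odd stages are symmetric, using $\CCS \isoext \CC$ and the analogous universality of $M$ with respect to $\strong$. The union $\bar{f} = \bigcup_n f_n$ is a bijection $M \to M_s$ that preserves independence in both directions, hence a pre-geometry isomorphism.

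The genuine work has already been absorbed into Lemmas~\ref{CCextendstoCCS} and~\ref{CCSextendstoCC}, so there is no real obstacle remaining for this theorem. The only potential snag would be a mismatch between the $\isoext$ framework as stated in Definition~\ref{isoext} and the precise form required by Evans--Ferreira's back--and--forth; but the statements coincide (finite strong extensions of the domain, pre-geometry isomorphisms extended to pre-geometry isomorphisms), so the citation applies verbatim and the proof is a one-line appeal to \cite{DavidMarcoTwo}.
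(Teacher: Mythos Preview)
Your proposal is correct and matches the paper's intended argument exactly: the theorem is stated without proof in the paper precisely because, as the discussion after Definition~\ref{isoext} makes explicit, it follows immediately from Lemmas~\ref{CCextendstoCCS} and~\ref{CCSextendstoCC} together with the Evans--Ferreira back-and-forth (\cite{DavidMarcoTwo}, Lemma~2.3). Your sketch of that back-and-forth is a welcome addition but not strictly required.
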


\section{The Symmetric Reduct of The General non-collapsed Structure}\label{symmetric}

\newcommand{\rz}[1]{r(#1)}
\newcommand{\rzsim}[1] {R_{\sym}(#1)}
\newcommand{\dz}[1]{d_0(#1)}
\newcommand{\dzover}[2]{d_0(#1/#2)}
\newcommand{\dzsim}[1]{d_{0}^{\sim}(#1)}
\newcommand{\dzsimover}[2]{d_{0}^{\sim}(#1/#2)}
\newcommand{\ddim}[1]{d(#1)}
\newcommand{\dsimdim}[1]{d^{\sim}(#1)}

\def \strong {\leqslant}
\def \simstrong {\leqslant_{\sim}}
\def \Rsim {R_{\sim}}
\def \Msim {M_{\sim}}
\def \order {\unlhd}
\def \Csim {\fC_\sim}

In this section we denote by $M$ the non-collapsed countable Hrushovski construction for a ternary relation $R$ (not necessarily symmetric) where each tuple is composed of distinct elements. We denote by $\calC$ the age of $M$.

\begin{definition}
Let
\[\Rsim(x_1,x_2,x_3):= \bigvee_{\sigma\in S_3} R(x_{\sigma(1)},x_{\sigma(2)},x_{\sigma(3)})\]
\end{definition}

We call $\Rsim$ the symmetrization of $R$. We wish to show that $\Msim$, the reduct of $M$ to $\Rsim$, is a proper reduct. Moreover, we will show that $\Msim$ is Hrushovski's non-collapsed construction for a symmetric ternary relation.

\begin{definition}
Let $A,B\subseteq N$ be  $R$-structures with $A,B$ finite. Define:
\\
$\rzsim{A} = \setarg{\set{a,b,c}\in {[A]}^{3}}{(a,b,c)\in R_A}$
\\
$\dzsim{A} = |A| - |\rzsim{A}|$
\\
$\dzsimover{B}{A} = \dzsim{B\cup A} - \dzsim{A}$
\\
$\dsimdim{A,N} = \max\setarg{\dzsim{B}}{A\subseteq B\subseteq N,\ B \text{ finite}}$
\\
$A\simstrong N$ if $\dsimdim{A,N} = \dzsim{A}$. We say $A$ is $\sim$-strong in $N$.

\smallskip
\noindent When the superstructure is $M$, we simply denote $\dsimdim{A}$.
\end{definition}

\begin{remark*} The relation $\simstrong$ is transitive. The proof is identical to that of the transitivity of $\strong$.
\end{remark*}

\begin{lemma} Let $A,B\subseteq M$ be finite. Then $\dzsimover{B}{A} \geq \dzover{B}{A}$
\end{lemma}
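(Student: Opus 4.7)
The plan is to compare the two predimension increments by looking at how ordered and unordered triples are counted. The key observation is that there is a natural surjection from ordered $R$-triples to unordered $\Rsim$-triples (sending a tuple to its underlying set), and this surjection restricts well to the ``new'' triples added when passing from $A$ to $B \cup A$.

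First I would rewrite both increments in a parallel form. Since $|B \cup A| - |A| = |B \setminus A|$ appears on both sides, the inequality $\dzsimover{B}{A} \geq \dzover{B}{A}$ reduces to
\[
|R_{B\cup A}| - |R_A| \;\geq\; |\rzsim{B\cup A}| - |\rzsim{A}|.
\]
Consider the map $\pi : R_{B\cup A} \to \rzsim{B\cup A}$ defined by $\pi(a,b,c) = \{a,b,c\}$; this is well-defined because every $R$-tuple consists of distinct elements, and it is surjective by the definition of $\Rsim$.

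The main step is to check that $\pi$ restricts to a surjection $R_{B \cup A} \setminus R_A \twoheadrightarrow \rzsim{B \cup A} \setminus \rzsim{A}$. For well-definedness, suppose $(a,b,c) \in R_{B\cup A} \setminus R_A$; if $\{a,b,c\} \in \rzsim{A}$ then $\{a,b,c\} \subseteq A$, but then $(a,b,c) \in R_{B\cup A} \cap A^3 = R_A$, contradiction. For surjectivity, given $\{a,b,c\} \in \rzsim{B\cup A} \setminus \rzsim{A}$, pick any preimage $(a',b',c') \in R_{B\cup A}$ under $\pi$; if $(a',b',c') \in R_A$ then $\{a',b',c'\} = \{a,b,c\} \subseteq A$ would give $\{a,b,c\} \in \rzsim{A}$, again a contradiction.

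The inequality of cardinalities then follows from the existence of this surjection, so $|R_{B\cup A} \setminus R_A| \geq |\rzsim{B\cup A} \setminus \rzsim{A}|$, and substituting back yields $\dzsimover{B}{A} \geq \dzover{B}{A}$. There is no serious obstacle here; the only thing to be careful about is the convention $R_A = R_{B\cup A} \cap A^3$, which is what makes both the well-definedness and surjectivity of the restricted $\pi$ automatic.
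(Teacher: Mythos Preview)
Your proof is correct and follows essentially the same approach as the paper's: both reduce the inequality to $|R(B\cup A)\setminus R(A)| \geq |\Rsim(B\cup A)\setminus \Rsim(A)|$ and establish this via the surjection $(a,b,c)\mapsto\{a,b,c\}$. You simply spell out the well-definedness and surjectivity of the restricted map more carefully than the paper, which dismisses it as ``trivial''.
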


\begin{proof}
Without loss of generality, assume $A\subseteq B$. It is then enough to show that $|R(B)\setminus R(A)|\geq|\Rsim(B)\setminus \Rsim(A)|$. But this is trivial, as the map $(a,b,c)\mapsto \set{a,b,c}$ is surjective from $R(B)\setminus R(A)$ onto $\Rsim(B)\setminus \Rsim(A)$.
\end{proof}

\begin{Cor}\label{strongImpliesSimstrong}
$A\strong M \implies A\simstrong \Msim$ 
\end{Cor}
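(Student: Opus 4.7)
The plan is to derive the corollary as an immediate consequence of the preceding lemma by unwinding the definition of $\simstrong$; essentially no further combinatorial work is required beyond the pointwise inequality that the lemma has already established.

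Concretely, fix $A \strong M$ and let $B$ be an arbitrary finite subset of $\Msim$ with $A \subseteq B$. Since $\Msim$ is defined on the same underlying universe as $M$ (only the distinguished relation is changed from $R$ to $\Rsim$), the set $B$ is simultaneously a finite subset of $M$ containing $A$. The hypothesis $A \strong M$ therefore gives $\dzover{B}{A} \geq 0$, and applying the preceding lemma to this pair yields
\[
\dzsimover{B}{A} \;\geq\; \dzover{B}{A} \;\geq\; 0.
\]

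Since $B$ was arbitrary among finite supersets of $A$ inside $\Msim$, this inequality says that $\dzsim(A)$ is the extremal value of $\dzsim$ realized over such supersets, which is exactly the defining condition of $A \simstrong \Msim$. There is no real obstacle: the substantive content — the pointwise comparison between the non-symmetric predimension $\dz$ and the symmetric predimension $\dzsim$ on any $A \subseteq B$ — is already packaged in the preceding lemma, and the corollary is essentially a one-line deduction from it together with the observation that $M$ and $\Msim$ share a universe.
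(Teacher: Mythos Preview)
Your proof is correct and follows exactly the same route as the paper, which simply says ``by definition, using the above lemma''; you have merely spelled out what that one-line invocation means. (Your choice of the word ``extremal'' is fortunate, since the paper's displayed definition of $\dsimdim{A,N}$ contains an obvious typo---it should be a $\min$, not a $\max$---but your argument that $\dzsimover{B}{A}\geq 0$ for all finite $B\supseteq A$ is precisely what is needed either way.)
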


\begin{proof}
By definition, using the above lemma.
\end{proof}

\begin{Cor}\label{closedFiniteSuperset}
For any $A\subseteq M$ there is some $A\subseteq B\simstrong M$ with $B$ finite.
\end{Cor}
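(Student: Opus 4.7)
The plan is to produce such a $B$ by taking a finite $\leqslant$-closure of $A$ in $M$ and then invoking Corollary \ref{strongImpliesSimstrong}. Implicit in the statement is that $A$ is finite, since otherwise the conclusion is impossible.

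First, I would note that since $M$ is Hrushovski's non-collapsed construction we have $\emptyset \leqslant M$. Consequently every finite $C \subseteq M$ satisfies $\dz{C} \geq 0$, so the restriction of $\dz{\cdot}$ to finite subsets is a non-negative integer valued function. This makes the standard greedy argument for finding a finite $\leqslant$-closure succeed: if $A \leqslant M$ fails, then by the definition of $d(A,\cdot)$ as a minimum over finite supersets, there exists a finite $A \subseteq A_1 \subseteq M$ with $\dz{A_1} < \dz{A}$. Iterating, one produces a chain $A \subseteq A_1 \subseteq A_2 \subseteq \dots$ of finite subsets of $M$ whose predimensions form a strictly decreasing sequence of non-negative integers. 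Such a sequence must terminate, and the terminal element is a finite $B \supseteq A$ with $B \leqslant M$.

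Finally, I would appeal to Corollary \ref{strongImpliesSimstrong}: from $B \leqslant M$ one concludes $B \simstrong \Msim$. Because $\simstrong$ is defined purely in terms of $\dzsim{\cdot}$, which depends only on the symmetrized relation $\Rsim$, the conditions $B \simstrong \Msim$ and $B \simstrong M$ coincide, so $B$ is the desired finite set.

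There is no genuine obstacle here; the statement is a routine combination of the existence of finite $\leqslant$-closures in Hrushovski's construction (guaranteed by the non-negativity of the predimension) with the one-line comparison of the two predimensions already established in the preceding lemma and Corollary \ref{strongImpliesSimstrong}.
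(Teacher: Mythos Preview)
Your proof is correct and follows the same approach as the paper's: take $B$ to be the self-sufficient ($\leqslant$-) closure of $A$ in $M$ and apply Corollary~\ref{strongImpliesSimstrong}. The paper compresses this into a single line, taking the existence and finiteness of the $\leqslant$-closure for granted, whereas you spell out the standard descending-predimension argument for it.
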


\begin{proof}
Take $B$ to be the self-sufficient-closure of $A$.
\end{proof}

\begin{definition}
We say an $R$-structure $A$ is \emph{symmetric} if for any $a_1,a_2,a_3\in A$ we have $A\models R(a_1,a_2,a_3) \rightarrow \bigwedge_{\sigma\in S_3} R(a_{\sigma(1)},a_{\sigma(2)},a_{\sigma(3)})$. We also call a symmetric $R$-structure an $\Rsim$-structure.

Define $\calC_\sim = \setarg{A}{A \text{ is a finite } \Rsim\text{-structure with }\emptyset\simstrong A}$.
\end{definition}

Observe that an $\Rsim$-reduct of an $R$-structure is an $\Rsim$-structure.
%We define the predimension and dimension of $\Rsim$-structures so that they coincide with the definitions given for $R$-structures.

%\begin{definition}
%Let $A,B\subseteq N$ be $\Rsim$-structures with $A$ and $B$ finite. Define:
%\\
%$\rzsim{A} = \setarg{\set{a,b,c}\in {[A]}^{\leq 3}}{(a,b,c)\in \Rsim^A}$
%\\
%$\dzsim{A} = |A| - |\rzsim{A}|$
%\\
%$\dzsimover{B}{A} = \dzsim{B\cup A} - \dzsim{A}$
%\\
%$\dsimdim{A,N} = \max\setarg{\dzsim{B}}{A\subseteq B\subseteq N,\ B \text{ finite}}$
%\\
%$A\simstrong N$ if $\dsimdim{A,N} = \dzsim{A}$. We say $A$ is $\sim$-strong in $N$.
%\end{definition}

\begin{definition}
Define $\Csim$ to be the category where the set of objects is $\calC_\sim$ and the morphisms are $\sim$-strong embeddings of $\Rsim$-structures.
\end{definition}

The category $\Csim$ has HP, JEP and AP, where the amalgam is simply the free join of two structures. The Fra\"{i}ss\'e limit of $\Csim$ is Hrushovski's non-collapsed construction for a symmetric ternary relation. We claim that $\Msim$ is in fact this very structure.

We fix $\order$, a total ordering of the elements of the universe.

\begin{definition} \label{RStructRep}
Let $A$ be an $R$-structure with $\Rsim$-reduct $A_\sim$. Let $B$ be an $\Rsim$-structure with $A_\sim\subseteq B$. We define $D$, the \emph{$R$-structure representation of $B$ over $A$}. The universe of $D$ is the set $B$ and the relation $R(D)$ is defined thus:
\[R(D) = R(A)\cup \setarg{(a,b,c)\in B^3\setminus A^3}{a\order b\order c,\ (a,b,c) \in R(B)}\]
\end{definition}

\begin{obs}\label{dObs}
\*
\begin{enumerate}
\item
Let $D$ be as in the definition above. Then $\dsimdim{A_\sim,B} = \ddim{A,D}$. In particular, if $A_\sim\simstrong B$ then $A\strong D$.
\item
Let $D$ be as in the definition above. The $\Rsim$-reduct of $D$ is exactly $B$. 
\end{enumerate}
\end{obs}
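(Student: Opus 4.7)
The strategy is to verify part (2) directly from the definition of $R(D)$, and then derive part (1) by a short counting argument using (2).

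For part (2), the underlying set of $D$ is $B$ by construction, so one needs only to check that the $\Rsim$-reduct of $D$ has the same $\Rsim$-relation as $B$. In the forward direction: every $(a,b,c) \in R(A)$ projects to an element of $\rzsim{A} \subseteq R(B)$ (using $A_\sim \subseteq B$), and every $(a,b,c)$ added via the second clause of Definition \ref{RStructRep} lies in $R(B)$ by construction. In the reverse direction, let $\set{a,b,c} \in R(B)$ and order it $a \order b \order c$: if $\set{a,b,c} \subseteq A$, then $A_\sim \subseteq B$ yields some ordering of $\set{a,b,c}$ lying in $R(A) \subseteq R(D)$; otherwise $(a,b,c) \in B^3 \setminus A^3$ satisfies the second clause, so $(a,b,c) \in R(D)$. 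Hence the $\Rsim$-reduct of $D$ is exactly $B$.

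For part (1), fix any finite substructure $A \subseteq C \subseteq D$ and let $X$ denote the $\Rsim$-substructure of $B$ on the universe $C$. Restricting the equality of part (2) to triples inside $C$ shows that $r(C)$, the number of unordered triples witnessed by some ordered triple in $R(C) = R(D) \cap C^3$, equals the cardinality of the $\Rsim$-relation on $X$. Consequently $\dz{C} = |C| - r(C) = |X| - |R(X)| = \dzsim{X}$. The map $C \mapsto X$ is a bijection between finite intermediate $R$-substructures of $D$ over $A$ and finite intermediate $\Rsim$-substructures of $B$ over $A_\sim$, so passing to the minimum over such $C$ (resp.\ $X$) yields $\ddim{A,D} = \dsimdim{A_\sim,B}$. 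Specialising to $C = A$ gives $\dz{A} = \dzsim{A_\sim}$; assuming additionally $A_\sim \simstrong B$, i.e.\ $\dsimdim{A_\sim,B} = \dzsim{A_\sim}$, this transfers to $\ddim{A,D} = \dz{A}$, which says $A \strong D$.

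There is no real obstacle here: the observation is essentially a bookkeeping statement recording that the construction of $D$ places exactly one ordered $R$-triple above each $\Rsim$-triple of $B$, and that this matching preserves the relevant triple counts, hence pre-dimensions, on every finite intermediate substructure.
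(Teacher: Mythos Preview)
The paper records this as an observation with no proof, so there is nothing to compare against; I comment only on correctness. Your argument for part (2) is fine.

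For part (1) there is a slip in the counting. In this section $R$ is \emph{not} assumed symmetric, and $d_0(C)$ is $|C|$ minus the number of \emph{ordered} tuples in $R(D)\cap C^3$; this is precisely what makes the earlier inequality $d_0^{\sim}(B/A)\ge d_0(B/A)$ non-trivial and what powers the proper-reduct argument afterwards. Your line ``$d_0(C)=|C|-r(C)$'' with $r(C)$ the number of \emph{unordered} triples is therefore not the right formula: on the $A$-part, $R(A)$ may carry several ordered tuples over a single unordered triple, so in general $|R(D)\cap C^3|>r(C)$ and $d_0(C)<d_0^{\sim}(X)$.

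What your construction does establish, correctly, is the \emph{relative} identity
\[
d_0(C/A)\;=\;d_0^{\sim}(X/A_\sim)\qquad\text{for every }A\subseteq C\subseteq D,
\]
since the ordered tuples in $R(D)\setminus R(A)$ are, by the very definition of $D$, in bijection with the unordered triples in $R_\sim(X)\setminus R_\sim(A_\sim)$. This is exactly what the ``in particular'' clause needs: if $A_\sim\leqslant_\sim B$ then every $d_0^{\sim}(X/A_\sim)\ge 0$, hence every $d_0(C/A)\ge 0$, i.e.\ $A\leqslant D$. The absolute equality $d^{\sim}(A_\sim,B)=d(A,D)$ as literally written is off by the constant $d_0^{\sim}(A_\sim)-d_0(A)\ge 0$; it does hold when $A=\emptyset$, which covers one of the two invocations in the paper, while the other invocation uses only the ``in particular'' clause. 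So the fix is simply to phrase your Claim in relative form and drop the absolute equality.
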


\begin{lemma}
The age of $\Msim$ is $\calC_\sim$.
\end{lemma}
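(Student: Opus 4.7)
The plan is to prove the two inclusions $\mathrm{Age}(\Msim) \subseteq \calC_\sim$ and $\calC_\sim \subseteq \mathrm{Age}(\Msim)$ separately, both of which should fall out of the machinery built up in this section with minimal additional work.

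For the forward inclusion $\mathrm{Age}(\Msim) \subseteq \calC_\sim$, I would take a finite $A \subseteq \Msim$. Since $\emptyset \strong M$ (by construction of $M$), Corollary \ref{strongImpliesSimstrong} gives $\emptyset \simstrong \Msim$, i.e.\ $\dzsim{B} \geq 0$ for every finite $B \subseteq \Msim$. Restricting this to $B \subseteq A$ — and observing that $\dzsim{B}$ depends only on the $\Rsim$-structure induced on $B$, hence is the same whether $B$ is viewed inside $A$ or inside $\Msim$ — yields $\emptyset \simstrong A$, placing $A$ in $\calC_\sim$.

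For the reverse inclusion $\calC_\sim \subseteq \mathrm{Age}(\Msim)$, I would start from a finite $A \in \calC_\sim$, fix the total order $\order$ on its universe, and form $D$, the $R$-structure representation of $A$ over $\emptyset$ as in Definition \ref{RStructRep}. Observation \ref{dObs}(1), specialized to the case in which the $R$-structure ``$A$'' of that definition is empty, gives $\dsimdim{\emptyset, A} = \ddim{\emptyset, D}$; combined with the hypothesis $\emptyset \simstrong A$ this yields $\emptyset \strong D$, so $D \in \calC$. By universality of $M$, $D$ admits an $R$-embedding $f : D \hookrightarrow M$, and since $\Rsim$ is literally the symmetrization of $R$, any $R$-embedding is automatically an $\Rsim$-embedding of the corresponding $\Rsim$-reducts. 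By Observation \ref{dObs}(2) the $\Rsim$-reduct of $D$ is precisely $A$, so $f$ exhibits $A$ as a finite substructure of $\Msim$.

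No serious obstacle is expected; the preparatory definitions and observations (the pre-dimension comparison $\dzsimover{B}{A} \geq \dzover{B}{A}$, Corollary \ref{strongImpliesSimstrong}, the $R$-representation $D$, and its compatibility with both the predimension and the $\Rsim$-reduct) have been arranged precisely to reduce this lemma to a routine translation between the $R$-world and the $\Rsim$-world. The only detail that requires a little care is the book-keeping that $\dzsim{B}$ is a local quantity, so that $\emptyset \simstrong M_\sim$ restricts to $\emptyset \simstrong A$ for any finite $A \subseteq M_\sim$.
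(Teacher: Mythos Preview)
Your proposal is correct and follows essentially the same route as the paper: for $\calC_\sim \subseteq \mathrm{Age}(\Msim)$ you pass through the $R$-structure representation $D$ of $A$ over $\emptyset$, use Observation \ref{dObs}(1) to get $\emptyset \strong D$, embed $D$ into $M$, and read off the $\Rsim$-reduct via Observation \ref{dObs}(2); for $\mathrm{Age}(\Msim) \subseteq \calC_\sim$ you use $\emptyset \strong M$ and Corollary \ref{strongImpliesSimstrong} to obtain $\emptyset \simstrong \Msim$ and restrict. The only cosmetic difference is that the paper takes the embedding of $D$ into $M$ to be strong (since $\emptyset \strong D$) and then invokes Corollary \ref{strongImpliesSimstrong} to conclude that $A_\sim$ is in fact $\sim$-strongly embedded in $\Msim$, a slightly stronger statement than you need here but one that is reused in the subsequent Lemma \ref{embeddingOver}.
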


\begin{proof}
Let $A_\sim\in \calC_\sim$. Let $D$ be the $R$-structure representation of $A_\sim$ over $\emptyset$. Since $\emptyset\simstrong A_\sim$, by Obesrvation \ref{dObs}.i we have $\emptyset\strong D$ and thus $D$ is strongly embeddable into $M$. By Lemma \ref{strongImpliesSimstrong} we have $D\simstrong\Msim$ and by Observation \ref{dObs}.ii the $\Rsim$ reduct of $D$ is exactly $A_\sim$. So in conclusion $A_\sim$ is $\sim$-strongly embeddable into $\Msim$.

Let $A_\sim\subseteq \Msim$. We know that $\emptyset\strong M$ and so by Corollary \ref{strongImpliesSimstrong} we have $\emptyset\simstrong \Msim$. In particular $\emptyset\simstrong A_\sim$ and so $A_\sim\in \calC_\sim$.
\end{proof}

\begin{lemma}\label{embeddingOver} Let $A\simstrong\Msim$ be finite. Let $B\in \Csim$ with $A\simstrong B$. Then there is some $\sim$-strong embedding $f:B\rightarrow\Msim$ with $f\upharpoonright A = Id_A$.
\end{lemma}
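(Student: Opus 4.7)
The plan is to mimic the proof of Proposition~\ref{weakHomogeneity}, using the known $\strong$-universal homogeneity of $M$ as a black box and transporting it through the $\Rsim$-reduct. First I would view $A$ as a finite subset of $M$ with its induced (non-symmetric) $R$-structure, and let $\bar{A}$ be the $\strong$-closure of $A$ in $M$; then $\bar{A}$ is finite, $\bar{A}\strong M$, and by Corollary~\ref{strongImpliesSimstrong} combined with a trivial monotonicity argument using $A\simstrong \Msim$ one also has $A\simstrong \bar{A}_\sim$. Next I would form the $R$-structure representation $D'$ of $B$ over $A$ (Definition~\ref{RStructRep}); Observation~\ref{dObs}.i together with the hypothesis $A\simstrong B$ then gives $A\strong D'$. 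Finally I would let $D$ be the free $R$-amalgam of $\bar{A}$ and $D'$ over $A$, so its universe is $\bar{A}\cup(B\setminus A)$ with no $R$-relation mixing $\bar{A}\setminus A$ and $B\setminus A$. The standard free-amalgam identity $\dz{Y}=\dz{\bar{A}}+\dz{Y\cap D'}-\dz{A}$ (for $\bar{A}\subseteq Y\subseteq D$), combined with $A\strong D'$ and $\bar{A}\strong M$, then yields $\bar{A}\strong D$ and $D\in\calC$.

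With $D$ in hand, I would invoke the $\strong$-universal homogeneity of $M$ to obtain a $\strong$-strong embedding $F\colon D\rightarrow M$ fixing $\bar{A}$; identify $D$ with its image in $M$, and set $f:=F\upharpoonright B$. By Observation~\ref{dObs}.ii, the $\Rsim$-reduct of $D'$ as a substructure of $D$ is precisely $B$, so $f$ is a well-defined embedding of $\Rsim$-structures $B\rightarrow\Msim$, and $f\upharpoonright A=\mathrm{id}_A$ because $F$ fixes $\bar{A}\supseteq A$.

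The main remaining task, and the step I expect to require the most care, is verifying that this $f$ is $\simstrong$-strong. Since $D\strong M$, Corollary~\ref{strongImpliesSimstrong} gives $D_\sim\simstrong\Msim$, so by transitivity of $\simstrong$ it suffices to show $B\simstrong D_\sim$. The crucial observation is that because the free $R$-amalgam introduces no $R$-relation linking $\bar{A}\setminus A$ with $B\setminus A$, the same holds for $\Rsim$-relations in $D_\sim$. Consequently, for any $B\subseteq Y\subseteq D_\sim$ one has the clean identity
\[\dzsim{Y}=\dzsim{B}+\dzsim{Y\cap\bar{A}_\sim}-\dzsim{A},\]
and since $A\simstrong\bar{A}_\sim$, the right-hand side is at least $\dzsim{B}$. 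This gives $B\simstrong D_\sim$, whence $B\simstrong\Msim$. The real obstacle is the bookkeeping check that the ordered triples introduced in $D'$ --- some of which join $A$ to $B\setminus A$ --- do not, after symmetrisation and free amalgamation with $\bar{A}$, produce any $\Rsim$-link between $\bar{A}\setminus A$ and $B\setminus A$; but this follows from the fact that the new ordered triples of $D'$ all lie inside $B$.
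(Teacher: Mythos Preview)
Your proposal is correct and follows essentially the same route as the paper's proof: take the $\strong$-closure $\bar{A}$ of $A$ in $M$, form the $R$-structure representation of $B$ over $A$, freely amalgamate with $\bar{A}$ over $A$, strongly embed into $M$ over $\bar{A}$, and then pass to the $\Rsim$-reduct via Corollary~\ref{strongImpliesSimstrong} and transitivity. The paper dismisses the step $B\simstrong D_\sim$ (your notation; the paper's $E$) as ``evident'' from $A\simstrong\bar{A}$, whereas you spell out the free-amalgam identity $\dzsim{Y}=\dzsim{B}+\dzsim{Y\cap\bar{A}_\sim}-\dzsim{A}$ explicitly; this added detail is fine and the bookkeeping check you flag is indeed immediate from the fact that the new ordered triples of $D'$ lie in $B^3\setminus A^3$.
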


\begin{proof}
Consider $A$ as an $R$-substructure of $M$. Let $\bar{A}\subseteq M$ be the self-sufficient-closure of $A$. Let $D$ be the $R$-structure representation of $B$ over $A$. By Observation \ref{dObs}.i, $A\strong D$. Take $E$ to be the free join of $D$ and $\bar{A}$ over $A$. By the properties of a free join and $A\strong D$, we have $\bar{A}\strong E$. It is also evident that $B\simstrong E$, because $A\simstrong \bar{A}$. By the property of $M$ as a fra\"{i}ss\'e limit, $E$ may be strongly embedded into $M$ over $\bar{A}$, we assume the embedding is the identity map. By Lemma \ref{strongImpliesSimstrong} we have $E\simstrong \Msim$ and so by transitivity $B\simstrong\Msim$. Thus, $B$ is $\sim$-strongly embeddable over $A$.
\end{proof}

\begin{prop} Let $A,B\simstrong\Msim$ be finite and let $f:A\rightarrow B$ be a finite partial $\Rsim$-isomorphism, then $f$ extends to an automorphism of $\Msim$.
\end{prop}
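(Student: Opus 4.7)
The plan is a standard back-and-forth argument, using Lemma~\ref{embeddingOver} as the one-step extension tool and Corollary~\ref{closedFiniteSuperset} to produce finite $\sim$-strong supersets at each stage.

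Enumerate $\Msim = \set{m_1, m_2, \ldots}$ and set $f_0 = f$, $A_0 = A$, $B_0 = B$. I would construct inductively an ascending chain of finite partial $\Rsim$-isomorphisms $f_n : A_n \to B_n$ such that $A_n, B_n \simstrong \Msim$, $m_k \in A_{2k}$, and $m_k \in B_{2k+1}$; the union $\bigcup_n f_n$ will then be the desired automorphism of $\Msim$.

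For the even step, given $f_{2n}$ with $A_{2n}, B_{2n} \simstrong \Msim$, apply Corollary~\ref{closedFiniteSuperset} to $A_{2n} \cup \set{m_n}$ to obtain a finite $\bar{A}$ with $A_{2n}\cup\set{m_n}\subseteq \bar{A}\simstrong\Msim$. Since $A_{2n}\simstrong \Msim$ and $A_{2n}\subseteq \bar{A}\subseteq \Msim$, it follows immediately from the definition of $\simstrong$ that $A_{2n}\simstrong \bar{A}$. Now transfer the abstract $\Rsim$-structure on $\bar{A}$ along $f_{2n}$: let $\bar{B}$ be the $\Rsim$-structure whose universe is $B_{2n}\amalg (\bar{A}\setminus A_{2n})$, with the $\Rsim$-diagram induced by the bijection $f' := f_{2n}\cup \mathrm{Id}_{\bar A\setminus A_{2n}} : \bar{A}\to \bar{B}$. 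Then $f'$ is an $\Rsim$-isomorphism, hence $B_{2n}\simstrong \bar{B}$, and since $\emptyset \simstrong B_{2n}$ in $\Msim$ we get $\emptyset\simstrong \bar{B}$ by transitivity, so $\bar{B}\in \Csim$. Apply Lemma~\ref{embeddingOver} to $B_{2n}\simstrong \Msim$ and $B_{2n}\simstrong \bar{B}\in \Csim$ to get a $\sim$-strong embedding $h:\bar{B}\to \Msim$ with $h\upharpoonright B_{2n} = \mathrm{Id}_{B_{2n}}$. Set $f_{2n+1} = h\circ f'$, $A_{2n+1} = \bar{A}$, $B_{2n+1} = h[\bar{B}]$. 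Then $f_{2n+1}$ is a partial $\Rsim$-isomorphism extending $f_{2n}$ with $m_n \in A_{2n+1}$, and both $A_{2n+1}\simstrong \Msim$ (by choice of $\bar{A}$) and $B_{2n+1}\simstrong \Msim$ (by Lemma~\ref{embeddingOver}).

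The odd step is symmetric: run the same construction with roles reversed to absorb $m_n$ into the image, which ensures surjectivity of the limit map. Taking $\bar{f} = \bigcup_{n\geq 0} f_n$ gives a bijection $\Msim\to\Msim$ whose restriction to every finite subset is a partial $\Rsim$-isomorphism, hence an automorphism extending $f$. The only point that requires any care is verifying that the transferred structure $\bar{B}$ lies in $\Csim$ and that $B_{2n}\simstrong \bar{B}$ — both follow formally because $f'$ is an $\Rsim$-isomorphism and $A_{2n}\simstrong \bar{A}$; there is no genuine obstacle beyond bookkeeping, all the real work having already been done in Lemma~\ref{embeddingOver}.
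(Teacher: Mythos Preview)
Your proposal is correct and follows exactly the approach the paper indicates: a standard back-and-forth between $\sim$-strong finite subsets, using Corollary~\ref{closedFiniteSuperset} to pass to a finite $\sim$-strong superset containing the next enumerated point and Lemma~\ref{embeddingOver} for the one-step extension on the other side. The small index slip (your construction actually yields $m_n\in A_{2n+1}$ and $m_n\in B_{2n+2}$ rather than $A_{2n}$, $B_{2k+1}$) is harmless bookkeeping.
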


\begin{proof}
A well-known back-and-forth argument between $\sim$-strong subsets, utilizing Lemma \ref{embeddingOver} and Corollary \ref{closedFiniteSuperset}.
\end{proof}

\begin{Cor}
The reduct $\Msim$ is a proper reduct of $M$.
\end{Cor}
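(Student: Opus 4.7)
The plan is to mimic the argument of Corollary \ref{properReduct}: exhibit two finite $R$-structures that are strongly embeddable into $M$, whose $\Rsim$-reducts are $\Rsim$-isomorphic, yet which are not $R$-isomorphic. Then the $\Rsim$-isomorphism between their $\sim$-strong images in $\Msim$ lifts, by the proposition immediately preceding this corollary, to an automorphism of $\Msim$ which fails to preserve $R$.

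Concretely, I would take $A=\{a_1,a_2,a_3\}$ with $R_A=\{(a_1,a_2,a_3)\}$ and $B=\{b_1,b_2,b_3\}$ with $R_B=\{(b_1,b_2,b_3),(b_2,b_1,b_3)\}$. A direct check gives $d_0(A)=2$ and $d_0(B)=1$, and all proper subsets trivially have non-negative $d_0$, so $\emptyset\strong A$ and $\emptyset\strong B$; hence both embed strongly in $M$. The $\Rsim$-reducts $A_\sim$ and $B_\sim$ each consist of a single unordered triple on which $\Rsim$ holds, so the map $a_i\mapsto b_i$ is an isomorphism $A_\sim\to B_\sim$ as $\Rsim$-structures. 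Identifying $A,B$ with their images in $M$, Corollary \ref{strongImpliesSimstrong} gives $A,B\simstrong\Msim$, so we may apply the homogeneity proposition preceding this corollary to extend $a_i\mapsto b_i$ to some $\sigma\in\mathrm{Aut}(\Msim)$.

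Finally, the map $a_i\mapsto b_i$ is not a partial $R$-isomorphism: $M\models R(b_2,b_1,b_3)$ while $M\not\models R(a_2,a_1,a_3)$ (since the only $R$-tuple in $A$ is $(a_1,a_2,a_3)$ and $A\strong M$ ensures no extra $R$-relations among elements of $A$). Therefore $\sigma\notin\mathrm{Aut}(M)$, so $R$ is not definable from $\Rsim$ in $M$, and $\Msim$ is a proper reduct.

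There is essentially no obstacle here; the only thing to be mindful of is that the two ``witnesses'' $A,B$ are chosen so that their pre-dimensions are non-negative on all subsets (which is automatic for these tiny structures), and that $A\strong M$ forces $R$ to have no additional tuples on $\{a_1,a_2,a_3\}$ beyond the one prescribed, so that the asymmetry in $R$ really is detected by $\sigma$.
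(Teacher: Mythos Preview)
Your proof is correct and follows essentially the same strategy as the paper: exhibit two three-point $R$-structures with the same $\Rsim$-reduct but different $R$-diagrams, embed them strongly into $M$, and use $\sim$-homogeneity to produce an automorphism of $\Msim$ that is not an automorphism of $M$. The only cosmetic differences are that the paper swaps the roles of your $A$ and $B$, applies the homogeneity proposition to the two-element subsets $\{a_1,a_2\}$ and $\{b_1,b_2\}$ rather than to the full triples, and detects the failure via $\{a_1,a_2\}\nleqslant M$ versus $\{b_1,b_2\}\strong M$ instead of directly checking an $R$-tuple; your direct check is arguably cleaner.
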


\begin{proof}
Consider the two $R$-structures:

\[A = \set{a_1,a_2,a_3}\]
\[R_A = \set{(a_1,a_2,a_3),(a_1,a_2,a_3)}\]

\[B = \set{b_1,b_2,b_3}\]
\[R_B = \set{(b_1,b_2,b_3)}\]

Assume $A,B\strong M$ and therefore $A,B\simstrong\Msim$. Note that $\set{a_1,a_2}\simstrong A\simstrong\Msim$ and $\set{b_1,b_2}\simstrong B\simstrong\Msim$. By the above proposition, the partial isomorphism $f = \setarg{(a_i,b_i)}{i\in\set{1,2}}$ can be extended into an automorphism $f_0\in Aut(\Msim)$. But, $f_0\notin Aut(M)$ because $\set{a_1,a_2}\nleqslant M$ while $\set{b_1,b_2}\strong M$. Thus, $\Msim$ is a proper reduct of $M$.
\end{proof}

\begin{theorem}
The structure $\Msim$ is the fra\"{i}ss\'e limit of $\Csim$.
\end{theorem}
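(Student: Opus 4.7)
The plan is to invoke the standard uniqueness characterization of Fra\"{i}ss\'e-Hrushovski limits: any countable structure whose age is $\calC_\sim$ and which is homogeneous with respect to $\sim$-strong embeddings must be isomorphic to the Fra\"{i}ss\'e limit of $\Csim$ (by a back-and-forth argument between $\sim$-strong finite substructures, completely analogous to Corollary \ref{homogeneity} / Lemma \ref{RichIsomorphic} in the $S$-setting).

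First I would note that $\Msim$ is countable, since it has the same underlying set as $M$, which is the countable Hrushovski structure. Second, the lemma just above identifies the age of $\Msim$ as exactly $\calC_\sim$, which is the class of objects of $\Csim$. Third, every $B \in \calC_\sim$ is $\sim$-strongly embeddable into $\Msim$: this is Lemma \ref{embeddingOver} applied with $A = \emptyset$, using that $\emptyset \simstrong \Msim$ (which is immediate from $\emptyset \strong M$ and Corollary \ref{strongImpliesSimstrong}).

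The homogeneity condition needed for the Fra\"{i}ss\'e characterization -- namely, that every finite partial $\Rsim$-isomorphism between $\sim$-strong finite substructures of $\Msim$ extends to an automorphism of $\Msim$ -- is precisely the content of the proposition immediately preceding the statement. Combining countability, the identification of the age, $\sim$-strong universality, and $\sim$-strong homogeneity, the standard back-and-forth uniqueness argument for countable homogeneous structures in an amalgamation class (as already carried out in the $S$-context in Lemma \ref{RichIsomorphic}) shows that $\Msim$ is isomorphic to the unique countable Fra\"{i}ss\'e-Hrushovski limit of $\Csim$. No step here is expected to be hard; this theorem is simply the assembly of the three previous facts through the uniqueness of the limit.
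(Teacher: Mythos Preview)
Your proposal is correct and follows essentially the same approach as the paper: both invoke the uniqueness (up to isomorphism) of the countable Fra\"{i}ss\'e-Hrushovski limit and verify that $\Msim$ has the required age and $\sim$-strong homogeneity using the immediately preceding lemma and proposition. Your write-up spells out countability and $\sim$-strong universality a bit more explicitly than the paper does, but the argument is the same.
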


\begin{proof}
There is exactly one (up to isomorphism) countable structure with age $\calC_\sim$ that is homogeneous in relation to $\sim$-strong substructures. Since the age of $\Msim$ is $\calC_\sim$ and it is homogeneous in relation to $\sim$-strong substructures, $\Msim$ must be that structure.
\end{proof}

\begin{prop}\label{GMandMsim}\footnote{This is noted in less detail as Theorem 4.7 of \cite{DavidMarcoTwo}}
The geometry of $M$ is isomorphic to that of $\Msim$.
\end{prop}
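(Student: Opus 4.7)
The plan is to mirror the strategy of Theorem~\ref{PGMandMs}, invoking the back-and-forth criterion of Evans--Ferreira (Lemma~2.3 of \cite{DavidMarcoTwo}) applied to the amalgamation classes $(\calC, \strong)$ and $(\calC_\sim, \simstrong)$. It suffices to verify $\calC \isoext \calC_\sim$ and $\calC_\sim \isoext \calC$; the conclusion is then Theorem~4.7 of \cite{DavidMarcoTwo}. To give a self-contained argument in our language, I will carry out two explicit constructions, one in each direction, parallel to Lemmas~\ref{CCextendstoCCS} and~\ref{CCSextendstoCC}.

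For $\calC_\sim \isoext \calC$, adapt the $R$-structure representation of Definition~\ref{RStructRep}. Given an isomorphism of pre-geometries $f_0 : PG(D_\sim) \to PG(A)$ with $D_\sim \in \calC_\sim$, $A \in \calC$, and a $\sim$-strong extension $D_\sim \simstrong E_\sim$, fix a total order $\order$ on $E_\sim$, extend $f_0$ to a bijection $f : E_\sim \to B$ by sending the new elements to fresh new elements, and define
\[ R_B = R_A \cup \bigl\{(f(x),f(y),f(z)) : \{x,y,z\} \in \Rsim(E_\sim) \setminus \Rsim(D_\sim),\ x \order y \order z\bigr\}. \]
This produces an explicit bijection between $R_B \setminus R_A$ and $\Rsim(E_\sim) \setminus \Rsim(D_\sim)$, whence $\dzsimover{Y}{Y \cap D_\sim} = \dzover{f[Y]}{f[Y] \cap A}$ for every $Y \subseteq E_\sim$. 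Specialising to $Y \supseteq D_\sim$ and using $D_\sim \simstrong E_\sim$ yields $A \strong B$ (and $B \in \calC$ by transitivity). Two applications of Lemma~\ref{dimensionleq}, taking $d$-closures on each side, then give $d^\sim(Y, E_\sim) = d(f[Y], B)$ for every $Y \subseteq E_\sim$, so $f$ is a pre-geometry isomorphism extending $f_0$.

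For $\calC \isoext \calC_\sim$, given $A \strong B$ in $\calC$ and an isomorphism $f_0 : PG(A) \to PG(D_\sim)$, the natural attempt is to symmetrize: with $f$ extending $f_0$ by $b \mapsto e_b$, set $E_\sim$ to have universe $D_\sim \cup \{e_b : b \in B \setminus A\}$ and $\Rsim(E_\sim) = \Rsim(D_\sim) \cup \{\{f(x),f(y),f(z)\} : (x,y,z) \in R_B \setminus R_A\}$. When each unordered triple in $R_B \setminus R_A$ arises from only a single ordering, pre-dimensions match exactly and the argument dualises that of the previous paragraph. The obstruction is that $R_B \setminus R_A$ may contain multiple distinct orderings of a single unordered triple; these collapse under symmetrization to a single $\Rsim$-relation, so the $\dzsim$-contribution of the image overshoots the $\dz$-contribution by one for each extra ordering. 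To compensate, for every such triple of multiplicity $k \geq 2$ introduce $k-1$ auxiliary points in $E_\sim$ together with a carefully chosen set of $\Rsim$-relations on them and on $\{f(x),f(y),f(z)\}$, tuned so that the local contribution to $\dzsim$ matches the local $\dz$-contribution and so that no unintended dependencies involving $f[B]$ are created. Verifying that this compensation preserves the pre-geometry on all subsets $X \subseteq B$ is the technical heart of this direction and the main obstacle of the proof. Once this is done, Lemma~\ref{dimensionleq} yields the required iso extension, and the Evans--Ferreira back-and-forth completes the proof.
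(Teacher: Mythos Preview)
Your approach diverges from the paper's in an important way: you attempt to establish that the \emph{pre-geometries} are isomorphic, whereas the statement (and the paper's proof) concerns only the \emph{geometries}. The paper even remarks explicitly, just after this proposition, that the pre-geometry version ``seems'' true but ``the proof is very technical and the claim is of no consequence to us.'' So you are aiming at a strictly harder target than required.

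This matters precisely in your $\calC \isoext \calC_\sim$ direction. The obstruction you identify --- multiple ordered $R$-tuples on a single unordered triple --- is real at the pre-geometry level, and your auxiliary-point fix is only gestured at: you say the extra points should be ``carefully chosen'' and ``tuned'' but do not construct them or verify that no unintended dependencies arise on arbitrary subsets. That is not a proof. The paper's insight is that this obstruction evaporates once you pass to the geometry: if a triple $\{a,b,c\}$ carries two or more $R$-orderings then $d_0(\{a,b,c\}) \leq 1$, so in $G(A)$ these three points collapse to at most one. Hence for any $A \in \calC$ one can replace it by some $B \in \calC$ with the same geometry and $PG(B) = G(B)$; for such $B$ the symmetrization loses nothing, and the naive construction you wrote down first (before introducing auxiliary points) already works. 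This is the move you are missing.

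Your $\calC_\sim \isoext \calC$ direction via the $R$-structure representation is essentially the paper's argument and is fine.
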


\begin{proof} (sketch)
Definition \ref{isoext} and the discussion that follows are valid also for geometries instead of pre-geometries. So it is enough to show that $\calC$ and $\calC_\sim$ have the isomorphism extension property for geometries (denote $\overset{G}{\isoext}$) in both directions.

\noindent$\calC\overset{G}{\isoext}\calC_\sim$:

\noindent Let $A\in\calC$, any three points in $A$ with more than one relation on them are of dimension less than $2$ and so in $G(A)$, the geometry of $A$, they all collapse into the same point or are absent from the structure completely. Thus, for any finite $A\in \calC$ there is some naturally associated $B\in \calC$ with the same geometry as $A$ and $PG(B) = G(B)$. It is not hard to see that the symmetric reduct of $B$ has the same geometry as $B$ and therefore as $A$. This works exactly the same when done over some substructure $A_0$ with $A_0\strong A$.

\noindent$\calC_\sim\overset{G}{\isoext}\calC$:

\noindent Let $A\in\calC_\sim$. As in Definition \ref{RStructRep}, one may take an $R$-structure representation of $A$ over $\emptyset$ in order to receive some $D\in\calC$ with the same geometry as $A$. This can also be done over a structure (modulo simple technicalities due to the fact the universe of the geometry is a quotient of the structure).
\end{proof}

In fact, it seems that the pre-geometries of $M$ and $\Msim$ are isomorphic, but the proof is very technical and the claim is of no consequence to us.

\section{Appendix}\label{appendix}

\begin{lemma}\label{fiveclique}
Let $\emptyset\leqslant N$. If $A=\set{\otn{a}{5}}\subseteq N$ and $S(a_1,a_2,a_3)$ for any $a_1,a_2,a_3\in A$, then there exists a pair of elements $x,y\in N$ such that $(a,x,y)\in R_N$ for all $a\in A$.
\end{lemma}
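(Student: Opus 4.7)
The plan is to argue by contradiction. Suppose that no pair $(x, y) \in N^2$ witnesses every element of $A$, so that every maximal $S$-clique $(C, p)$ in $K := C(A, N)$ has $|C| \in \{3, 4\}$. Since every triple $T \subseteq A$ satisfies $S$, $T$ sits inside the universe $C_0 := \{a \in A \setminus \{b_1, b_2\} : R(a, b_1, b_2)\}$ of a clique $(C_0, \{b_1, b_2\}) \in K$ (where $(b_1, b_2)$ witnesses $T$), so the universes of $K$ cover the ten triples of $A$. Writing $m_k := |\{(C, p) \in K : |C| = k\}|$, a short case analysis on $m_4$ shows the covering forces $m_3 + 2m_4 \geq 6$: if $m_4 \geq 4$ then $2m_4 \geq 8$; if $m_4 = 3$, the triple of the three distinct complements is left uncovered, forcing $m_3 \geq 1$; if $m_4 = 2$, two distinct 4-subsets of $A$ overlap in a single 3-subset and cover at most seven triples, forcing $m_3 \geq 3$; if $m_4 \leq 1$ then at least six triples need $3$-cliques, forcing $m_3 \geq 6$.

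Next, apply Lemma \ref{outsourcePredimension} to the finite $R$-substructure $B := A \cup W_K \subseteq N$ (for which $\emptyset \leqslant B$ since $\emptyset \leqslant N$), with $L = \emptyset$ and $K' \subseteq C(B)$ the set of maximal-in-$B$ extensions $(C^B, p)$ of the cliques $(C, p) \in K$. This yields $d_0(B) \leq d_0(\outsource{B}{A}{K', \emptyset})$. Writing $B^*$ for the outsourced structure and $u := |(W_K \setminus A) \cap \bigcup K'|$, one has $|B^*| = 5 + u + 2|K|$. The fresh generating relations of cliques in $K'$ are pairwise disjoint, and setting $k_w := |\{(C', p) \in K' : w \in C'\}|$ for $w \in W_K \setminus A$ one computes $r(B^*) \geq \sum_{(C', p) \in K'} |C'| = (3m_3 + 4m_4) + \sum_{w \in W_K \setminus A} k_w$. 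Since $u \leq \sum_{w} k_w$ (each surviving external witness is counted at least once on the right), we obtain
\[
d_0(B^*) \leq (5 + u + 2|K|) - \left((3m_3 + 4m_4) + \sum_{w \in W_K \setminus A} k_w\right) \leq 5 - (m_3 + 2m_4) \leq -1 < 0,
\]
hence $d_0(B) < 0$, contradicting $\emptyset \leqslant N$.

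The main obstacle is the bookkeeping: each original external witness $w \in W_K \setminus A$ that survives outsourcing (contributing $+1$ to $|B^*|$ via the term $u$) necessarily lies in at least one extended universe $C'$ (contributing $\geq +1$ to $r(B^*)$ via the term $\sum_{w} k_w$), so the two effects cancel in the final estimate. The injectivity of the extension map $(C, p) \mapsto (C^B, p)$ from $K$ into $C(B)$ is immediate, since distinct maximal cliques in $A$ already have distinct witness pairs (the pair $p$ determines $C = \{a \in A \setminus p : R(a, p)\}$), so enlarging the universe to $C^B$ preserves distinctness. After this cancellation the contradiction reduces cleanly to the covering inequality $m_3 + 2m_4 \geq 6$ from the first paragraph.
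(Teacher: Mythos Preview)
Your argument is correct and takes a genuinely different route from the paper's proof. The paper proves Lemma~\ref{fiveclique} by a direct, self-contained exhaustion of cases (internal versus external witness pairs, witnesses relating to three versus four elements, shared relations between triplets, and so on); the authors themselves call it ``very tedious and uninteresting'' and advise the reader to skip it. You instead invoke the outsourcing machinery of Section~\ref{age}: after the covering count $m_3+2m_4\ge 6$, a single application of Lemma~\ref{outsourcePredimension} to $B=A\cup W_K$ with $K'$ and $L=\emptyset$ turns the problem into the clean inequality $d_0(B)\le d_0(B^*)\le 5-(m_3+2m_4)<0$. This is considerably shorter and more conceptual. There is no circularity: Lemma~\ref{outsourcePredimension} and the lemmas it rests on (Lemma~\ref{goodclique}, Observation~\ref{outsourceObs}, Lemma~\ref{outsourcingRespectsS}) make no use of Fact~\ref{noUnwitnessedCliques} or Lemma~\ref{fiveclique}.

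Two small points are worth tightening. First, your case analysis for $m_3+2m_4\ge 6$ is phrased in terms of the number $m_4$ of $4$-cliques in $K$ rather than the number of \emph{distinct} $4$-element universes; when you write ``the triple of the three distinct complements'' you are tacitly assuming the three universes are different. The bound still holds when universes repeat (fewer distinct $4$-universes means more uncovered triples, hence a larger $m_3$), but you should say so. Second, it is worth noting explicitly that $K=C(A,N)$ is finite---this follows from $\emptyset\leqslant N$ together with the multiplicity bounds of Lemma~\ref{multiplicityMaximum} (whose proof only uses $d_0\ge 0$), so $B$ is a finite substructure and Lemma~\ref{outsourcePredimension} applies.
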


\begin{remark*} This proof is just an exhaustion of all possible options. In spirit, it is identical to a computer program running over all possibilities. It is included for the sake of completeness.
\end{remark*}

\begin{proof} Assume $A\subseteq N$ contradicts the statement.
\\We first show that all pairs of witnesses for triplets in $A$ cannot be external to $A$.
\smallskip
\\\textbf{Claim} All pairs of witnesses for the clique $A$ are not contained within $A$.
\begin{proof}
Assume a pair of witnesses for some $S$-triplet in $A$ is contained in $A$, say $\set{a_1,a_2}$ witness $S(a_3,a_4,a_5)$, then there are already three known $R$ relations amongst elements of $A$ (namely $\set{a_3,a_4,a_5}\times\set{a_1}\times\set{a_2}$) and the dimension of $A$ can be decreased further by only 2.

Another set of internal witnesses reduces the dimension by at least 2 more, and so it is impossible for $A$ to be a clique because not all $S$-triplets are witnessed yet, and witnessing any unwitnessed triplet will further reduce the dimension. So there cannot be another internal pair of witnesses.

The relations $S(a_1,a_2,a_3), S(a_1,a_2,a_4),$ $ S(a_1,a_2,a_5)$ must be witnessed and as the witnesses are non-internal for all of these, adding them to $A$ not only does not raise the predimension (because the witnesses are related to $a_1,a_2$ and so at worst case they are exactly cancelled out), but actively lowers it by at least three for their relatedness to {$a_3,a_4,a_5$} (regardless of how many pairs of witnesses). In conclusion - we may assume there are no internal pairs of witnesses.
\end{proof}

Assume each pair of witnesses relates to only three elements, then we have ${5 \choose 3} = 10$ different pairs of witnesses associated with triplets.  Note that a single $R$ relation can be used for the generation of at most two triplets of the clique $A$ because there is no internal pair of witnesses. Also, the dimension of $A$ is at most $5$.

If no $R$ relation is used in two different triplets, then by adding all witnesses, we add at most $2\cdot 10 = 20$ elements and $3\cdot 10 = 30$ relations to $A$, and have dimension below zero.

If there is a relation being used in the generation of two distinct triplets $T_1,T_2$, then both triplets have exactly one internal witness and they must have the same external one, say $t$. Adding $t$ to $A$ adds $1$ element and $5$ relations, and so far we have dimension at most $1$. There can be only $4$ more triplets sharing relations with $T_1$ or $T_2$ which leaves us with at least $4$ other triplets. If none of these other triplets share a relation with any other triplet, then they reduce the dimension by at least $4$ more which brings it below zero. If there is a triplet with a shared relation amongst the remaining $4$, say $T_3$, then it has one internal witness and one external witness. Adding the external witness to our current structure reduces the predimension by at least $2$ more ($3$ relations and one new element), which brings it below zero.

So there must be a pair of witnesses that relates to four elements of $A$.

Say $T = \set{a_1,a_2,a_3,a_4}$ is witnessed by the same pair. So far we have dimension 3 at best. There are now ${4 \choose 2} = 6$ S-relations left to witness ($a_5$ with any other two elements). Assume again that all other pairs of witnesses relate to only three elements.

If the clique $T$ shares a relation with a triplet $T_1$, then $a_5$ is one of the witnesses for $T$ and so the current dimension is 2 at best. Both of the witnesses for $T_1$ already appear in the structure (The external witness for $T_1$ is the same as that of $T$) and it shares only one relation with $T$, which implies there are two previously unaccounted for relations in the structure, bringing us to dimension zero. Choose another triplet $T_2$, if one of its witnesses is previously unseen, then adding the pair of witnesses adds three relations to structure and the dimension goes below zero. If both witnesses are already in the structure, $T_2$ has at most one relation shared with $T$ and one relation shared with $T_1$ and so contains one extra relation unaccounted for, which also takes the dimension below zero. So $T$ does not share a relation with a triplet.

If there are two triplets $T_1,T_2$ sharing a relation, then as before adding their external witness reduces the dimension by $4$, bringing it below zero.

So no triplets share relations and by adding all witnesses we add at most $2\cdot 6 = 12$ elements and $3\cdot 6 = 18$ relations, which lowers the dimension below zero.

So again there must be a pair of witnesses (a different pair by assumption) relating to four elements, say $T' = \set{a_1,a_2,a_3,a_5}$ and the dimension is reduced to one (the two size $4$ cliques cannot share a relation because then they would have the same external witness and adding it reduces dimension below zero).

We are left with the relations $S(a_1,a_4,a_5),S(a_2,a_4,a_5),S(a_3,a_4,a_5)$ to take care of. Not all three of these relations can be witnessed by the same pair by our assumption, and so at least two new pairs are involved. Note that any of the remaining cliques cannot share a relation with either $T$ or $T'$, because then they would both have an internal witness and the same external witness which we already said cannot happen. If one of the cliques shares a relation with $T$ (without loss of generality) then one of the witnesses of $T$ is internal, and so the dimension is already zero, the unaccounted for relations from the aforementioned clique lower the dimension further. So none of the remaining cliques shares a relation with $T$ or $T'$. Whether they share a relation amongst themselves or do not share relations at all does not matter, as adding their witnesses the the structure lowers the dimension below zero.

So finally, in conclusion, any S-clique of size five has a common pair of witnesses.
\end{proof}

\begin{Cor}\label{morethanfiveclique}
Let $\emptyset\leqslant N$. Let $A\subseteq N$ with $|A| > 4$ and $S(a_1,a_2,a_3)$ for all $a_1,a_2,a_3\in A$ distinct, then there exists a pair of witnesses $x,y\in N$ such that $(a,x,y)\in R$ for all $a\in A$.
\end{Cor}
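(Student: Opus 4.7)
The plan is to induct on $n := |A|$, with the base case $n = 5$ being Lemma \ref{fiveclique}. For the inductive step, assume the statement for all cliques of size strictly between $4$ and $n$, and let $|A| = n \geq 6$. Set $B_i := A \setminus \{a_i\}$ for $i \in \{1, 2, 3\}$; each $B_i$ is a clique of size $n - 1 \geq 5$, so by the inductive hypothesis it admits a common pair of witnesses $p_i \subseteq N$. If $p_i = p_j$ for some $i \neq j$, then this pair witnesses $B_i \cup B_j = A$, and we are done. So we assume $p_1, p_2, p_3$ are pairwise distinct and aim for a contradiction.

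The key preliminary is a small \emph{disjointness observation}: if two distinct pairs $p \neq p'$ both witness a common clique $C \subseteq N$ with $|C| \geq 4$, then $p \cap p' = \emptyset$. Indeed, if $p \cap p' = \{x\}$ with $p = \{x, y\}$, $p' = \{x, y'\}$, then $\{x, y, y'\}$ is disjoint from $C$, so $F := C \cup \{x, y, y'\}$ has $|C| + 3$ points and carries at least $2|C|$ distinct $R$-relations (namely $(a, x, y)$ and $(a, x, y')$ for each $a \in C$). This yields $d_0(F) \leq 3 - |C| \leq -1$, contradicting $\emptyset \leqslant N$.

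Each intersection $B_i \cap B_j = A \setminus \{a_i, a_j\}$ has size $n - 2 \geq 4$ and is witnessed by both $p_i$ and $p_j$, so the observation forces $p_1, p_2, p_3$ to be pairwise disjoint. Since $p_i \cap B_i = \emptyset$ we get $p_i \cap A \subseteq \{a_i\}$, whence $E := A \cup p_1 \cup p_2 \cup p_3$ has at most $n + 6$ elements. Meanwhile each $p_i$ contributes $n - 1$ distinct relations (one for each $a \in B_i$), and pairwise disjointness of the $p_i$ ensures that $R$-triples arising from different pairs cannot coincide (they would force one pair to share two elements with another). Thus $r(E) \geq 3(n-1)$, giving $d_0(E) \leq (n + 6) - 3(n - 1) = 9 - 2n \leq -3$ for $n \geq 6$, again contradicting $\emptyset \leqslant N$. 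This rules out three distinct $p_i$ and closes the induction.

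The most delicate step is the disjointness observation together with the final bookkeeping that the enumerated relations are all genuinely distinct; the estimates are somewhat tight (for $n = 6$ we get $d_0(E) \leq -3$), but no finer balancing is needed once one respects the constraint that witness pairs lie outside their clique.
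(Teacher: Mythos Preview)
Your proof is correct and follows the same inductive scaffold as the paper (base case $n=5$ via Lemma~\ref{fiveclique}), but the inductive step is organised differently. The paper removes only \emph{two} elements, obtaining two witness pairs for $A\setminus\{a_1\}$ and $A\setminus\{a_2\}$; the resulting count gives only $d_0(A')\le 5-(n-1)\le 0$, which is not yet a contradiction, so the paper must further argue that $A'\leqslant N$, that all $S$-witnesses for triples in $A'$ therefore lie in $A'$, and then exhibit a specific triple $\{a_1,a_2,a\}$ whose witnesses cannot simultaneously coincide with both pairs. Your version removes \emph{three} elements; the extra pair, together with your disjointness observation (two distinct pairs witnessing a common $4$-element set must be disjoint), pushes the count to $d_0(E)\le 9-2n\le -3$, giving an immediate contradiction without any appeal to self-sufficiency of the intermediate set. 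Your argument is thus slightly heavier in setup but avoids the more delicate ``witnesses must be internal'' step entirely; the paper's version is more parsimonious in the number of induction calls but pays for it with an additional structural argument.
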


\begin{proof}
We prove by induction on the size of the clique $A$: For a set of size 5, that is exactly Lemma \ref{fiveclique}. Assume the Corollary is true for cliques of size $i$ with $i>4$, let $|A| = i+1$. Let $a_1,a_2\in A$ be distinct elements. Since $|A\setminus \set{a_1}| = i$, by assumption there is a pair $\set{x_1,y_1}$ such that $(a,x_1,y_1)\in R_N$ for all $a\in A\setminus\set{a_1}$. Similarly, there is a pair $\set{x_2,y_2}$ such that $(a,x_2,y_2)\in R_N$ for all $a\in A\setminus\set{a_2}$. Assume $\set{x_1,y_1} \neq \set{x_2,y_2}$. Without loss of generality $x_i \notin \set{x_{3-i},y_{3-i}}$.

If there is an instance of $\set{a_i,x_1,y_1} = \set{a_j,x_2,y_2}$ for some $a_i,a_j\in A$ distinct, then $y_1 = y_2\notin A$, $x_1 = a_i = a_1$ and $x_2 = a_j = a_2$. In such case there are at least $2i-1$ distinct $R$ relations amongst the $i+2$ elements of $A\cup\set{y_1}$, since $i>3$ this means $d_0(A\cup\set{y_1}) < 0$ which contradicts $\emptyset \leqslant A$. So such an instance does not exist, and there is no intersection between relations forming $A\setminus \set{a_1}$ and $A\setminus \set{a_2}$ as cliques.

Denote $A' = A\cup\set{x_1,y_1,x_2,y_2}$. We now have $|A'| \leq i+5$ and $r(A') \geq 2i$ and so $0 \leq d_0(A') \leq i+5 - 2i = 5-i \leq 0$. And so $|A'| = i+5$, meaning the witnesses are external to $A$, and $r(A') = 2i$, meaning the only relations in $A'$ are the ones we accounted for. Also, $A' \leqslant N$ and so any witnesses for $S$-triplets in $A'$ are elements of $A'$. Now, the only relation in $A'$ containing $a_1$ is $(a_1,x_2,y_2)\in R_N$ and the only relation in $A'$ containing $a_2$ is $(a_2,x_1,y_1)\in R_N$. because $A$ is a clique there is a pair $\set{x_3,y_3}$ with $(a_1,x_3,y_3),(a_2,x_3,y_3),(a,x_3,y_3)\in R_N$ for some $a\in A\setminus\set{a_1,a_2}$. Because $A' \leqslant M$, it must be true that $\set{x_3,y_3} \subseteq A'$. We then must have $\set{x_1,y_1} = \set{x_3,y_3} = \set{x_2,y_2}$ which is a contradiction because $\set{x_1,y_1} \neq \set{x_2,y_2}$. And so it cannot be true that $\set{x_1,y_1} \neq \set{x_2,y_2}$.

So $\set{x_1,y_1} = \set{x_2,y_2}$ and so the entire clique $A$ is witnessed by $\set{x_1,y_1}$.

By induction, the Corollary is true for all $i > 5$
\end{proof}

\begin{lemma}\label{gradualIsOutsourcingLemma}
Let $A\subseteq B$ be $R$-structures and let $K,L\subseteq C(B)$ mutually exclusive with $W_{K\cup L}\subseteq B$. Assume $K$ is non-empty. Then any gradual outsourcing $B_{|K|}$ of $K$ with core $(A,L)$ with respect to $L$ is isomorphic to $\outsource{B}{A}{K,L}$. We use the notation of Definition \ref{A^Kdefinition}.
\end{lemma}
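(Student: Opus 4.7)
The plan is by induction on $|K|$. For the base case $|K| = 1$ with $K = \set{C}$, the gradual process is the single step $B_1 = \outsource{B}{A}{C}$ relative to $T_0 = \set{C} \cup L$. The key observation is that an element $z \in wit(C)$ lies in $EW^1_{T_0}(A)$ if and only if $z \in EW_{\set{C}, L}(A)$: $z$ witnesses $C$ once in $T_0$, and it contributes no further witnessings iff $z \notin W_L$, which is precisely the condition cutting down to $EW_{\set{C}, L}(A)$. Thus the witnesses reused by the gradual step are exactly the elements the bulk operation $\outsource{B}{A}{\set{C}, L}$ deletes. An isomorphism is then defined as the identity on $B \setminus EW_{\set{C}, L}(A)$, matching the fresh bulk witnesses $\calW_{\set{C}}$ with the gradual witnesses $\set{x^C, y^C}$ (fresh-to-fresh or fresh-to-reused as appropriate); preservation of $R$ follows because both relation sets decompose identically as the $R_A$-relations not generating $C$, the $L$-clique generators, and the new generating relations $RW(C, \set{x^C, y^C})$.

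For the inductive step, let $C_1$ be the clique the gradual process outsources first, and write $C_1' = (C_1, \set{x^{C_1}, y^{C_1}})$ for its freshened version. The remaining $|K|-1$ gradual steps form a gradual outsourcing of $I_1 = K \setminus \set{C_1}$ from $B_1$ with core $(A, L \cup \set{C_1'})$; mutual exclusivity of $I_1$ and $L \cup \set{C_1'}$ is exactly the goodness invariant whose inductive step is proved, via Lemma \ref{outsourcingRespectsS}, inside the proof of Lemma \ref{outsourcePredimension}. The inductive hypothesis then gives $B_{|K|} \cong \outsource{B_1}{A}{I_1, L \cup \set{C_1'}}$, and the base case yields $B_1 \cong \outsource{B}{A}{\set{C_1}, L}$. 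The final step is a direct compositional check: both the iterated bulk outsourcing and the single-shot $\outsource{B}{A}{K, L}$ produce structures with universe $B \setminus EW_{K, L}(A)$ together with $2|K|$ fresh witnesses (one pair per $C \in K$), and the same relation set comprising $R_A$-relations not generating any $K$-clique, the $L$-clique generators, and the new $K$-clique generators.

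The main obstacle is tracking the dichotomy between witness reuse in the gradual process and wholesale delete-and-replace in the bulk. The combinatorial fact underpinning the bijection is that each $z \in EW_{K, L}(A)$ is reused by the gradual process at exactly one step: $z$ witnesses no $L$-clique (since $z \notin W_L$), so its $T_i$-witness count starts at some $m \geq 1$ and drops by one at each step outsourcing a $K$-clique containing $z$ as a witness (when $z$ is not reused), until the count reaches one and reuse is triggered; once reused, $z$ continues to witness the freshened clique but no further outsourcing step can affect it. This invariant ensures that the total number of reuse events equals $|EW_{K, L}(A)|$, so the two universes have matched cardinality in each inductive layer and the explicit bijection described in the base case assembles into the required isomorphism.
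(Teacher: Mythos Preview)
Your inductive decomposition has a gap at the step ``the base case yields $B_1 \cong \outsource{B}{A}{\set{C_1}, L}$''. The base case proves that a gradual outsourcing with $|K|=1$ matches the bulk, and in that situation the single step is performed relative to $T_0 = \set{C_1} \cup L$. In the inductive step, however, the first gradual step outsources $C_1$ relative to $T_0 = K \cup L$ with $|K|>1$; this is \emph{not} an instance of the base case. Concretely, the reuse criterion differs: in the inductive first step a witness $z \in wit(C_1)$ is reused iff $z \in EW^1_{K\cup L}(A)$, whereas matching $\outsource{B}{A}{\set{C_1},L}$ would require the criterion $z \in EW_{\set{C_1},L}(A)$. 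If $z$ witnesses both $C_1$ and some $C_2 \in K\setminus\set{C_1}$ (and is otherwise external), then $z \in EW_{\set{C_1},L}(A)$ but $z \notin EW^1_{K\cup L}(A)$, so $B_1$ retains $z$ and also acquires a fresh $z^{C_1}$, while $\outsource{B}{A}{\set{C_1},L}$ deletes $z$ and adds one fresh witness. The two structures have different cardinalities and are not isomorphic. The chain from $B_{|K|}$ to $\outsource{B}{A}{K,L}$ therefore breaks at this link.

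Your final paragraph, on the other hand, is essentially correct and is in fact the whole argument: each $z\in EW_{K,L}(A)$ is reused at exactly one step (namely the step at which the last $K$-clique it witnesses is outsourced), and no $z\notin EW_{K,L}(A)$ is ever reused. This is precisely what the paper proves, but it does so directly rather than by induction: it checks the two containments $B_{|K|} = \outsource{B}{A}{K,L}$ (as sets, after relabelling) and $R(B_{|K|}) = R(\outsource{B}{A}{K,L})$ by tracking, for each element and each relation, when it is added or removed during the process. Your inductive scaffolding does not actually reduce the work, since the ``direct compositional check'' you defer to at the end is already the full content of the lemma; the argument is cleaner if you drop the induction and run the reuse-tracking analysis of your last paragraph globally, as the paper does.
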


\begin{proof} The isomorphism is in fact the identity map, after some elements of $B$ were relabelled during the gradual outsourcing process.

\noindent$\mathbf{B_{|K|}\subseteq \outsource{B}{A}{K,L}}:$ Throughout the outsourcing process, the only new elements added to the structure $B$ are elements of $\calW_K$. Also, the elements of $EW_{K,L}(A)$ were relabelled into elements of $\calW_K$. By definition an element $x\in EW_{K,L}(A)$ does not witness cliques in $L$ and so when the last clique the element witnesses in $K$ is outsourced, that element is relabelled.

\smallskip
\noindent$\mathbf{B_{|K|}\supseteq \outsource{B}{A}{K,L}}:$ Any element of $B\setminus EW_{K,L}(A)$ is not removed from the structure or relabelled at any stage. Let $z^C\in \calW_K$, say the clique $C$ was outsourced at the stage $i$, then $z^C\in B_{i+1}$. Since $\clqwit{C}{x^C,y^C}\notin I_{j}$ for $j>i$ the element $z^C$ will not be removed or relabelled, so we have also $z^C\in A_{|K|}$.

\bigskip
\noindent$\mathbf{R(B_{|K|})\subseteq R(\outsource{B}{A}{K,L})}:$ Let $r\in R(B_{|K|})$.

If $r\in R(B_{|K|})\setminus R_B$ then $r \in RW{\clqwit{C}{x^C,y^C}}$ for some $C\in K$ because these are the only new relations added to the structure, and so $r\in R(\outsource{B}{A}{K,L})$. So assume $r\in R_B$.

If $r\notin(R_A \cup \bigcup_{C\in T_0}RW(C))$ then $r$ was removed when the first clique was outsourced and so $r\notin R(B_{|K|})$.

If $r\in \bigcup_{C\in K}RW(C)$ then by mutual exclusivity $r\notin \bigcup_{C\in L}RW(C)$ and so $r$ is removed from the structure when the last clique that shares it is outsourced, so $r\notin R(B_{|K|})$.

The only remaining options are $r\in R_A\setminus \bigcup_{C\in K}RW(C)$ or $r\in \bigcup_{C\in L}RW(C)$ so it must be that $r\in R(\outsource{B}{A}{K,L})$.

\smallskip
\noindent$\mathbf{R(B_{|K|})\supseteq R(\outsource{B}{A}{K,L})}:$ Let $r\in R(\outsource{B}{A}{K,L})$.

If $r\in R_A\setminus \bigcup_{C\in K}RW(C)$ then because $r$ is a relation in the core (which is always $A$) and is never a generating relation for any clique that is to be outsourced, it will not be removed at any stage of the gradual outsourcing process.

If $r\in RW{\clqwit{C}{x^C,y^C}}$ for some $C\in K$, let $i$ be the stage when $C$ was outsourced. So $\clqwit{C}{x^C,y^C} \in T_j$ for all $j > i$ and this clique is never outsourced again. By definition of outsourcing the relations $RW{\clqwit{C}{x^C,y^C}}$ are present in any $B_{j}$ for $i>j$ and therefore $r\in R(B_{|K|})$.

If $r\in RW(C)$ for some $C\in L$ then because $L\subseteq T_j$ for all $j$ and the outsourcing is with respect to $T_j$, the relation $r$ is never removed from the structure and so $r\in R(B_{|K|})$.
\end{proof}

\bibliography{myrefs}{}

\def\cprime{$'$}
\begin{thebibliography}{1}

\bibitem{DavidMarcoOne}
David~M. Evans and Marco~S. Ferreira.
\newblock The geometry of {H}rushovski constructions, {I}: {T}he uncollapsed
  case.
\newblock {\em Ann. Pure Appl. Logic}, 162(6):474--488, 2011.

\bibitem{DavidMarcoTwo}
David~M. Evans and Marco~S. Ferreira.
\newblock The geometry of {H}rushovski constructions, {II}. {T}he strongly
  minimal case.
\newblock {\em J. Symbolic Logic}, 77(1):337--349, 2012.

\bibitem{EvHr}
David~M. Evans and Ehud Hrushovski.
\newblock The automorphism group of the combinatorial geometry of an
  algebraically closed field.
\newblock {\em J. London Math. Soc. (2)}, 52(2):209--225, 1995.

\bibitem{Hr4}
Ehud Hrushovski.
\newblock Locally modular regular types.
\newblock In {\em Classification theory ({C}hicago, {IL}, 1985)}, volume 1292
  of {\em Lecture Notes in Math.}, pages 132--164. Springer, Berlin, 1987.

\bibitem{HrushovskiSecond}
Ehud Hrushovski.
\newblock Strongly minimal expansions of algebraically closed fields.
\newblock {\em Israel J. Math.}, 79(2-3):129--151, 1992.

\bibitem{Hns}
Ehud Hrushovski.
\newblock A new strongly minimal set.
\newblock {\em Ann. Pure Appl. Logic}, 62(2):147--166, 1993.
\newblock Stability in model theory, III (Trento, 1991).

\bibitem{NotePillayAmple}
Anand Pillay.
\newblock A note on {CM}-triviality and the geometry of forking.
\newblock {\em J. Symbolic Logic}, 65(1):474--480, 2000.

\bibitem{Rabinovich}
E.~D. Rabinovich.
\newblock {\em Definability of a field in sufficiently rich incidence systems},
  volume~14 of {\em QMW Maths Notes}.
\newblock Queen Mary and Westfield College School of Mathematical Sciences,
  London, 1993.
\newblock With an introduction by Wilfrid Hodges.

\bibitem{Ziegler}
Martin Ziegler.
\newblock An exposition of hrushovski’s new strongly minimal set.
\newblock available at
  http://home.mathematik.uni-freiburg.de/ziegler/preprints/tutorial.pdf.

\end{thebibliography}
\bibliographystyle{plain}
\end{document}